\documentclass{amsart}
\usepackage{mathrsfs}
\usepackage{amsfonts}
\usepackage{graphicx}
\usepackage{amsmath}
\usepackage{amssymb}
\usepackage{bm}
\usepackage{color}
\usepackage{mathdots}

\usepackage[scr=boondoxo]{mathalfa}

\usepackage[linktocpage=true
]{hyperref}
\hypersetup{colorlinks,linkcolor=blue,urlcolor=cyan,citecolor=blue,
pdftitle={Twisted Yangians of types BI, CI, DI and Drinfeld type current relations},
pdfauthor={Vidas Regelskis}}

\newtheorem{theorem}{Theorem}[section]
\newtheorem{thm}[theorem]{Theorem}

\newtheorem{crl}[theorem]{Corollary}

\newtheorem{lemma}[theorem]{Lemma}
\newtheorem{prop}[theorem]{Proposition}

\numberwithin{equation}{section}
	
\theoremstyle{remark}
\newtheorem{remark}[theorem]{Remark}

\newcommand{\nc}{\newcommand}

\nc{\ot}{\otimes}
\nc{\op}{\oplus}
\nc{\ol}{\overline}
\nc{\un}{\underline}

\nc{\mc}{\mathcal}
\nc{\ms}{\mathsf}
\nc{\mf}{\mathfrak}
\nc{\mb}{\mathbf}
\nc{\bb}{\mathbb}
\nc{\mr}{\mathscr}

\nc{\al}{\alpha}
\nc{\bet}{\beta}
\nc{\eps}{\epsilon}
\nc{\del}{\delta}
\nc{\ga}{\gamma}
\nc{\Ga}{\Gamma}
\nc{\ka}{\kappa}
\nc{\vka}{\varkappa}
\nc{\la}{\lambda}
\nc{\om}{\omega}
\nc{\si}{\sigma}
\nc{\Ups}{\upsilon}
\nc{\vphi}{\varphi}

\nc{\id}{\mathrm{id}}
\nc{\gr}{\mathrm{gr}}
\nc{\msf}[1]{\mathsf{#1}}
\nc{\diag}{\operatorname{diag}}

\nc{\Ug}{U\mathfrak{g}}
\nc{\Ub}{U\mathfrak{b}}

\nc{\Hk}{\mathsf{H}}
\nc{\ombH}{\overline{\mathbf{H}}}

\nc{\ud}{\underline}
\nc{\tl}{\tilde}
\nc{\wt}{\widetilde}
\nc{\wh}{\widehat}

\nc{\End}{\mathrm{End}}
\nc{\Ext}{\mathrm{Ext}}
\nc{\Hom}{\mathrm{Hom}}
\nc{\Ima}{\mathrm{Image}}
\nc{\Ind}{\mathrm{Ind}}
\nc{\Ker}{\mathrm{Ker}}
\nc{\RHom}{\mathrm{RHom}}
\nc{\Sym}{\mathrm{Sym}}

\nc{\C}{\mathbb{C}}
\nc{\N}{\mathbb{N}}
\nc{\Z}{\mathbb{Z}}

\nc\mfgl{\mathfrak{gl}}
\nc\mfg{\mathfrak{g}}
\nc\mfsl{\mathfrak{sl}}
\nc\mfsp{\mathfrak{sp}}
\nc\mfo{\mathfrak{o}}

\nc{\lan}{\langle}
\nc{\ran}{\rangle}

\nc{\tw}{{\sf tw}}

\nc{\equ}[1]{\begin{equation}#1\end{equation}}
\nc{\eqa}[1]{\begin{equation}\begin{alignedat}{50}#1\end{alignedat}\end{equation}}
\nc{\eqn}[1]{\begin{equation*}\begin{alignedat}{50}#1\end{alignedat}\end{equation*}}
\nc{\eqg}[1]{\begin{equation}\begin{gathered}#1\end{gathered}\end{equation}}

\nc{\ali}[1]{\begin{alignat}{50}#1\end{alignat}}
\nc{\als}[1]{\begin{subequations}\begin{alignat}{50}#1\end{alignat}\end{subequations}}
\nc{\aln}[1]{\begin{alignat*}{50}#1\end{alignat*}}

\nc{\gat}[1]{\begin{gather}#1\end{gather}}
\nc{\gas}[1]{\begin{subequations}\begin{gather}#1\end{gather}\end{subequations}}
\nc{\gan}[1]{\begin{gather*}#1\end{gather*}}

\nc\el{\nonumber\\}
\nc\nn{\nonumber}

\nc{\tx}[1]{\qu\text{#1}\qu}

\nc{\qu}{\quad}
\nc{\qq}{\qquad}

\nc{\iso}{\stackrel{\sim}{\longrightarrow}}
\nc{\into}{\hookrightarrow}
\nc{\onto}{\twoheadrightarrow}

\DeclareMathOperator{\tr}{tr}

\nc\red{\color{red}}
\nc\blu{\color{blue}}

\renewcommand{\,}{\kern 0.1em} 

\begin{document}

\title[Twisted Yangians of types BI, CI, DI]{Twisted Yangians of types BI, CI, DI \\ and Drinfeld type current relations}

\author{Vidas Regelskis}
\address{Department of Physics, Astronomy and Mathematics, University of Hertfordshire, 
Hatfield AL10 9AB, UK and
Institute of Theoretical Physics and Astronomy, Vilnius University, Saul\.etekio av.~3, Vilnius 10257, Lithuania.}
\email{vidas.regelskis@gmail.com}

\subjclass[2020]{Primary 17B37.}
\keywords{Drinfeld presentation, twisted Yangians, $\imath$quantum groups}

\begin{abstract}
We study twisted Yangians associated with the split symmetric pairs of types BI, CI and DI. We introduce a new presentation of these algebras, which we call the transposed presentation, governed by a twisted reflection equation that interacts naturally with the Gaussian decomposition of the generating matrix. Working entirely within the $R$-matrix presentation, we derive Drinfeld-type current presentations of the special twisted Yangian $SY^{\tw}(\mfg_N)$ and of the extended twisted Yangian $X^{\tw}(\mfg_N)$, in which the Serre relations are stated in a closed current form. Extracting coefficients recovers the Drinfeld presentation due to Lu. As a consequence, we establish the isomorphism between the $R$-matrix and Drinfeld presentations of these twisted Yangians conjectured by Lu, Wang and Zhang. As a byproduct, we obtain a tensor product decomposition of $X^{\tw}(\mfg_N)$ into the twisted Yangian in the Drinfeld presentation and a polynomial ring in countably many central variables. We also obtain Poincar\'e--Birkhoff--Witt bases in the Drinfeld generators and describe the coideal coproduct on the low Drinfeld modes.
\end{abstract}

\maketitle
\setcounter{tocdepth}{1}
\tableofcontents
\setcounter{tocdepth}{2}

\setlength{\medmuskip}{2.0mu plus 1.0mu minus 1.0mu} 


\section{Introduction} \label{sec:into}

The Yangian associated with a simple Lie algebra admits three essentially different presentations: the $R$-matrix (or RTT) presentation, built from a solution of the quantum Yang--Baxter equation, Drinfeld's original $J$-presentation, generated by the underlying Lie algebra together with a set of degree-one elements, and Drinfeld's new (current) presentation, given in terms of generating series of the Chevalley-type generators. For the classical Lie algebras of type A the $R$-matrix presentation goes back to \cite{MNO96}; for types B, C and D it was introduced in \cite{AMR06, Mo07}. An explicit isomorphism between the $R$-matrix and current presentations was obtained much later by Jing, Liu and Molev \cite{JLM18}, who applied a Gaussian decomposition to the generating matrix and read off the Drinfeld-type relations from the resulting matrix identities. 
The equivalence of these three presentations for the orthogonal and symplectic Yangians was established by Guay, Wendlandt and the author \cite{GRW19b}.
The present paper develops a counterpart of this circle of ideas for twisted Yangians.

Twisted Yangians are coideal subalgebras of Yangians attached to symmetric pairs. Building on Sklyanin's reflection equation \cite{Sk88}, they were introduced in the $R$-matrix presentation by Olshanski \cite{Ol92} for the symmetric pairs of types AI and AII, by Molev and Ragoucy \cite{MR02} for type AIII, and by Guay and the author \cite{GR16} for the remaining classical types B, C and D, the low-rank cases having been treated in \cite{GRW16}. Their representation theory was developed in \cite{MNO96, Mo07, GRW18, GRW19a}, and for general symmetric pairs a presentation in Drinfeld's original $J$-form is available through \cite{Ma02, BR17}. From a more recent viewpoint, twisted Yangians arise as classical (rational) limits of affine $\imath$quantum groups, the coideal subalgebras of affine quantum groups attached to quantum symmetric pairs; this places them within the broader programme of Drinfeld-type presentations for such coideal subalgebras, whose affine $\imath$-counterparts were obtained by Lu and Wang \cite{LW21} in split ADE type and by Zhang \cite{Zh22} in split BCFG type.

A Drinfeld-type presentation of twisted Yangians themselves has emerged only recently. For type AI it was obtained by Lu, Wang and Zhang \cite{LWZ25a} through a Gaussian decomposition of the $R$-matrix presentation, settling the orthogonal case. In parallel, the same authors \cite{LWZ25b} produced, for all split symmetric pairs, a family of current algebras by degenerating the Drinfeld presentations of the corresponding affine $\imath$quantum groups, and conjectured these to be isomorphic to the $R$-matrix twisted Yangians; the quasi-split types were subsequently treated by Lu and Zhang \cite{LZ25}. A minimalistic presentation, together with the coideal structure, was then described by Lu \cite{Lu26a}. These developments have already found applications, for instance to shifted twisted Yangians and finite W-algebras of classical type \cite{LPTTW25}.

Two points motivate the present work. First, outside type AI and quasi-split AIII the Drinfeld-type relations have been obtained by degeneration from the $q$-loop setting rather than directly inside the $R$-matrix presentation of the twisted Yangian, and the isomorphism between the two presentations conjectured in \cite{LWZ25b} remained to be established in types B, C and D. 
Second, the Drinfeld-type relations had not been cast in a fully closed \emph{current} form. In the minimalistic presentation of \cite{Lu26a} they appear in expanded coefficient form, while in the generating-function presentation of \cite[\S4.3]{LWZ25b} the Serre relations --- the most intricate part of the presentation --- still carry explicit lower-order correction terms; in both cases these terms make the relations cumbersome to use.

We address both points. Working entirely within the $R$-matrix presentation, we apply a Gaussian decomposition of the generating matrix, in the spirit of \cite{JLM18, LWZ25a}, and derive the Drinfeld-type relations --- including the Serre relations in closed current form --- directly for the split symmetric pairs \eqref{split-pairs} of types BI, CI and DI. We also reformulate current relations obtained in \cite[Remark~4.12]{LWZ25a}. Our relations may be compared directly with the generating-function presentation of \cite[\S4.3]{LWZ25b}: our \eqref{hb} corresponds to their (4.13), and our Serre relations \eqref{S1} and \eqref{S2} to their (4.16) and (4.17). In place of the lower-order correction terms that \cite{LWZ25b} spells out separately in each relation, our presentation is organised around a single building block $x_{ij}(v,u)$, defined in \eqref{xij}, which enters both Serre relations \eqref{S1} and \eqref{S2}, together with the symmetrising bracket $\{f(u)\}^u = f(u)+f(-u)$ of \eqref{u-symm}, which recurs throughout \eqref{hb}, \eqref{S1} and \eqref{S2}.

A key technical device is a new presentation of the twisted Yangians of types B, C and D, which we call the \emph{transposed presentation}. It is governed by a \emph{twisted} reflection equation \eqref{RE}, in contrast with the (non-twisted) reflection equation used in \cite{GR16}; the two presentations are related by the matrix substitution $S(u) \mapsto A\,S(u)\,J\,A^t$. The twisted reflection equation interacts more transparently with the Gaussian decomposition and is what makes the direct computation of the Drinfeld-type relations feasible.

Our main results are Theorem \ref{T:iso} and Corollaries \ref{C:LWZ} and \ref{C:iso-ext}. Theorem \ref{T:iso} establishes a Drinfeld-type current presentation of the special twisted Yangian $SY^{\tw}(\mfg_N)$ for the split symmetric pairs \eqref{split-pairs}; extracting coefficients from its defining identities recovers the Drinfeld presentation of \cite[Def.~2.7]{Lu26a}, and hence, upon rescaling the generators, that of \cite[Def.~4.1]{LWZ25b}, while a further specialisation yields the minimalistic presentation of \cite[Thm.~3.1]{Lu26a}. Corollary \ref{C:LWZ} identifies $SY^{\tw}(\mfg_N)$ with the twisted Yangian $\mc{Y}^{\imath}(\mfg_N)$ in the Drinfeld presentation of \cite{LWZ25b}, thereby confirming, in the split types BI, CI and DI, the isomorphism between the $R$-matrix and Drinfeld presentations conjectured therein. Corollary~\ref{C:iso-ext} gives the analogous current presentation of the extended twisted Yangian $X^{\tw}(\mfg_N)$; a byproduct of its proof is the tensor product decomposition $X^{\tw}(\mfg_N) \cong \mc{Y}^{\imath}(\mfg_N) \ot \C[\ms{t}_0, \ms{t}_1, \ldots]$ with a polynomial ring in countably many central variables, refining the decomposition \eqref{X=SYcz} obtained in \cite{GR16}. The relations themselves are established by largely structural arguments, via a reduction to low-rank subalgebras and quasi-determinantal embeddings, while injectivity rests on the Drinfeld presentation of $\mc{Y}^{\imath}(\mfg_N)$ due to Lu \cite{Lu26a} and its filtration and Poincar\'e--Birkhoff--Witt theorem \cite{LWZ25b}, through a comparison of associated graded algebras. Two further consequences are recorded at the end of \S\ref{sec:Dr}: Poincar\'e--Birkhoff--Witt bases of $SY^{\tw}(\mfg_N)$ and $X^{\tw}(\mfg_N)$ in the Drinfeld generators (Corollary \ref{C:PBW}), and the coideal coproduct on the low Drinfeld modes, including an explicit form of the cross-term entering $\Delta(h_{i,1})$ (Remark \ref{R:coideal} and Appendix \ref{app:coideal}). We expect these arguments to serve as a blueprint for obtaining Drinfeld current presentations of twisted Yangians attached to other symmetric pairs, as well as of twisted super-Yangians; a Gaussian-decomposition treatment of the latter in quasi-split type A has recently been carried out by Lu \cite{Lu26b}.

The paper is organised as follows. In \S\ref{sec:tY} we recall the twisted Yangians of types B, C and D, introduce their transposed presentation, and construct a family of quasi-determinantal embeddings. \S\ref{sec:gauss} sets up the Gaussian decomposition of the generating matrix and the associated reduced generating matrices. \S\ref{sec:low} treats the low-rank cases, which provide the base of the inductive arguments, and \S\ref{sec:gen} contains the general case, culminating in \S\ref{sec:Dr}, where the Drinfeld-type relations are assembled and the main results are proved. Four appendices collect the fundamental representation of $X^{\tw}(\mfg_N)$ (Appendix \ref{app:Rep}), the lengthy auxiliary relations and evaluated commutators (Appendices \ref{app:Aux} and \ref{app:Comms}) used in the proof of Proposition \ref{P:Serre}, and the coproducts of the low Drinfeld modes (Appendix \ref{app:coideal}).


\section{Twisted Yangians} \label{sec:tY}


\subsection{Lie algebras}

Choose $N \ge 2$ and write $N=2n$ or $N=2n+1$ according to whether $N$ is even or odd. Let $E_{ij}\in\End(\C^N)$ and $e_i\in\C^N$  denote the standard matrix units and the standard basis vectors of $\C^N$ so that $E_{ij} e_k = \del_{jk} e_i$; here $1\le i,j,k \le N$. We will denote linear functionals dual to $e_i$ by $e^*_i$. 

Matrix units $E_{ij}$ satisfy defining relations of the $\mfgl_N$ Lie algebra:
\equ{
[E_{ij}, E_{kl}] = \del_{jk} E_{il} - \del_{il} E_{kj}. \label{[E,E]}
}
The orthogonal Lie algebra $\mfo_N$ and the symplectic Lie algebra $\mfsp_N$ can be regarded as subalgebras of $\mfgl_N$ as follows. For any $1 \le i,j\le N$ set $\theta_{ij}:=\theta_i\theta_j$ with $\theta_i:=1$ in the orthogonal case and $\theta_i:=\del_{i>N/2}-\del_{i\le N/2}$ in the symplectic case.
Introduce elements $F_{ij} := E_{ij} - \theta_{ij} E_{\ol{\jmath}\,\ol{\imath}}$ with $\ol{\imath} := N-i+1$ and $\ol{\jmath} := N-j+1$. These elements satisfy relations 
\gat{ 
\label{[F,F]}
[F_{ij},F_{kl}] = \del_{jk} F_{il} - \del_{il} F_{kj} + \theta_{ij} (\del_{j \bar l} F_{k \ol{\imath}} - \del_{i \bar k} F_{\ol{\jmath}\, l}) ,
\\
\label{F+F=0}
F_{ij} + \theta_{ij} F_{\ol{\jmath}\,\ol{\imath}}=0,
}
which are the defining relations of $\mfo_N$ and $\mfsp_N$; we will denote both algebras by $\mfg_N$ and restrict to $N\ge 3$ in the orthogonal case. We set $\theta := \theta_{1}$ so that $\theta = 1$ in the orthogonal case and $\theta=-1$ in the symplectic case.


\subsection{Symmetric pairs}

We consider symmetric pairs $(\mfg_N, \mfg_N^\rho)$, where $\rho$ is an involution of $\mfg_N$ defined by $\rho(X) = -GX^t G^{-1}$ for a particular matrix $G$ and $\mfg_N^\rho$ denotes the $\rho$-fixed subalgebra of $\mfg_N$. Our choice of matrix $G$ is as follows.
\begin{itemize}\itemsep0.2em

\item BI:\hspace{0.8em} $(\mfg_N, \mfg_N^\rho) = (\mfo_{2n+1}, \mfo_{p} \op \mfo_{q})$ such that $p< q$. Then $G := \sum_{i=1}^{p} (E_{ii} + E_{\bar \imath \bar \imath}) + \sum_{i=p+1}^{N-p} E_{i \bar\imath}$.

\item DI:\hspace{0.78em} $(\mfg_N, \mfg_N^\rho) = (\mfo_{2n}, \mfo_{p} \op \mfo_{q})$ such that $p\le q$. We choose $G$ to be the same as above. \vspace{0.2em}

\item DIII: $(\mfg_N, \mfg_N^\rho) = (\mfo_{2n}, \mfgl_{n})$. Then $G := \sum_{i=1}^{n} (E_{i\bar\imath} - E_{\bar \imath i})$. \vspace{0.2em}

\item CI:\hspace{0.8em} $(\mfg_N, \mfg_N^\rho) = (\mfsp_{2n}, \mfgl_n)$. Then $G := \sum_{i=1}^{n} (E_{ii} - E_{\bar\imath\bar\imath})$.

\item CII:\hspace{0.49em} $(\mfg_N, \mfg_N^\rho) = (\mfsp_{2n}, \mfsp_{p} \op \mfsp_{q})$ such that $p\le q$. Then $G := \sum_{i=1}^{\frac{p}{2}} (E_{i \bar\imath} - E_{\bar \imath i}) - \sum_{i=\frac{p}{2}+1}^n (E_{i \bar\imath} - E_{\bar \imath i})$.

\end{itemize}
When $G$ is a diagonal matrix, $\rho$ is the Chevalley involution. The corresponding pairs
\equ{
\text{BI}:\;(\mf{o}_{2n+1}, \mf{o}_{n} \op \mf{o}_{n+1}) , \qq
\text{DI}:\;(\mf{o}_{2n}, \mf{o}_n \op \mf{o}_n) , \qq
\text{CI}:\;(\mf{sp}_{2n}, \mf{gl}_n) 
\label{split-pairs}
}
are split symmetric pairs and will be the primary focus of this paper. We will use the symbol $\om$ to denote the Chevalley involution.

\smallskip

Set $(\pm) := -1$ when $\mfg_N^\rho= \mfgl_n$ and $(\pm):=1$ otherwise. Then $G F_{ji} G^{-1} =(\pm)\, \theta \sum_{a,b=1}^N g_{ib}\, F_{ab}\, g_{aj}$, where $g_{ab}$ are matrix entries of $G$, and $G^2 = (\pm)\,\theta\,I$.
We define elements
\ali{
F^{\rho}_{ij} := \sum_{1\le a\le N} (F_{ia} - G F_{ai} G^{-1}) g_{aj} = \sum_{1\le a\le N} (F_{ia} g_{aj} - g_{ia} F_{ja} )
}
satisfying commutation relations
\ali{
[F^\rho_{ij}, F^\rho_{kl}] &= g_{kj} F^\rho_{il} - g_{il} F^\rho_{kj} - g_{ik} F^\rho_{jl} + g_{lj} F^\rho_{ki} \nn \\ 
& \qu + \sum_{1\le a\le N} \Big( \theta_a \theta_{\bar{k}}\, (\del_{\bar\jmath k}\, g_{ia} - \del_{\bar\imath k}\, g_{aj} ) F^\rho_{\bar{a} l} + \theta_a \theta_{\bar{l}} \, (\del_{\bar\jmath l}\, g_{ia} - \del_{\bar\imath l}\, g_{aj} ) F^\rho_{k\bar{a}} \Big) \label{grho-rels}
}
and symmetry relations
\equ{
F^{\rho}_{ij} + (\pm)\,\theta\,F^\rho_{ji} = 0 , \qq
\sum_{1\le a\le N} \theta_{aj} (F^{\rho}_{ia} g_{\bar a\bar\jmath} - g_{ia}F^\rho_{\bar a\bar\jmath}) = 0 .
 \label{grho-symm}
}
Relations \eqref{grho-rels} and \eqref{grho-symm} are defining relations of the $\rho$-fixed subalgebra $\mfg^\rho_N$.


\subsection{Matrix operators}

Introduce matrix operators
\equ{
\label{IPQ}
I := \sum_{1\le i,j\le N} E_{ii} \ot E_{jj} , \qq
P := \sum_{1\le i,j\le N} E_{ij} \ot E_{ji} , \qq
Q := \sum_{1\le i,j\le N} \theta_{ij} E_{\bar{\jmath}\,\bar{\imath}} \ot E_{ji} , \qu
}
and a matrix-valued function 
\equ{
R(u) := I - u^{-1} P - (\ka-u)^{-1}Q , 
\label{R(u)}
}
where $\ka = N/2 - \theta$ and $u$ is a formal parameter. It is a solution of the quantum Yang-Baxter equation in $\C^{N}\ot\C^{N}\ot \C^{N}$:
\equ{
R_{12}(u-v)\,R_{13}(u-z)\,R_{23}(v-z) = R_{23}(v-z)\,R_{13}(u-z)\,R_{12}(u-v). \label{YBE}
}
Here, and throughout the manuscript, the subscript notation indicates the tensor spaces the matrix operators act on.

Let $t$ denote matrix transposition, $(E_{ij})^{t} := E_{ji}$, and let $R'(u)$ denote the partially-transposed $R(u)$, that is $R'(u) = R^{t_1} (u)= R^{t_2} (u)$ (the second equality follows from \eqref{IPQ}) and let $J := \sum_{i} \theta_i E_{i\bar{\imath}}$. Then
\equ{
R'(\ka-u) = J_1 R(u) J^{-1}_1 = J_2 R(u) J^{-1}_2 . \label{R=JRtJ}
}
Let $A \in \End(\C^N)$ be an orthogonal matrix. Then
\equ{
A_1 A_2 R(u) A^t_1 A^t_2 = R(u) . \label{AARAA}
}
We note two more identities,
\equ{
R(1) \, R(1) = 2 R(1) + \begin{cases} \frac{8(N-2)}{(N-4)^2}\,Q,  \\ 0 ,  \end{cases} \qu
Q R(1) = \begin{cases} \frac{N}{1-\ka}\,Q & \text{in the orthogonal case,} \\ 0 & \text{in the symplectic case,} \end{cases}
\label{RR=R}
}
that will be useful in later sections. In the orthogonal case the right-hand sides above are non-singular precisely when $N\ne4$, equivalently $\mfg_N\ne\mfo_4$ (i.e. $\ka\ne1$); the exceptional case $\mfg_N=\mfo_4$ is treated separately in Section~\ref{sec:low}.

\smallskip

\enlargethispage{0.5em}

Define a matrix-valued function 
\ali{
G(u) := 
\begin{cases} 
\dfrac{(p-q)J + 4 u\,G}{p-q+4u}  &  \text{for types BI, DI, CII,} \\
G & \text{otherwise}.
\end{cases}
}
It is a solution of the twisted reflection equation in $\C^N \ot \C^N$:
\equ{
R(u-v)\,G_1(u)\,R'(\ka-u-v)\,G_2(v) = G_2(v)\,R'(\ka-u-v)\,G_1(u)\,R(u-v)
}
and the symmetry relation in $\C^N$:
\equ{
G^t(u) = (\pm)\,\theta\, G(\ka-u) + \frac{G(u)-G(\ka-u)}{2u-\ka} + \frac{\tr(G(u)J)\,(G(\ka-u)-J)}{2u-2\ka}
}
and satisfies the unitarity relation
\equ{
G(u)\,J\,G(-u)\,J = I
}
where $I \in \End(\C^N)$ is the identity matrix. Moreover, $\tr (G(u) J) = \theta\frac{(p-q)(p+q-4u)}{p-q+4u}$ for types BI, DI, CII and $\tr(G(u)J) = 0$ otherwise.
%


\subsection{Extended twisted Yangian}

The extended twisted Yangian $X^\tw(\mfg_N,G)$ is an associative unital $\C$-algebra with countably many generators $s_{ij}^{(r)}$ with $1\le i, j \le N$ and $r\in \N$ satisfying a family of relations that can be presented as follows. Introduce formal series
\equ{
s_{ij}(u) = g_{ij} + \sum_{r\ge 1} s^{(r)}_{ij} u^{-r} \in X^\tw(\mfg_N,G) [[u^{-1}]]
\label{sij(u)}
}
and combine them into the generating matrix $S(u) = \sum_{i,j=1}^N E_{ij} \ot s_{ij}(u)$. Defining relations of $X^{\tw}(\mfg_N,G)$ in its {\it transposed presentation} are then given by the twisted reflection equation
\equ{
R(u-v)\,S_1(u)\,R'(\ka-u-v)\,S_2(v) = S_2(v)\,R'(\ka-u-v)\,S_1(u)\,R(u-v)  . \label{RE}
}
In particular,
\ali{ \label{[s,s]}
[\,s_{ij}(u),s_{kl}(v)] &= \frac{1}{u-v}\Big(s_{kj}(u)\,s_{il}(v)-s_{kj}(v)\,s_{il}(u)\Big) \el
{}&\qu + \frac{1}{u+v} \sum_{1\le a\le N} \Big(\delta_{k\bar{\jmath}}\,\theta_{aj}\, s_{ia}(u)\,s_{\bar{a}l}(v)-
\delta_{l\bar{\imath}}\,\theta_{ai}\, s_{k\bar{a}}(v)\,s_{aj}(u)\Big) \el
{}&\qu - \frac{1}{u^2-v^2} \sum_{1\le a\le N} \delta_{\bar{\imath}j} \Big(\theta_{aj}\,s_{ka}(u)\,s_{\bar{a}l}(v) - \theta_{ai}\, s_{k\bar{a}}(v)\,s_{al}(u)\Big) \el	
{}&\qu - \frac{1}{u-v-\ka} \sum_{1\le a\le N} \Big( \delta_{k\bar{\imath}}\,\theta_{ai}\, s_{aj}(u)\, s_{\bar{a}l}(v) - \delta_{l\bar{\jmath}} \,\theta_{aj}\, s_{k\bar{a}}(v)\, s_{ia}(u) \Big) \el
{}&\qu - \frac{1}{u+v-\ka}\, \Big( s_{ik}(u)\, s_{jl}(v) - s_{ki}(v)\, s_{lj}(u) \Big) \nn \el
{}&\qu + \frac{1}{(u+v) (u-v-\ka)} \sum_{1\le a\le N} \Big( \delta_{k\bar{\imath}}\,\theta_{ai}\,s_{ja}(u)\, s_{\bar{a}l}(v) - \delta_{l\bar{\jmath}}\,\theta_{aj}\,s_{k\bar{a}}(v)\, s_{ai}(u) \Big) \el
{}&\qu + \frac{1}{(u-v) (u+v-\ka)}\, \Big( s_{ki}(u)\, s_{jl}(v) - s_{ki}(v)\, s_{jl}(u) \Big) \el
{}&\qu - \frac{1}{(u-v-\ka) (u+v-\ka)} \sum_{1\le a\le N} \Big( \delta_{k\bar{\imath}}\,\theta_{ak} s_{\bar{a} a}(u)\, s_{jl}(v) - \delta_{l\bar{\jmath}}\,\theta_{al} \,s_{ki}(v)\, s_{\bar{a} a}(u) \Big) .
}
Relations \eqref{R=JRtJ} and \eqref{AARAA} imply that the mapping $S(u) \mapsto A S(u)J A^t$ for a suitable choice of matrix $A$ defines an isomorphism with the presentation of $X^\tw(\mfg_N,G)$ given in~\cite{GR16}. 
%


\subsection{Special twisted Yangian}

The special twisted Yangian $SY^\tw(\mfg_N,G)$ is obtained by taking the quotient of $X^\tw(\mfg_N,G)$ by the two-sided ideal generated by the symmetry relation (see \cite[\S4]{GR16})
\ali{ \label{sym}
S^t(u) = (\pm)\,\theta S(\ka-u) + \frac{S(u)-S(\ka-u)}{2u-\ka} + \frac{\tr(G(u)J)\,S(\ka-u) - \tr(S(u)J)J }{2u-2\ka} 
}
and the unitarity relation
\equ{
S(u)\,JS(-u)J = I. \label{SS=wI}
}
In terms of generating series, symmetry relation \eqref{sym} reads as
\ali{ \label{symij}
s_{ji}(u) = (\pm)\,\theta s_{ij}(\ka-u) + \frac{s_{ij}(u)-s_{ij}(\ka-u)}{2u-\ka} + \frac{\sum_{1\le a\le N} ( \theta_a\, g_{\bar{a}a}(u)\,s_{ij}(\ka-u) - \theta_{ia}\,\del_{i\bar{\jmath}}\, s_{\bar{a}a}(u)) }{2u-2\ka} 
}
and the unitarity relation \eqref{SS=wI} reads as
\equ{
\sum_{1\le a\le N} \theta_{aj}\, s_{i a}(u)\, s_{\bar a \bar\jmath}(-u) = \del_{ij} .  \label{unitij}
}

The special twisted Yangian $SY^\tw(\mfg_N,G)$ can be viewed as a subalgebra of the Yangian $Y(\mfg_N)$, which is itself a subalgebra of the extended Yangian $X(\mfg_N)$; both defined in \cite{AMR06}. Here $T(u) = \sum_{i,j=1}^N E_{ij} \ot t_{ij}(u)$ denotes the generating matrix of $Y(\mfg_N)$, that is, the generating matrix of $X(\mfg_N)$ suitably normalised by a central series, see \cite{AMR06}; this normalisation ensures that the image of $S(u)$ under the embedding below satisfies the unitarity relation \eqref{SS=wI}. The corresponding embedding is
\equ{
S(u) \mapsto T(u - \ka/2)\,G(u)\,T^t(\ka/2-u) , \label{S->TGT}
}
cf.~\cite[Thm.~3.1]{GR16}. Note that \eqref{S->TGT} involves the ordinary matrix transpose, with no sign factors $\theta_{ij}$, in contrast with the twisted transposition of {\it loc.~cit.}: this is the transposed-presentation form of the embedding, and it is the form that reappears in the fundamental representation of Lemma \ref{L:rep} and in the coideal coproduct \eqref{coid-sandwich}. The matrix $G(u)$ reduces to the constant matrix $G$ for the split pairs of types CI and DI, while for split BI it retains its rational form.


\subsection{The associated graded algebra}

Introduce an ascending filtration on $SY^\tw(\mfg_N,G)$ by assigning $\deg s^{(r)}_{ij} = r-1$ for all $r\ge 1$. Denote by $\bar s^{(r)}_{ij}$ the image of $s^{(r)}_{ij}$ in the $(r\!-\!1)$-th component of the associated graded algebra $\gr SY^\tw(\mfg_N,G)$. Then
\ali{
[\bar{s}^{(r)}_{ij}, \bar{s}^{(q)}_{kl}] &= g_{kj} \bar{s}^{(r+q-1)}_{il} - g_{il} \bar{s}^{(r+q-1)}_{kj} + (-1)^r \Big( g_{ik} \bar{s}^{(r+q-1)}_{jl} - g_{lj} \bar{s}^{(r+q-1)}_{ki} \Big) \nn\\ 
& \qu + \sum_{1\le a\le N} \Big( \theta_{l\bar{a}}\big( \del_{\bar\jmath\, l}\,  g_{ia}  + (-1)^r \del_{\bar\imath l} \, g_{aj} \big) \bar{s}^{(r+q-1)}_{k\bar{a}} - \theta_{k\bar{a}} \big( \del_{\bar\imath k} \, g_{aj} + (-1)^r \del_{\bar\jmath k}\, g_{ia} \big) \bar{s}^{(r+q-1)}_{\bar{a}l}  \Big) .
}
and
\equ{
\bar{s}_{ji}^{(r)} - (-1)^r (\pm)\,\theta\,\bar{s}_{ij}^{(r)} = \begin{cases} 
\del_{r1}  \frac{q-p}{2} \big( g_{ij} - \theta_{j} \del_{i\bar\jmath}\big) & \text{for types BI, DI and CII}, \\
0 & \text{otherwise,}
\end{cases} 
}
and
\equ{
\sum_{1\le a\le N} \theta_{aj} \Big(\bar{s}_{ia}^{(r)}g_{\bar a \bar\jmath} + (-1)^{r} g_{ia} \bar{s}_{\bar a\bar\jmath}^{(r)} \Big) = 0 .
}
Here we have used $g_{ij} = (\pm)\,\theta{}g_{ji}$ and $\sum_{a=1}^N \theta_a g_{a\bar a} = q-p$ for types BI, DI, CII and $\sum_{a=1}^N \theta_a g_{a\bar a} = 0$ otherwise.

Let $U(\mfg_N[x])$ denote the universal enveloping algebra of the current Lie algebra $\mfg_N[x]$ generated by elements $F_{ij}\, x^r$ with $1\le i,j \le N$ and $r\ge 0$, and let $U(\mfg_N[x]^\rho)$ denote the subalgebra of $U(\mfg_N[x])$ generated by elements
\equ{
F^{(\rho,r)}_{ij} := \sum_{1\le a\le N} (F_{ia} g_{aj} - (-1)^r\,g_{ia} F_{ja})\,x^r . \label{Frr}
}
Then the mapping
\equ{
\bar{s}^{(r)}_{ij} \mapsto F^{(\rho,r-1)}_{ij} + \ga^{(r-1)}_{ij} \qu\text{where}\qu \ga^{(r)}_{ij} := \begin{cases}  
\theta\, \del_{r0} \frac{p-q}{4} \big( g_{ij} - \theta_{j} \del_{i\bar\jmath}\big) & \text{for types BI, DI, CII}, \\
0 & \text{otherwise} 
\end{cases} 
\label{SY-graded-iso}
}
defines an isomorphism $\gr SY^\tw(\mfg_N,G) \cong U(\mfg_N[x]^\rho)$.

Let the associated graded algebra $\gr X^\tw(\mfg_N,G)$ of the extended twisted Yangian $X^\tw(\mfg_N,G)$ be defined analogously to that of $SY^\tw(\mfg_N,G)$. By \cite[Cor.~5.3]{GR16}, $\gr X^\tw(\mfg_N,G) \cong U(\mfg_N[x]^\rho) \ot \C[\zeta_0, \zeta_1, \ldots]$, where $\C[\zeta_0, \zeta_1, \ldots]$ is the algebra of polynomials in infinitely many variables. The isomorphism of algebras is given by the mapping
\equ{
\bar{s}^{(r)}_{ij} \mapsto F^{(\rho,r-1)}_{ij} + \tfrac12 \,\theta\, g_{ij}\, \zeta_{r-1} + \ga^{(r-1)}_{ij}. \label{X-graded-iso}
}


\subsection{Linear basis} \label{sec:basis}

We adapt Theorem 3.2 and Corollary 5.3 in \cite{GR16}. Given any total ordering on the set of elements $s_{ij}^{(r)}$, a linear basis of $SY^\tw(\mfg_N,G)$ is provided by ordered monomials in the following elements:

\begin{itemize}

\item BI, DI: $s_{ij}^{(2r-1)}$ with $i< \bar\jmath$, $j\le p$ or $\bar\imath,j>p$ and $s_{ij}^{(2r)}$ with $i< \bar\jmath$, $j\le p$. \vspace{0.2em}

\item DIII: \hspace{0.6em} $s_{ij}^{(2r-1)}$ with $j\le n < i$ and $s_{ij}^{(2r)}$ with $i\le n$ or $j> n$. \vspace{0.2em}

\item CI: \hspace{1.4em} $s_{ij}^{(2r-1)}$, $s_{ij}^{(2r)}$ with $i\le \bar\jmath$. \vspace{0.2em}

\item CII:\hspace{1.32em} $s_{ij}^{(2r-1)}$ with $\bar\imath, j\!\le\! \tfrac{p}{2}$ or $\bar\imath, j \!>\! \tfrac{p}{2}$ or $i\!\le\! \tfrac{p}{2}$ or $\bar\jmath \!\le\! \tfrac{p}{2}$ and 
$s_{ij}^{(2r)}$ with $j\!\le\! \tfrac{p}{2} \!<\! i \!< \!\ol{\,\tfrac{p}{2}\,}\!$ or $\tfrac{p}{2} \!<\! j \!< \!\ol{\,\tfrac{p}{2}\,}\! \le i$. 

\end{itemize}
Here $r\ge1$. Moreover, $i>j$ for $s^{(2r-1)}_{ij}$ and $i\ge j$ for $s_{ij}^{(2r)}$ in all cases except CII and DIII, for which $i\ge j$ for $s^{(2r-1)}_{ij}$ and $i> j$ for $s_{ij}^{(2r)}$.

\enlargethispage{0.5em}

In the remaining parts of this paper we will focus on special and extended twisted Yangians for split symmetric pairs \eqref{split-pairs}, which we denote by $SY^\tw(\mfg_N)$ and $X^\tw(\mfg_N)$, respectively. A linear basis of $SY^\tw(\mfg_N)$ is given by ordered monomials in the elements 
\equ{
\label{SY-basis}
s^{(r)}_{ij} \;\text{ with } \; j < i < \bar\jmath + \tfrac{1-\theta}{2} \;\text{ and }\; s^{(2r)}_{ii}  \;\text{ with }\; 1\le i \le n \;\text{ and }\; r\ge 1.
}
For $X^\tw(\mfg_N)$ a linear basis is given by ordered monomials in the elements 
\equ{
\label{X-basis}
s^{(r)}_{ij} \;\text{ with } \; j < i < \bar\jmath + \tfrac{1-\theta}{2} \;\text{ and }\; s^{(r)}_{11} \;\text{ and }\; s^{(2r)}_{ii} \;\text{ with }\; 1< i \le n+1 \;\text{ and }\; r\ge 1.
}


\subsection{Embeddings}

In this subsection we obtain a family of embeddings, for $1\le m < n$,
\[
\psi^{(N)}_m : X^\tw(\mfg_{N-2m}) \to X^\tw(\mfg_N) .
\]
We will follow the approaches in \cite{JLM18} and \cite{LZ25} with suitable modifications.

Let $X$ be an invertible square matrix over a unital ring. 
The $(i,j)$-th {\it quasi-determinant} of $X$ is defined by the formula below and denoted graphically by the boxed notation (see~\cite[\S1.10]{Mo07}):
\equ{
|X|_{ij} := \big((X^{-1})_{ji}\big)^{-1} = \begin{vmatrix}
x_{11} & \cdots & x_{1j} & \cdots & x_{1N} \\
\cdots & \cdots & \cdots & \cdots & \cdots \\
x_{i1} & \cdots & \boxed{x_{ij}} & \cdots & x_{iN} \\
\cdots & \cdots & \cdots & \cdots & \cdots\\
x_{N1} & \cdots & x_{Nj} & \cdots & x_{NN} \\
\end{vmatrix} .
}

For split symmetric pairs \eqref{split-pairs} the matrix $G$ is diagonal, $G = \sum_{a=1}^N \theta_{\bar a} E_{aa}$, allowing the diagonal series $s_{ii}(u)$ to be inverted. This is a crucial property that we will exploit. For $1< i,j<N$ set 
\ali{
\si_{ij}(u) &:= \begin{vmatrix}
s_{11}(u) & s_{1j}(u) \\
s_{i1}(u) & \boxed{s_{ij}(u)}
\end{vmatrix} = s_{ij}(u) - s_{i1}(u)\,s_{11}(u)^{-1} s_{1j}(u)
\label{si(u)}
\intertext{and}
\mr{S}(u) & \;= \sum_{1\le a_i,b_i\le N} E_{a_1 b_1} \ot E_{a_2 b_2} \ot  \mr{S}^{a_1 a_2}_{b_1 b_2}(u) \nn \\
& := R(1)\,S_1(u+1)\,R'(\ka-2u-1)\,S_2(u) = S_2(u)\,R'(\ka-2u-1)\,S_1(u+1)\,R(1) .
\label{SS(u)}
}

\begin{lemma} \label{L:SS(u)}
For $1<i,j<N$ we have
\begin{flalign*}
\qq (1)\qu & [s_{11}(u) , \si_{ij}(v)] = 0 , & \\ 
\qq (2)\qu & \si_{ij}(u) = s^{-1}_{11}(u+1)\, \mr{S}^{1i}_{1j}(u).& 
\end{flalign*}
\end{lemma}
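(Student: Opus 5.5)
Part (2) comes first; part (1) then follows from it. The strategy adapts the quasi-determinant/fusion argument of \cite{JLM18} (see also \cite{LZ25}) to the twisted reflection equation \eqref{RE}.

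To prove (2), we compute the matrix entry $\mr{S}^{1i}_{1j}(u)$ from the first expression in \eqref{SS(u)},
\[
\mr{S}(u) = R(1)\,S_1(u+1)\,R'(\ka-2u-1)\,S_2(u).
\]
Expand $R(1)=I-P-(\ka-1)^{-1}Q$ and $R'(\ka-2u-1)=I-(\ka-2u-1)^{-1}P^{t}-(2u+1)^{-1}Q^{t}$, where $P^t=Q$ up to signs $\theta$. For the entry with upper indices $(1,i)$ and lower indices $(1,j)$, $1<i,j<N$, the operators $P$ and $Q$ from $R(1)$ on the left act on the row indices $(1,i)$. Since $i\ne 1$ and $i\ne\bar 1=N$, the $Q$-term vanishes on this row pair. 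The entry therefore becomes
\[
\big(S_1(u+1)R'S_2(u)\big)^{1i}_{1j}-\big(S_1(u+1)R'S_2(u)\big)^{i1}_{1j}.
\]
Each term expands into a short sum over $s_{\cdot\cdot}(u+1)s_{\cdot\cdot}(u)$ plus contraction terms coming from $R'$. The contractions carry a factor $\sum_a\theta_a s_{1a}(u+1)s_{\bar a j}(u)$ or a similar sum. Using the defining relations \eqref{[s,s]} evaluated at $v=u$ and $u\mapsto u+1$, together with the second (equal) expression for $\mr{S}(u)$ in \eqref{SS(u)}, these contraction terms should cancel pairwise between the two summands. What remains is
\[
\mr{S}^{1i}_{1j}(u)=s_{11}(u+1)s_{ij}(u)-s_{i1}(u+1)s_{1j}(u).
\]
Finally, we apply \eqref{[s,s]} to the pair $(s_{11}(u+1),s_{i1}(u))$ at the special point $u-v=1$. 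There the $\frac{1}{u-v}$ term becomes a genuine exchange identity, of the form $s_{i1}(u+1)s_{11}(u)=s_{11}(u+1)s_{i1}(u)+(\text{terms that cancel})$. This rewrites the expression as $s_{11}(u+1)\big(s_{ij}(u)-s_{i1}(u)s_{11}(u)^{-1}s_{1j}(u)\big)=s_{11}(u+1)\si_{ij}(u)$, which is (2).

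For (1), we use the fusion property. The element $\mr{S}(u)$ has a clean commutation relation with $S_0(v)$, obtained by applying \eqref{RE} twice together with the Yang--Baxter equation \eqref{YBE}. Since $R(1)$ is, up to the $Q$-correction, a rank projector (see \eqref{RR=R}), the entries $\mr{S}^{1i}_{1j}$ behave as antisymmetrised $2\times2$ minors. We then extract the $(1,1)$ entry of that relation for $S_0(v)$, with $i,j\neq1,N$. The $R$-matrix factors act on the index $1$ in both the $0$-space and the first fused space, so they produce the same scalar multiples on both sides. We expect this to give $s_{11}(v)\mr{S}^{1i}_{1j}(u)=\mr{S}^{1i}_{1j}(u)s_{11}(v)$, using the standard argument that the commutator $[s_{11}(v),\mr{S}^{1i}_{1j}(u)]$ is a combination of terms killed by the antisymmetrisation. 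Since $s_{11}(v)$ and $s_{11}(u+1)$ commute (take $i=j=k=l=1$ in \eqref{[s,s]}; for $1\ne\bar1$ only the symmetric terms survive, and they cancel), (1) then follows from (2).

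The main obstacle is the bookkeeping in step (2). The twisted $R'$ and $Q$ terms generate contraction sums $\sum_a\theta_a s_{\cdot a}s_{\bar a\cdot}$, and we must show that these cancel, or are absorbed via the equality of the two expressions in \eqref{SS(u)}. We also need to treat the orthogonal $N=4$ case separately, where $\ka=1$ and $R(1)$ is singular.
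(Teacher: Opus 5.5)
\textbf{Part (2): the matrix entry is miscomputed.} The claim that $P^t=Q$ up to signs $\theta$ is false. By \eqref{R=JRtJ} one has $P^t=J_1QJ_1^{-1}$, which involves the index involution $a\mapsto\bar a$. As a result the bookkeeping is the opposite of what you describe:
\begin{itemize}
\item The $Q$-parts of $R(1)$ and of $R'(\ka-2u-1)$ vanish identically on this entry, since they require $\bar\imath=1$ or $\bar 1=1$. So there are no contraction sums to cancel, and the second expression in \eqref{SS(u)} is not needed.
\item The $P^t$-part of $R'(\ka-2u-1)$ is not a contraction, and it survives. It contributes $\frac{1}{2u-\ka+1}\big(s_{1i}(u+1)-s_{i1}(u+1)\big)s_{1j}(u)$.
\end{itemize}
The correct entry is therefore
\[
\mr S^{1i}_{1j}(u)=s_{11}(u+1)\,s_{ij}(u)-\Big(\frac{2u-\ka+2}{2u-\ka+1}\,s_{i1}(u+1)-\frac{1}{2u-\ka+1}\,s_{1i}(u+1)\Big)s_{1j}(u),
\]
not $s_{11}(u+1)s_{ij}(u)-s_{i1}(u+1)s_{1j}(u)$.

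Your exchange identity is also wrong. Relation \eqref{[s,s]} at $u-v=1$ does not give $s_{i1}(u+1)s_{11}(u)=s_{11}(u+1)s_{i1}(u)$ modulo cancelling terms. It gives \eqref{s11-si1}, whose extra terms are exactly the bracket displayed above. Your two errors compensate each other, which is why your conclusion is correct. Taken literally, however, the argument is inconsistent: combining your formula for $\mr S^{1i}_{1j}(u)$ with the true exchange relation does not yield $s_{11}(u+1)\si_{ij}(u)$.

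With the correct entry, (2) is immediate: multiply \eqref{s11-si1} on the right by $s_{11}^{-1}(u)s_{1j}(u)$. This is the paper's argument via \eqref{si(u)-1}. Your $\mfo_4$ caveat is also unnecessary here, because the singular $Q$-part of $R(1)$ drops out of this entry.

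\textbf{Part (1): the key step is missing.} Reducing (1) to (2) together with $[s_{11}(v),\mr S^{1i}_{1j}(u)]=0$ and $[s_{11}(v),s_{11}(u+1)]=0$ is logically fine, and the second commutation is correct. The first, however, is only announced ("we expect"), and the reason you give for it is wrong. The vectors $e_1\otimes e_1\otimes e_i$ and $e_1\otimes e_1\otimes e_j$ are not eigenvectors of the $R$- and $R'$-factors in the fused relation:
\begin{itemize}
\item The $P$-part of the factor $R_{02}$ exchanges the indices $1$ and $i$ between spaces $0$ and $2$.
\item The $P^t$-parts of the $R'$-factors, which sit between $S_0(v)$ and $\mr S_{12}(u)$, send $e_1\otimes e_1$ to $\sum_k e_k\otimes e_k$, so intermediate indices are summed over.
\end{itemize}
Making your route work would require projector identities for the image of $R_{12}(1)$, in the spirit of \eqref{RRRR1}--\eqref{RRRR4} but adapted to a configuration with one unfused copy $S_0(v)$. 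That is the entire content of the step, and it is absent.

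The paper avoids fusion altogether. Using \eqref{s11-si1} and \eqref{si1(u)}, it rewrites $\si_{ij}(u)$ as \eqref{si(u)-sim}, which involves only $s_{11}$, $s_{1i}$, $s_{1j}$ and $s_{ij}$. It then checks $[s_{11}(u),\si_{ij}(v)]=0$ directly from \eqref{s11-sij}, \eqref{s11-s1i} and $[s_{11}(u),s_{11}(v)]=0$.
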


\begin{proof} (1)
Set $i=j=l=1$ and rename $k\to i$ in \eqref{[s,s]}. Then substitute $u\to u+1$ and $v\to u$. For $1< i < N$ we obtain
\ali{
s_{11}(u+1)\, s_{i1}(u) = \bigg( \frac{2u-\ka+2}{2u-\ka+1}\, s_{i1}(u+1) - \frac{1}{2u-\ka+1}\, s_{1i}(u+1)\bigg) s_{11}(u) .  
\label{s11-si1}
}
This allows us to rewrite \eqref{si(u)} as
\ali{
\si_{ij}(u) &= s_{ij}(u) - s_{11}^{-1}(u+1) s_{11}(u+1) s_{i1}(u)\,s^{-1}_{11}(u) s_{1j}(u) \nn\\ & = s_{ij}(u) - s^{-1}_{11}(u+1)  \bigg( \frac{2u-\ka+2}{2u-\ka+1}\, s_{i1}(u+1) - \frac{1}{2u-\ka+1}\, s_{1i}(u+1) \bigg)\, s_{1j}(u) .
\label{si(u)-1}
}
Next, set $i=j=k=1$ and rename $l\to i$ in \eqref{[s,s]}. Then solve the resulting expression for $s_{i1}(u)$ and substitute $v\to \ka-u$. This gives
\equ{
s_{i1}(u) = \frac{s_{1i}(u) + (2u-\ka-1)\,s^{-1}_{11}(\ka-u)\, s_{11}(u)\, s_{1i}(\ka-u)}{2u-\ka} .
\label{si1(u)}
}
Substituting \eqref{si1(u)} into \eqref{si(u)-1} we obtain
\equ{
\si_{ij}(u) = s_{ij}(u)-s^{-1}_{11}(\ka-u-1)\, s_{1i}(\ka-u-1)\, s_{1j}(u) . 
\label{si(u)-sim}
}
To get the wanted result we need to compute commutators of the form $[s_{11}(u), s_{1i}(v)]$ and $[s_{11}(u), s_{ij}(v)]$.
Set $i=j=1$ and rename $k\to i$, $l\to j$ in \eqref{[s,s]}. This gives
\ali{
(u+v-\ka) \, [s_{11}(u), s_{ij}(v)] &= s_{i1}(v)\, s_{j1}(u) - s_{1i}(u)\, s_{1j}(v) \nn\\ & \qu + \frac{u+v-\ka+1}{u-v}\,(s_{i1}(u)\, s_{1j}(v) - s_{i1}(v)\, s_{1j}(u)) .
\label{s11-sij}
}
Set $i=j=k=1$ and rename $l\to i$ in \eqref{[s,s]}. Then
\ali{
(u+v-\ka)\,[s_{11}(u),s_{1i}(v)] &= s_{11}(v)\, s_{i1}(u) + \frac{2v-\ka+1}{u-v}\,s_{11}(u)\,s_{1i}(v) - \frac{u+v-\ka+1}{u-v} \, s_{11}(v) \,s_{1i}(u) . \label{s11-s1i}
}
Applying \eqref{si(u)-sim}, \eqref{s11-sij}, \eqref{s11-s1i} and $[s_{11}(u) , s_{11}(v)]=0$ to $[s_{11}(u) , \si_{ij}(v)]$ yields the wanted result.

\medskip

\noindent (2) This follows by evaluating $(e^*_1 \ot e^*_i) \,( \mr{S}(u)\, e_1 \ot e_j )$ and comparing the result with \eqref{si(u)-1}. 
\end{proof}

The following proposition underpins the rest of the paper.

\begin{prop} \label{P:emb1}
The mapping 
\equ{
\psi^{(N)}_1 \;:\; X^{\tw}(\mfg_{N-2}) \to X^{\tw}(\mfg_{N}) , \qu s_{ij}(u) \mapsto \si_{i+1,j+1}(u)
\label{psi_1}
}
defines an injective homomorphism of extended twisted Yangians for split symmetric pairs.
\end{prop}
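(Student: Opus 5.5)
Two things need to be shown: that the series $\si_{i+1,j+1}(u)$ satisfy the twisted reflection equation \eqref{RE} for $\mfg_{N-2}$, and that the resulting homomorphism is injective. For the homomorphism property I would follow \cite{JLM18, LZ25} and work with the fused operator $\mr{S}(u)$ of \eqref{SS(u)}.

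First consider the tensor product of four copies of $\C^N$ and the operator $R(1)\,S_1(u+1)\,R'(\ka-2u-1)\,S_2(u)$ in spaces $1,2$, together with an analogous copy in spaces $3,4$ at the point $v$. Repeated use of \eqref{RE} and the Yang--Baxter equation \eqref{YBE}, with its partially transposed versions obtained via \eqref{R=JRtJ}, gives a fused reflection equation. Schematically it reads
\[
R_{[12][34]}(u-v)\,\mr{S}_{12}(u)\,R'_{[12][34]}(\ka-u-v)\,\mr{S}_{34}(v) = \mr{S}_{34}(v)\,R'_{[12][34]}(\ka-u-v)\,\mr{S}_{12}(u)\,R_{[12][34]}(u-v),
\]
where the fused $R$-matrices are products of four elementary $R$-matrices, each sandwiched by the projector-like factors $R(1)$ and $R_{34}(1)$. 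The identities \eqref{RR=R} let us treat $R(1)$ as (a multiple of) an idempotent, up to a $Q$-correction. In the orthogonal case $N\neq4$ this is exactly where the nonsingularity remark after \eqref{RR=R} is used.

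Next, apply the functional $e^*_1\ot e^*_{i+1}\ot e^*_1\ot e^*_{k+1}$ on the left and the vectors $e_1\ot e_{j+1}\ot e_1\ot e_{l+1}$ on the right, with $1\le i,j,k,l\le N-2$. The entries $\mr{S}^{1a}_{1b}(u)$ with $1<a,b<N$ are, by Lemma~\ref{L:SS(u)}(2), equal to $s_{11}(u+1)\,\si_{ab}(u)$. By Lemma~\ref{L:SS(u)}(1), $s_{11}(u+1)$ commutes with every $\si$, so the scalar factors $s_{11}(u+1)s_{11}(v+1)$ pull out and cancel. What remains are the matrix coefficients of the fused $R$-matrices restricted to indices in $\{2,\dots,N-1\}$ in the second and fourth slots.

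The main obstacle is checking that these restricted coefficients reproduce exactly $R^{[N-2]}(u-v)$ and $R'^{[N-2]}(\ka-u-v)$, with $\ka$ replaced by $\ka-1$ for $\mfg_{N-2}$. Unwanted terms, namely those with an intermediate index $1$ or $N$ passing through $Q$, must either vanish or cancel. The shift $\ka\to\ka-1$ is precisely compensated by the shift $u\to u+1$ in the definition of $\mr{S}$. I would first verify this on the explicit coefficients of $Q$, using $\ol{\imath+1}=\ol{\imath}^{[N-2]}+1$ and $\theta_{i+1}=\theta_i^{[N-2]}$, which holds because the sign pattern is symmetric about the middle.

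If this direct route becomes unwieldy, the fallback is to verify relation \eqref{[s,s]} for the $\si$'s componentwise, using \eqref{si(u)-sim}, \eqref{s11-sij} and \eqref{s11-s1i}. The constant term is also consistent: $\si_{i+1,j+1}(u)$ has leading coefficient $g_{i+1,j+1}=\theta_{\ol{\jmath}}\del_{ij}$, which matches the diagonal $G$ of $\mfg_{N-2}$.

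For injectivity I would pass to associated graded algebras. The filtration $\deg s^{(r)}_{ij}=r-1$ is preserved, because $\si_{ij}^{(r)}\equiv s^{(r)}_{ij}$ modulo lower-degree terms: the quadratic correction $s_{i1}s_{11}^{-1}s_{1j}$ has degree at most $r-2$ in each coefficient. The induced map on $\gr$ is therefore, via \eqref{X-graded-iso}, the natural inclusion
\[
U(\mfg_{N-2}[x]^\rho)\ot\C[\zeta_0,\zeta_1,\ldots]\to U(\mfg_N[x]^\rho)\ot\C[\zeta_0,\zeta_1,\ldots]
\]
induced by $F_{ij}\mapsto F_{i+1,j+1}$, with the $\zeta$'s mapping to $\zeta$'s up to lower-order terms. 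This map is injective by PBW for the current Lie algebras, and injectivity of the graded map implies injectivity of $\psi^{(N)}_1$. Alternatively, the ordered monomials in \eqref{X-basis} map to linearly independent leading terms.
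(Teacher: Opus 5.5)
Your plan is essentially the paper's proof. The paper likewise uses the four-fold fused reflection equation \eqref{RE2} and restricts it to vectors $e_1\ot e_i\ot e_1\ot e_j$ with $1<i,j<N$. It then passes from $\mr S^{1,i+1}_{1,j+1}(u)$ to $\si_{i+1,j+1}(u)$ via Lemma~\ref{L:SS(u)} together with $[s_{11}(u),s_{11}(v)]=0$, and proves injectivity through the graded map $\bar s^{(r)}_{ij}\mapsto\bar s^{(r)}_{i+1,j+1}$.

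The step you flag as the main obstacle is exactly what the paper records in \eqref{RRRR1}--\eqref{RRRR4}, after using \eqref{SS(u)} and the Yang--Baxter equation to bring $R_{12}(1)$ and $R_{34}(1)$ next to the $R$-products. One correction: the restricted products do not reproduce the $\mfg_{N-2}$ $R$-matrices exactly. They give $\wt R_{24}(u-v)$ and $\wt R'_{24}(\ka-1-u-v)$ only up to the scalar factors $\frac{u-v-2}{u-v-1}$ and $\frac{u+v-\ka+3}{u+v-\ka+2}$. These factors appear identically on both sides and therefore cancel.
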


\begin{proof}
We begin by proving that the mapping $\psi^{(N)}_1$ is a homomorphism of algebras. The proof relies on verification of matrix identities and is similar in spirit to that of \cite[Lem.~3.5--3.6]{JLM18} and \cite[Prop.~6.3]{LZ25}. Thus, we will omit long mechanical computations and indicate the key identities only. 

\smallskip

Set $a:=u-v$ and $\tl{a}:=u+v$. We have the following relation in $\End(\C^N)^{\ot 4}\ot X^{\tw}(\mfg_N)((u^{-1},v^{-1}))$:
\eqa{
& R_{23}(a-1)\,R_{13}(a)\,R_{24}(a)\,R_{14}(a+1) \,\mr{S}_{12}(u) \\ 
& \qu \times R'_{14}(\ka-1-\tl{a})\,R'_{13}(\ka-2-\tl{a})\,R'_{24}(\ka-\tl{a})\,R'_{23}(\ka-1-\tl{a}) \,\mr{S}_{34}(v) 
\\ 
& \qq\qq = \mr{S}_{34}(v) \, R'_{23}(\ka-1-\tl{a})\,R'_{24}(\ka-\tl{a})\,R'_{13}(\ka-2-\tl{a})\,R'_{14}(\ka-1-\tl{a}) \\ & \hspace{4.4cm} \times \mr{S}_{12}(u) \, R_{14}(a+1)\,R_{24}(a)\,R_{13}(a)\,R_{23}(a-1) \, \label{RE2}
}
which follows by a repeated application of the Yang-Baxter relation \eqref{YBE} and reflection equation \eqref{RE}. 
Set (cf.~\eqref{R(u)})
\equ{
\wt{R}(u) := \wt{I} - u^{-1} \wt{P} - (\ka - 1 - u)^{-1} \wt{Q} 
}
where operators $\wt{I}$, $\wt{P}$ and $\wt{Q}$ are defined via \eqref{IPQ} except the sums are over $1<i,j<N$. In the space spanned by vectors $e_{i} \ot e_j$ with $1<i,j<N$, the operator $\wt{R}(u)$ and its partially-transposed counterpart $\wt{R}'(u)$ coincide with $R(u)$ and $R'(u)$ of $X^{\tw}(\mfg_{N-2})$.
We will show that \eqref{RE2} specialises to
\equ{
\wt{R}_{24}(a)\,\mr{S}_{12}(u) \, \wt{R}'_{24}(\ka-1-\tl{a}) \,\mr{S}_{34}(v) = \mr{S}_{34}(v) \, \wt{R}'_{24}(\ka-1-\tl{a}) \,\mr{S}_{12}(u) \, \wt{R}_{24}(a) \, \label{RE2a}
}
in the space spanned by vectors $e_1 \ot e_{i} \ot e_1 \ot e_{j} \ot x$ with $1< i,j<N$ and $x \in X^{\tw}(\mfg_N)((u^{-1},v^{-1}))$.

Using \eqref{SS(u)} and the Yang-Baxter equation (to move $R_{34}(1)$ rightwards) we rewrite the right hand side of \eqref{RE2} as
\ali{
& S_4(v)\,R'_{34}(\ka-2v-1)\,S_3(v+1)\,R'_{24}(\ka-\tl{a})\,R'_{23}(\ka-1-\tl{a})\,R'_{14}(\ka-1-\tl{a})\,R'_{13}(\ka-2-\tl{a}) \nn\\ & \qq\qq \times S_2(u)\,R'_{12}(\ka-2u-1)\,S_1(u+1)\, R_{34}(1)\,R_{12}(1) R_{14}(a+1)\,R_{24}(a)\,R_{13}(a)\,R_{23}(a-1) .
\label{rhs1}
}
It was shown in the proof of \cite[Lem.~3.6]{JLM18}%
\footnote{There is a typo in \cite[Lem.~3.5]{JLM18}: $\varphi(u)=0$ in the symplectic case.} 
that
\ali{
& R_{34}(1) \, R_{12}(1) \, R_{14}(a+1)\,R_{24}(a)\,R_{13}(a)\,R_{23}(a-1) \, e_1 \ot e_{i} \ot e_1 \ot e_{j} \nn\\
& \hspace{4.5cm} = \frac{a-2}{a-1} \, R_{34}(1)\,R_{12}(1)\,\wt{R}_{24}(a)\, e_1 \ot e_{i} \ot e_1 \ot e_{j} 
\label{RRRR1}
}
for $1< i,j< N$. 
By similar arguments, we find
\ali{
& (e_1^* \ot e_i^* \ot e_p^* \ot e_r^*) \Big( R'_{24}(\ka-\tl{a})\,R'_{23}(\ka-1-\tl{a})\,R'_{14}(\ka-1-\tl{a}) \nn\\ 
& \hspace{4.15cm} \times R'_{13}(\ka-2-\tl{a})\, R_{34}(1) \, R_{12}(1) \, e_q \ot e_s \ot e_1 \ot e_j \Big) \nn\\
& \qq = \frac{\tl{a}-\ka+3}{\tl{a}-\ka+2} \, (e_1^* \ot e_i^* \ot e_p^* \ot e_r^*) \, \Big( R_{34}(1) \,\wt{R}'_{24}(\ka-1-\tl{a})\, R_{12}(1) \, e_q \ot e_s \ot e_1 \ot e_j \Big)
\label{RRRR2}
}
for $1< i,j< N$ and $1\le p,r,q,s \le N$. Hence, applying the second row of \eqref{rhs1} to the vector $e_1 \ot e_i \ot e_1 \ot e_j$ and invoking \eqref{RRRR1} and \eqref{SS(u)} gives
\ali{
& \frac{a-2}{a-1}  R_{34}(1)\, R_{12}(1) S_1(u+1)\,R'_{12}(\ka-2u-1)\,S_2(u) \, \wt{R}_{24}(a) \, e_1 \ot e_{i} \ot e_1 \ot e_{j} .
}
Next, applying the first row of \eqref{rhs1} to this result and invoking  \eqref{RRRR2} and \eqref{SS(u)} gives
\ali{
& \frac{\tl{a}-\ka+3}{\tl{a}-\ka+2}\, \frac{a-2}{a-1} \, (e_1^* \ot e_k^* \ot e_p^* \ot e_r^*) \Big( S_4(v)\,R'_{34}(\ka-2v-1)\,S_3(v+1) \,  R_{34}(1) \nn \\ & \hspace{1.5cm} \times \wt{R}'_{24}(\ka-1-\tl{a})\,   R_{12}(1) S_1(u+1)\,R'_{12}(\ka-2u-1)\,S_2(u) \, \wt{R}_{24}(a) \, e_1 \ot e_i \ot e_1 \ot e_j \Big)\nn\\[0.2em]
& \qq = \frac{\tl{a}-\ka+3}{\tl{a}-\ka+2}\, \frac{a-2}{a-1}  \, (e_1^* \ot e_k^* \ot e_p^* \ot e_r^*) \Big( \mr{S}_{34}(v) \, \wt{R}'_{24}(\ka-1-\tl{a})\, \mr{S}_{12}(u) \, \wt{R}_{24}(a) \, e_1 \ot e_i \ot e_1 \ot e_j \Big) \label{rhs2}
}
for $1< i,j,k< N$ and $1\le p,r\le N$. Restricting to $p=1$ and $1< r < N$ yields the right hand side of the wanted identity, \eqref{RE2a}, up to an overall scalar factor. The left hand side of \eqref{RE2a} is obtained analogously. Namely, we rewrite the left hand side of \eqref{RE2} as (cf.~\eqref{rhs1})
\ali{
& R_{23}(a-1)\,R_{13}(a)\,R_{24}(a)\,R_{14}(a+1) \, R_{12}(1)\, R_{34}(1) \, S_1(u+1)\,R'_{12}(\ka-2u-1)\,S_2(u) \nn\\ 
& \qu \times  R'_{13}(\ka-2-\tl{a})\,R'_{14}(\ka-1-\tl{a})\, R'_{23}(\ka-1-\tl{a}) \, R'_{24}(\ka-\tl{a})\, S_3(v+1)\,R'_{34}(\ka-2v-1) \,S_4(v) .
\label{lhs1}
}
Relations \eqref{RRRR1} and \eqref{RRRR2} imply that
\ali{
& (e^*_1 \ot e^*_{i} \ot e^*_1 \ot e^*_{j}) \Big( R_{23}(a-1) \, R_{13}(a)\, R_{24}(a)\, R_{14}(a+1)\, R_{12}(1) \, R_{34}(1) \, e_{q} \ot e_{s} \ot e_{p} \ot e_{r} \Big) \nn\\
& \hspace{4.5cm} = \frac{a-2}{a-1} \,(e^*_1 \ot e^*_{i} \ot e^*_1 \ot e^*_{j}) \Big( \wt{R}_{24}(a) \,R_{12}(1)\,  R_{34}(1) \, e_{q} \ot e_{s} \ot e_{p} \ot e_{r} \Big)
\label{RRRR3}
}
and
\ali{
& 
(e^*_q \ot e^*_s \ot e^*_1 \ot e^*_j) \Big( R_{12}(1) \, R_{34}(1)\, R'_{13}(\ka-2-\tl{a}) \nn\\ 
& \hspace{3cm} \times \,R'_{14}(\ka-1-\tl{a})\,R'_{23}(\ka-1-\tl{a}) \,R'_{24}(\ka-\tl{a})\, 
e_1 \ot e_i \ot e_p \ot e_r \Big) \nn\\
& \qq = \frac{\tl{a}-\ka+3}{\tl{a}-\ka+2} \,(e^*_q \ot e^*_s \ot e^*_1 \ot e^*_j ) \Big( R_{12}(1) \, \wt{R}'_{24}(\ka-1-\tl{a}) \, R_{34}(1) \, e_1 \ot e_i \ot e_p \ot e_r \Big)
\label{RRRR4}
}
for $1<i,j<N$ and $1\le p,r,q,s\le N$.
Sandwiching \eqref{lhs1} with $e^*_1 \ot e^*_k \ot e^*_1 \ot e^*_l$ and $e_1 \ot e_i \ot e_1 \ot e_j$, and invoking \eqref{RRRR3}, \eqref{RRRR4} and \eqref{SS(u)} yields
\ali{
\frac{\tl{a}-\ka+3}{\tl{a}-\ka+2}\, \frac{a-2}{a-1}  \, (e_1^* \ot e_k^* \ot e_1^* \ot e_l^* ) \Big(\wt{R}_{24}(a) \, \mr{S}_{12}(u) \, \wt{R}'_{24}(\ka-1-\tl{a})\,  \mr{S}_{34}(v) \,  e_1 \ot e_i \ot e_1 \ot e_j \Big) 
}
which is the left hand side of the wanted identity, \eqref{RE2a}, up to the same overall scalar factor as in \eqref{rhs2}.
This shows that the series $\mr{S}^{1,i+1}_{1,j+1}(u)$ with $1\le i,j\le N-2$ satisfy the defining relations of $X^{\tw}(\mfg_{N-2})$. Lemma~\ref{L:SS(u)} and relation $[s_{11}(u), s_{11}(v)]=0$ imply that the same is true for $\si_{i+1,j+1}(u)$, as required.

It remains to prove that $\psi^{(N)}_1$ is injective. To this end, it is sufficient to show that $\psi^{(N)}_1$ induces an injective homomorphism of the associated graded algebras, $\gr X^\tw(\mfg_{N-2}) \to \gr X^\tw(\mfg_N)$. 
Recall that 
\[
\si_{i+1,j+1}(u) = s_{i+1,j+1}(u) - s^{-1}_{11}(u+1)\,\bigg( \frac{2u-\ka+2}{2u-\ka+1}\, s_{i+1,1}(u+1) - \frac{1}{2u-\ka+1}\, s_{1,i+1}(u+1) \bigg)\, s_{1,j+1}(u) .
\]
Since $g_{11}=1$ and $g_{i+1,1}=g_{1,i+1}=g_{1,j+1}=0$, the image of $\si_{i+1,j+1}(u)$ in $\gr X^\tw(\mfg_{N})[[u^{-1}]]$ is 
\equ{
\ol{\si}_{i+1,j+1}(u) = g_{i+1,j+1} + \sum_{r\ge 1} \bar{s}_{i+1,j+1}^{(r)} \,u^{-r}  .
}
Thus, the induced homomorphism maps elements $\bar{s}^{(r)}_{ij} \in \gr X^\tw(\mfg_{N-2}) \cong U(\mfg_{N-2}[x]^\rho) \ot \C[\zeta_0,\zeta_1,\ldots]$ to elements $\bar{s}_{i+1,j+1}^{(r)} \in \gr X^\tw(\mfg_{N}) \cong U(\mfg_N[x]^\rho) \ot \C[\zeta_0,\zeta_1,\ldots]$ and so it is injective, and the claim follows.
\end{proof}

\begin{remark}
The statement of Proposition \ref{P:emb1} holds true for all extended twisted Yangians $X^\tw(\mfg_N,G)$ with $g_{11}=1$. When $g_{11}=0$, the series $s_{11}(u)$ is non-invertible. In such a case one can define a homomorphism
\equ{
X^{\tw}(\mfg_{N-2}) \to X^{\tw}(\mfg_{N}) , \qu s_{ij}(u) \mapsto \mr{S}^{1,i+1}_{1,j+1}(u) .
}
However, it is not injective.
\end{remark}

\begin{crl} \label{C:emb}
The mapping 
\equ{
\psi^{(N)}_m \;:\; X^{\tw}(\mfg_{N-2m}) \to X^{\tw}(\mfg_{N}) , \qu s_{ij}(u) \mapsto \begin{vmatrix}
s_{11}(u) & \cdots & s_{1m}(u) & s_{1,m+j}(u) \\
\cdots & \cdots & \cdots & \cdots \\
s_{m1}(u) & \cdots & s_{mm}(u) & s_{m,m+j}(u) \\
s_{m+i,1}(u) & \cdots & s_{m+i,m}(u) & \boxed{s_{m+i,m+j}(u)} 
\end{vmatrix}  
\label{psi_m}
}
defines an injective homomorphism of extended twisted Yangians for split symmetric pairs.
\end{crl}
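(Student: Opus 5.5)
We argue by induction on $m$, composing the embeddings of Proposition~\ref{P:emb1}. The case $m=1$ is Proposition~\ref{P:emb1} itself. For $m\ge2$ we consider the composite
\[
\psi^{(N)}_1\circ\psi^{(N-2)}_{m-1} : X^{\tw}(\mfg_{N-2m}) \to X^{\tw}(\mfg_{N-2}) \to X^{\tw}(\mfg_N).
\]
By the induction hypothesis and Proposition~\ref{P:emb1}, both factors are injective algebra homomorphisms, so the composite is one as well. It therefore remains to show that the composite sends $s_{ij}(u)$ to the quasi-determinant in \eqref{psi_m}.

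To identify the image, note that $\psi^{(N-2)}_{m-1}(s_{ij}(u))$ is the quasi-determinant of the $m\times m$ matrix built from the entries $s_{ab}(u)$ of $X^{\tw}(\mfg_{N-2})$ with $a\in\{1,\dots,m-1,m-1+i\}$ and $b\in\{1,\dots,m-1,m-1+j\}$. Since $\psi^{(N)}_1$ is a homomorphism, it acts entrywise, replacing each $s_{ab}(u)$ by $\si_{a+1,b+1}(u)$. Each $\si_{a+1,b+1}(u)$ is itself the $2\times2$ quasi-determinant of $s_{11},s_{1,b+1},s_{a+1,1},s_{a+1,b+1}$, so the image is a quasi-determinant whose entries are quasi-determinants, all sharing the pivot block $s_{11}(u)$. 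Sylvester's identity for quasi-determinants (the heredity property, \cite[\S1.10]{Mo07}; see also the analogous step in \cite{JLM18}) then collapses this into the single $(m+1)\times(m+1)$ quasi-determinant with rows $1,\dots,m,m+i$ and columns $1,\dots,m,m+j$. This is exactly \eqref{psi_m}.

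For the heredity identity to be applicable, the relevant submatrices must be invertible. The upper-left block $[s_{ab}(u)]_{a,b\le m}$ has constant term $\diag(g_{11},\dots,g_{mm})=\diag(\theta_{\bar1},\dots,\theta_{\bar m})$, because $G$ is diagonal with entries $\pm1$ for split pairs. Hence this block, and all its leading principal submatrices, are invertible in the ring of matrices over $X^{\tw}(\mfg_N)[[u^{-1}]]$, and every intermediate quasi-determinant, in particular $\si_{22}(u)$ and its successors, is well defined.

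The main obstacle is confirming that the inverses arising at intermediate stages, such as $\si_{22}(u)^{-1}$, are compatible with the inverses of the full blocks, so that Sylvester's identity holds in this noncommutative setting. Since this is a purely formal identity about matrices over a ring, valid whenever the relevant blocks are invertible, we would cite it rather than rederive it. The auxiliary fact that $s_{11}(u)$ commutes with the $\si_{ij}(v)$ (Lemma~\ref{L:SS(u)}(1)) is not needed for the identity itself.
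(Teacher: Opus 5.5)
Your proposal is correct and follows the paper's own argument. Like the paper, you induct on $m$, take Proposition~\ref{P:emb1} as the base case, and use Sylvester's theorem for quasi-determinants (as in Jing--Liu--Molev's proof of their Thm.~3.7) to identify $\psi^{(N)}_1\circ\psi^{(N-2)}_{m-1}$ with the quasi-determinant \eqref{psi_m}; your index bookkeeping and the check that the leading blocks are invertible (constant term $\diag(g_{11},\dots,g_{mm})$ with $g_{aa}=1$ for $a\le m<n$) supply details the paper leaves implicit.
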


\begin{proof} 
This follows by induction on $m$. The base case is given by Proposition \ref{P:emb1}. The general step relies on Sylvester's theorem for quasi-determinants; see \cite[Proof of Thm.~3.7]{JLM18}. 
\end{proof}

We end this section with a collection of statements, which are counterparts of Propositions 3.8 and 3.9 and Corollary 3.10 in \cite{JLM18} and Propositions 6.5 and 6.6 and Corollary 6.7 in \cite{LZ25}, and hold by the same arguments.

\begin{prop}
We have the equality of mappings
\equ{
\psi^{(N)}_{l} \circ \psi^{(N-2l)}_{m} = \psi^{(N)}_{l+m} 
}
for all $1\le l,m < n$ satisfying $l+m<n$. \qed
\end{prop}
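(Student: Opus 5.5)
Both sides of the claimed equality are algebra homomorphisms $X^{\tw}(\mfg_{N-2l-2m}) \to X^{\tw}(\mfg_N)$ by Corollary \ref{C:emb}. It is therefore enough to compare them on the generating series $s_{ij}(u)$, for $1\le i,j\le N-2l-2m$. The plan is to compute $\psi^{(N)}_l\big(\psi^{(N-2l)}_m(s_{ij}(u))\big)$ explicitly. One writes $\psi^{(N-2l)}_m(s_{ij}(u))$ as the quasi-determinant \eqref{psi_m} of the $(m+1)\times(m+1)$ matrix with entries $s_{ab}(u)$, $s_{a,m+j}(u)$, $s_{m+i,b}(u)$, $s_{m+i,m+j}(u)$ of $X^{\tw}(\mfg_{N-2l})$, where $1\le a,b\le m$. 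Since $\psi^{(N)}_l$ is a homomorphism, and quasi-determinants are given by noncommutative rational expressions, the quasi-determinant is carried to the same expression in the images. Each image $\psi^{(N)}_l(s_{cd}(u))$ is itself an $(l+1)\times(l+1)$ quasi-determinant of the generating matrix of $X^{\tw}(\mfg_N)$, with the first $l$ rows and columns as the fixed block and entry $(l+c,\,l+d)$ boxed.

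The key step is Sylvester's theorem for quasi-determinants, as used in \cite[Proof of Thm.~3.7]{JLM18}. Let $Y$ be the $(l+m+1)\times(l+m+1)$ submatrix of $S(u)$ on rows $1,\dots,l+m,\,l+m+i$ and columns $1,\dots,l+m,\,l+m+j$. Let $Y_0$ be its upper-left $l\times l$ block. For each pair of indices $(c,d)$ outside $Y_0$, form the quasi-determinant $|Y_0 \text{ bordered by row } c \text{ and column } d|_{cd}$. Sylvester's identity states that the matrix of these bordered quasi-determinants has its boxed quasi-determinant equal to $|Y|_{l+m+1,l+m+1}$. The bordered quasi-determinants are exactly $\psi^{(N)}_l(s_{c'd'}(u))$ after the shift by $l$. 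Therefore the left-hand side equals $|Y|_{l+m+1,l+m+1}$, which is $\psi^{(N)}_{l+m}(s_{ij}(u))$ by \eqref{psi_m}. This proves the equality on generators, and hence of the maps.

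The main obstacle is checking that every inversion in the argument is legitimate in $X^{\tw}(\mfg_N)[[u^{-1}]]$, since otherwise Sylvester's identity cannot be applied. The leading coefficients are the entries of the diagonal matrix $G$, with $g_{aa}=\theta_{\bar a}$. Hence the leading term of each relevant leading principal submatrix is an invertible scalar diagonal matrix. It follows that the submatrices and all quasi-determinants involved are invertible series, with constant terms $\pm1$, and the noncommutative Sylvester identity holds in this power-series ring. I would also note that the index bookkeeping is consistent: the shift $i\mapsto i+l$ composed with $i\mapsto i+m$ gives $i\mapsto i+l+m$, matching \eqref{psi_m}.
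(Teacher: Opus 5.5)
Your proof is correct and is essentially the argument the paper relies on (it defers to the Sylvester-theorem argument of \cite{JLM18}): both sides are homomorphisms, $\psi^{(N)}_l$ carries the $(m+1)\times(m+1)$ quasi-determinant to the one built from the images $\psi^{(N)}_l(s_{c'd'}(u))$, and Sylvester's theorem identifies that with $\psi^{(N)}_{l+m}(s_{ij}(u))$. One small correction to your last paragraph: not all quasi-determinants involved are invertible, because those with distinct row and column labels (including $\psi^{(N)}_{l+m}(s_{ij}(u))$ for $i\ne j$) have zero constant term since $G$ is diagonal, so the boxed quasi-determinants must be read as Schur complements $x_{ij}-r\,X_0^{-1}c$ rather than as $((X^{-1})_{ji})^{-1}$. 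This is harmless, because only $Y_0$ and $\big(\psi^{(N)}_l(s_{ab}(u))\big)_{1\le a,b\le m}$ are ever inverted, and their leading terms are invertible diagonal matrices.
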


For any tuples $(a_1 , \ldots , a_k )$ and $(b_1 , \ldots , b_k )$ consisting of elements from the set $\{1 , \ldots , N \}$, introduce {\it quantum minors} by the formula
\equ{
\mc{S}^{a_1 \ldots a_k}_{b_1 \ldots b_k}(u) := \sum_{\si \in \Sigma_k} {\rm sign}(\si) \cdot s_{a_{\si(1)} b_1}(u) \cdots s_{a_{\si(k)} b_k}(u-k+1) \,.
}
Here $\Sigma_k$ is the symmetric group of order $k!$.
These are formal series in $u^{-1}$ with coefficients in $X^\tw(\mfg_N)$.

\begin{prop}
We have the identity
\equ{
\psi^{(N)}_m (s_{ij}(u)) = \mc{S}^{1 \ldots m}_{1 \ldots m}(u+m)^{-1} \mc{S}^{1 \ldots m,\,m+i}_{1 \ldots m,\,m+j}(u+m) 
}
for all $1\le i,j\le N-2m$. \qed
\end{prop}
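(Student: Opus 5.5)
The plan is to reduce the statement to the standard identity expressing a quasi-determinant through quantum minors, as in \cite[Prop.~3.9]{JLM18} and \cite[Prop.~6.6]{LZ25}. By Corollary~\ref{C:emb}, $\psi^{(N)}_m(s_{ij}(u))$ is the quasi-determinant of the $(m+1)\times(m+1)$ submatrix of $S(u)$ with rows $1,\ldots,m,m+i$ and columns $1,\ldots,m,m+j$. So it suffices to show two things: that the quantum minors $\mc{S}^{a_1\ldots a_k}_{b_1\ldots b_k}(u)$ for indices in this range behave like genuine determinants, and that the corresponding quasi-determinant equals the ratio of minors in the statement.

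The first step is to show that, for indices with $a_r\ne\bar b_s$ (automatic for indices $\le m+\max(i,j)\le N-m$ once we work in the range relevant here), the minors are antisymmetric in the upper and in the lower indices. I would derive this from the fusion procedure. The operator $R(1)$, restricted to the span of $e_a\ot e_b$ with $a,b$ in $\{1,\ldots,N-m\}$ not paired by $\bar{\ }$, acts as $I-P$, which is twice the antisymmetriser, because the $Q$-term vanishes there. Then, exactly as $\mr{S}(u)$ is formed in \eqref{SS(u)}, the product
\[
R(1)\cdots \, S_1(u)\,R'(\ldots)\,S_2(u-1)\cdots S_k(u-k+1)
\]
with all intervening $R$ and $R'$ factors (the $R'$ factors having arguments $\ka-2u+\cdots$) is shown via \eqref{RE} and \eqref{YBE} to be antisymmetrised on both sides. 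The next task is to check that, on the relevant vectors, the $R'$ factors act by scalars. They involve only $Q$-type terms, and these are killed by index restrictions, in the same way the scalar factors appear in \eqref{RRRR1}--\eqref{RRRR2}. Taking matrix elements then shows that the minors are given by the column-ordered formula and are antisymmetric in the upper indices.

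The second step is induction on $m$. For $m=1$ the claim is Lemma~\ref{L:SS(u)}(2): one checks that $\mr{S}^{1i}_{1j}(u)$ coincides with the quantum minor $\mc{S}^{1,i}_{1,j}(u+1)$. Because the $R'$ and $R(1)$ factors reduce to antisymmetrisation on these indices, this minor equals $s_{11}(u+1)s_{ij}(u)-s_{i1}(u+1)s_{1j}(u)$ up to the reordering already used in \eqref{s11-si1}. For the inductive step, use $\psi^{(N)}_m=\psi^{(N)}_1\circ\psi^{(N-2)}_{m-1}$ from the preceding proposition. Then apply the induction hypothesis inside $X^\tw(\mfg_{N-2})$ and push the resulting quantum minors through $\psi^{(N)}_1$. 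This requires the analogue of \cite[Lem.~3.11]{JLM18}:
\[
\psi^{(N)}_1\big(\mc{S}^{a_1\ldots a_k}_{b_1\ldots b_k}(u)\big)=s_{11}(u+1)^{-1}\,\mc{S}^{1,a_1+1,\ldots,a_k+1}_{1,b_1+1,\ldots,b_k+1}(u+1).
\]
I would prove this lemma via the fusion realisation above together with the block identity \eqref{RE2a}, or alternatively by a Sylvester-type expansion of quasi-determinants. The $s_{11}$ factors cancel in the ratio, which yields the formula with shift $u+m$.

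I expect the main obstacle to be the first step: verifying that the $R'(\ka-\cdots)$ factors and the $Q$-parts of $R(1)$ contribute only scalars on the index range considered, so that the fused product is honestly antisymmetric. I would handle this with the same vector-by-vector evaluation used for \eqref{RRRR1}--\eqref{RRRR4}.
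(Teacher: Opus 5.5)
The step that fails is your claim that the $R'$-factors act by scalars on the relevant vectors because their non-identity terms are killed by the index restrictions. Already for two factors, $R'(\ka-2u-1)=I-(\ka-2u-1)^{-1}P^{t_1}-(2u+1)^{-1}Q^{t_1}$. The $Q^{t_1}$-part does vanish here, since it would need a row index equal to $N$. The $P^{t_1}$-part does not: $(e_b^*\ot e_y^*)\,P^{t_1}=\del_{by}\sum_l e_l^*\ot e_l^*$, and $b$ is the \emph{summed} column index of $S_1(u+1)$, so no restriction on the outer indices removes it. Carrying out the evaluation gives, for $1<i,j<N$,
\[
\mr{S}^{1i}_{1j}(u)=\mc{S}^{1,i}_{1,j}(u+1)+\frac{1}{2u-\ka+1}\,\big(s_{1i}(u+1)-s_{i1}(u+1)\big)\,s_{1j}(u),
\]
which is \eqref{si(u)-1} multiplied on the left by $s_{11}(u+1)$. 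So your base case $\mr{S}^{1i}_{1j}(u)=\mc{S}^{1,i}_{1,j}(u+1)$ is false. The reordering \eqref{s11-si1} you appeal to is exactly what creates the extra $s_{1i}(u+1)$; it does not remove it. For the same reason the fused products are not antisymmetrisations of the plain column-ordered sums. Your transport formula for $\psi^{(N)}_1$, modelled on \cite{JLM18}, therefore fails already for $k=1$.

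No more careful computation can fix this: with $\mc{S}$ defined as plain alternating sums, the identity itself is false for $m=1$. By the display above and Lemma~\ref{L:SS(u)}(2),
\[
\psi^{(N)}_1\big(s_{ij}(u)\big)-s_{11}(u+1)^{-1}\mc{S}^{1,i+1}_{1,j+1}(u+1)=\frac{s_{11}(u+1)^{-1}\big(s_{1,i+1}(u+1)-s_{i+1,1}(u+1)\big)\,s_{1,j+1}(u)}{2u-\ka+1}.
\]
Its $u^{-3}$-coefficient $\tfrac12\big(s^{(1)}_{1,i+1}-s^{(1)}_{i+1,1}\big)s^{(1)}_{1,j+1}$ has filtration degree $0$. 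Under \eqref{X-graded-iso} it maps to $-F^{(\om,0)}_{i+1,1}F^{(\om,0)}_{1,j+1}\neq 0$, since $U(\mfg_N[x]^{\om})\ot\C[\zeta_0,\zeta_1,\ldots]$ is a domain. The paper gives no argument beyond an appeal to the same arguments as in \cite{JLM18,LZ25}. That appeal, like your scheme, can only succeed once the minors are replaced by fused, Sklyanin-type minors. These are matrix elements of the antisymmetrised product $S_1(u)\,R'_{12}(\ka-2u+1)\,S_2(u-1)\cdots S_k(u-k+1)$ retaining all intermediate $R'$-factors, as for Olshanski's twisted Yangians. For $k=2$, evaluated at $u+1$ and up to the normalisation of $R(1)$, this is exactly $\mr{S}(u)$ of \eqref{SS(u)}, so the case $m=1$ becomes Lemma~\ref{L:SS(u)}(2). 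With that definition, your induction through $\psi^{(N)}_m=\psi^{(N)}_1\circ\psi^{(N-2)}_{m-1}$ and Sylvester's identity, with the $s_{11}$-factors cancelling, is the right route. A smaller inaccuracy: $a_r\ne\bar b_s$ is not automatic, since $m+i=\ol{m+j}$ when $i+j=N-2m+1$. What you actually need does hold: no two row indices and no two column indices are conjugate, and that is what kills the $Q$-part of $R(1)$.
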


\begin{crl} \label{C:emb-comm}
We have the relations
\equ{
[s_{ab}(u), \psi^{(N)}_m (s_{ij}(v))] = 0
}
for all $1\le a,b \le m$ and $1\le i,j\le N-2m$. Here $s_{ab}(u)$ are series with coefficients in $X^\tw(\mfg_N)$ and $s_{ij}(v)$ are series with coefficients in $X^\tw(\mfg_{N-2m})$. \qed
\end{crl}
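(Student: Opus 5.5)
We argue by induction on $m$, taking Lemma~\ref{L:SS(u)}(1) as the base case and composing the embeddings via the preceding proposition $\psi^{(N)}_{l}\circ\psi^{(N-2l)}_{m}=\psi^{(N)}_{l+m}$.

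\emph{Base case $m=1$.} Here we must show $[s_{11}(u),\si_{i+1,j+1}(v)]=0$ for $1\le i,j\le N-2$, which is exactly Lemma~\ref{L:SS(u)}(1).

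\emph{Inductive step.} Assume the claim holds for $m-1$, for every rank. We write
\[
\psi^{(N)}_m=\psi^{(N)}_1\circ\psi^{(N-2)}_{m-1}.
\]
Let $1\le a,b\le m$. There are two cases.

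\emph{Case $a=b=1$.} The element $\psi^{(N-2)}_{m-1}(s_{ij}(v))$ is a series whose coefficients lie in $X^{\tw}(\mfg_{N-2})$. Hence $\psi^{(N)}_m(s_{ij}(v))$ has coefficients in the subalgebra generated by the coefficients of the $\si_{k+1,l+1}(v)$. By Lemma~\ref{L:SS(u)}(1), $s_{11}(u)$ commutes with all of these, so it commutes with $\psi^{(N)}_m(s_{ij}(v))$.

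\emph{Case $(a,b)\neq(1,1)$.} We work with the quantum-minor formula
\[
\psi^{(N)}_m(s_{ij}(v))=\mc{S}^{1\ldots m}_{1\ldots m}(v+m)^{-1}\,\mc{S}^{1\ldots m,\,m+i}_{1\ldots m,\,m+j}(v+m).
\]
We would prove, directly from the reflection equation \eqref{RE} through fusion, that for $a,b\le m$:
\begin{itemize}
\item $\mc{S}^{1\ldots m}_{1\ldots m}(w)$ commutes with $s_{ab}(u)$;
\item the commutator of $s_{ab}(u)$ with $\mc{S}^{1\ldots m,\,m+i}_{1\ldots m,\,m+j}(w)$ equals $\mc{S}^{1\ldots m,\,m+i}_{1\ldots m,\,m+j}(w)$ times a scalar-valued series that also governs the commutator of $s_{ab}(u)$ with $\mc{S}^{1\ldots m}_{1\ldots m}(w)$, so that the two contributions cancel in the ratio.
\end{itemize}
This is the approach of \cite[Cor.~3.10]{JLM18}.

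Alternatively, we could avoid minors. Apply the base case inside $X^{\tw}(\mfg_{N-2})$ to $\psi^{(N-2)}_{m-1}$, giving commutation with $s_{11}$ of the smaller algebra. Its image under $\psi^{(N)}_1$ shows that $\si_{22}(u)$ commutes with the image. However, this only yields commutation with the quasi-determinants $\si_{ab}$, not with the original $s_{ab}$. To recover the $s_{ab}$, one would note that the $s_{ab}(u)$ with $a,b\le m$ are generated by $s_{11}$, $s_{1b}$, $s_{a1}$ and $\si_{ab}$ through the Gauss relation
\[
s_{ab}=\si_{ab}+s_{a1}s_{11}^{-1}s_{1b},
\]
so it remains to show commutation with $s_{1b}$ and $s_{a1}$ for $1<a,b\le m$.

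\emph{Main obstacle.} The hard part is commutation with $s_{1b}(u)$ and $s_{a1}(u)$ for $1<a,b\le m$. These do not lie in the centralising subalgebra supplied by Lemma~\ref{L:SS(u)}. For this we would use the relation \eqref{RE2a}-type projection: apply the twisted reflection equation \eqref{RE} with a fusion on spaces $1$ and $2$ and project onto vectors $e_1\ot e_b$ and $e_1\ot e_{m+i}$. The goal is to show that the cross terms involving the $Q$ and $R'$ parts vanish because of the index separation $b\le m<m+i,\,m+j<\overline{m}$. The step I expect to be most delicate is controlling these $R'$ (reflection) terms, which have no counterpart in \cite{JLM18}.
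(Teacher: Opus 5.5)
The parts you prove are correct. The case $a=b=1$ is fine: $\psi^{(N)}_m=\psi^{(N)}_1\circ\psi^{(N-2)}_{m-1}$ puts the coefficients of $\psi^{(N)}_m(s_{ij}(v))$ in the subalgebra generated by the coefficients of the $\si_{k+1,l+1}(v)$, and these commute with $s_{11}(u)$ by Lemma~\ref{L:SS(u)}(1). Applying $\psi^{(N)}_1$ to the induction hypothesis also gives commutation with every $\si_{ab}(u)$, $2\le a,b\le m$.

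The gap is that the corollary's content is the case $(a,b)\neq(1,1)$, and for it you give no argument. Your second route stops exactly at $s_{1b}(u)$ and $s_{a1}(u)$, and your first route is only announced. This obstacle is real, not a formality:
\begin{itemize}
\item In type A you could finish with zero modes. There $\mathrm{ad}\,t^{(1)}_{ab}$ acts on $T(u)$ by $\mfgl_m$-conjugation, which leaves the quasi-determinant invariant, and $\mfgl_m$ together with $t_{11}(u)$ generates all $t_{ab}(u)$, $a,b\le m$.
\item Here, for $a,b\le m$, \eqref{[s,s]} shows that $\mathrm{ad}\,s^{(1)}_{ab}$ acts by $S(v)\mapsto YS(v)+S(v)Y^t$, with $Y$ antisymmetric on the block $\{1,\dots,m\}$. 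The quasi-determinant is still invariant. But rotating $s_{11}(u)$ only produces $s_{bb}(u)$ and $s_{1b}(u)+s_{b1}(u)$, never $s_{1b}(u)-s_{b1}(u)$.
\end{itemize}
So some genuinely twisted input is required.

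Your first route is the one the paper intends; it gives no separate proof and only refers to the corresponding corollaries of Jing--Liu--Molev and Lu--Zhang. As you describe it, though, it does not work. If $\mc S^{1\ldots m}_{1\ldots m}$ commutes with $s_{ab}(u)$, your ``scalar series'' must vanish. What you actually need is that the numerator minor commutes with $s_{ab}(u)$ on its own; nothing cancels in the ratio. In type A this follows from the commutator formula for quantum minors together with their antisymmetry in the indices.

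Moreover, in the twisted setting the minors must be the fused ones carrying the $R'$ factors, not column determinants. Already for $m=1$, Lemma~\ref{L:SS(u)}(2) gives $\psi^{(N)}_1(s_{ij}(u))=s_{11}^{-1}(u+1)\,\mr S^{1c}_{1d}(u)$ with $c=i+1$, $d=j+1$. Then Lemma~\ref{L:SS(u)}(2) and \eqref{si(u)-1} give
\[
\mr S^{1c}_{1d}(u)=s_{11}(u+1)\,s_{cd}(u)-s_{c1}(u+1)\,s_{1d}(u)+\tfrac{1}{2u-\ka+1}\big(s_{1c}(u+1)-s_{c1}(u+1)\big)\,s_{1d}(u).
\]
The last term comes from $R'(\ka-2u-1)$ and is nonzero: its $u^{-3}$ coefficient is $\tfrac12(s^{(1)}_{1c}-s^{(1)}_{c1})\,s^{(1)}_{1d}$.

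The missing step is therefore to prove that $s_{ab}(w)$ commutes with such fused minors, which means controlling the reflection terms you list as the main obstacle. As written, your proposal establishes the corollary only for $a=b=1$.
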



\section{Gaussian series} \label{sec:gauss}


\subsection{Gaussian decomposition}

By \cite[Thm.~4.96]{GGRW05}, the generating matrix $S(u)$ has a unique decomposition 
\equ{
S(u) = F(u)\,D(u)\,E(u) \label{S=FDE}
}
where
\gat{\setlength\arraycolsep{4pt}
D(u) = \begin{bmatrix}
d_1(u) && \cdots & 0 \\
& d_2(u) && \vdots \\
\vdots & & \ddots \\
0 & \cdots & & d_N(u)
\end{bmatrix} , 
\\[0.5em] 
\setlength\arraycolsep{4pt}
F(u) = \begin{bmatrix}
1 && \cdots & 0\, \\
f_{21}(u) & \ddots && \vdots\, \\
\vdots & & \ddots \\
f_{N1}(u) & f_{N2}(u) & \cdots & 1\,
\end{bmatrix} , \qq
E(u) = \begin{bmatrix}
\,1 & e_{12}(u) & \dots & e_{1N}(u) \\
 & \ddots && e_{2N}(u) \\
\,\vdots & & \ddots & \vdots \\
\,0 & \cdots & & 1
\end{bmatrix},
}
and the matrix entries above are defined in terms of quasi-determinants:
\gat{\setlength\arraycolsep{2pt}
d_i(u) = \begin{vmatrix} 
s_{11}(u) & \cdots & s_{1,i-1}(u) & s_{1i}(u) \\
\vdots & \ddots & \vdots & \vdots \\ 
s_{i-1,1}(u) & \cdots & s_{i-1,i-1}(u) & s_{i-1,i}(u) \\
s_{i1}(u) & \cdots & s_{i,i-1}(u) & \boxed{s_{ii}(u)} \\
\end{vmatrix},
\\[0.5em] 
\setlength\arraycolsep{2pt}
f_{ji}(u) = \begin{vmatrix} 
s_{11}(u) & \cdots & s_{1,i-1}(u) & s_{1i}(u) \\
\vdots & \ddots & \vdots & \vdots \\ 
s_{i-1,1}(u) & \cdots & s_{i-1,i-1}(u) & s_{i-1,i}(u) \\
s_{j1}(u) & \cdots & s_{j,i-1}(u) & \boxed{s_{ji}(u)} \\
\end{vmatrix} d^{-1}_i(u) , 
\qu
e_{ij}(u) = d^{-1}_i(u) \begin{vmatrix} 
s_{11}(u) & \cdots & s_{1,i-1}(u) & s_{1j}(u) \\
\vdots & \ddots & \vdots & \vdots \\ 
s_{i-1,1}(u) & \cdots & s_{i-1,i-1}(u) & s_{i-1,j}(u) \\
s_{i1}(u) & \cdots & s_{i,i-1}(u) & \boxed{s_{ij}(u)} \\
\end{vmatrix} . \nn\\
}
We will refer to the series $f_{ji}(u)$, $d_i(u)$, $e_{ij}(u)$ as the {\it Gaussian series}, and to their coefficients $f^{(r)}_{ji}$, $d_{i,r}$, $e^{(r)}_{ij}$ defined by
\equ{
f_{ji}(u) = \sum_{r\ge 0} f^{(r)}_{ji} u^{-r-1} ,\qu 
d_{i}(u) = g_{ii} + \sum_{r\ge 0} d_{i,r} u^{-r-1} , \qu
e_{ij}(u) = \sum_{r\ge 0} e^{(r)}_{ij} u^{-r-1} 
}
as the {\it Gaussian generators}.

We will need a {\it reduced generating matrix} defined as follows. For $0\le m < n$ define $(N-2m)\times (N-2m)$-dimensional matrices
\gat{
\setlength\arraycolsep{2pt}
\renewcommand{\arraystretch}{0.7}
D^{[m]}(u) := \begin{bmatrix}
d_{m+1}(u) && \cdots & 0 \\
& d_{m+2}(u) && \vdots \\
\vdots & & \ddots \\
0 & \cdots & & d_{\ol{m+1}}(u)
\end{bmatrix} , \label{D[m]}
\\[0.5em] 
\setlength\arraycolsep{2pt}
\renewcommand{\arraystretch}{0.7}
F^{[m]}(u) := \begin{bmatrix}
1 && \cdots & 0\, \\
f_{m+2,m+1}(u) & \ddots && \vdots\, \\
\vdots & & \ddots \\
f_{\ol{m+1},m+1}(u) & \cdots & f_{\ol{m+1},\ol{m+2}}(u) & 1\,
\end{bmatrix} , \qu\;
E^{[m]}(u) := \begin{bmatrix}
\,1 & e_{m+1,m+2}(u) & \dots & e_{m+1,\ol{m+1}}(u) \\
 & \ddots && \vdots \\
\,\vdots & & \ddots & e_{\ol{m+2},\ol{m+1}}(u) \\
\,0 & \cdots & & 1
\end{bmatrix} . \label{F[m]E[m]}
}
Here recall that $\bar \imath = N-i+1$.
The reduced generating matrix is then
\equ{
S^{[m]}(u) := F^{[m]}(u)\,D^{[m]}(u)\,E^{[m]}(u) . \label{S[m]}
}
Let $s^{[m]}_{ij}(u)$ with $m< i,j< \ol{m}$ denote matrix entries of $S^{[m]}(u)$. We then have the following analogues of \cite[Prop.~4.1]{JLM18} and \cite[Cor.~4.2]{JLM18}, the proofs of which follow by the same arguments.

\begin{prop}
The series $s^{[m]}_{m+i,m+j}(u)$ of $X^{\tw}(\mfg_{N})$ coincide with the images of the generating series $s_{ij}(u)$ of $X^{\tw}(\mfg_{N-2m})$ under the embedding \eqref{psi_m}, that is,
\equ{
s^{[m]}_{m+i,m+j}(u) = \psi^{(N)}_m(s_{ij}(u)) \qu\text{for} \qu 1 \le i,j\le N-2m .
}
\end{prop}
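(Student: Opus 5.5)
The proof is pure linear algebra over a noncommutative ring: the twisted reflection equation plays no role. The only inputs are the uniqueness of the Gaussian decomposition \eqref{S=FDE} and the block (Schur complement) description of quasi-determinants.

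Write $S(u)$ in $2\times 2$ block form with respect to the splitting $\{1,\dots,m\}\sqcup\{m+1,\dots,N\}$:
\[
S(u)=\begin{bmatrix} A(u) & B(u)\\ C(u) & W(u)\end{bmatrix},
\]
where $A(u)$ is the upper-left $m\times m$ block. The block $A(u)$ is invertible because its leading quasi-minors $d_1(u),\dots,d_m(u)$ have constant terms $g_{ii}=\pm1$. The key observation is that the matrix of images under \eqref{psi_m}, with entries $\psi^{(N)}_m(s_{ij}(u))$ for $1\le i,j\le N-2m$, is exactly the corresponding $(N-2m)\times(N-2m)$ block of the Schur complement $W(u)-C(u)A(u)^{-1}B(u)$. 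This is the standard fact \cite[\S1.10]{Mo07} that the quasi-determinant in \eqref{psi_m}, with a single boxed entry bordered by the block $A(u)$, equals $s_{m+i,m+j}(u)-\sum_{a,b\le m}s_{m+i,a}(u)\,(A(u)^{-1})_{ab}\,s_{b,m+j}(u)$. The indices $m+i$ and $m+j$ range over $m+1,\dots,N-m=\ol{m+1}$.

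Next, block-decompose the Gaussian factors. Write $F=\begin{bmatrix}F_{11}&0\\F_{21}&F_{22}\end{bmatrix}$, $D=\mathrm{diag}(D_1,D_2)$ and $E=\begin{bmatrix}E_{11}&E_{12}\\0&E_{22}\end{bmatrix}$. Multiplying out gives
\[
A=F_{11}D_1E_{11},\qquad B=F_{11}D_1E_{12},\qquad C=F_{21}D_1E_{11},\qquad W=F_{21}D_1E_{12}+F_{22}D_2E_{22}.
\]
It follows that $CA^{-1}B=F_{21}D_1E_{12}$, and hence the Schur complement is
\[
W-CA^{-1}B=F_{22}(u)\,D_2(u)\,E_{22}(u).
\]
Here $F_{22}$ is unipotent lower triangular, $D_2$ is diagonal and $E_{22}$ is unipotent upper triangular, with entries $f_{ji}(u)$, $d_i(u)$, $e_{ij}(u)$ for $i,j>m$.

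Restrict to the rows and columns $m+1,\dots,\ol{m+1}$ of this $(N-m)\times(N-m)$ matrix. The submatrix of a product of a lower triangular, a diagonal and an upper triangular matrix, taken over an initial segment of indices, equals the product of the corresponding initial submatrices of the three factors. So the $(N-2m)$-block is $F^{[m]}(u)D^{[m]}(u)E^{[m]}(u)=S^{[m]}(u)$, matching \eqref{D[m]} and \eqref{F[m]E[m]}. Reading off entries gives $s^{[m]}_{m+i,m+j}(u)=\psi^{(N)}_m(s_{ij}(u))$.

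The main point needing care is consistency of the Gaussian entries. The quasi-determinants defining $d_i$, $f_{ji}$ and $e_{ij}$ for $i>m$ are, by Sylvester's theorem for quasi-determinants (cf.\ \cite[Proof of Thm.~3.7]{JLM18}), the same as those computed from the Schur complement. Equivalently, by uniqueness in \cite[Thm.~4.96]{GGRW05}, the block factorisation above is the Gaussian decomposition of the Schur complement. Since the whole argument works on the explicit factorisation $S=FDE$, no further invertibility issues arise beyond that of $A(u)$.
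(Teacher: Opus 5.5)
Your argument is correct and is essentially the standard one the paper relies on when it defers to \cite[Prop.~4.1]{JLM18}: write $S=FDE$ in block form, identify the bordered quasi-determinant with the Schur complement $W-CA^{-1}B=F_{22}D_2E_{22}$, and truncate the triangular factors to an initial segment of indices. Your closing paragraph on Sylvester's theorem and uniqueness of the Gaussian decomposition is not needed, because the entries of $F_{22}$, $D_2$, $E_{22}$ are by definition the Gaussian series of $S(u)$ from which $S^{[m]}(u)$ is built.
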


\begin{crl} \label{C:RE[m]}
The subalgebra $X^{\tw[m]}(\mfg_N) \subset X^{\tw}(\mfg_N)$ generated by the coefficients of the series $s^{[m]}_{ij}(u)$ with $m<i,j<\ol{m}$ is isomorphic to $X^{\tw}(\mfg_{N-2m})$. In particular,
\[
R^{[m]}(u-v)\,S_1^{[m]}(u)\,R^{[m]\prime}(\ka^{[m]}-u-v)\,S_2^{[m]}(v) = S_2^{[m]}(v)\,R^{[m]\prime}(\ka^{[m]}-u-v)\,S_1^{[m]}(u)\,R^{[m]}(u-v) 
\]
where $\ka^{[m]}:=\frac{N-2m}{2} - \theta$ and $R^{[m]}(u)$ is the $R$-matrix of $X^{\tw}(\mfg_{N-2m})$.
\end{crl}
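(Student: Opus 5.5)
The corollary should follow directly from the preceding proposition together with Corollary~\ref{C:emb}. By the proposition, $s^{[m]}_{m+i,m+j}(u)=\psi^{(N)}_m(s_{ij}(u))$ for $1\le i,j\le N-2m$. So the subalgebra $X^{\tw[m]}(\mfg_N)$ generated by the coefficients of the $s^{[m]}_{ij}(u)$ is exactly the image $\psi^{(N)}_m\big(X^{\tw}(\mfg_{N-2m})\big)$. Corollary~\ref{C:emb} tells us that $\psi^{(N)}_m$ is an injective algebra homomorphism. It therefore restricts to an isomorphism $X^{\tw}(\mfg_{N-2m})\iso X^{\tw[m]}(\mfg_N)$, which gives the first claim.

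For the reflection equation, I will identify $\End(\C^{N-2m})$ with the span of the $E_{ab}$ with $m<a,b<\ol m$, using the shift $a\mapsto a+m$. Under this identification the matrix $S^{[m]}(u)$ is the image, entrywise under $\psi^{(N)}_m$, of the generating matrix $S(u)$ of $X^{\tw}(\mfg_{N-2m})$. The next check is that the shift is compatible with the $R$-matrix data: I need $\overline{m+i}=m+\bar\imath'$, where $\bar\imath'=N-2m-i+1$ is the bar of $X^{\tw}(\mfg_{N-2m})$. This holds, and the signs $\theta_{m+i}$ agree with the $\theta_i$ of $\mfg_{N-2m}$. The latter point needs care in the symplectic case, since $m+i\le N/2$ if and only if $i\le (N-2m)/2$. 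It follows that $P,Q,J$ restrict to the corresponding operators for $\mfg_{N-2m}$, and that $\ka$ is replaced by $\ka^{[m]}=\frac{N-2m}{2}-\theta$. This is the same observation as for $\wt R$ in the proof of Proposition~\ref{P:emb1}, iterated.

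The remaining step is routine. Apply $\psi^{(N)}_m$ to the defining relation \eqref{RE} of $X^{\tw}(\mfg_{N-2m})$. Since $\psi^{(N)}_m$ is a homomorphism, the image relation is precisely the displayed identity with $R^{[m]}$, $\ka^{[m]}$ and $S^{[m]}$.

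The main obstacle is the sign and index bookkeeping needed to match $\theta$ and the bar involution after the shift. Everything else is inherited from Corollary~\ref{C:emb} and the preceding proposition. The proposition itself is proved by the argument of \cite[Prop.~4.1]{JLM18}: expand the quasi-determinant of \eqref{psi_m} using the Gaussian decomposition \eqref{S=FDE} restricted to the leading block, as in the proof of Corollary~\ref{C:emb}.
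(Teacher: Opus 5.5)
Your proposal is correct and follows the paper's approach. The paper obtains the corollary exactly as in \cite[Cor.~4.2]{JLM18}: the preceding proposition identifies $s^{[m]}_{m+i,m+j}(u)$ with $\psi^{(N)}_m(s_{ij}(u))$, Corollary~\ref{C:emb} makes $\psi^{(N)}_m$ injective, and the reflection equation is the image of \eqref{RE} for $\mfg_{N-2m}$. Your check that $\ol{m+i}=m+\bar\imath'$ and $\theta_{m+i}=\theta'_i$ under the shift is correct, and it is bookkeeping the paper leaves implicit.
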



\subsection{Inverse Gaussian decomposition} 

Inverting \eqref{S=FDE} gives
\equ{
S^{-1}(u) = E^{-1}(u)\,D^{-1}(u)\,F^{-1}(u) \label{Si=FDEi}
}
where $E^{-1}(u)$ is an upper triangular matrix and $F^{-1}(u)$ is a lower triangular matrix with entries given by $(E^{-1}(u))_{kk} = (F^{-1}(u))_{kk} = 1$ for $1\le k \le N$ and
\ali{
(F^{-1}(u))_{ji} = - f_{ji}(u) + \sum_{i<a<j} f_{ja}(u)\,f_{ai}(u) & - \sum_{i<a<b<j} f_{jb}(u)\,f_{ba}(u)\,f_{ai}(u) \nn \\[0.2em] 
& + \ldots + (-1)^{j-i} f_{j,j-1}(u) \cdots f_{i+1,i}(u) , \label{Fi} \\
(E^{-1}(u))_{ij} = - e_{ij}(u) + \sum_{i<a<j} e_{ia}(u)\,e_{aj}(u) & - \sum_{i<a<b<j} e_{ia}(u)\,e_{ab}(u)\,e_{bj}(u) \nn \\ & + \ldots + (-1)^{j-i} e_{i,i+1}(u) \cdots e_{j-1,j}(u) \label{Ei} 
}
for $1\le i < j \le N$.
In particular, the lower right $n \times n$ submatrix of $S^{-1}(u)$ only involves $f_{ji}(u)$, $e_{ij}(u)$ with $N-n < i<j\le N$ and $d_k(u)$ with $N-n<k\le N$.
The inverse $(S^{[m]}(u))^{-1}$ of the reduced generating matrix $S^{[m]}(u)$ is given by analogous formulas.


\section{Low rank cases} \label{sec:low}

Isomorphisms for extended twisted Yangians of low rank were investigated in \cite{GRW16}; we adapt those results to the presentation used in this paper, beginning by recalling the rank one case from \cite{LWZ25a}.


\subsection{Extended twisted Yangian $X^+(\mfgl_2)$}

Consider the extended twisted Yangian $X^+(\mfgl_2)$ (see \cite[\S2.13]{Mo07}) and write its generating matrix as
\equ{
S^\circ(u) =
\begin{bmatrix} 
1 & 0 \\ f^\circ_{21}(u) & 1  
\end{bmatrix}
\begin{bmatrix} 
d^\circ_1(u) & 0 \\ 0 & d^\circ_2(u) 
\end{bmatrix}
\begin{bmatrix} 
1 & e^\circ_{12}(u) \\ 0 & 1 
\end{bmatrix} .
}
Here (and below) the circle $^\circ$ indicates belonging to $X^+(\mfgl_2)$. Note that
\equ{
f^\circ_{21}(u), e^\circ_{12}(u) \in u^{-1} X^+(\mfgl_2) [[u^{-1}]], \qu 
d^\circ_{1}(u), d^\circ_{2}(u) \in 1 + u^{-1} X^+(\mfgl_2) [[u^{-1}]] .
}
We recall that the $R$-matrix of $X^+(\mfgl_2)$ is
\equ{
R^\circ(u) = I - u^{-1} P
}
with $I$ and $P$ given by \eqref{IPQ} with $N=2$.
For any function or operator $f(u)$ set
\equ{
\{ f(u) \}^u := f(u) + f(-u) . \label{u-symm}
}
Defining relations of $X^+(\mfgl_2)$ \cite[eq.~(2.6)]{Mo07} then imply (for a derivation, see \cite[\S4]{LWZ25a}):
\ali{
e^{\circ}_{12}(u) = f^{\circ}_{21}(-u-1&) , \qu d^{\circ}_1(u)\,d^{\circ}_2(-1-u) = d^{\circ}_2(u)\,d^{\circ}_1(-1-u), \qu [d^{\circ}_i(u),\,d^{\circ}_j(v)] = 0 \qu\text{for}\qu i,j = 1,2, \label{gl2:1} 
\\
d^{\circ}_1(u)\,f^{\circ}_{21}(v) &= \bigg(\frac{(u-v-1)(u+v+1)}{(u-v)(u+v)}\,f^{\circ}_{21}(v) + \bigg\{ \frac{u+\tfrac12}{u\,(u-v)}\,f^{\circ}_{21}(u) \bigg\}^{\!u}\,  \bigg) \, d^{\circ}_1(u) , \label{gl2:2} 
\\
d^{\circ}_2(u-1)\,f^{\circ}_{21}(v) &= \bigg( \frac{(u-v)(u+v)}{(u-v-1)(u+v+1)}\,f^{\circ}_{21}(v) -\bigg\{ \frac{u-\tfrac12}{u\,(u-v-1)}\,f^{\circ}_{21}(u-1) \bigg\}^{\!u}\, \bigg) \, d^{\circ}_2(u-1) , \label{gl2:3} 
\\
[f^{\circ}_{21}(u), f^{\circ}_{21}(v)] &= -\frac{1}{u-v}\,(f^{\circ}_{21}(u)-f^{\circ}_{21}(v))^2 - \frac{1}{u+v+1}\,((d^{\circ}_1(u))^{-1} d^{\circ}_2(u) - (d^{\circ}_1(v))^{-1} d^{\circ}_2(v)) . \label{gl2:4}
}

The centre of $X^+(\mfgl_2)$ is generated by coefficients of the series
\equ{
\ms{z}^{\circ}(u) := d^{\circ}_1(-u)\,d^{\circ}_2(u-1) , \qu 
\ms{c}^{\circ}(u) := d^{\circ}_1(u)\,(d^{\circ}_1(-u))^{-1}. \label{gl2:5}
}
The series $\ms{z}^{\circ}(u)$ is the {\it Sklyanin determinant} and the series $\ms{c}^{\circ}(u)$ is the {\it symmetry series} of $X^+(\mfgl_2)$, see \cite[\S2.5, \S2.13 and Ex.~2.7.3]{Mo07}. The coefficients of $\ms{z}^{\circ}(u)$ at the even powers of $u^{-1}$ and the coefficients of $\ms{c}^{\circ}(u)$ at the odd powers of $u^{-1}$ are algebraically independent and generate the whole centre of $X^+(\mfgl_2)$.

Introduce the series
\equ{
b(u) := f^{\circ}_{21}(u-\tfrac12), \qu 
h(u) := (d^{\circ}_1(u-\tfrac12))^{-1} d^{\circ}_2(u-\tfrac12) 
}
and elements $b_r$, $h_r$ with $r\ge 0$ via
\[
b(u) = \sum_{r\ge 0} b_r u^{-r-1}, \quad
h(u) = 1+ \sum_{r\ge 0} h_r u^{-r-1}.
\]
Relations (\ref{gl2:1}--\ref{gl2:4}) imply \cite[Lem.~4.2]{LWZ25a}\footnote{Relation \eqref{gl2:7} is a refinement of relation (4.19) in \cite{LWZ25a}.}
\gat{
[h(u),h(v)] = 0, \qu h(-u) = h(u) , \qu
[b(u),b(v)] = \frac{c}{2} \frac{(b(u) - b(v))^2}{v-u} + \frac{h(v)-h(u)}{v+u} , \label{gl2:6}
\\
h(u)\,b(v) = \bigg( \frac{(u-v+\tfrac{c}{2})(u+v-\tfrac{c}{2})}{(u-v-\tfrac{c}{2})(u+v+\tfrac{c}{2})} \, b(v) - \bigg\{ \frac{c\,(u-\tfrac{c}{2})}{u\,(u-v-\tfrac{c}{2})}\, b(u-\tfrac{c}{2}) \bigg\}^{\!u}\, \bigg) \, h(u) , \label{gl2:7}
}
with $c=2$. In particular, $h_{2r}=0$ for $r\ge 0$.
The (sub)algebra generated by elements $b_r$ and $h_{2r+1}$, $r\ge 0$, is isomorphic to the twisted Yangian $Y^+(\mfsl_2)$.


\subsection{Extended twisted Yangian $X^\tw(\mfsp_2)$}

The $R$-matrix of $X^\tw(\mfsp_2)$ and that of $X^+(\mfgl_2)$ satisfy
\equ{
\label{RR-sp2}
R(u) = \frac{u-1}{u-2}\,R^{\circ}(\tfrac12u) , \qu 
R'(\ka-u) = \frac{u-1}{u}\,R^{\circ\prime}(-\tfrac12(u-2)) 
}
where $\ka = 2$. The $G$-matrix is $G=E_{11}-E_{22}$. Set $A:=E_{11}+\sqrt{-1}E_{22}$. This leads to the following analogue of the fourth statement of \cite[Prop.~4.1]{GRW16}.

\begin{prop} \label{P:sp2->gl2}
The mapping $X^{\tw}(\mfsp_2) \to X^+(\mfgl_2)$ given by
\equ{
S(u) \mapsto  A S^\circ(\tfrac 12\,(u-1))\, A
\label{sp2->gl2}
}
defines an isomorphism of algebras. In terms of Gaussian series it reads as
\ali{
\label{sp2-G1}
d_1(u) & \mapsto d^\circ_1(\tfrac 12\,(u-1)) , \qu &e_{12}(u) &\mapsto \sqrt{-1}\,e^\circ_{12}(\tfrac 12\,(u-1)) , \\
\label{sp2-G2}
d_{2}(u) & \mapsto -d^\circ_{2}(\tfrac 12\,(u-1)) , \qu & f_{21}(u) & \mapsto \sqrt{-1}\,f^\circ_{21}(\tfrac 12\,(u-1)) .
}
\end{prop}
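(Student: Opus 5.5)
The plan is to check that the substitution \eqref{sp2->gl2} carries the twisted reflection equation \eqref{RE} for $X^{\tw}(\mfsp_2)$ into the defining relation of $X^+(\mfgl_2)$. The Gaussian formulas \eqref{sp2-G1}--\eqref{sp2-G2} are then read off directly, and bijectivity follows from a graded comparison.

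\emph{Step 1: homomorphism.} Recall that the defining relation of $X^+(\mfgl_2)$ (cf.~\cite[eq.~(2.6)]{Mo07}) is
\[
R^\circ(u-v)\,S^\circ_1(u)\,R^{\circ\prime}(-u-v)\,S^\circ_2(v) = S^\circ_2(v)\,R^{\circ\prime}(-u-v)\,S^\circ_1(u)\,R^\circ(u-v).
\]
Put $u'=\tfrac12(u-1)$ and $v'=\tfrac12(v-1)$, so that $u'-v'=\tfrac12(u-v)$ and $u'+v'=\tfrac12(u+v-2)$.

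By \eqref{RR-sp2} with $\ka=2$, the factor $R(u-v)$ is proportional to $R^\circ(u'-v')$. Likewise $R'(\ka-u-v)=R'(\ka-(u+v))$ is proportional to $R^{\circ\prime}(-\tfrac12(u+v-2))=R^{\circ\prime}(-u'-v')$. The scalar prefactors appear identically on both sides of \eqref{RE} and cancel.

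It remains to handle the conjugation by $A$. Since $A=A^t$ is diagonal, we have $A_1A_2\,P\,A_1A_2 = P A_1^2A_2^2$ only up to the factor $A^2=\diag(1,-1)$, so we must compute rather than cancel. Note that $A_1A_2 P A_1^{-1}A_2^{-1}=P$, while for the partial transpose $A_1 P^{t_1} A_2 = A_2^{-1}\,P^{t_1}\,A_1^{-1}$ up to a sign pattern. The cleanest route is to write $S(u)\mapsto A\,S^\circ(u')\,A = (A S^\circ(u') A^{-1})\,A^2$. Conjugation by $A$ commutes with $R^\circ$, and $A^2=G$. One then checks directly on the $4\times4$ matrices that $G_1 R^{\circ\prime} G_1$ relates appropriately to $R^{\circ\prime}$, using $G^tG=I$ and $R^{\circ\prime}=I-u^{-1}Q^\circ$ with $Q^\circ=\sum E_{ji}\ot E_{ji}$.

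I expect this sign bookkeeping to be the main obstacle, since the twisted (transposed) form of \eqref{RE} differs from Molev's convention precisely by such $\theta$-signs. Concretely, I would verify that $A_1A_2\,R^{\circ\prime}\,A_1A_2$ equals a scalar multiple of $R^{\circ\prime}$, using that $Q^\circ$ is invariant under $X\ot X$ for orthogonal-type scalings satisfying $A^2$ diagonal with entries $\pm1$. If this fails, I would fall back on checking the finitely many component relations \eqref{[s,s]} for $N=2$ directly.

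\emph{Step 2: Gaussian images.} The Gaussian decomposition of $A\,F^\circ D^\circ E^\circ A$ is $(AF^\circ A^{-1})(AD^\circ A)(A^{-1}E^\circ A)$. This gives $d_1\mapsto d_1^\circ$ and $d_2\mapsto (\sqrt{-1})^2 d_2^\circ=-d^\circ_2$, while the off-diagonal entries $f_{21}$ and $e_{12}$ pick up the factor $\sqrt{-1}$. This is exactly \eqref{sp2-G1}--\eqref{sp2-G2}, by uniqueness of the Gaussian decomposition.

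\emph{Step 3: bijectivity.} The inverse map $S^\circ(u)\mapsto A^{-1}S(2u+1)A^{-1}$ is a homomorphism by the same computation run backwards, since the relations were shown to be equivalent. Hence the map is an isomorphism.
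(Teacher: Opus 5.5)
Your outline matches the paper's proof: substitute \eqref{sp2->gl2} into \eqref{RE}, cancel the scalar factors coming from \eqref{RR-sp2}, read off the Gaussian images from $(AF^\circ A^{-1})(AD^\circ A)(A^{-1}E^\circ A)$, and invert. Steps 2 and 3 are fine. The gap is in Step 1, at exactly the point you flag. You never show that $S^\circ\mapsto AS^\circ A$ is compatible with the reflection equation, and the concrete check you propose is false. Take $A=\diag(1,\sqrt{-1})$ and $Q^\circ=\sum_{i,j}E_{ij}\ot E_{ij}$. Then $A_1A_2\,Q^\circ A_1A_2=\sum_{i,j}g_{ii}g_{jj}E_{ij}\ot E_{ij}=G_1Q^\circ G_1$ and $(A_1A_2)^2=G_1G_2=\diag(1,-1,-1,1)$. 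Hence $A_1A_2R^{\circ\prime}(w)A_1A_2=G_1G_2-w^{-1}G_1Q^\circ G_1$, which is not a scalar multiple of $R^{\circ\prime}(w)=I-w^{-1}Q^\circ$. Nor is $Q^\circ$ invariant under conjugation by $A\ot A$. That would need $AA^t$ to be scalar, whereas $AA^t=G$; indeed $(A\ot A)Q^\circ(A\ot A)^{-1}=G_1Q^\circ G_1\ne Q^\circ$.

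The missing identity is $A_1Q^\circ A_2=A_2Q^\circ A_1$. Both sides equal $\sum_{i,j}a_ia_j\,E_{ij}\ot E_{ij}$ for $A=\diag(a_1,a_2)$, so $A_1R^{\circ\prime}(w)A_2=A_2R^{\circ\prime}(w)A_1$. Combine this with $[A_1A_2,R^\circ(w)]=0$. Using it once on each side gives
\[
R^\circ(u'-v')\,A_1S^\circ_1(u')A_1\,R^{\circ\prime}(-u'-v')\,A_2S^\circ_2(v')A_2=A_1A_2\,R^\circ(u'-v')\,S^\circ_1(u')\,R^{\circ\prime}(-u'-v')\,S^\circ_2(v')\,A_1A_2 ,
\]
together with the analogous identity for the right-hand side. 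So the reflection equation for $AS^\circ A$ is the one for $S^\circ$ conjugated by the invertible matrix $A_1A_2$. This gives the homomorphism property. Since it is an equivalence, it also justifies running the computation backwards in Step 3; note too that the constant term $AIA=G$ matches $s_{ij}(u)=g_{ij}+\dots$. Your ``cleanest route'' through $(AS^\circ A^{-1})\,G$ can be completed along the same lines, and the componentwise fallback would also work. As written, however, Step 1 does not establish that the map is a homomorphism.
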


\begin{proof}
Applying the mapping \eqref{sp2->gl2} and matrix identities \eqref{RR-sp2} to \eqref{RE} yields defining relations of $X^+(\mfgl_2)$. The mapping \eqref{sp2->gl2} is clearly invertible and so it is an isomorphism of algebras. Relations \eqref{sp2-G1} and \eqref{sp2-G2} follow by applying Gaussian decomposition to both sides of \eqref{sp2->gl2}.
\end{proof}

Combining Proposition \ref{P:sp2->gl2} with relations (\ref{gl2:1}--\ref{gl2:5}) we obtain the following.

\begin{crl} \label{C:sp2}
The following relations hold in $X^\tw(\mfsp_2)$:
\gat{
e_{12}(u) = f_{21}(-u) , \qu d_1(u)\, d_2(-u) = d_2(u)\, d_1(-u) , \qu [ d_i(u), d_j(v) ] = 0 \qu\text{for}\qu i,j=1,2, \label{sp2:1}
\\[0.2em]
d_1(u+1)\,f_{21}(v) = \bigg[ \frac{(u-v-1)(u+v+1)}{(u-v+1)(u+v-1)}\,f_{21}(v) + \bigg\{ \frac{2\,(u+1)}{u\,(u-v+1)}\,f_{21}(u+1) \bigg\}^{\!u} \,\bigg]\, d_1(u+1) , \label{sp2:2}
\\[0.2em]
d_2(u-1)\,f_{21}(v) = \bigg[ \frac{(u-v+1)(u+v-1)}{(u-v-1)(u+v+1)}\,f_{21}(v) - \bigg\{ \frac{2\,(u-1)}{u\,(u-v-1)}\,f_{21}(u-1) \bigg\}^{\!u} \,\bigg]\, d_2(u-1) , \label{sp2:3}
\\[0.2em]
[f_{21}(u), f_{21}(v)] = \frac{2}{v-u} (f_{21}(u)-f_{21}(v))^2 + \frac{2}{v+u} ( d_1^{-1}(v)\,d_2(v) - d_1^{-1}(u)\,d_2(u) ) . \label{sp2:4}
}
The centre of $X^\tw(\mfsp_2)$ is generated by coefficients of the series
\equ{
\ms{z}(u) := -d_1(u)\,d_2(-u), \qu 
\ms{c}(u) := d^{-1}_1(2-u)\, d_1(u) .
\label{sp2:5}
}
\end{crl}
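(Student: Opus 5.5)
The plan is a direct transport of structure: push the relations \eqref{gl2:1}--\eqref{gl2:5} of $X^+(\mfgl_2)$ through the isomorphism of Proposition~\ref{P:sp2->gl2}, using the dictionary \eqref{sp2-G1}--\eqref{sp2-G2} for the Gaussian series. Write $\phi$ for that isomorphism and set $w=\tfrac12(u-1)$, $w'=\tfrac12(v-1)$. Every identity in $X^\tw(\mfsp_2)$ is equivalent to its image under $\phi$. So each relation in Corollary~\ref{C:sp2} reduces to a relation among $d^\circ_i$, $f^\circ_{21}$, $e^\circ_{12}$ evaluated at shifted and rescaled arguments. I would check these one at a time.

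\textbf{Relations \eqref{sp2:1}.}
\begin{itemize}
\item Under $\phi$, $e_{12}(u)$ goes to $\sqrt{-1}\,e^\circ_{12}(w)$. By \eqref{gl2:1}, $e^\circ_{12}(w)=f^\circ_{21}(-w-1)$, and $-w-1=\tfrac12(-u-1)$. This is exactly the image of $\sqrt{-1}^{-1}\cdot\sqrt{-1}\,f_{21}(-u)$, which gives $e_{12}(u)=f_{21}(-u)$.
\item Since $-1-w=\tfrac12(-u-1)$, the image of $d_1(u)d_2(-u)$ is $-d^\circ_1(w)d^\circ_2(-1-w)$. This matches the image of $d_2(u)d_1(-u)$ by the second relation in \eqref{gl2:1}.
\item Commutativity of the $d_i$ is immediate.
\end{itemize}

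\textbf{Relations \eqref{sp2:2}--\eqref{sp2:4}.} The signs $-1$ on $d_2$ and $\sqrt{-1}$ on $f_{21}$ combine as follows.
\begin{itemize}
\item In \eqref{sp2:2} and \eqref{sp2:3}, the $d$'s appear on both sides and $f_{21}$ appears linearly. The factors $\sqrt{-1}$ and $-1$ therefore cancel.
\item In \eqref{sp2:4}, the left side and the $f^2$ term each pick up $\sqrt{-1}^2=-1$, while $d_1^{-1}d_2$ maps to $-(d^\circ_1)^{-1}d^\circ_2$. All terms thus acquire the same factor $-1$, which cancels.
\end{itemize}
It then remains to rewrite rational coefficients. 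For example, $d_1(u+1)$ maps to $d^\circ_1(\tfrac u2)$, so in \eqref{gl2:2} one substitutes $u\to u/2$ and $v\to w'$. This gives $\tfrac u2-w'=\tfrac12(u-v+1)$ and $\tfrac u2+w'+1=\tfrac12(u+v+1)$. The prefactor of $f(v)$ then becomes $\frac{(u-v-1)(u+v+1)}{(u-v+1)(u+v-1)}$, and the symmetrised term $\{\frac{u/2+1/2}{(u/2)(u/2-w')}f^\circ_{21}(u/2)\}^u$ becomes $\{\frac{2(u+1)}{u(u-v+1)}f_{21}(u+1)\}^u$, since $f^\circ_{21}(\tfrac u2)$ corresponds to $f_{21}(u+1)$. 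Relation \eqref{sp2:3} is obtained the same way from \eqref{gl2:3}, with $d_2(u-1)\mapsto -d^\circ_2(\tfrac u2-1)$ and $u\to u/2$. Relation \eqref{sp2:4} comes from \eqref{gl2:4}: one has $w-w'=\tfrac12(u-v)$ and $w+w'+1=\tfrac12(u+v)$, which produce the factors $2/(v-u)$ and $2/(v+u)$.

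\textbf{Centre.} Since $\phi$ is an isomorphism, it maps the centre onto the centre. The image of $-d_1(u)d_2(-u)$ is $d^\circ_1(w)d^\circ_2(-w-1)=\ms{z}^\circ(-w)$, a reparametrisation of the Sklyanin determinant. Also, $d_1(2-u)\mapsto d^\circ_1(\tfrac12(1-u))=d^\circ_1(-w)$, so $\ms{c}(u)\mapsto \ms{c}^\circ(w)$. Because these are invertible affine changes of variable of the central series, their coefficients generate the same subalgebra, and \eqref{gl2:5} then gives the centre.

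The main obstacle is purely bookkeeping: checking that the symmetrising brackets $\{\cdot\}^u$ transform correctly under the affine substitution. The bracket is symmetric in $u\mapsto -u$, and this symmetry is preserved because the substitution used there is linear ($u\to u/2$) with the shift absorbed into the argument of $d$. I would verify this first.
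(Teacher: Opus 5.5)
Your proposal is correct and is exactly the paper's argument: the paper obtains the corollary by transporting \eqref{gl2:1}--\eqref{gl2:5} through the isomorphism of Proposition~\ref{P:sp2->gl2} via \eqref{sp2-G1}--\eqref{sp2-G2}, and your handling of the factors $\sqrt{-1}$ and $-1$, of the substitutions $w=\tfrac12(u-1)$ and $u\to u/2$, and of the brackets $\{\cdot\}^u$ all checks out. The only step you leave unstated is that the image $(d^\circ_1(-w))^{-1}d^\circ_1(w)$ of $\ms{c}(u)$ equals $\ms{c}^\circ(w)$; this holds because $d^\circ_1(w)$ and $d^\circ_1(-w)$ commute by the third relation in \eqref{gl2:1}.
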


Introduce the series
\equ{
b(u) := \tfrac{1}{\sqrt{-2}} f_{21}(u) , \qu 
h(u) := -d_1^{-1}(u)\, d_2(u) 
}
and note that the mapping \eqref{sp2->gl2} maps these series as
\equ{
b(u) \mapsto \tfrac{1}{\sqrt{2}}\, b^\circ(\tfrac12u), \qu 
h(u) \mapsto h^\circ(\tfrac12u) .
}
This immediately yields the following result.

\begin{thm} \label{T:iso-sp2}
The special twisted Yangian $SY^{\tw}(\mfsp_2)$ is isomorphic to the algebra generated by coefficients of the series $b(u)$ and $h(u)$ subject to relations (\ref{gl2:6}--\ref{gl2:7}) with $c=4$.
\end{thm}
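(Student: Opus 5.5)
The plan is to transport the known rank-one result through the isomorphism of Proposition \ref{P:sp2->gl2}. First I identify $SY^{\tw}(\mfsp_2)$ with a subalgebra of $X^+(\mfgl_2)$, and then match that subalgebra with the twisted Yangian $Y^+(\mfsl_2)$. The latter is generated by $b_r$ and $h_{2r+1}$ subject to \eqref{gl2:6}--\eqref{gl2:7} with $c=2$. A rescaling of the spectral parameter then converts $c=2$ into $c=4$.

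The first step is to describe $SY^{\tw}(\mfsp_2)$ as the quotient of $X^{\tw}(\mfsp_2)$ by the ideal generated by the symmetry relation \eqref{sym} and the unitarity relation \eqref{SS=wI}. I rewrite these in Gaussian terms.
\begin{itemize}
\item Unitarity \eqref{SS=wI} should amount to the normalisation $\ms{z}(u)=1$ of the central Sklyanin-type series of \eqref{sp2:5}.
\item The symmetry relation should amount to $\ms{c}(u)=1$ at its odd coefficients. The even ones are presumably determined by the rest; this needs care because $\tr(G(u)J)=0$ for CI.
\end{itemize}
Under \eqref{sp2->gl2}, the series $\ms{z}(u)$ and $\ms{c}(u)$ go to $\ms{z}^\circ$ and $\ms{c}^\circ$, up to the shift $u\mapsto \tfrac12(u-1)$ and sign factors. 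The quotient therefore corresponds to $X^+(\mfgl_2)$ modulo its central generators set to scalars. By \cite[\S2.13]{Mo07}, this quotient is $Y^+(\mfsl_2)\cong X^+(\mfgl_2)/(\text{centre})$, using $X^+(\mfgl_2)\cong Y^+(\mfsl_2)\ot Z$. I would verify the matching of ideals directly from the images of $d_1,d_2$ in \eqref{sp2-G1}--\eqref{sp2-G2}.

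Next I check that $SY^{\tw}(\mfsp_2)$ is generated by the coefficients of $b(u)$ and $h(u)$. In $X^{\tw}(\mfsp_2)$ the generators are $d_1,d_2,f_{21},e_{12}$, and $e_{12}(u)=f_{21}(-u)$ by \eqref{sp2:1}. Modulo the central series, $d_1$ and $d_2$ are recovered from $h(u)=-d_1^{-1}(u)d_2(u)$ once $\ms{z}$ and $\ms{c}$ are fixed. The images $b(u)\mapsto \tfrac1{\sqrt2}b^\circ(\tfrac12u)$ and $h(u)\mapsto h^\circ(\tfrac12u)$ then show that the subalgebra generated by $b_r,h_r$ maps isomorphically onto $Y^+(\mfsl_2)$.

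The last step is to rescale the relations. Substituting $u\to\tfrac12u$, $v\to\tfrac12v$ in \eqref{gl2:6}--\eqref{gl2:7} with $c=2$, together with the factor $\tfrac1{\sqrt2}$ on $b$, should produce the same relations with $c=4$.
\begin{itemize}
\item In \eqref{gl2:6}, $\frac{(b(u)-b(v))^2}{v-u}$ picks up a factor $2\cdot\tfrac12=1$ from the spectral-parameter rescaling and the $\tfrac12$ from $b^2$. This changes $\tfrac c2$ from $1$ to $2$, as required.
\item In \eqref{gl2:7}, all shifts $\tfrac c2$ double.
\end{itemize}
The main obstacle I expect is the identification of the ideal defining $SY^{\tw}(\mfsp_2)$ with the central ideal on the $X^+(\mfgl_2)$ side, with correct signs and shifts. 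This includes showing that the defining relations of $Y^+(\mfsl_2)$ in these generators are exactly \eqref{gl2:6}--\eqref{gl2:7}, for which I rely on \cite[Lem.~4.2]{LWZ25a} and the statement recalled after \eqref{gl2:7}.
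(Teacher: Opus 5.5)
Your proposal is correct and follows the paper's route. You transport everything through the isomorphism of Proposition \ref{P:sp2->gl2}, via $b(u)\mapsto \tfrac{1}{\sqrt2}\,b^\circ(\tfrac12 u)$ and $h(u)\mapsto h^\circ(\tfrac12 u)$, invoke the rank-one $c=2$ result for $Y^+(\mfsl_2)$, and rescale to $c=4$; the paper treats the passage to the special quotient as immediate, whereas you spell out that $\ms{c}(u)\mapsto \ms{c}^\circ(\tfrac12(u-1))$ and $\ms{z}(u)\mapsto \ms{z}^\circ(\tfrac12(1-u))$. In \eqref{gl2:6} your factor count is garbled: the net factor on the quadratic term is $\tfrac12\cdot 2\cdot 2=2$, coming from $[b,b]=\tfrac12[b^\circ,b^\circ]$, from $\big(\tfrac12(v-u)\big)^{-1}$, and from $(b^\circ)^2=2\,b^2$, and this is what turns $\tfrac c2=1$ into $\tfrac c2=2$.
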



\subsection{Extended twisted Yangian $X^\tw(\mfo_3)$}

Let $V \subset \C^2 \ot \C^2$ be the three-dimensional subspace spanned by vectors
\[
v_1 := e_1 \ot e_1 , \qu 
v_2 := \tfrac{1}{\sqrt{2}} ( e_1 \ot e_2 + e_2 \ot e_1 ) , \qu
v_3 := - e_2 \ot e_2 ,
\]
and let the projection $|_{V\ot V}$ from $\C^2 \ot \C^2 \ot \C^2 \ot \C^2$ to its subspace $V \ot V$ be given by $\tfrac14\, R^\circ_{12}(-1)\, R^\circ_{34}(-1)$. Let $R(u)$ be the $R$-matrix of $X^\tw(\mfo_3)$. Then 
\ali{
\label{RR-o3-1}
R(u) &\equiv \frac{u+\frac12}{u-\frac12} \, R^\circ_{14}(2(u-\tfrac12))\,R^\circ_{13}(2u)\,R^\circ_{24}(2u)\,R^\circ_{23}(2(u+\tfrac12)) \bigg|_{V \ot V} ,
\\
\label{RR-o3-2}
R'(\ka-u) &\equiv \frac{u-1}{u} \, R^{\circ\prime}_{23}(2(1-u))\, R^{\circ\prime}_{24}(2(\tfrac12-u))\, R^{\circ\prime}_{13}(2(\tfrac12-u))\, R^{\circ\prime}_{14}(-2u) \bigg|_{V \ot V} ,
}
where $\ka=\tfrac12$ and $\equiv$ denotes equality of matrix operators upon identification of basis vectors $e_i \ot e_j \in \C^3 \ot \C^3$ with $v_i \ot v_j \in V \ot V$. In particular, the subspace $V \ot V$ is stable under the action of matrix operators in the rhs\ of \eqref{RR-o3-1} and \eqref{RR-o3-2}, see \cite[\S4.2]{AMR06}. The $G$-matrix is the identity matrix. 
We now state the following analogue of \cite[Prop.~4.5]{GRW16}.

\begin{prop} \label{P:o3->gl2}
The mapping $X^{\tw}(\mfo_3) \to X^+(\mfgl_2)$ given by
\equ{
\label{o3->gl2}
S(u) \mapsto \tfrac12 R^\circ(-1)\, S_1^\circ(2u-1)\,R^{\circ\prime}(-4u+1)\,S_2^\circ(2u) \,\Big|_{V} ,
}
where basis vectors $e_i \in \C^3$ and $v_i \in V$ are identified, defines an isomorphism of algebras.
In terms of Gaussian series it reads as
\gan{
d_1(u) \mapsto \tfrac{4u}{4u-1}\, d^\circ_1(2u-1)\,d^\circ_1(2u) , \qu 
d_2(u) \mapsto d^\circ_1(2u-1)\,d^\circ_2(2u) , \qu 
d_3(u) \mapsto \tfrac{4u+1}{4u}\, d^\circ_2(2u-1)\,d^\circ_2(2u),
\\
e_{12}(u) \mapsto \sqrt{2}\,e^\circ_{12}(2u) , \qu
e_{13}(u) \mapsto - e^\circ_{12}(2u)\,e^\circ_{12}(2u) - \tfrac{1}{4u}\, (d^\circ_1(2u))^{-1}\,d^\circ_2(2u) ,\qu 
e_{23}(u) \mapsto -\sqrt{2}\,e^\circ_{12}(2u-1) , 
\\
f_{21}(u) \mapsto \sqrt{2}\,f^\circ_{21}(2u) , \qu
f_{31}(u) \mapsto - f^\circ_{21}(2u)\,f^\circ_{21}(2u) - \tfrac{1}{4u}\, (d^\circ_1(2u))^{-1}\,d^\circ_2(2u) ,\qu 
f_{32}(u) \mapsto -\sqrt{2}\,f^\circ_{21}(2u-1) .
}
\end{prop}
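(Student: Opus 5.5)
The proof has three parts: the fusion gives a homomorphism, the map is bijective, and the Gaussian formulas follow from a triangular factorisation.

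\textbf{Homomorphism.} I would follow the fusion procedure, in the same way the reflection equation \eqref{RE2} was derived in the proof of Proposition \ref{P:emb1}. Write $\mr{S}^\circ(u) := R^\circ(-1)\,S_1^\circ(2u-1)\,R^{\circ\prime}(-4u+1)\,S_2^\circ(2u)$. This is the $X^+(\mfgl_2)$-analogue of \eqref{SS(u)}. By the reflection equation of $X^+(\mfgl_2)$ together with the Yang--Baxter equation, it also equals $S_2^\circ(2u)\,R^{\circ\prime}(-4u+1)\,S_1^\circ(2u-1)\,R^\circ(-1)$. Since $R^\circ(-1)=I+P$ is twice the symmetriser, $\tfrac12\mr{S}^\circ(u)$ preserves $V=\mathrm{Sym}^2\C^2$ and vanishes on the antisymmetric part. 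This shows the restriction in \eqref{o3->gl2} is well defined.

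Next, in $(\C^2)^{\ot 4}$ I would write the four-fold fused reflection equation for $\mr{S}^\circ_{12}(u)$ and $\mr{S}^\circ_{34}(v)$. It has the same shape as \eqref{RE2}: the products $R^\circ_{23}R^\circ_{13}R^\circ_{24}R^\circ_{14}$ and the corresponding transposed products appear on both sides. It follows by moving $R^\circ$'s past each other repeatedly using the reflection and Yang--Baxter equations. I would then restrict it to $V\ot V$, using the projector $\tfrac14R^\circ_{12}(-1)R^\circ_{34}(-1)$ and its stability. Substituting \eqref{RR-o3-1} and \eqref{RR-o3-2} produces \eqref{RE} for $\mfo_3$ with $\ka=\tfrac12$, up to scalar factors. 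Those scalars must cancel between the two sides. The main bookkeeping obstacle is matching the spectral shifts: $2u-1$ and $2u$ in the fused matrix must combine with the arguments $2(u-v)\pm1$ and $2(u+v)$ that appear in \eqref{RR-o3-1} and \eqref{RR-o3-2}. For the transposed factors, I would use the identity \eqref{R=JRtJ} for $\mfgl_2$, namely $R^{\circ\prime}(-u)=(u+1)u^{-1}\cdot$(inverse-type relation), to rewrite $R^{\circ\prime}$ products. I must also check that $G=I$ maps correctly at $u=\infty$: the constant term is $\tfrac12(I+P)|_V=\mathrm{id}_V$.

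\textbf{Bijectivity.} For surjectivity, the entry $s_{12}(u)$ already gives $f^\circ_{21}$ (or $e^\circ_{12}$) up to lower terms. The diagonal entries give $d^\circ_1(2u-1)d^\circ_1(2u)$ and similar products. I do not expect this to generate everything outright, so I would argue through filtrations instead. Assign degree $r-1$ to the coefficients on both sides. The map is then filtered, and its associated graded map sends $\gr X^\tw(\mfo_3)\cong U(\mfo_3[x]^\rho)\ot\C[\zeta]$ onto $\gr X^+(\mfgl_2)\cong U(\mfgl_2[x]^{\rm tw})$. Concretely, the degree-$r$ symbols of $s_{ij}$ are the images of the symmetrised $\mathfrak{sl}_2$ tensor representation on $\mathrm{Sym}^2$ of the current generators, plus central parts. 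A direct comparison of PBW bases (\eqref{X-basis} versus the basis of \cite[\S2.13]{Mo07}) then shows the graded map is bijective. This uses $\mfo_3\cong\mfsl_2$, and the central $\zeta_r$ correspond to the symmetric-series coefficients. Checking the graded bijection, in particular the matching of the central parts, is the step I expect to be delicate.

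\textbf{Gaussian formulas.} Finally, I would substitute $S^\circ=F^\circ D^\circ E^\circ$ into the fused matrix and compute its $3\times3$ matrix in the basis $v_1,v_2,v_3$. Then I would read off the quasi-determinants. For example, $d_1=s_{11}$ is the $(v_1,v_1)$ entry, which equals $d^\circ_1(2u-1)d^\circ_1(2u)$ times the factor $\frac{4u}{4u-1}$ coming from $R^{\circ\prime}(-4u+1)$. The remaining entries follow the same way. To simplify products of $f^\circ(2u-1)$ and $f^\circ(2u)$ into the stated forms, I would use the commutation relations \eqref{gl2:2}--\eqref{gl2:4} at $v=u\pm\tfrac12$.
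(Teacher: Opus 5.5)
Your proposal is correct and follows essentially the paper's proof. The homomorphism comes from the fused matrix $\tfrac12R^\circ(-1)S^\circ_1(2u-1)R^{\circ\prime}(-4u+1)S^\circ_2(2u)|_V$ together with \eqref{RR-o3-1}--\eqref{RR-o3-2} and the reflection and Yang--Baxter equations. Bijectivity comes from comparing associated graded algebras on the PBW generators $\bar s^{(r)}_{21},\bar s^{(r)}_{11},\bar s^{(2r)}_{22}$ and their $\mfgl_2$ counterparts. The Gaussian formulas are read off entrywise using the $X^+(\mfgl_2)$ relations; your factor $\tfrac{4u}{4u-1}$ for $d_1$ is indeed right.

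One small correction concerns your heuristic for the step you flagged as delicate. $\gr X^+(\mfgl_2)$ is the enveloping algebra of the twisted current algebra tensored with one central polynomial generator in each even degree. Only the even-degree $\zeta_r$ match symmetry-series coefficients; the odd-degree $\zeta_r$ match the identity component of the twisted current algebra. This does not affect the direct PBW comparison you propose.
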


\begin{proof}
The explicit form of the mapping \eqref{o3->gl2} is obtained by applying Gaussian decomposition \eqref{S=FDE} to the generating matrices on both sides of \eqref{o3->gl2}, taking matrix entries, and applying relations (\ref{gl2:1}--\ref{gl2:4}). For instance, taking the (1,1)-th matrix entry on both sides of the mapping immediately gives
\equ{
\label{o3:G1}
d_1(u) \mapsto \frac{4u}{4u-1}\, d^\circ_1(2u-1)\,d^\circ_1(2u) .
}
Next, taking the (1,2)-th matrix entry on both sides of the mapping gives
\equ{
d_1(u)\,e_{12}(u) \mapsto \frac{d^\circ_1(2u-1)}{4u-1} \bigg( 2\sqrt{2} \, u \, d^\circ_1(2u) \, e^\circ_{12}(2u) + \frac{ 4u-1}{\sqrt{2}}\,e^\circ_{12}(2u-1) \, d^\circ_1(2u) +  \frac{1}{\sqrt{2}}\, f^\circ_{21}(2u) \, d^\circ_1(2u) \bigg) .
\label{o3:G2}
}
From (\ref{gl2:1}--\ref{gl2:2}) we know that
\[
f^\circ_{21}(2u)\,d^\circ_1(2u) = 4u\, d^\circ_1(2u)\,e^\circ_{12}(2u) - (4u-1)\,e^\circ_{12}(2u-1)\,d^\circ_1(2u) ,
\]
allowing us to rewrite the rhs~of \eqref{o3:G2} as %
\[
\frac{4\sqrt{2}\,u}{4u-1} \, d^\circ_1(2u-1)\,d^\circ_1(2u)\,e^\circ_{12}(2u) .
\]
It remains to combine \eqref{o3:G1} with \eqref{o3:G2}, yielding the wanted expression, $e_{12}(u) \mapsto \sqrt{2}\, e^\circ_{12}(2u)$.
Images of the remaining Gaussian series of $X^\tw(\mfo_3)$ are obtained in a similar way.

We now turn to proving that the mapping \eqref{o3->gl2} defines an isomorphism of algebras. Applying \eqref{o3->gl2} and matrix identities (\ref{RR-o3-1}--\ref{RR-o3-2}) to the defining relation \eqref{RE} yields a matrix relation, which holds true by virtue of the quantum Yang-Baxter equation, properties of projection operators and defining relations of $X^+(\mfgl_2)$. (An analogous computation is presented in the proof of \cite[Prop.~4.7]{GRW16}.) This proves that the mapping \eqref{o3->gl2} defines a homomorphism of algebras.
Next, consider the associated graded algebras $\gr X^\tw(\mfo_3)$ and $\gr X^+(\mfgl_2)$. 
A linear basis of $\gr X^\tw(\mfo_3)$ (resp.~$\gr X^+(\mfgl_2)$) is given by ordered monomials in the elements $\bar{s}^{(r)}_{21}$, $\bar{s}^{(r)}_{11}$, $\bar{s}^{(2r)}_{22}$ (resp.~$\bar{s}^{\circ(r)}_{21}$, $\bar{s}^{\circ(r)}_{11}$, $\bar{s}^{\circ(2r)}_{22}$) with $r\ge 1$.
The homomorphism \eqref{o3->gl2} descends to a bijective linear map $\gr X^\tw(\mfo_3) \to \gr X^+(\mfgl_2)$ given by 
\gan{
\bar{s}_{21}^{(r)} \mapsto 2^{1/2-r}\,\bar{s}^{\circ(r)}_{21} , \qu
\bar{s}_{11}^{(r)} \mapsto \tfrac14\del_{r1} + 2^{1-r}\,\bar{s}^{\circ(r)}_{11} , \qu 
\bar{s}^{(2r)}_{22} \mapsto 2^{-2r} (\bar{s}^{\circ(2r)}_{22} + \bar{s}^{\circ(2r)}_{11})
}
and so it defines an isomorphism of graded algebras. Consequently, the mapping \eqref{o3->gl2} is an isomorphism of algebras.
\end{proof}

Combining Proposition \ref{P:o3->gl2} with relations (\ref{gl2:1}--\ref{gl2:5}) we obtain the following result.

\begin{crl} \label{C:so3}
The following identities hold in $X^\tw(\mfo_3)$:
\gat{
e_{12}(u-\tfrac12) = -e_{23}(u) = - f_{32}(-u+\tfrac12) = f_{21}(-u) , \label{so3:1}
\\
e_{13}(u) = -\tfrac12 f_{21}^2(-u-\tfrac12) - \frac{1}{4u-1}\, d^{-1}_1(u)\,d_2(u) , \qu   
f_{31}(u) = -\tfrac12 f_{21}^2(u) - \frac{1}{4u-1}\, d^{-1}_1(u)\,d_2(u) , \label{so3:2}
\\[0.2em]  
\frac{2u-1}{4u-1} \, d_1(u-\tfrac14)\, d_2(-u-\tfrac14) = \frac{2u+1}{4u+1}\, d_1(-u-\tfrac14)\, d_2(u-\tfrac14) , \label{so3:3} \\[0.5em]
 d_1(u)\,d_3(-u) = d_1(-u)\,d_3(u) = d_2(u)\,d_2(-u) , \qu [ d_i(u) , d_j(v) ] = 0 \qu\text{for}\qu 1\le i,j\le3, \label{so3:4}
}
\ali{
d_1(u+\tfrac14)\,f_{21}(v) &= \bigg[ \frac{(u-v-\frac34)(u+v+\frac34)}{(u-v+\frac14)(u+v-\frac14)}\,f_{21}(v) + \,\bigg\{ \frac{u+\frac12}{u\,(u-v+\frac14)}\,f_{21}(u+\tfrac14) \bigg\}^{\!u} \,\bigg]\, d_1(u+\tfrac14) , \label{so3:5}
\\[0.3em]
d_2(u)\,f_{21}(v) &= \bigg[ \frac{(u-v+\frac12)(u-v-1)(u+v)(u+v+\frac12)}{(u-v-\frac12)(u-v)(u+v+1)(u+v-\frac12)}\,f_{21}(v) \nn \\
& \qq - \frac{u\,(u+\frac14)\,f_{21}(u)}{(u-\frac14)(u+\frac12)(u-v)} + \frac{u\,(u-\frac14)\,f_{21}(u-\tfrac12)}{(u+\frac14)(u-\frac12)(u-v-\frac12)}  \nn \\
& \qq - \frac{u\,(u-\frac34)\,f_{21}(-u+\tfrac12)}{3\,(u-\frac14)(u-\frac12)(u+v-\frac12)} + \frac{u\,(u+\frac34)f_{21}(-u-1) }{3\,(u+\frac14)(u+\frac12)(u+v+1)} \bigg]\, d_2(u) , \!\!\! \label{so3:6}
\\[0.3em]
[f_{21}(u), f_{21}(v)] &= \frac{\frac12(f_{21}(u)-f_{21}(v))^2}{v-u} + \frac{1}{u+v+\frac12} \bigg[ \frac{v}{v-\frac14}\,d_1^{-1}(v)\,d_2(v) - \frac{u}{u-\frac14}\,d_1^{-1}(u)\,d_2(u)\bigg] . \label{so3:7}
}
The centre of $X^\tw(\mfo_3)$ is generated by coefficients of the series
\equ{
\ms{z}(u) := d_1(u)\,d_3(-u) , \qu
\ms{c}(u) := \frac{2u-1}{2u}\,d_1(u)\,d_1^{-1}(\tfrac12-u) . \label{so3:8}
}
\end{crl}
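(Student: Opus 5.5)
The plan is to transport every identity through the isomorphism of Proposition \ref{P:o3->gl2}, using the explicit images of the Gaussian series listed there. Since that map is an isomorphism, it suffices to check that each identity in Corollary \ref{C:so3} becomes a relation that holds in $X^+(\mfgl_2)$ after substitution. The only inputs are relations (\ref{gl2:1}--\ref{gl2:5}). I would handle the identities in the order listed, since the later ones reuse the earlier ones.

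For \eqref{so3:1} I substitute the images $e_{12}(u)\mapsto\sqrt2\,e^\circ_{12}(2u)$, $e_{23}(u)\mapsto-\sqrt2\,e^\circ_{12}(2u-1)$, $f_{21}(u)\mapsto\sqrt2 f^\circ_{21}(2u)$ and $f_{32}(u)\mapsto-\sqrt2 f^\circ_{21}(2u-1)$. The claim then reduces to $e^\circ_{12}(w)=f^\circ_{21}(-w-1)$ from \eqref{gl2:1}. For \eqref{so3:2} I use \eqref{so3:1} to write $e^\circ_{12}(2u)^2=f^\circ_{21}(-2u-1)^2=\tfrac12 f_{21}(-u-\tfrac12)^2$. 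I also note that $d_1^{-1}(u)d_2(u)\mapsto\tfrac{4u-1}{4u}(d^\circ_1(2u))^{-1}d^\circ_2(2u)$, because the factors $d^\circ_1(2u-1)$ cancel, as all $d^\circ$ commute.

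The diagonal identities \eqref{so3:3} and \eqref{so3:4} come from the images of $d_i$. Commutativity is immediate from \eqref{gl2:1}. The equality $d_1(u)d_3(-u)=d_1(-u)d_3(u)=d_2(u)d_2(-u)$ becomes a product of four $d^\circ$'s, and I reduce it using $d^\circ_1(w)d^\circ_2(-1-w)=d^\circ_2(w)d^\circ_1(-1-w)$ at $w=2u$ and $w=2u-1$. The scalar prefactors $\tfrac{4u}{4u-1}$ and $\tfrac{4u+1}{4u}$ cancel correctly under $u\to-u$. Identity \eqref{so3:3} is the same computation after the shift $u\to u-\tfrac14$.

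For \eqref{so3:5} I map $d_1(u+\tfrac14)=\tfrac{4u+1}{4u}\,d^\circ_1(2u-\tfrac12)d^\circ_1(2u+\tfrac12)$ and commute $f^\circ_{21}(2v)$ past each factor in turn using \eqref{gl2:2}. Similarly, for \eqref{so3:6} I map $d_2(u)=d^\circ_1(2u-1)d^\circ_2(2u)$ and use \eqref{gl2:2} together with \eqref{gl2:3}, the latter shifted so that its argument is $2u$. Each step produces pole terms of the form $f^\circ_{21}$ evaluated at a point depending only on $u$. The second passage acts on these pole terms as well; I resolve them by specialising \eqref{gl2:2}/\eqref{gl2:3} at $v$ equal to the pole location. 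Then I collect partial fractions in $v$ and rewrite the result in terms of $f_{21}$.

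The main obstacle is this last bookkeeping. In \eqref{so3:6} the four residues at $v=u,\,u-\tfrac12,\,-u+\tfrac12,\,-u-1$ carry coefficients such as $\tfrac13$. Checking them requires careful use of the unitarity-type relation $d^\circ_1(u)d^\circ_2(-1-u)=d^\circ_2(u)d^\circ_1(-1-u)$, so that the terms involving $f^\circ_{21}$ at reflected points combine. A practical check is to compare residues on both sides pole by pole, and then verify the regular part at $v\to\infty$.

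The commutator \eqref{so3:7} is \eqref{gl2:4} at arguments $2u,2v$, with the $d$-ratio converted via the formula above. For the centre, I send $\ms z(u)$ and $\ms c(u)$ to products of $\ms z^\circ$ and $\ms c^\circ$ at shifted arguments, using \eqref{gl2:5} and the isomorphism. This image generates the centre of $X^+(\mfgl_2)$ because the relation is triangular and invertible in the generating coefficients.
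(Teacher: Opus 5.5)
Your proposal is correct and follows the paper's own route: the corollary is obtained by transporting each identity through the isomorphism of Proposition \ref{P:o3->gl2}, using its listed Gaussian images, and reducing to (\ref{gl2:1}--\ref{gl2:5}). The centre is handled, as you say, by the triangular relation with $\ms{z}^\circ(-2u)\,\ms{z}^\circ(1-2u)$ and $\ms{c}^\circ(2u)\,\ms{c}^\circ(2u-1)$; one minor remark is that \eqref{so3:6} does not actually need $d^\circ_1(w)\,d^\circ_2(-1-w)=d^\circ_2(w)\,d^\circ_1(-1-w)$, since commuting $f^\circ_{21}(2v)$ past $d^\circ_1(2u-1)\,d^\circ_2(2u)$ via \eqref{gl2:2}--\eqref{gl2:3} already gives the stated coefficients by rational-function algebra alone.
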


Introduce the series
\equ{
b(u) := f_{21}(u-\tfrac14) , \qu 
h(u) := \tfrac{4u-1}{4u-2} \, d_1^{-1}(u-\tfrac14)\, d_2(u-\tfrac14) 
}
and note that the mapping \eqref{o3->gl2} maps these series as
\equ{
b(u) \mapsto \sqrt{2}\,b^\circ(2u), \qu 
h(u) \mapsto h^\circ(2u) .
}
This immediately yields the following statement.

\begin{thm} \label{T:iso-so3}
The special twisted Yangian $SY^{\tw}(\mfo_3)$ is isomorphic to the algebra generated by coefficients of the series $b(u)$ and $h(u)$ subject to relations (\ref{gl2:6}--\ref{gl2:7}) with $c=1$.
\end{thm}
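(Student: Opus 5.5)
The plan is to transport everything through the isomorphism of Proposition~\ref{P:o3->gl2}, following the template of Theorem~\ref{T:iso-sp2}. That isomorphism identifies $X^\tw(\mfo_3)$ with $X^+(\mfgl_2)$. Under it we have $b(u)\mapsto\sqrt2\,b^\circ(2u)$ and $h(u)\mapsto h^\circ(2u)$. So the first task is to pin down where $SY^\tw(\mfo_3)$ goes.

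\textbf{Step 1: the image of $SY^\tw(\mfo_3)$.} By the decomposition \eqref{X=SYcz} of \cite{GR16}, $X^\tw(\mfo_3)$ is a tensor product of $SY^\tw(\mfo_3)$ with a central polynomial algebra. This central part is generated by the coefficients of $\ms z(u)$ and $\ms c(u)$ from \eqref{so3:8}, restricted to their algebraically independent modes. Similarly, $X^+(\mfgl_2)=Y^+(\mfsl_2)\ot Z$, where $Z$ is generated by $\ms z^\circ,\ms c^\circ$ from \eqref{gl2:5}.

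Using the explicit images of $d_1,d_2,d_3$, I would check that $\ms z(u)$ and $\ms c(u)$ map to series whose coefficients generate the same central subalgebra as those of $\ms z^\circ,\ms c^\circ$. For example, $\ms z(u)=d_1(u)d_3(-u)$ maps to a scalar multiple of $\ms z^\circ(2u)\ms z^\circ(-2u+1)$-type products. Hence the isomorphism restricts to an isomorphism of centres, and it identifies the quotients by the ideals that set the relevant central series to their scalar values.

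It remains to see that the subalgebra of $X^\tw(\mfo_3)$ generated by the coefficients of $b(u)$ and $h(u)$ is a complement to the centre mapping onto $Y^+(\mfsl_2)$. This follows because the images $\sqrt2 b^\circ(2u)$ and $h^\circ(2u)$ generate exactly the subalgebra generated by $b_r,h_{2r+1}$, which is $Y^+(\mfsl_2)$ by the last statement of the $X^+(\mfgl_2)$ subsection.

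\textbf{Step 2: identify $SY^\tw(\mfo_3)$ with that subalgebra.} The composite $SY^\tw(\mfo_3)\hookrightarrow X^\tw(\mfo_3)\twoheadrightarrow X^\tw(\mfo_3)/(\text{centre}-\text{scalars})$ is an isomorphism. I would then compare: $\langle b,h\rangle\subset X^\tw(\mfo_3)$ maps isomorphically to $Y^+(\mfsl_2)$, and $X^+(\mfgl_2)=Y^+(\mfsl_2)\ot Z$. Together these give $SY^\tw(\mfo_3)\cong Y^+(\mfsl_2)$, realised by the coefficients of $b,h$.

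\textbf{Step 3: rescale the relations.} Substitute $b(u)=\sqrt2\,b^\circ(2u)$ and $h(u)=h^\circ(2u)$ into \eqref{gl2:6}--\eqref{gl2:7} with $c=2$.

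For the bracket, $[b(u),b(v)]=2[b^\circ(2u),b^\circ(2v)]$ equals
\[
2\Big(\frac{(b^\circ(2u)-b^\circ(2v))^2}{2v-2u}+\frac{h^\circ(2v)-h^\circ(2u)}{2v+2u}\Big)
=\frac12\frac{(b(u)-b(v))^2}{v-u}+\frac{h(v)-h(u)}{v+u},
\]
which is \eqref{gl2:6} with $c=1$.

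In \eqref{gl2:7}, replace $u\to2u$ and $v\to2v$. The rational factor $(2u-2v+1)(2u+2v-1)/\ldots$ becomes the $c=1$ factor. The correction term $\frac{2(2u-1)}{2u(2u-2v-1)}\,b^\circ(2u-1)$ becomes $\frac{(u-\frac12)}{u(u-v-\frac12)}\cdot\frac{1}{\sqrt2}\,b(u-\tfrac12)$. After accounting for the $\sqrt2$ on the left, this matches the $c=1$ form. Finally, $h(-u)=h(u)$ and $[h,h]=0$ transfer directly.

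\textbf{Main obstacle.} I expect the hard part to be Step 1: checking that the centres correspond, and in particular confirming that the prefactor $\frac{4u-1}{4u-2}$ in $h$ and the scalar factors in the $d_i$ images make the central series match exactly. If this proves awkward, the fallback is an associated-graded/PBW comparison as in the proof of Proposition~\ref{P:o3->gl2}. In that approach I would show directly that the $b,h$ modes span a complement to the centre in $\gr$.
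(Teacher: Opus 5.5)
Your proposal is correct and follows essentially the paper's route: transport through the isomorphism of Proposition~\ref{P:o3->gl2} using $b(u)\mapsto\sqrt2\,b^\circ(2u)$ and $h(u)\mapsto h^\circ(2u)$, invoke the rank-one ($c=2$) presentation, and rescale to $c=1$, which your Step~3 carries out correctly. The centre matching you flag as the main obstacle is in fact automatic: an algebra isomorphism carries centre onto centre, and the ideal defining $SY^{\tw}(\mfo_3)$ is generated by the kernel of a character of the polynomial centre, so its image is again such an ideal. The one genuine input beyond the transport is that $\langle b^\circ,h^\circ\rangle$ is a tensor complement to the centre of $X^+(\mfgl_2)$ with defining relations \eqref{gl2:6}--\eqref{gl2:7}, rather than merely being abstractly isomorphic to $Y^+(\mfsl_2)$; this is exactly what the paper's ``immediately yields'' takes from the rank-one result, and your graded fallback would also supply it.
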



\subsection{Extended twisted Yangian $X^\tw(\mfo_4)$}

Let $V = \C^2 \ot \C^2$ and let
\equ{
v_1 := e_1 \ot e_1, \qu
v_2 := e_1 \ot e_2, \qu
v_3 := e_2 \ot e_1, \qu
v_4 := -e_2 \ot e_2 
}
be a basis of $V$. Let $R(u)$ be the $R$-matrix of $X^\tw(\mfo_4)$. Then (see \cite[Lem.~4.9]{AMR06})
\ali{
\label{RR-o4}
R(u) \equiv \frac{u}{u-1}\,R^\circ_{13}(u)\,R^\circ_{24}(u) , \qu
R'(\ka-u) \equiv \frac{u-1}{u}\,R^{\circ\prime}_{13}(1-u)\,R^{\circ\prime}_{24}(1-u) , 
}
where $\ka=1$ and $\equiv$ denotes equality of linear maps upon identification of basis vectors $e_i \ot e_j \in \C^4 \ot \C^4$ with $v_i \ot v_j \in V \ot V$. The $G$-matrix is the identity matrix.

\begin{prop} \label{P:o4->gl2}
The mapping $X^{\tw}(\mfo_4) \to X^+(\mfgl_2) \ot X^+(\mfgl_2)$ given by                          
\equ{
S(u) \mapsto S^\circ(u-\tfrac12)\ot S^\circ(u-\tfrac12) , \label{o4->gl2}
}
subject to the identification of basis vectors described above, defines an embedding of algebras.
In terms of Gaussian series it reads as
\aln{
d_1(u) \mapsto d^\circ_1(u-\tfrac12)\ot d^\circ_1(u-\tfrac12) , \qu d_3(u) \mapsto d^\circ_2(u-\tfrac12)\ot d^\circ_1(u-\tfrac12) , \\
d_2(u) \mapsto d^\circ_1(u-\tfrac12)\ot d^\circ_2(u-\tfrac12) , \qu d_4(u) \mapsto d^\circ_2(u-\tfrac12)\ot d^\circ_2(u-\tfrac12) , 
}
\aln{
e_{12}(u) & \mapsto \phantom{-} 1\ot e^\circ_{12}(u-\tfrac12) , \qu e_{13}(u) \mapsto \phantom{-} e^\circ_{12}(u-\tfrac12)\ot 1 , \qu e_{14}(u) \mapsto -e^\circ_{12}(u-\tfrac12)\ot e^\circ_{12}(u-\tfrac12) , 
\\
e_{34}(u) &\mapsto -1\ot e^\circ_{12}(u-\tfrac12) , \qu
e_{24}(u) \mapsto -e^\circ_{12}(u-\tfrac12)\ot 1 , \qu
e_{23}(u) \mapsto 0 , 
\\
f_{21}(u) & \mapsto \phantom{-} 1\ot f^\circ_{21}(u-\tfrac12) , \qu
f_{31}(u) \mapsto \phantom{-} f^\circ_{21}(u-\tfrac12)\ot 1 , \qu
f_{41}(u) \mapsto -f^\circ_{21}(u-\tfrac12)\ot f^\circ_{21}(u-\tfrac12)  , 
\\
f_{43}(u) &\mapsto -1\ot f^\circ_{21}(u-\tfrac12) , \qu
f_{42}(u) \mapsto -f^\circ_{21}(u-\tfrac12)\ot 1 , \qu
f_{32}(u) \mapsto 0 .
}
\end{prop}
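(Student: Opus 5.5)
The proof has three parts: show the map is a homomorphism, read off the Gaussian series, and prove injectivity on associated graded algebras. Throughout we use the model of Proposition~\ref{P:o3->gl2}.

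\emph{Homomorphism.} Write $S^\circ_{[12]}(u) := S^\circ_1(u)\,S^\circ_2(u)$, where the two factors take values in different tensor copies of $X^+(\mfgl_2)$ and therefore commute. Then \eqref{o4->gl2} says $S(u)\mapsto S^\circ_{[12]}(u-\tfrac12)$ on $\C^4\cong\C^2\ot\C^2$. Substitute this, together with the factorisations \eqref{RR-o4}, into \eqref{RE}. After shifting $u,v$ by $\tfrac12$ the arguments fit exactly: $u-v$ is unchanged, and $\ka-u-v=1-(u-\tfrac12)-(v-\tfrac12)$. Hence the $R'$-factors become $R^{\circ\prime}_{13}(-\tilde u-\tilde v)\,R^{\circ\prime}_{24}(-\tilde u-\tilde v)$, with $\tilde u=u-\tfrac12$ and $\tilde v=v-\tfrac12$.

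The scalar prefactors $\frac{u}{u-1}$ and $\frac{u-1}{u}$ appear identically on both sides and cancel. The remaining identity then factorises into the product of the $X^+(\mfgl_2)$ reflection equation acting on spaces $(1,3)$ in the first tensor factor and the same equation on spaces $(2,4)$ in the second. The only care needed is to commute operators acting on disjoint spaces and disjoint algebra factors past each other. We must also check that the identification $v_4=-e_2\ot e_2$, and the resulting $J$ and partial transposition, are compatible with \eqref{RR-o4}; this is exactly what \cite[Lem.~4.9]{AMR06} provides. Note that $G=I$ maps to $I\ot I$, consistent with the constant terms.

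\emph{Gaussian series.} Write $S^\circ=F^\circ D^\circ E^\circ$ in each factor. Since the Kronecker product of lower unitriangular (resp.\ diagonal, upper unitriangular) matrices is again of that type in the ordered basis $v_1,\dots,v_4$, we get
\[
(F^\circ\ot F^\circ)(D^\circ\ot D^\circ)(E^\circ\ot E^\circ).
\]
The only adjustment is the sign coming from $v_4=-e_2\ot e_2$, which conjugates by $\diag(1,1,1,-1)$. By uniqueness of the Gaussian decomposition \cite[Thm.~4.96]{GGRW05}, reading off entries gives the listed formulas. In particular $e_{23}\mapsto0$ and $f_{32}\mapsto0$, because the $(2,3)$ entry of a Kronecker product of upper triangular $2\times2$ matrices vanishes.

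\emph{Injectivity.} This is the main obstacle, because the map is not surjective: the target has larger graded dimension. We pass to associated graded algebras. By Corollary~5.3 of \cite{GR16}, $\gr X^\tw(\mfo_4)\cong U(\mfo_4[x]^\rho)\ot\C[\zeta_0,\zeta_1,\dots]$, and similarly for each $X^+(\mfgl_2)$ factor, giving $U(\mfgl_2[x]^{\rho})$. Under $\mfo_4\cong\mfsl_2\op\mfsl_2$, the leading term of $\bar s^{(r)}_{ij}$ maps to $\bar s^{\circ(r)}\ot1+1\ot\bar s^{\circ(r)}$ in the appropriate entries, up to shifts from lower-degree terms. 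This is the Lie algebra map $\mfo_4[x]^\rho\to(\mfgl_2\op\mfgl_2)[x]^\rho$, which is injective, and $\zeta_r$ maps to a sum of the two central generators of the same degree.

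It remains to check that the induced map on symmetric algebras is injective. The images of the basis \eqref{X-basis} have linearly independent leading symbols: they are the $\mfsl_2$ parts of each factor plus one diagonal central combination. By PBW this shows injectivity of the graded map, and hence of \eqref{o4->gl2}. I expect the bookkeeping of the central series, specifically that $s_{11}^{(r)}$ odd modes give independent symbols, to be the delicate point. I would handle it by computing $d_1(u)\mapsto d^\circ_1\ot d^\circ_1$ modulo lower filtration.
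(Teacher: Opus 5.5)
Your proposal is correct and follows the paper's proof. The homomorphism property comes from substituting \eqref{RR-o4} into \eqref{RE}, which splits it into two commuting copies of the $X^+(\mfgl_2)$ reflection equation. Injectivity comes from comparing associated graded algebras, where your leading symbols are exactly the paper's list. Your derivation of the Gaussian series, from the Kronecker product of the triangular factors together with uniqueness of the Gaussian decomposition, is a slightly cleaner version of the paper's entry-by-entry computation. One small slip: $\ka-u-v=1-u-v=-(u-\tfrac12)-(v-\tfrac12)$, not $1-(u-\tfrac12)-(v-\tfrac12)$, although your resulting argument $-\tilde u-\tilde v$ for $R^{\circ\prime}$ is right.
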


\begin{proof}
The proof is analogous to that of Proposition \ref{P:o3->gl2}. The explicit form of the mapping \eqref{o4->gl2} is obtained using the method outlined in the proof of Proposition \ref{P:o3->gl2}. Next, applying \eqref{o4->gl2} and matrix identities \eqref{RR-o4} to \eqref{RE} immediately yields relations of the tensor algebra $X^+(\mfgl_2) \ot X^+(\mfgl_2)$. This proves that \eqref{o4->gl2} defines a homomorphism of algebras. It remains to prove that it is injective. 
Consider the associated graded algebras $\gr X^\tw(\mfo_4)$ and $\gr (X^+(\mfgl_2) \ot X^+(\mfgl_2))$.
A linear basis of $\gr X^\tw(\mfo_4)$ is given by ordered monomials in the elements $\bar{s}_{21}^{(r)}$, $\bar{s}_{31}^{(r)}$, $\bar{s}_{11}^{(r)}$, $\bar{s}_{22}^{(2r)}$, $\bar{s}_{33}^{(2r)}$ and that of $\gr(X^+(\mfgl_2) \ot X^+(\mfgl_2))$ is given by ordered monomials in the elements $\bar{s}_{21}^{\circ(r)}\ot 1$, $1\ot \bar{s}_{21}^{\circ(r)}$, $\bar{s}_{11}^{\circ(r)} \ot 1$, $1\ot \bar{s}_{11}^{\circ(r)}$, $\bar{s}^{\circ(2r)}_{22} \ot 1$, $1\ot \bar{s}^{\circ(2r)}_{22}$ with $r\ge 1$.  The homomorphism \eqref{o4->gl2} descends to a linear map $\gr X^\tw(\mfo_4) \to \gr(X^+(\mfgl_2) \ot X^+(\mfgl_2))$ given by 
\gan{
\bar{s}^{(r)}_{21} \mapsto 1\ot\bar{s}^{\circ(r)}_{21} , \qu \bar{s}^{(r)}_{31} \mapsto \bar{s}^{\circ(r)}_{21}\ot1 , \qu \bar{s}^{(r)}_{11} \mapsto \bar{s}^{\circ(r)}_{11} \ot 1 + 1 \ot \bar{s}^{\circ(r)}_{11} , \\
\bar{s}^{(2r)}_{22} \mapsto \bar{s}^{\circ(2r)}_{11} \ot 1 + 1 \ot \bar{s}^{\circ(2r)}_{22} , \qu
\bar{s}^{(2r)}_{33} \mapsto \bar{s}^{\circ(2r)}_{22} \ot 1 + 1 \ot \bar{s}^{\circ(2r)}_{11} .
}
This mapping is injective, hence so is \eqref{o4->gl2}.
\end{proof}

Combining Proposition \ref{P:o4->gl2} with relations (\ref{gl2:1}--\ref{gl2:5}) we obtain the following statement.

\begin{crl} \label{C:so4}
The following relations hold in $X^\tw(\mfo_4)$: 
\gat{
e_{12}(u) = - e_{34}(u) = - f_{43}(-u) = f_{21}(-u) , \qu
e_{13}(u) = - e_{24}(u) = - f_{42}(-u) = f_{31}(-u) , \label{so4:1} \\[0.2em]
e_{14}(-u) = f_{41}(u) = - f_{31}(u)\,f_{21}(u) , \qu e_{23}(u) = f_{32}(u) = 0, \label{so4:2}
\\[0.2em]  
d_1(u)\, d_4(-u) = d_1(-u)\, d_4(u) = d_2(u)\,d_3(-u) = d_2(-u)\,d_3(u) , \label{so4:3} \\[0.2em]
 [ d_i(u), d_j(v) ] = 0 \qu\text{for}\qu 1\le i,j\le 4, \label{so4:4}
}
\gat{
d_1(u+\tfrac12)\,f_{k1}(v) = \bigg[ \frac{(u-v-\frac12)(u+v+\frac12)}{(u-v+\frac12)(u+v-\frac12)}\,f_{k1}(v) + \bigg\{ \frac{u+\frac12}{u\,(u-v+\frac12)}\,f_{k1}(u+\tfrac12) \bigg\}^{\!u} \,\bigg]\, d_1(u+\tfrac12) , \label{so4:5}
\\[0.2em]
d_2(u-\tfrac12)\,f_{21}(v) = \bigg[ \frac{(u-v+\frac12)(u+v-\frac12)}{(u-v-\frac12)(u+v+\frac12)}\,f_{21}(v) - \bigg\{ \frac{u-\frac12}{u\,(u-v-\frac12)}\,f_{21}(u-\tfrac12) \bigg\}^{\!u} \,\bigg]\, d_2(u-\tfrac12) , \label{so4:6}
\\[0.2em]
d_2(u+\tfrac12)\,f_{31}(v) = \bigg[ \frac{(u-v-\frac12)(u+v+\frac12)}{(u-v+\frac12)(u+v-\frac12)}\,f_{31}(v) + \bigg\{ \frac{u+\frac12}{u\,(u-v+\frac12)}\,f_{31}(u+\tfrac12) \bigg\}^{\!u} \,\bigg]\, d_2(u+\tfrac12) , \label{so4:7}
\\[0.2em]
[f_{k1}(u), f_{\ell 1}(v)] = \del_{k\ell} \bigg[ \frac{1}{v-u}\, (f_{k1}(u)-f_{k1}(v))^2 + \frac{1}{u+v} ( d_1^{-1}(v)\,d_k(v) - d_1^{-1}(u)\,d_k(u) ) \bigg] \label{so4:8}
}
for $k,\ell=2,3$.
The centre of $X^\tw(\mfo_4)$ is generated by coefficients of the series
\equ{
\ms{z}(u) := d_1(u)\,d_4(-u), \qu 
\ms{c}(u) := d_1(u)\, d_1^{-1}(1-u) . \label{so4:9}
}
\end{crl}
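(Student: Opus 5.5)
The plan is to transport everything through the embedding of Proposition \ref{P:o4->gl2}. Since \eqref{o4->gl2} is injective, a relation among Gaussian series of $X^\tw(\mfo_4)$ holds as soon as the images of both sides agree in $X^+(\mfgl_2)\ot X^+(\mfgl_2)$. The images of $d_i,e_{ij},f_{ji}$ are listed explicitly there, and each image is built from one tensor factor at a time. Every relation in Corollary \ref{C:so4} therefore reduces to a rank-one relation among (\ref{gl2:1}--\ref{gl2:4}), evaluated at the shifted argument $u-\tfrac12$ in one or both factors.

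I would go through the relations in order. For \eqref{so4:1} and \eqref{so4:2}, apply $e^\circ_{12}(u)=f^\circ_{21}(-u-1)$ from \eqref{gl2:1}. With the shift this reads $e^\circ_{12}(u-\tfrac12)=f^\circ_{21}(-u-\tfrac12)$, which is exactly the image of $f_{21}(-u)$. This gives, for example, $e_{12}(u)=f_{21}(-u)$; the sign patterns are read off from the table. For $e_{14}(-u)=f_{41}(u)=-f_{31}(u)f_{21}(u)$, the image of $f_{41}$ is $-f^\circ_{21}\ot f^\circ_{21}$, which equals the product of the images of $f_{31}$ and $f_{21}$ because these live in different tensor factors and commute. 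The identities $e_{23}=f_{32}=0$ are immediate from the table.

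For \eqref{so4:3}, use $d^\circ_1(u)d^\circ_2(-1-u)=d^\circ_2(u)d^\circ_1(-1-u)$ in each factor together with commutativity of the $d^\circ_i$. For instance, $d_1(u)d_4(-u)\mapsto d^\circ_1(u-\tfrac12)d^\circ_2(-u-\tfrac12)\ot(\text{same})$, and applying the rank-one identity in either factor yields each of the listed equalities. Relation \eqref{so4:4} is immediate from \eqref{gl2:1}.

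For \eqref{so4:5}--\eqref{so4:7}, note that $d_1(u+\tfrac12)\mapsto d^\circ_1(u)\ot d^\circ_1(u)$, while $f_{21}$ and $f_{31}$ sit in a single factor. The commutation therefore involves only one copy of \eqref{gl2:2}, with $v$ replaced by $v-\tfrac12$ and the other factor $d^\circ_1(u)$ commuting through. The rational coefficients are checked by simplifying \eqref{gl2:2} after the shift. Relation \eqref{so4:6} uses \eqref{gl2:3}, since $d_2(u-\tfrac12)\mapsto d^\circ_1(u-1)\ot d^\circ_2(u-1)$ and $f_{21}$ lives in the second factor. Relation \eqref{so4:7} uses \eqref{gl2:2} in the first factor. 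Relation \eqref{so4:8} with $k\neq\ell$ holds because the factors commute. With $k=\ell$ it is \eqref{gl2:4} shifted, where $d_1^{-1}d_2\mapsto 1\ot (d^\circ_1)^{-1}d^\circ_2$ (and similarly for $k=3$); the constant $u+v+1$ becomes $u+v$ after the shift.

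The centre statement \eqref{so4:9} is the main obstacle. Centrality itself follows by mapping $\ms{z}(u)$ and $\ms{c}(u)$ to products of the central series \eqref{gl2:5} in each factor: $\ms{z}(u)\mapsto \ms{z}^\circ(u+\tfrac12)\ot\ms{z}^\circ(u+\tfrac12)$, and $\ms{c}(u)$ maps to the analogous product of symmetry series. Since the map is injective, these series are central in the image and hence in $X^\tw(\mfo_4)$. For generation, I would pass to $\gr X^\tw(\mfo_4)\cong U(\mfg_4[x]^\rho)\ot\C[\zeta_0,\zeta_1,\ldots]$ and show that the symbols of the coefficients of $\ms{z}$ and $\ms{c}$ (even, respectively odd, modes) give the $\zeta_r$. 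I would then invoke the argument of \cite[Cor.~5.3]{GR16} that the centre of $X^\tw(\mfg_N)$ is generated by the Sklyanin determinant and the symmetry series, combined with the decomposition $X^\tw=SY^\tw\ot(\text{centre})$ and the fact that $SY^\tw(\mfo_4)$ has trivial centre. Matching the central series of \cite{GR16} with $\ms{z},\ms{c}$ via the Gaussian decomposition is where I expect the most bookkeeping.
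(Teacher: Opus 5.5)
Your proposal is correct and follows the paper's own route. Every relation is transported through the injective map of Proposition \ref{P:o4->gl2} and reduced to the rank-one relations (\ref{gl2:1}--\ref{gl2:4}) at the shifted argument, and centrality of $\ms{z}(u)$ and $\ms{c}(u)$ comes from their images being products of the central series \eqref{gl2:5}.

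The paper gives no argument for generation of the centre beyond this one-line combination; elsewhere it cites \cite[\S5]{GR16}. Generation does not follow from the embedding alone, because the image is a proper subalgebra of $X^+(\mfgl_2)\ot X^+(\mfgl_2)$. Your associated-graded step, where the symbols of $\ms{c}_{2r}$ and $\ms{z}_{2r+1}$ are the $\zeta_r$, together with the appeal to \cite{GR16}, supplies what the paper leaves implicit.
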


Introduce the series
\ali{
b_1(u) := f_{21}(u) , \qu b_2(u) := f_{31}(u) , \qu 
h_1(u) := d_1^{-1}(u)\, d_2(u) , \qu h_2(u) := d_1^{-1}(u)\, d_3(u) 
}
and note that the mapping \eqref{o4->gl2} maps these series as
\equ{
b_1(u) \mapsto 1 \ot b^\circ(u) , \qu 
h_1(u) \mapsto 1 \ot h^\circ(u) , \qu b_2(u) \mapsto b^\circ(u) \ot 1 , \qu h_2(u) \mapsto h^\circ(u) \ot 1 .
}
This immediately yields the following statement.

\begin{thm} \label{T:iso-so4}
The special twisted Yangian $SY^{\tw}(\mfo_4)$ is isomorphic to the algebra generated by coefficients of the series $b_i(u)$ and $h_j(u)$ with $i,j=1,2$ satisfying relations analogous to (\ref{gl2:6}--\ref{gl2:7}) with $c=2$ when $i=j$ and trivial relations when $i\ne j$.
\end{thm}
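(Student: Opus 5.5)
The plan is to deduce the presentation of $SY^{\tw}(\mfo_4)$ from the embedding of Proposition \ref{P:o4->gl2}, in the same way as Theorems \ref{T:iso-sp2} and \ref{T:iso-so3}. The first step is to describe the image of $SY^{\tw}(\mfo_4)$ under \eqref{o4->gl2}. The series $\ms{z}(u)$ and $\ms{c}(u)$ of \eqref{so4:9} are central. As in \cite{GR16}, the quotient of $X^{\tw}(\mfo_4)$ by the symmetry relation \eqref{sym} and the unitarity relation \eqref{SS=wI} amounts to normalising these central series; this is the analogue of \eqref{X=SYcz}. Under \eqref{o4->gl2}, $\ms{z}(u)$ goes to the product of the Sklyanin determinants $\ms{z}^\circ(u-\tfrac12)\ot\ms{z}^\circ(u-\tfrac12)$, up to the shift conventions of \eqref{gl2:5}, and $\ms{c}(u)$ goes to a product of symmetry series of the two factors. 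So $SY^{\tw}(\mfo_4)$ is the subalgebra of $X^{\tw}(\mfo_4)$ generated by the coefficients of $b_1,b_2,h_1,h_2$: these elements are invariant under rescaling $S(u)$ by central series, and together with the centre they generate everything. To check the generation claim, I will use Corollary \ref{C:so4}. Relations \eqref{so4:1}--\eqref{so4:3} express every $e_{ij}$, every $f_{ij}$, and $d_4$ through $f_{21},f_{31},d_1,d_2,d_3$. Moreover $d_1$ is determined by $h_1,h_2$ and the central series, because $\ms{z}(u)=d_1(u)d_4(-u)=d_2(u)d_3(-u)=d_1(u)d_1(-u)h_1(u)h_2(-u)$ and $\ms{c}(u)=d_1(u)d_1^{-1}(1-u)$.

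Next I will read off the relations. The images $1\ot b^\circ(u)$, $1\ot h^\circ(u)$, $b^\circ(u)\ot 1$, $h^\circ(u)\ot 1$ satisfy \eqref{gl2:6}--\eqref{gl2:7} with $c=2$ inside each tensor factor. Elements coming from different factors commute, which gives the trivial relations for $i\ne j$. As a consistency check, the same relations should follow directly in $X^{\tw}(\mfo_4)$ from \eqref{so4:5}--\eqref{so4:8}: the case $k\ne\ell$ of \eqref{so4:8} is the commutativity $[b_1,b_2]=0$, and \eqref{so4:6}--\eqref{so4:7} combined with \eqref{so4:5} yield \eqref{gl2:7} for $h_1b_1$ and $h_2b_2$. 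The mixed relation $h_1(u)b_2(v)=b_2(v)h_1(u)$ should come from combining \eqref{so4:5} for $k=3$ with the analogue of \eqref{so4:6} for $d_2$ acting on $f_{31}$.

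Finally, I need the abstract algebra $\mc{A}$ defined by these relations (two commuting copies of the rank-one algebra) to map isomorphically onto $SY^{\tw}(\mfo_4)$. The map $\mc{A}\to SY^{\tw}(\mfo_4)$ is surjective by the generation step. For injectivity, compose it with the embedding \eqref{o4->gl2}. Each copy of the rank-one algebra maps isomorphically onto the subalgebra of $X^+(\mfgl_2)$ isomorphic to $Y^+(\mfsl_2)$, as recalled after \eqref{gl2:7}. Hence $\mc{A}\cong Y^+(\mfsl_2)\ot Y^+(\mfsl_2)$ embeds into $X^+(\mfgl_2)\ot X^+(\mfgl_2)$, and injectivity follows.

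The main obstacle is the identification in the first step. One has to verify that the symmetry and unitarity relations cut $X^{\tw}(\mfo_4)$ down to exactly the subalgebra generated by $b_i,h_i$, with no additional constraint, and that this matches $Y^+(\mfsl_2)^{\ot 2}$ inside the tensor product. I would first try to settle this with a graded comparison: compare the PBW bases of $\gr SY^{\tw}(\mfo_4)\cong U(\mfo_4[x]^\rho)$, using the basis \eqref{SY-basis}, with those of $\gr Y^+(\mfsl_2)^{\ot2}$, via the graded map written in the proof of Proposition \ref{P:o4->gl2}. Under that map, $\mfo_4\cong\mfsl_2\op\mfsl_2$, so the two associated graded algebras should match.
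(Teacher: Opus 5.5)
Your proposal is correct and follows the paper's argument. The paper deduces the theorem at once from the embedding of Proposition~\ref{P:o4->gl2}: it notes that $b_1(u),h_1(u),b_2(u),h_2(u)$ are sent to $1\ot b^\circ(u)$, $1\ot h^\circ(u)$, $b^\circ(u)\ot 1$, $h^\circ(u)\ot 1$ and uses the rank-one result recalled after \eqref{gl2:7}, exactly as in your second and third paragraphs. The step you flag as the main obstacle is the identification of $SY^{\tw}(\mfo_4)$ with the subalgebra generated by the $b_i,h_i$; as you say, rescaling-invariance plus generation only gives surjectivity onto the quotient. The paper leaves this identification implicit in its ``this immediately yields'' (cf.\ the isomorphism $SY^{\tw}(\mfo_4)\cong Y^+(\mfsl_2)\ot Y^+(\mfsl_2)$ quoted in the proof of Lemma~\ref{L:LWZ}). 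It can also be settled from the embedding itself. The coefficients of $\ms{c}(u)$ and $\ms{z}(u)$ map injectively into the tensor square of the centre of $X^+(\mfgl_2)$, and $X^+(\mfgl_2)$ is the tensor product of $Y^+(\mfsl_2)$ with its centre. Hence multiplication from the $b,h$-subalgebra tensored with this central subalgebra into $X^{\tw}(\mfo_4)$ is injective, and the isomorphism with $SY^{\tw}(\mfo_4)$ follows by setting $\ms{c}(u)=\ms{z}(u)=1$.
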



\section{The general case} \label{sec:gen}

The analysis of the general case involves four steps. We first focus on a subalgebra isomorphic to the twisted Yangian $X^+(\mfgl_n)$ and use results of \cite{LWZ25a} to obtain relations of type AI in $X^\tw(\mfg_{N})$. We then identify central series, which are implied by the symmetry and unitarity relations of $S(u)$, and move on to obtaining relations of types BI, CI and DI in $X^\tw(\mfg_{N})$. Derivations of Serre type relations involve many technical steps and auxiliary relations, which are deferred to Appendices \ref{app:Aux} and \ref{app:Comms}. 
The main results, relations of Drinfeld-type currents of $SY^\tw(\mfg_{N})$ and $X^\tw(\mfg_{N})$, are presented in Section \ref{sec:Dr}. For Serre type relations we will use the short-hand notation
\equ{
\Sym_{u_1,\ldots,u_k} f(u_1,\ldots, u_k) := \sum_{\si \in \Sigma_k} f(u_{\si(1)} , \ldots, u_{\si(k)}) 
\label{Sym}
}
for any function or operator $f(u_1,\ldots, u_k)$.


\subsection{Relations of type AI} \label{sec:gen-AI}

Restricting relations \eqref{[s,s]} to $1\le i,j,k,l\le n$ gives
\ali{ \label{[s,s]:AI}
[\,s_{ij}(u),s_{kl}(v)] &= \frac{1}{u-v}\Big(s_{kj}(u)\,s_{il}(v)-s_{kj}(v)\,s_{il}(u)\Big)\el
{}& - \frac{1}{u+v-\ka}\, \Big( s_{ik}(u)\, s_{jl}(v) - s_{ki}(v)\, s_{lj}(u) \Big) \nn \el
{}& + \frac{1}{(u-v) (u+v-\ka)} \Big( s_{ki}(u)\, s_{jl}(v)-s_{ki}(v)\, s_{jl}(u) \Big) .
}
In other words, the upper left $n\times n$ submatrix $S^\circ(u)$ of $S(u)$ satisfies the twisted reflection equation
\equ{
\label{RE-A}
R^\circ(u-v)\,S^{\circ}_1(u)\,R^{\circ\prime}(\ka-u-v)\,S^{\circ}_2(v) = S^{\circ}_2(v)\, R^{\circ\prime}(\ka-u-v)\,S^{\circ}_1(u)\,R^\circ(u-v)
}
where $R^\circ(u) := I - u^{-1} P \in \End(\C^n \ot \C^n)[[u^{-1}]]$ and $R^{\circ\prime}(u)$ is its partial transpose. Coefficients of the series $s_{ij}(u)$ with $1\le i,j\le n$ generate a subalgebra of $X^{\tw}(\mfg_N)$ isomorphic to the extended twisted Yangian $X^+(\mfgl_n)$, see \cite[\S2.13]{Mo07}. The isomorphism of algebras is given by the mapping $s_{ij}(u) \mapsto s_{ij}(u-\ka/2)$. (We will denote the aforementioned subalgebra of $X^{\tw}(\mfg_N)$ by $X^+_\ka(\mfgl_n)$.) This isomorphism allows us to immediately write down relations for Gaussian series of $S^\circ(u)$ by adapting those found in \cite{LWZ25a} (also see Lemma \ref{L:LWZ} in Section \ref{sec:Dr}). They are obtained by exploiting relations of $X^+(\mfgl_2)$ and $X^+(\mfgl_3)$ and embeddings $X^+(\mfgl_m) \into X^+(\mfgl_n)$. The latter include a shift by $\frac{m}{2}$ of the spectral parameter (see \cite[Lem~2.14.1]{Mo07}) responsible for the presence of $i$'s and $j$'s in the rational expressions below.

\begin{lemma} \label{L:AI-rels}
The following relations hold in $X^\tw(\mfg_N)$, for $1\le i,j < n$ and $1\le k,l\le n$, 
\ali{
[d_k(u),d_l(v) ] &= 0 , \qu \text{and}\qu [d_k(u), f_{i+1,i}(v)] = 0 \qu \text{if}\qu k\ne i,i+1 , \label{AI:dd}
\\[0.2em]
d_i(u)\,f_{i+1,i}(v) &= \bigg(\frac{(u-v-1)(u+v-\ka+i)\,f_{i+1,i}(v)}{(u-v)(u+v-\ka+i-1)} + \frac{(2u-\ka+i)\, f_{i+1,i}(u)}{(2u-\ka+i-1)(u-v)} \nn \\ & \hspace{4.1cm} - \frac{(2u-\ka+i-2)\, f_{i+1,i}(\ka-i+1-u) }{(2u-\ka+i-1)(u+v-\ka+i-1)} \bigg) d_i(u) , \label{AI:df1}
\\[0.2em]
d_{i+1}(u)\,f_{i+1,i}(v) &= \bigg(\frac{(u-v+1)(u+v-\ka+i)\, f_{i+1,i}(v)}{(u-v)(u+v-\ka+i+1)} - \frac{(2u-\ka+i)\, f_{i+1,i}(u)}{(2u-\ka+i+1)(u-v)} \nn \\ & \hspace{4.1cm} + \frac{(2u-\ka+i+2)\, f_{i+1,i}(\ka-i-1-u)}{(2u-\ka+i+1)(u+v-\ka+i+1)} \bigg) d_{i+1}(u) , \label{AI:df2}
\\[0.2em]
[f_{i+1,i}(u), f_{i+1,i}(v)] &= \frac{(f_{i+1,i}(v) - f_{i+1,i}(u))^2}{v-u} + \frac{d^{-1}_{i}(v)\,d_{i+1}(v) - d^{-1}_{i}(u)\,d_{i+1}(u)}{v+u-\ka+i} \,, \label{AI:ff2} 
\intertext{and}
f_{i+1,i}(u)\,f_{j+1,j}(v) &= \frac{u-v+\tfrac{i-j-1}{2}}{u-v+\tfrac{i-j+1}{2}}\,f_{j+1,j}(v)\,f_{i+1,i}(u) \nn \\ & \qu + \frac{1}{u-v+\tfrac{i-j+1}{2}}\,([f^{(0)}_{i+1,i},f_{j+1,j}(v)] - [f_{i+1,i}(u),f_{j+1,j}^{(0)}]) , \label{AI:ff1}
\\[0.5em]
\Sym_{v,w} [f_{j+1,j}(w)&,[f_{j+1,j}(v),f_{i+1,i}(u)]] \nn \\ & = \frac{1}{v+w-\ka+j}\, \Sym_{v,w} \bigg( \frac{(2v-\ka+j)\,f_{i+1,i}(u)}{(u-v+\frac{i-j-1}{2})(u+v-\ka+\frac{i+j-1}2)} \nn \\ & \hspace{2.8cm} - \frac{f_{i+1,i}(v-\frac{i-j-1}2)}{u-v+\frac{i-j-1}2} + \frac{f_{i+1,i}(\ka-v-\frac{i+j-1}2)}{u+v-\ka+\frac{i+j-1}2} \bigg)\,d^{-1}_j(v)\,d_{j+1}(v) , \label{AI:fff}
}
if $|i-j|=1$, and 
\equ{
[f_{j+1,j}(u), f_{i+1,i}(v)] = 0 \qu\text{if}\qu |i-j|>1 . \label{AI:ff0}
}
Moreover, for $1\le i < n$,
\gat{
e_{i,i+1} (u) = f_{i+1,i}(\ka-i-u) , \label{AI:f=e} \\
d_i(u)\,d_{i+1}(\ka-i-u) = d_{i+1}(u)\,d_{i}(\ka-i-u) .\label{AI:dd=dd}
}
\end{lemma}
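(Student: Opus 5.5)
The plan is to deduce everything from the type AI results of \cite{LWZ25a}, transported along the isomorphism $X^+(\mfgl_n)\cong X^+_\ka(\mfgl_n)\subset X^\tw(\mfg_N)$, $s_{ij}(u)\mapsto s_{ij}(u-\ka/2)$, identified just before the statement.

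\emph{Step 1: the Gaussian series are compatible.} Since $S^\circ(u)$ is the upper left $n\times n$ block of $S(u)$, the quasi-determinant formulas for $d_k(u)$, $f_{ji}(u)$, $e_{ij}(u)$ with $1\le i,j,k\le n$ involve only entries of $S^\circ(u)$. Hence these series coincide with the Gaussian series of $S^\circ(u)$. Under the isomorphism they are the Gaussian series $d^\circ_k$, $f^\circ_{ji}$, $e^\circ_{ij}$ of $X^+(\mfgl_n)$ with argument shifted by $\ka/2$. The diagonal entries of $G$ in this block are all equal to $\theta_{\bar a}=1$, so the normalisation of $d_k$ matches that of $X^+(\mfgl_n)$.

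\emph{Step 2: rank-two relations.} Relations \eqref{AI:dd} for $k,l\in\{i,i+1\}$, together with \eqref{AI:df1}, \eqref{AI:df2}, \eqref{AI:ff2}, \eqref{AI:f=e} and \eqref{AI:dd=dd}, come from \eqref{gl2:1}--\eqref{gl2:4}. They are transported through the embedding $X^+(\mfgl_2)\into X^+(\mfgl_n)$ of \cite[Lem.~2.14.1]{Mo07}, which sends $S^\circ(u)$ to the quasi-determinant block with rows and columns $1,\dots,i-1$ adjoined. This embedding carries a spectral shift by $\frac{i-1}{2}$. Composing it with the shift $\ka/2$ produces the combinations $u+v-\ka+i$ and $\ka-i-u$.

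The concrete check is as follows. Substitute $u\mapsto u-\frac{\ka-i+1}{2}$ into \eqref{gl2:2}. The symmetrisation $\{\cdot\}^u$ of \eqref{gl2:2} then becomes the pair of terms with $f_{i+1,i}(u)$ and $f_{i+1,i}(\ka-i+1-u)$ in \eqref{AI:df1}. The identity $e^\circ_{12}(u)=f^\circ_{21}(-u-1)$ becomes \eqref{AI:f=e}. The commutativity $[d_k(u),f_{i+1,i}(v)]=0$ for $k\ne i,i+1$ follows from the analogue of Corollary \ref{C:emb-comm} for $k<i$. For $k>i+1$ it follows from the embedding of $X^+(\mfgl_{i+1})$ commuting with the complementary block, exactly as in \cite{LWZ25a}.

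\emph{Step 3: rank-three and distant relations.} Relations \eqref{AI:ff1} and \eqref{AI:fff} for $|i-j|=1$ are taken from the $X^+(\mfgl_3)$ relations in \cite{LWZ25a}. They are transported via the same embedding with shift $\frac{\min(i,j)-1}{2}$; this produces the offsets $\frac{i-j\pm1}{2}$ and $\frac{i+j-1}{2}$. Relation \eqref{AI:ff0} follows because, for $|i-j|>1$, $f_{j+1,j}$ lies in a commuting block, by the argument used for the $d_k$ in Step 2.

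\emph{Main obstacle.} The difficulty is bookkeeping: all the shifts (the $\ka/2$ shift, the $\frac{m}{2}$ embedding shift, and the shift built into the normalisation of $f$ in \cite{LWZ25a}) must be composed consistently. In particular the reflected arguments such as $\ka-v-\frac{i+j-1}{2}$ in \eqref{AI:fff} have to come out correctly. I would verify each relation first for $i=1$ (no embedding shift) against \eqref{gl2:2}--\eqref{gl2:4}, and then confirm that the general-$i$ formulas are obtained from it by the uniform substitution $\ka\mapsto\ka-i+1$.
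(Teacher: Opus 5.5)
Your proposal is correct and follows the paper's own route. The paper, like you, transports the type AI relations of \cite{LWZ25a}, which are built from $X^+(\mfgl_2)$, $X^+(\mfgl_3)$ and the quasi-determinant embeddings with their spectral shift, through the identification of the upper-left block with $X^+_\ka(\mfgl_n)$, and the distant commutativity comes from Corollary \ref{C:emb-comm}. Your substitution, applied to both arguments as $(u,v)\mapsto(u-\frac{\ka-i+1}{2},\,v-\frac{\ka-i+1}{2})$ in \eqref{gl2:1}--\eqref{gl2:4}, does reproduce \eqref{AI:df1}, \eqref{AI:df2}, \eqref{AI:ff2}, \eqref{AI:f=e} and \eqref{AI:dd=dd} exactly.
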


\begin{remark} \label{R:dif}
The counterparts of \eqref{AI:df1} and \eqref{AI:df2} for the inverses $d_i^{-1}(u)$ and $d_{i+1}^{-1}(u)$  are
\ali{
d_i^{-1}(u)\,f_{i+1,i}(v) = \bigg(\frac{(u-v)(u+v-\ka+i-1)\,f_{i+1,i}(v)}{(u-v-1)(u+v-\ka+i)} & - \frac{(2u-\ka+i-2)\, f_{i+1,i}(u-1)}{(2u-\ka+i-1)(u-v-1)} \nn \\ & + \frac{(2u-\ka+i)\, f_{i+1,i}(\ka-i-u) }{(2u-\ka+i-1)(u+v-\ka+i)} \bigg) d_i^{-1}(u) , \label{AI:dif1}
\\[0.2em]
d_{i+1}^{-1}(u)\,f_{i+1,i}(v) = \bigg(\frac{(u-v)(u+v-\ka+i+1)\, f_{i+1,i}(v)}{(u-v+1)(u+v-\ka+i)} & + \frac{(2u-\ka+i+2)\, f_{i+1,i}(u+1)}{(2u-\ka+i+1)(u-v+1)} \nn \\ & - \frac{(2u-\ka+i)\, f_{i+1,i}(\ka-i-u)}{(2u-\ka+i+1)(u+v-\ka+i)} \bigg) d_{i+1}^{-1}(u) . \label{AI:dif2}
}
\end{remark}

\begin{lemma} \label{L:AI-copy}
All relations in $X^{\tw}(\mfg_N)$ given in Lemma \ref{L:AI-rels} remain valid under the mapping 
\ali{
d_k(u) \mapsto d_{N-n+k}(u-\ka+\tfrac{n}{2}) ,  \qu
f_{i+1,i}(u) \mapsto f_{N-n+i+1,N-n+i}(u-\ka+\tfrac{n}{2}) ,  \qu
\label{AI-shift}
}
for $1\le k \le n$ and $1\le i < n$.
\end{lemma}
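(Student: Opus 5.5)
The plan is to show that the lower-right $n\times n$ block of $S(u)$, suitably transformed, also generates a copy of $X^+(\mfgl_n)$, and that its Gaussian series are exactly the shifted series $d_{N-n+k}$, $f_{N-n+i+1,N-n+i}$. Lemma~\ref{L:AI-rels} then transports verbatim. The natural candidate comes from the inverse matrix. By \eqref{Si=FDEi}, \eqref{Fi} and \eqref{Ei}, the lower-right $n\times n$ submatrix of $S^{-1}(u)$ involves only $d_k(u)$, $f_{ji}(u)$ and $e_{ij}(u)$ with $N-n<i,j,k\le N$. It has an \emph{opposite} Gauss decomposition $E'^{-1}D'^{-1}F'^{-1}$, upper times diagonal times lower.

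First, I would derive from \eqref{RE} a reflection equation for $S^{-1}(u)$. Inverting both sides of \eqref{RE} and using unitarity-type identities of $R$, namely $R(u)R(-u)\propto I$ and the analogous identity for $R'$, gives a twisted reflection equation for $S^{-1}$ with $u\mapsto -u$ in the $R$-matrices. Next, I would restrict indices to $N-n<i,j,k,l\le N$. As in the passage from \eqref{[s,s]} to \eqref{[s,s]:AI}, all the $\del_{k\bar\jmath}$-type terms drop, because $\bar\imath\le n$ whenever $i>N-n$; the exception is the middle index in the odd case $N=2n+1$, which is excluded from the range. So this block satisfies an AI-type equation \eqref{RE-A} with $R^\circ$ and a reversed parameter.

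The more convenient alternative is to conjugate by the index reversal $i\mapsto \bar\imath$ via $J$, using \eqref{R=JRtJ}. Under $W(u):=\big(J\,(S^{-1}(u))^{t}J^{-1}\big)$, or a similar combination, the block becomes lower-triangular $\times$ diagonal $\times$ upper-triangular again, with the diagonal entries $d_{N-n+k}^{-1}$ appearing in reversed order. One then needs a further inversion or the involution $u\mapsto-u$ combined with unitarity to return to $d_{N-n+k}(u)$ in the original order. Alternatively, and more directly, I would use Corollary~\ref{C:RE[m]} with $m=N-n-\ldots$. More precisely, the reduced matrix $S^{[m]}$ for $m=\lfloor N/2\rfloor-1$ or smaller gives an algebra $X^{\tw}(\mfg_{N-2m})$ whose upper-left block sits in the middle. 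Iterating from the top through the embedding $\psi_m^{(N)}$ only reaches indices $\le n$, which explains why an inversion step is needed for the bottom half.

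Once the block is identified with $X^+(\mfgl_n)$ with a shifted parameter, I would fix the shift by comparing the $\ka$-dependent denominators. The top block has spectral parameter offset $\ka/2$, i.e.\ $s_{ij}(u)\mapsto s_{ij}(u-\ka/2)$. For the bottom block, the inversion and reversal produce parameter $\ka-u$ type reflections, and matching with $u+v-\ka+i$ in \eqref{AI:df1} forces the substitution $u\mapsto u-\ka+\tfrac n2$ in \eqref{AI-shift}. This I would check concretely on rank-two pieces against Corollaries~\ref{C:sp2}, \ref{C:so3} and \ref{C:so4}, e.g.\ \eqref{so4:6} versus \eqref{so4:5} and the relation $e_{34}=-e_{12}$.

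The main obstacle is the bookkeeping of the inversion. One must ensure that the Gaussian series of the inverted and reversed bottom block are exactly $d_{N-n+k}$ and $f_{N-n+i+1,N-n+i}$, rather than inverses or the $e$-series. Any sign produced by the $\theta_i$ in $J$ must be absorbed by an automorphism $f\mapsto -f$, which preserves all relations of Lemma~\ref{L:AI-rels} since they are homogeneous of even degree in $f$ or linear on both sides. To handle this I would invoke \eqref{AI:f=e} in the reversed block to convert $e$-series back to $f$-series.
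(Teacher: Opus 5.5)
Your first half matches the paper's argument. Three ingredients are right:
\begin{itemize}
\item $S(u)\mapsto S^{-1}(-u)$ is an automorphism of $X^{\tw}(\mfg_N)$, from $R(u)R(-u)\propto I$ and, via \eqref{R=JRtJ}, $R'(\ka-w)^{-1}\propto R'(\ka+w)$.
\item The lower-right $n\times n$ block of $S^{-1}(-u)$ satisfies \eqref{RE-A} with the same $\ka$.
\item By \eqref{Fi}--\eqref{Ei}, that block equals $E^{-1}_{\mathrm{bl}}(-u)\,D^{-1}_{\mathrm{bl}}(-u)\,F^{-1}_{\mathrm{bl}}(-u)$.
\end{itemize}

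\textbf{The gap.} The step you flag as ``the main obstacle'' is left unresolved, and it is the actual content of the lemma. The missing idea is that inversion is \emph{also} an automorphism at the level of the $\mfgl_n$ block. Inverting \eqref{RE-A} and using $R^\circ(u)R^\circ(-u)=(1-u^{-2})I$ and $(R^{\circ\prime}(u))^{-1}=R^{\circ\prime}(n-u)$ shows that
\[
S^\circ(u)\mapsto \big(S^\circ(-\tilde u)\big)^{-1},\qquad \tilde u=u-\ka+\tfrac n2,
\]
is an automorphism of $X^+_\ka(\mfgl_n)$. Compose it with the homomorphism given by the block of $S^{-1}(-u)$. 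The generating matrix is then sent to $F_{\mathrm{bl}}(\tilde u)\,D_{\mathrm{bl}}(\tilde u)\,E_{\mathrm{bl}}(\tilde u)$. This is already a lower$\times$diagonal$\times$upper factorisation, with diagonal $d_{N-n+1}(\tilde u),\dots,d_N(\tilde u)$ in the original order and subdiagonal entries $f_{N-n+i+1,N-n+i}(\tilde u)$. By uniqueness of the Gauss decomposition, these are the Gaussian series of a homomorphic image of $X^+_\ka(\mfgl_n)$, so Lemma \ref{L:AI-rels} transfers.

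In particular, the shift $-\ka+\tfrac n2$ is \emph{produced} by this automorphism; the $\tfrac n2$ comes from the block size $n$ in $(R^{\circ\prime})^{-1}$. Fixing the shift afterwards by matching denominators, or testing it against the rank-two corollaries, is only a consistency check, not a proof.

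\textbf{Why your proposed substitute does not close the gap.} Take $W(u)=J(S^{-1}(u))^tJ^{-1}$.
\begin{itemize}
\item Transposition in the auxiliary space reverses the order of the noncommuting algebra entries in matrix products. Transposing \eqref{RE} therefore gives the reflection equation in the \emph{opposite} algebra. So $W$ is at best an anti-homomorphic image unless you compose with a further anti-automorphism, which you do not supply. Lemma \ref{L:AI-rels} applied to $W$ would yield relations with products reversed.
\item Even ignoring this, the Gauss factors of $W$ have diagonal $d_N^{-1},\dots,d_{N-n+1}^{-1}$, inverted and in reversed order. Their off-diagonal entries come from $F^{-1}$ and $E^{-1}$ rather than $F$ and $E$.
\item You would therefore need yet another transformation (``a further inversion or the involution $u\mapsto -u$ combined with unitarity''). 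You neither specify nor verify it, and this is exactly where the argument has to be made.
\end{itemize}
The route through Corollary \ref{C:RE[m]} you correctly discard yourself. What is needed is precisely the inversion automorphism $S^\circ(u)\mapsto (S^\circ(-\tilde u))^{-1}$ of $X^+_\ka(\mfgl_n)$ described above.
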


\begin{proof}
This follows by arguments analogous to those in \cite[\S5.2]{JLM18} with minor adjustments. Restricting \eqref{[s,s]} to $N-n< i,j,k,l\le N$ yields another subalgebra of $X^{\tw}(\mfg_N)$ isomorphic to $X^+_\ka(\mfgl_n)$. From \eqref{RE} and $R(-u)\,R(u) = (1-u^{-2})\,I$ we observe that the mapping
\equ{
\eta : S(u) \mapsto S^{-1}(-u)
}
defines an automorphism of $X^{\tw}(\mfg_N)$. Hence, the subalgebra of $X^{\tw}(\mfg_N)$ generated by coefficients of the series $\eta(s_{ij}(u))$ with $N-n< i,j\le N$ is isomorphic to $X^+_\ka(\mfgl_n)$. From (\ref{Fi}--\ref{Ei}) we know that the lower right $n \times n$ submatrix of $S^{-1}(u) = E^{-1}(u)\,D^{-1}(u)\,F^{-1}(u)$ equals the product of the lower right $n \times n$ submatrices of $E^{-1}(u)$, $D^{-1}(u)$ and $F^{-1}(u)$. Then, from \eqref{RE-A} and $(R^{\circ\prime}(u))^{-1} = R^{\circ \prime}(n-u)$, we observe that the mapping
\equ{
\eta^\circ : S^\circ(u) \mapsto (S^{\circ}(-\wt{u}))^{-1} \qu\text{where}\qu \wt{u} = u -\ka + \tfrac{n}{2} 
}
defines an automorphism of $X^+_\ka(\mfgl_{n})$ and so 
\gan{
\setlength\arraycolsep{2pt}
\renewcommand{\arraystretch}{0.7}
\begin{bmatrix}
1 && \cdots & 0\, \\
f_{\ol{n-1},\ol{n}}(\wt{u}) & \ddots && \vdots\, \\
\vdots & & \ddots \\
f_{\ol{1},\ol{n}}(\wt{u}) & f_{\ol{1},\ol{n-1}}(\wt{u}) & \cdots & 1\,
\end{bmatrix} 
\begin{bmatrix}
d_{\ol{n}}(\wt{u}) && \cdots & 0 \\
& \ddots && \vdots \\
\vdots & & \ddots \\
0 & \cdots & & d_{\ol{1\!}}(\wt{u})
\end{bmatrix}
\begin{bmatrix}
\,1 & e_{\ol{n},\ol{n-1}}(\wt{u}) & \dots & e_{\ol{n},\ol{1}}(\wt{u}) \\
 & \ddots && e_{\ol{n-1},\ol{1}}(\wt{u}) \\
\,\vdots & & \ddots & \vdots \\
\,0 & \cdots & & 1
\end{bmatrix}
}
yields a Gaussian decomposition of the generating matrix of the second copy of $X^+_\ka(\mfgl_n)$, from which \eqref{AI-shift} follows.
\end{proof}


\subsection{Central series}

The extended twisted Yangian $X^\tw(\mfg_N)$ has two central series, denoted $\ms{c}(u)$ and $\ms{z}(u)$, that are associated with the symmetry and unitarity relations. For low rank cases these series were identified in Section \ref{sec:low}; here we express them in terms of the diagonal Gaussian series for any rank. 

Define a series $\ms{c}(u) = 1 + \sum_{r\ge 0}\ms{c}_r u^{-r-1} \in X^\tw(\mfg_N)[[u^{-1}]]$ by
\ali{
\label{c(u)}
\ms{c}(u) := p(u)\,d_1(u)\,d_1^{-1}(\ka-u) \qu\text{where}\qu
p(u) := \begin{cases}
\dfrac{(u-\frac14)(u-\ka)}{u\,(u-\ka+\frac14)} & \text{when } \mfg_N=\mfo_{2n+1}, \\
1 & \text{otherwise}.
\end{cases}
}
Corollary \ref{C:emb-comm} and relation \eqref{AI:df1} imply that coefficients of $\ms{c}(u)$ are central in $X^{\tw}(\mfg_N)$. The series $p(u)$ is such that the symmetry relation \eqref{symij} is equivalent to the requirement $\ms{c}(u)=1$.

Next, adapting \cite[Prop.~4.1]{GR16}, we find that the unitarity relation of $X^{\tw}(\mfg_N)$ is
\equ{
JS(-u)J = \ms{z}(u)\,S^{-1}(u) \label{S=zS}
}
where $\ms{z}(u)= 1 + \sum_{r\ge 0}\ms{z}_r u^{-r-1} \in X^\tw(\mfg_N)[[u^{-1}]]$ is a series with coefficients central in $X^{\tw}(\mfg_N)$. Applying \eqref{S=FDE}, \eqref{Si=FDEi} and taking the bottom-right entry on both sides of \eqref{S=zS} gives
\equ{
\theta\,d_1(-u) = \ms{z}(u)\,d^{-1}_{N}(u) . \label{d=zd}
}
Note that $\ms{c}(\ka-u) = \ms{c}^{-1}(u)$ and  $\ms{z}(-u) = \ms{z}(u)$. In particular, the even coefficients of $\ms{c}(u)$ and the odd coefficients of $\ms{z}(u)$ are algebraically independent and generate the centre of $X^\tw(\mfg_N)$; see \cite[\S5]{GR16}. More precisely,
\equ{
X^\tw(\mfg_N) \cong SY^\tw(\mfg_N) \ot \C[\ms{c}_0, \ms{c}_2, \ldots] \ot \C[\ms{z}_1, \ms{z}_3, \ldots] .
\label{X=SYcz}
}

\smallskip

We now show that an analogue of \eqref{d=zd} holds for all diagonal Gaussian series, viz.~\eqref{z(u)}. We will need the following technical lemma.

\begin{lemma} \label{L:fe}
The following relations hold in $X^{\tw}(\mfg_N)$:
\equ{
f_{N-i+1,N-i}(u) = - e_{i,i+1}(-u), \qu 
e_{N-i,N-i+1}(u) = - f_{i+1,i}(-u) \label{f-e:1}
}
for $1\le i < n$. Furthermore, if $\mfg_{N} = \mfsp_{2n}$, then
\equ{
f_{n+1,n}(u) = e_{n,n+1}(-u) \label{f-e:2}
}
and if $\mfg_{N} = \mfo_{2n+1}$, then
\equ{
-f_{n+2,n+1}(u+\tfrac12) = f_{n+1,n}(u) = - e_{n+1,n+2}(-u) = e_{n,n+1}(-u-\tfrac12) \label{f-e:3}
}
and if $\mfg_{N} = \mfo_{2n}$, then
\ali{
-f_{n+2,n+1}(u) = f_{n,n-1}(u) & = e_{n-1,n}(-u) = -e_{n+1,n+2}(-u) , \label{f-e:4} \\ 
-f_{n+2,n}(u) = f_{n+1,n-1}(u) & = e_{n-1,n+1}(-u) = - e_{n,n+2}(-u) , \label{f-e:5} \\
f_{n+1,n}(u) &= e_{n,n+1}(u) = 0 . \label{f-e:6}
}
\end{lemma}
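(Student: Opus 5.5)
The plan is to derive all the identities from the unitarity relation in the form \eqref{S=zS}, $JS(-u)J = \ms{z}(u)\,S^{-1}(u)$, together with the Gaussian decompositions \eqref{S=FDE} and \eqref{Si=FDEi}. The left-hand side is $J F(-u) D(-u) E(-u) J$. Since $J=\sum_i\theta_iE_{i\bar\imath}$ reverses the order of the basis, conjugation by $J$ turns lower triangular matrices into upper triangular ones and vice versa. Hence
\[
JS(-u)J = \big(JF(-u)J^{-1}\big)\big(JD(-u)J\big)\big(JE(-u)J^{-1}\big)
\]
(up to the sign $J^2=\theta I$) is an upper-triangular $\times$ diagonal $\times$ lower-triangular factorisation. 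The right-hand side $\ms{z}(u)E^{-1}(u)D^{-1}(u)F^{-1}(u)$ is of the same shape, with unipotent upper and lower factors.

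By uniqueness of such a factorisation (quasi-determinants of the reversed matrix), we may compare factors. First, $JF(-u)J^{-1}=E^{-1}(u)$ and $JE(-u)J^{-1}=F^{-1}(u)$. Second, the diagonal factors give $\theta\,\theta_i\theta_{\bar\imath}\,d_{\bar\imath}(-u)=\ms{z}(u)d_i^{-1}(u)$, which is consistent with \eqref{d=zd}. Next, take the entries adjacent to the diagonal. By \eqref{Fi} and \eqref{Ei}, $(E^{-1}(u))_{i,i+1}=-e_{i,i+1}(u)$, and $(JF(-u)J^{-1})_{i,i+1}=\theta_i\theta_{i+1}f_{\overline{\imath},\overline{\imath+1}}(-u)$ with $\overline{\imath}=N-i+1$ and $\overline{\imath+1}=N-i$. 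Thus
\[
\theta_i\theta_{i+1}\,f_{N-i+1,N-i}(-u)=-e_{i,i+1}(u),
\]
and similarly for the other pair. For $1\le i<n$ we have $\theta_i\theta_{i+1}=1$ in both cases, which gives \eqref{f-e:1} after $u\mapsto -u$. For $\mfsp_{2n}$ and $i=n$ we have $\theta_n\theta_{n+1}=-1$ and $\overline{n}=n+1$, so the same comparison gives $f_{n+1,n}(-u)=e_{n,n+1}(u)$, which is \eqref{f-e:2}.

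The middle entries in the orthogonal cases do not follow from unitarity alone, since the middle pair is mapped to itself. Here I will combine three ingredients: the identities just obtained, the type AI relation \eqref{AI:f=e} ($e_{i,i+1}(u)=f_{i+1,i}(\ka-i-u)$) for the upper-left copy, and its shifted copy from Lemma \ref{L:AI-copy}, applied to the lower-right block and valid with index shift $N-n$.

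For $\mfo_{2n+1}$, \eqref{AI:f=e} with $i=n$ gives $e_{n,n+1}(u)=f_{n+1,n}(\ka-n-u)$ with $\ka-n=-\tfrac12$, i.e.\ $e_{n,n+1}(-u-\tfrac12)=f_{n+1,n}(u)$. Unitarity relates $f_{n+2,n+1}$ to $e_{n,n+1}$, and Lemma \ref{L:AI-copy} with the shift $\ka-\tfrac n2$ relates $e_{n+1,n+2}$ to $f_{n+2,n+1}$. Chaining these produces \eqref{f-e:3}, where one must track the signs $\theta$ and the half-shifts.

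For $\mfo_{2n}$ I will reduce to the rank-two case via the embedding $\psi^{(N)}_{n-2}$ and Corollary \ref{C:RE[m]}. The reduced matrix $S^{[n-2]}(u)$ generates a copy of $X^\tw(\mfo_4)$ whose Gaussian series are $d_{n-1},\dots,d_{n+2}$ and the corresponding $e$'s and $f$'s. Corollary \ref{C:so4}, namely \eqref{so4:1} and \eqref{so4:2} with indices shifted by $n-2$, then gives \eqref{f-e:4}, \eqref{f-e:5} and \eqref{f-e:6} directly; $\ka$ is automatically consistent because $\ka^{[m]}=1$. Alternatively, for $\mfo_{2n+1}$ one can reduce in the same way to $X^\tw(\mfo_3)$ and use \eqref{so3:1}, which gives $e_{12}(u-\tfrac12)=-e_{23}(u)=-f_{32}(-u+\tfrac12)=f_{21}(-u)$ and hence \eqref{f-e:3} after relabelling. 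This is the cleaner route, and I will use it for both orthogonal cases; the same reduction to $X^\tw(\mfsp_2)$ via \eqref{sp2:1} covers \eqref{f-e:2} as well.

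The main obstacle is justifying that the Gaussian series of $S^{[m]}(u)$ coincide with the corresponding Gaussian series of $S(u)$, so that the low-rank identities transfer without change. This is built into the definition \eqref{S[m]} together with the proposition identifying $s^{[m]}$ with $\psi_m$. The remaining care is in the signs $\theta_i\theta_{\bar\imath}$ of $J$ in the unitarity comparison for \eqref{f-e:1}.
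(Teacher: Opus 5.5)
Your proof is correct. For \eqref{f-e:3}--\eqref{f-e:6}, and for \eqref{f-e:2} if you take the $X^{\tw}(\mfsp_2)$ route, it is exactly the paper's argument. You pass to the reduced matrix $S^{[n-1]}(u)$, resp.\ $S^{[n-2]}(u)$, whose Gaussian factors are by construction blocks of those of $S(u)$, and relabel \eqref{so3:1}, \eqref{so4:1}--\eqref{so4:2}, \eqref{sp2:1}.

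The difference is in \eqref{f-e:1}. The paper applies unitarity separately to each reduced matrix in the form \eqref{S=zS[m]}. This requires Corollary \ref{C:RE[m]} and the reduced central series $\ms{z}^{[i-1]}(u)$, and the paper then reads off only the bottom-right corner entries, where $D^{-1}$, $E^{-1}$, $F^{-1}$ contribute single Gaussian series. You instead use \eqref{S=zS} once for the full matrix. You compare the upper$\times$diagonal$\times$lower factorisations $\big(JF(-u)J^{-1}\big)\big(JD(-u)J\big)\big(JE(-u)J^{-1}\big)$ and $E^{-1}(u)\big(\ms{z}(u)D^{-1}(u)\big)F^{-1}(u)$. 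Uniqueness holds over any ring once the diagonal entries are invertible, and $\ms{z}(u)$ can be absorbed into the diagonal factor because it is central.

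Your route is cleaner and yields more. Besides \eqref{f-e:1}, and \eqref{f-e:2} via $\theta_n\theta_{n+1}=-1$, the diagonal comparison gives $\ms{z}(u)=\theta\,d_i(-u)\,d_{N-i+1}(u)$ for every $i$ at once. That is Proposition \ref{P:z(u)}, including the middle case $\ms{z}(u)=d_{n+1}(-u)\,d_{n+1}(u)$ for $\mfo_{2n+1}$, which the paper obtains by a separate argument on the reduced matrices plus the $\mfo_3$ isomorphism.

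Two small corrections.

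First, your factorisation of $JS(-u)J$ is exact, not merely up to sign, since $J^{-1}EJ=JEJ^{-1}$. Its diagonal entries are $\theta_i\theta_{\bar\imath}\,d_{\bar\imath}(-u)=\theta\,d_{\bar\imath}(-u)$. With your extra factor $\theta$, the consistency check against \eqref{d=zd} fails by a sign in the symplectic case.

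Second, the first route you sketch for $\mfo_{2n+1}$ is not justified.
\begin{itemize}
\item \eqref{AI:f=e} is available only for $i<n$. For $N=2n+1$ the upper-left $(n+1)\times(n+1)$ block is not of type AI, since $\overline{n+1}=n+1$.
\item The lower-right type AI block of Lemma \ref{L:AI-copy} consists of the indices $n+2,\dots,2n+1$, so it never involves $e_{n+1,n+2}$ or $f_{n+2,n+1}$.
\end{itemize}
Full unitarity does give $f_{n+1,n}(u)=-e_{n+1,n+2}(-u)$ and $-f_{n+2,n+1}(u)=e_{n,n+1}(-u)$. However, the link between these two pairs genuinely needs the $X^{\tw}(\mfo_3)$ input, so you were right to switch to that route.
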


\begin{proof}
Consider the algebra $X^{\tw[i-1]}(\mfg_N)$ with $1\le i < n$. Corollary \ref{C:RE[m]} implies that
\equ{
J^{[i-1]} S^{[i-1]}(-u) J^{[i-1]} = \ms{z}^{[i-1]}(u)\,(S^{[i-1]}(u))^{-1} . \label{S=zS[m]}
}
Taking the bottom-right entry on both sides gives
\equ{
\theta\,d_i(-u) = \ms{z}^{[i-1]}(u)\,d^{-1}_{N-i+1}(u) . \label{d=zd[m]}
}
Next, taking the entry to the left of the bottom-right entry on both sides of \eqref{S=zS[m]} gives 
\[
\theta\,d_i(-u)\,e_{i,i+1}(-u)\,= - \ms{z}^{[i-1]}(u)\,d^{-1}_{N-i+1}(u)\,f_{N-i+1,N-i}(u)
\]
which, upon combining with \eqref{d=zd[m]}, gives the first relation in \eqref{f-e:1}. The second relation in \eqref{f-e:1} is obtained by taking the entry above the bottom-right entry on both sides of \eqref{S=zS[m]} and combining with \eqref{d=zd[m]}. The remaining relations follow from \eqref{gl2:1} and Propositions \ref{P:sp2->gl2}, \ref{P:o3->gl2} and \ref{P:o4->gl2}.
\end{proof}

\begin{prop} \label{P:z(u)}
The following relations hold in $X^{\tw}(\mfg_N)$:
\equ{
\ms{z}(u) = \theta\,d_i(-u)\,d_{N-i+1}(u) \qu\text{for}\qu 1 \le i \le N-n .
\label{z(u)}
}
\end{prop}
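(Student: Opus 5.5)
The plan is to proceed by induction on $i$, using the relation \eqref{d=zd[m]} obtained in the proof of Lemma \ref{L:fe}. That relation reads $\theta\,d_i(-u) = \ms{z}^{[i-1]}(u)\,d^{-1}_{N-i+1}(u)$, where $\ms{z}^{[i-1]}(u)$ is the central unitarity series of the subalgebra $X^{\tw[i-1]}(\mfg_N)\cong X^{\tw}(\mfg_{N-2i+2})$. Since the $d_k$ pairwise commute by \eqref{AI:dd} and Lemma \ref{L:AI-copy}, and more generally by Corollary \ref{C:emb-comm} together with the quasi-determinant description, this is equivalent to $\ms{z}^{[i-1]}(u) = \theta\,d_i(-u)\,d_{N-i+1}(u)$. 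Its range of validity is $1\le i\le N-n$, i.e. as long as $S^{[i-1]}$ has size at least $2$; in the odd case $i=n+1$ it gives the $1\times1$ matrix, and we treat that separately. It therefore suffices to prove $\ms{z}^{[m]}(u)=\ms{z}(u)$ for all relevant $m$, and by the composition property $\psi^{(N)}_{l}\circ\psi^{(N-2l)}_m=\psi^{(N)}_{l+m}$ this reduces to showing $\ms{z}^{[1]}(u)=\ms{z}(u)$, i.e. that $\psi^{(N)}_1$ maps the unitarity series of $X^{\tw}(\mfg_{N-2})$ to that of $X^{\tw}(\mfg_N)$.

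To prove $\ms{z}^{[1]}(u)=\ms{z}(u)$, I would compare the $(2,\ol 2)$-corner data. First apply \eqref{S=zS} to $S(u)=F(u)D(u)E(u)$ and extract the entry in row $N-1$, column $N-1$ (and the neighbouring ones) on both sides. The right-hand side $\ms{z}(u)\,E^{-1}(u)D^{-1}(u)F^{-1}(u)$ has $(N-1,N-1)$ entry $\ms{z}(u)\big(d^{-1}_{N-1}(u)+e^{-1}\text{-terms}\cdot d_N^{-1}(u)\cdot f\text{-terms}\big)$. The left-hand side $\theta_{\cdot}\,s_{22}(-u)$, with sign factors from $J$, is expressed via Gaussian series as $d_2(-u)+f_{21}(-u)d_1(-u)e_{12}(-u)$. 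Using \eqref{d=zd} together with Lemma \ref{L:fe} for $i=1$, which identifies $f_{N,N-1}(u)=-e_{12}(-u)$ and $e_{N-1,N}(u)=-f_{21}(-u)$, the cross terms should cancel exactly. This leaves $\theta\,d_2(-u)=\ms{z}(u)\,d_{N-1}^{-1}(u)$, which is \eqref{z(u)} for $i=2$. Comparing with \eqref{d=zd[m]} for $i=2$ then gives $\ms{z}^{[1]}(u)=\ms{z}(u)$, and iterating within the subalgebra $X^{\tw[1]}(\mfg_N)$ yields all $i\le n$.

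However, Lemma \ref{L:fe} itself was proved using \eqref{d=zd[m]} with $\ms{z}^{[i-1]}$, which is fine because its proof only needed the relation within a single subalgebra. So the argument is not circular: at step $i$ one uses Lemma \ref{L:fe} for index $i-1$ in $X^{\tw[i-2]}(\mfg_N)$.

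The remaining cases are the middle indices. For $\mfo_{2n+1}$ with $i=n+1$, I must show $\ms{z}(u)=d_{n+1}(-u)d_{n+1}(u)$. This follows from \eqref{so3:4} applied in $X^{\tw[n-1]}(\mfg_N)\cong X^{\tw}(\mfo_3)$, namely $d_1(u)d_3(-u)=d_2(u)d_2(-u)$ for the reduced series, combined with the already established case $i=n$. For $\mfo_{2n}$ with $i=n$, the statement is already covered; the consistency check against \eqref{so4:3} is automatic.

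The main obstacle is the cancellation of cross terms in the $(N-1,N-1)$ entry. It requires the exact sign conventions of $J$, namely $\theta_i$, and the precise identification of $f_{N,N-1}$ with $e_{12}$ at the reflected argument from Lemma \ref{L:fe}. I would verify this carefully in the symplectic and orthogonal cases separately, checking signs against the rank-one cases of Corollaries \ref{C:sp2}, \ref{C:so3} and \ref{C:so4}.
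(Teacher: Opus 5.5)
Your proposal is correct and is essentially the paper's argument. The paper likewise reads off the next-to-last diagonal entry of \eqref{S=zS[m]} in $X^{\tw[i-1]}(\mfg_N)$ and cancels the cross terms using the centrality of $\ms{z}^{[i-1]}(u)$, \eqref{d=zd[m]} and \eqref{f-e:1}; it then chains the resulting identities $\theta\,d_i(-u)\,d_{N-i+1}(u)=\theta\,d_{i+1}(-u)\,d_{N-i}(u)$ starting from $\ms{z}^{[0]}(u)=\ms{z}(u)$, and treats $i=n+1$ for $\mfo_{2n+1}$ via $X^{\tw[n-1]}(\mfo_{2n+1})\cong X^{\tw}(\mfo_3)$, exactly as you do. Your formulation as $\ms{z}^{[m]}(u)=\ms{z}(u)$ is a repackaging of this chain; note that \eqref{d=zd[m]} should only be invoked for $i<n$, since for $\mfo_{2n}$ the case $i=n$ would involve a degenerate $X^{\tw}(\mfo_2)$. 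This is harmless, because your entry computation in $X^{\tw[n-2]}(\mfg_N)$ yields the case $i=n$ directly.
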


\begin{proof}
Consider the algebra $X^{\tw[i-1]}(\mfg_N)$ with $1\le i < n$.  Taking the next-to-last diagonal entry on both sides of \eqref{S=zS[m]} gives (recall (\ref{D[m]}--\ref{S[m]}) and (\ref{Si=FDEi}--\ref{Ei}))
\aln{
& \theta\,(d_{i+1}(-u) + f_{i+1,i}(-u)\,d_i(-u)\,e_{i,i+1}(-u)) \\ 
& \qq = \ms{z}^{[i-1]}(u)\,d^{-1}_{N-i}(u) + \ms{z}^{[i-1]}(u)\, e_{N-i,N-i+1}(u)\,d^{-1}_{N-i+1}(u)\,f_{N-i+1,N-i}(u) .
}
Since $\ms{z}^{[i-1]}(u)$ is a central series in $X^{\tw[i-1]}(\mfg_N)$ we may exchange $\ms{z}^{[i-1]}(u)$ and $e_{N-i,N-i+1}(u)$ and apply \eqref{d=zd[m]}. This yields
\aln{
& \theta\,(d_{i+1}(-u) + f_{i+1,i}(-u)\,d_{i}(-u)\,e_{i,i+1}(-u)) \\ 
& \qq = \ms{z}^{[i-1]}(u)\,d^{-1}_{N-i}(u) + \theta\,e_{N-i,N-i+1}(u)\,d_i(-u)\,f_{N-i+1,N-i}(u) .
}
Applying \eqref{f-e:1} gives
\equ{
\theta\,d_{i+1}(-u) = \ms{z}^{[i-1]}(u)\,d^{-1}_{N-i}(u) . \label{d=zd:2}
}
Comparing \eqref{d=zd[m]} with \eqref{d=zd:2} we find
\equ{
\ms{z}^{[i-1]}(u) = \theta\,d_{i}(-u)\,d_{N-i+1}(u) = \theta\,d_{i+1}(-u)\,d_{N-i}(u) 
}
implying \eqref{z(u)} for $1\le i < n$ (since $\ms{z}^{[0]}(u)=\ms{z}(u)$). The remaining relation, $\ms{z}(u) = d_{n+1}(-u)\,d_{n+1}(u)$ when $N=2n+1$, follows from Corollary~\ref{C:RE[m]} and Proposition~\ref{P:o3->gl2}. The former implies $X^{\tw[n-1]}(\mfo_{2n+1}) \cong X^{\tw}(\mfo_3)$. The wanted relation then follows by comparing the images of $d_1(-u)\,d_3(u)$ and $d_2(-u)\,d_2(u)$ under the mapping \eqref{o3->gl2} and applying the third relation in \eqref{gl2:1}.
\end{proof}


\subsection{Relations of types BI, CI and DI} \label{sec:BCD}

We will now derive relations involving series $f_{n+1,n}(u)$ and  $d_{n+1}(u)$ in the case of $X^\tw(\mfo_{2n+1})$, series $f_{n+1,n}(u)$ in the case of $X^\tw(\mfsp_{2n})$, and series $f_{n+1,n-1}(u)$ in the case~of~$X^\tw(\mfo_{2n})$.
Corollary \ref{C:emb-comm} immediately implies the following trivial relations.

\begin{lemma} \label{L:BCD-triv}
In the algebra $X^{\tw}(\mfo_{2n+1})$ we have:
\ali{
[d_{n+1}(u), d_{i}(v)] &= 0 \qu \text{for}\qu 1 \le i \le n , \label{B:11}\\
[d_{n+1}(u), f_{i+1,i}(v)] &= 0 \qu \text{for}\qu 1 \le i < n , \label{B:12}\\
[f_{n+1,n}(u), d_{i}(v)] &= 0 \qu \text{for}\qu 1 \le i < n , \label{B:13}\\
[f_{n+1,n}(u), f_{i+1,i}(v)] &= 0 \qu \text{for}\qu 1 \le i < n - 1 . \label{B:14}
\intertext{In the algebra $X^{\tw}(\mfsp_{2n})$ we have:}
[f_{n+1,n}(u), d_{i}(v)] &= 0 \qu \text{for}\qu 1 \le i < n ,\\
[f_{n+1,n}(u), f_{i+1,i}(v)] &= 0 \qu \text{for}\qu 1 \le i < n - 1 .
\intertext{In the algebra $X^{\tw}(\mfo_{2n})$ we have:}
[f_{n+1,n-1}(u), d_{i}(v)] &= 0 \qu \text{for}\qu 1 \le i < n-1 ,\\
[f_{n+1,n-1}(u), f_{i+1,i}(v)] &= 0 \qu \text{for}\qu 1 \le i < n - 2 . 
}
\end{lemma}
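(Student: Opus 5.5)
We deduce every relation from Corollary~\ref{C:emb-comm}, read in the form: for $m\ge 1$ the coefficients of $s_{ab}(u)$ with $1\le a,b\le m$ commute with the subalgebra $X^{\tw[m]}(\mfg_N)$, which is generated by the coefficients of $s^{[m]}_{ij}(u)$, $m<i,j<\ol m$ (the images of $\psi^{(N)}_m$). The plan has two parts. First, place each ``upper'' series $d_i(v)$, $f_{i+1,i}(v)$ in the algebra generated by the $s_{ab}$ with $a,b\le m$. Second, place each ``lower'' series $d_{n+1}(u)$, $f_{n+1,n}(u)$, $f_{n+1,n-1}(u)$ in $X^{\tw[m]}(\mfg_N)$, for a suitable $m$.

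For the first part, note that $d_i(v)$ and $f_{i+1,i}(v)$ are quasi-determinants of the upper-left $(i+1)\times(i+1)$ block of $S(v)$. Hence their coefficients lie in the subalgebra generated by the coefficients of $s_{ab}(v)$ with $a,b\le i+1$ (for $d_i$, already $a,b\le i$ suffices). The quasi-determinant expansions only use the inverses $s_{11}^{-1}$ and lower quasi-minors, and these are invertible series with constant terms products of $g_{aa}=\pm1$. So this step is formal.

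For the second part, the Proposition identifying $s^{[m]}_{m+i,m+j}(u)$ with $\psi^{(N)}_m(s_{ij}(u))$, together with the Gaussian decomposition \eqref{S[m]}, shows that the Gaussian series $d_k(u)$ with $m<k<\ol m$ and $f_{kl}(u)$ with $m<l<k<\ol m$ are exactly the Gaussian series of $S^{[m]}(u)$. Their coefficients therefore lie in $X^{\tw[m]}(\mfg_N)$. This is again by uniqueness of the Gaussian decomposition: the quasi-determinants of $S^{[m]}$ coincide with the corresponding entries of $D^{[m]}$, $F^{[m]}$, $E^{[m]}$.

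It remains to choose $m$ in each case.

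In type B, for \eqref{B:11} and \eqref{B:12} take $m=i$ and $m=i+1$ respectively. Since $i+1\le n$, the series $d_{n+1}(u)$ lies in $X^{\tw[m]}$ because $m<n+1<\ol m=2n+2-m$. For \eqref{B:13} and \eqref{B:14} take $m=n-1$. Then $f_{n+1,n}(u)$ has indices $n-1<n<n+1<n+3$, so it lies in $X^{\tw[n-1]}$. On the other side, $d_i$ with $i<n$ involves only $s_{ab}$ with $a,b\le i\le n-1$, and $f_{i+1,i}$ with $i<n-1$ involves only $a,b\le i+1\le n-1$.

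Type C is identical to the second half of type B, again with $m=n-1$.

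In type D take $m=n-2$. Then $f_{n+1,n-1}(u)$ has indices in $(n-2,\ol{n-2})=(n-2,n+3)$, so it lies in $X^{\tw[n-2]}$. The conditions $i<n-1$ and $i<n-2$ ensure $a,b\le n-2$ on the other side.

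The only point needing care, and the step I regard as the main obstacle, is that Corollary~\ref{C:emb-comm} compares series at different spectral parameters $u$, $v$. This is harmless: it holds for all coefficients, and the commutation passes to the coefficients of products and of inverses of series with invertible constant term.
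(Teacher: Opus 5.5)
Your route is the paper's: its proof of this lemma is a one-line citation of Corollary \ref{C:emb-comm}. Your choices $m=n-1$ (type B relations \eqref{B:13}--\eqref{B:14}, and type C) and $m=n-2$ (type D) are correct.

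The point you flag as the main obstacle, different spectral parameters, is harmless. The real gap is at two boundary cases in type B.
\begin{itemize}
\item In \eqref{B:11} with $i=n$ you take $m=i=n$.
\item In \eqref{B:12} with $i=n-1$ you take $m=i+1=n$.
\end{itemize}
Corollary \ref{C:emb-comm} is only available for $1\le m<n$, and so are $\psi^{(N)}_m$, $X^{\tw[m]}(\mathfrak g_N)$ and Corollary \ref{C:RE[m]}. For $N=2n+1$ and $m=n$ the source algebra would be an $X^{\tw}(\mathfrak o_1)$, which is never defined, so ``$[s_{ab}(u),d_{n+1}(v)]=0$ for all $a,b\le n$'' is not at your disposal.

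Nor is this a harmless extension. Once the case $m=n-1$ and \eqref{so3:4} are granted, what such an $m=n$ statement adds is essentially $[d_{n+1}(u),f_{n,n-1}(v)]=0$, which is exactly the case you need. The paper's one-line proof is equally silent here, although it does record the case $i=n$ of \eqref{B:11} separately as \eqref{dd1}.

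Both cases can be repaired.

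\emph{\eqref{B:11} with $i=n$.} Take $m=n-1$. Then $d_n$ and $d_{n+1}$ are diagonal Gaussian series of $S^{[n-1]}(u)$, and $X^{\tw[n-1]}(\mathfrak o_{2n+1})\cong X^{\tw}(\mathfrak o_3)$, so \eqref{so3:4} applies.

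\emph{\eqref{B:12} with $i=n-1$.} Use the zero mode.
\begin{itemize}
\item The $u^{-1}$-coefficient of \eqref{[s,s]} (with $G=I$, $\theta=1$) gives $[s^{(1)}_{n,n-1},S(v)]=ZS(v)-S(v)Z$, where $Z=E_{n-1,n}-E_{n,n-1}+E_{\overline{n-1},\bar n}-E_{\bar n,\overline{n-1}}$.
\item $Z$ has zero $(n+1)$-st row and column and does not mix $\{1,\dots,n\}$ with $\{n+2,\dots,N\}$. So it acts on the leading $(n+1)\times(n+1)$ block $X(v)$ by $[Z_0,\,\cdot\,]$, and therefore also on $X(v)^{-1}$ by $[Z_0,\,\cdot\,]$.
\item This commutator has zero $(n+1,n+1)$ entry. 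Since $f^{(0)}_{n,n-1}=s^{(1)}_{n,n-1}$, this gives $[f^{(0)}_{n,n-1},d_{n+1}(v)]=0$.
\item $d_{n+1}$ commutes with all coefficients of $d_{n-1}$ (by the case $m=n-1$) and of $d_n$ (by the previous fix), hence with $h_{n-1,1}$.
\item Relations \eqref{AI:df1}--\eqref{AI:df2}, i.e.\ \eqref{hb} with $i=j=n-1$, give $[h_{n-1,1},b_{n-1,s}]=4\,b_{n-1,s+1}$.
\item Induction on $s$ then yields $[d_{n+1}(u),f_{n,n-1}(v)]=0$.
\end{itemize}
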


\smallskip

Next, we state relations involving diagonal type Gaussian series that follow by combining Corollary \ref{C:RE[m]} with Propositions \ref{P:sp2->gl2}, \ref{P:o3->gl2}~and~\ref{P:o4->gl2}.

\begin{lemma} \label{L:BCD-df}
In the algebra $X^{\tw}(\mfo_{2n+1})$ we have:
\ali{
[d_{i}(u),d_{n+1}(v)] &= 0 \qu\text{for}\qu i=n,n+1, \label{dd1} \\[0.2em]  
d_{n}(u)\, f_{n+1,n}(v) &= \bigg[\frac{(u - v - 1) (u + v + \frac12)}{(u - v) (u + v - \frac12)} \, f_{n+1,n}(v) \nn \\ & \qq + \frac{u + \frac14}{(u - \frac14) (u - v)}\, f_{n+1,n}(u) - \frac{u - \frac34}{(u - \frac14) (u + v - \frac12)}\, f_{n+1,n}(\tfrac12-u) \bigg]\, d_n(u) , \label{hf1} \\[0.4em]
d_{n+1}(u)\, f_{n+1,n}(v) &= \bigg[\frac{(u + v) (u - v - 1) (u - v + \frac12) (u + v + \frac12) }{(u - v) (u + v + 1) (u - v - \frac12) (u + v - \frac12)} \, f_{n+1,n}(v) \nn\\ 
&\qq - \frac{u}{u - \frac14}\bigg( \frac{u + \frac14}{(u + \frac12) (u - v)}\, f_{n+1,n}(u) + \frac{u - \frac34}{3 (u - \frac12) (u + v - \frac12)}\, f_{n+1,n}(\tfrac12-u) \bigg) \nn\\ 
&\qq +  \frac{u}{u + \frac14}\bigg(\frac{u - \frac14}{(u - \frac12) (u - v - \frac12)}\, f_{n+1,n}(u - \tfrac12) \nn \\ & \hspace{4.6cm} + \frac{u + \frac34}{3 (u + \frac12) (u + v + 1)}\, f_{n+1,n}(-u - 1) \bigg) \bigg]\, d_{n+1}(u) . \label{hf2}
\intertext{In the algebra $X^{\tw}(\mfsp_{2n})$ we have:}
%
d_{n}(u)\, f_{n+1,n}(v) &= \bigg[ \frac{(u + v) (u - v - 2)}{(u - v) (u + v - 2)} \, f_{n+1,n}(v) \nn \\ & \qq + \frac{2 u}{(u - 1) (u - v)}\, f_{n+1,n}(u) - \frac{2 (u - 2)}{(u - 1) (u + v - 2)} f_{n+1,n}(2-u) \bigg]\, d_{n}(u) . \label{hf3}
\intertext{In the algebra $X^{\tw}(\mfo_{2n})$ we have, for $i=n,n-1$:}
%
d_{i}(u)\, f_{n+1,n-1}(v) &= \bigg[ \frac{(u + v) (u - v - 1)}{(u - v) (u + v - 1)} \, f_{n+1,n-1}(v) \nn \\ & \qq + \frac{u}{(u - \frac12) (u - v)} \, f_{n+1,n-1}(u) - \frac{u - 1}{(u - \frac12)(u + v - 1)} \, f_{n+1,n-1}(1-u) \bigg]\, d_{i}(u) . \label{hf4}
}
\end{lemma}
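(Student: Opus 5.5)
The plan is to reduce every relation to a rank-one or rank-two computation inside the reduced subalgebra $X^{\tw[m]}(\mfg_N)$, for a suitable $m$, and then transport the known low-rank relations.

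\textbf{Step 1: choice of $m$ and identification of Gaussian series.} Take $m=n-1$ in types BI and CI, so that $X^{\tw[n-1]}(\mfg_N)\cong X^{\tw}(\mfo_3)$ or $X^{\tw}(\mfsp_2)$. In type DI take $m=n-2$, so that $X^{\tw[n-2]}(\mfo_{2n})\cong X^{\tw}(\mfo_4)$; these isomorphisms are Corollary~\ref{C:RE[m]}. By the proposition identifying $s^{[m]}_{m+i,m+j}(u)$ with $\psi^{(N)}_m(s_{ij}(u))$, the Gaussian series of the reduced matrix $S^{[m]}(u)$ are precisely $d_{m+k}(u)$, $f_{m+k,m+l}(u)$ and $e_{m+l,m+k}(u)$. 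The decomposition \eqref{S[m]} is a Gaussian decomposition of a generating matrix of $X^{\tw}(\mfg_{N-2m})$, so by uniqueness these series are the Gaussian series of that smaller algebra. Hence the required relations reduce to relations among $d_1,d_2,f_{21}$ in $X^\tw(\mfo_3)$ and $X^\tw(\mfsp_2)$, and among $d_1,d_2,f_{31}$ in $X^\tw(\mfo_4)$.

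\textbf{Step 2: reading off the relations.}
\begin{itemize}
\item Relation \eqref{dd1} is \eqref{so3:4}.
\item Relation \eqref{hf1} is \eqref{so3:5} after the shift $u\mapsto u-\tfrac14$.
\item Relation \eqref{hf2} is \eqref{so3:6}.
\item Relation \eqref{hf3} is \eqref{sp2:2} after the shift $u\mapsto u-1$.
\item Relation \eqref{hf4} with $i=n-1$ is \eqref{so4:5} with $k=3$, and with $i=n$ it is \eqref{so4:7}, in both cases after $u\mapsto u-\tfrac12$.
\end{itemize}
In each case I will expand the braces $\{\cdot\}^u$ explicitly. For example, in \eqref{hf3} the term $\{2(u+1)f_{21}(u+1)/(u(u-v+1))\}^u$ becomes $\frac{2u}{(u-1)(u-v)}f(u)-\frac{2(u-2)}{(u-1)(u+v-2)}f(2-u)$ after the shift. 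This turns the symmetrized low-rank formulas into the displayed forms.

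\textbf{Main obstacle.} The only nontrivial point is the bookkeeping of shifts of the spectral parameter. The embedding $\psi^{(N)}_m$ introduces no shift in $u$; this is the advantage of the quasi-determinant form \eqref{psi_m} over the quantum-minor form. However, the low-rank corollaries carry shifts coming from the isomorphisms to $X^+(\mfgl_2)$. I will check carefully that the constants $\ka^{[m]}$ match: $\ka=\tfrac12$ for $\mfo_3$, $2$ for $\mfsp_2$ and $1$ for $\mfo_4$. These values produce the arguments $\tfrac12-u$, $2-u$ and $1-u$ appearing in \eqref{hf1}--\eqref{hf4}.

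I will also verify that the rational prefactors agree after simplification. In particular, for \eqref{hf2} the four-factor coefficient of $f(v)$ in \eqref{so3:6} should be rewritten with the factors regrouped, and no further shift is needed there.
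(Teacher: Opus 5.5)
Your proposal is correct and follows essentially the paper's own argument. Like the paper, you reduce via Corollary~\ref{C:RE[m]} to $X^{\tw[n-1]}(\mfg_N)\cong X^\tw(\mfo_3)$ or $X^\tw(\mfsp_2)$, and to $X^{\tw[n-2]}(\mfo_{2n})\cong X^\tw(\mfo_4)$, and then transport the rank-one relations coming from the isomorphisms with $X^+(\mfgl_2)$. The only difference is that you cite the packaged Corollaries~\ref{C:so3}, \ref{C:sp2} and \ref{C:so4} rather than going back to Proposition~\ref{P:o3->gl2} and \eqref{gl2:1}--\eqref{gl2:3}. Your spectral-parameter shifts and the expansions of $\{\cdot\}^u$ check out in every case.
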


\begin{proof}
Consider the algebra $X^{\tw}(\mfo_{2n+1})$. By Corollary \ref{C:RE[m]}, coefficients of the series $s^{[n-1]}_{kl}(u)$ with $n\le k,l\le n+2$ satisfy relations of the algebra $X^{\tw}(\mfo_{3})$. From \eqref{S[m]} we know that
\[
\setlength\arraycolsep{2pt}
S^{[n-1]}(u) = 
\begin{bmatrix}
1 & 0 & 0\, \\
f_{n+1,n}(u) & 1 & 0 \\
f_{n+2,n}(u) & f_{n+2,n+1}(u) & 1
\end{bmatrix} \!
\begin{bmatrix}
d_{n}(u) & 0 & 0 \\
0 & d_{n+1}(u) & 0 \\
0 & 0 & d_{n+2}(u)
\end{bmatrix} \!
\begin{bmatrix}
1 & e_{n,n+1}(u) & e_{n,n+2}(u) \\
0 & 1 & e_{n+1,n+2}(u) \\
0 & 0 & 1
\end{bmatrix} .
\]
Relations (\ref{dd1}--\ref{hf2}) are now verified using Proposition \ref{P:o3->gl2} and relations (\ref{gl2:1}--\ref{gl2:3}). 
Relations (\ref{hf3}--\ref{hf4}) are verified in a similar way.
\end{proof}

We state relations involving Gaussian series associated with the $n$-th simple root of $\mfg_N$.

\begin{lemma} \label{L:ff}
In the algebra $X^{\tw}(\mfo_{2n+1})$ we have:
\ali{
[f_{n+1,n}(u), f_{n+1,n}(v)] &= \frac{\frac12}{v-u} \, (f_{n+1,n}(u) - f_{n+1,n}(v))^2 \nn\\ & \qu + \frac{1}{v+u+\frac12}\,\bigg( \frac{v}{v-\frac14}\,d^{-1}_n(v)\,d_{n+1}(v) - \frac{u}{u-\frac14}\,d^{-1}_n(u)\,d_{n+1}(u) \bigg) , \label{B:ff1} \\[0.3em]
[f_{n,n-1}(u), f_{n+1,n}(v)] &= \frac{1}{u-v}\,\big( f_{n+1,n}(v)\,(f_{n,n-1}(v) - f_{n,n-1}(u) ) + f_{n+1,n-1}(u) - f_{n+1,n-1}(v)  \big) . \label{B:ff2} 
\intertext{In the algebra $X^{\tw}(\mfsp_{2n})$ we have:}
[f_{n+1,n}(u), f_{n+1,n}(v)] &= \frac{2}{v-u} \, (f_{n+1,n}(v) - f_{n+1,n}(u))^2 \nn\\ & \qu + \frac{2}{u+v}\,\big( d^{-1}_n(v)\,d_{n+1}(v) - d^{-1}_n(u)\,d_{n+1}(u) \big) , \label{C:ff1} \\[0.3em]
[f_{n,n-1}(u), f_{n+1,n}(v)] &= \frac{2}{u-v}\,\big( f_{n+1,n}(v)\,(f_{n,n-1}(v) - f_{n,n-1}(u) ) + f_{n+1,n-1}(u) - f_{n+1,n-1}(v)  \big) . \label{C:ff2}
\intertext{In the algebra $X^{\tw}(\mfo_{2n})$ we have:}
[f_{n,n-1}(u), f_{n+1,n-1}(v)] &= 0 , \label{D:ff1} \\[0.3em]
[f_{n+1,n-1}(u), f_{n+1,n-1}(v)] &=  \frac{1}{v-u} \, (f_{n+1,n-1}(v) - f_{n+1,n-1}(u))^2 \nn\\ & \qu + \frac{1}{u+v}\,\big( d^{-1}_{n-1}(v)\,d_{n+1}(v) - d^{-1}_{n-1}(u)\,d_{n+1}(u) \big) , \label{D:ff2} \\[0.3em]
[f_{n-1,n-2}(u), f_{n+1,n-1}(v)] &= \frac{1}{u-v}\,\big( f_{n+1,n-1}(v)\,(f_{n-1,n-2}(v) - f_{n-1,n-2}(u) ) \nn\\[0.2em] & \hspace{6.63cm} + f_{n+1,n-2}(u) - f_{n+1,n-2}(v) \big) . \label{D:ff3}
}
\end{lemma}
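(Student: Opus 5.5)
The plan is to reduce every identity to a low-rank subalgebra via Corollary \ref{C:RE[m]} and then transport relations from Corollaries \ref{C:sp2}, \ref{C:so3}, \ref{C:so4}, or from type A relations when the simple root sits inside a type A chain. The self-commutator relations \eqref{B:ff1}, \eqref{C:ff1} and \eqref{D:ff2} come first. For $\mfo_{2n+1}$, the coefficients of $s^{[n-1]}_{kl}(u)$ with $n\le k,l\le n+2$ generate a copy of $X^\tw(\mfo_3)$. Its Gaussian series are $f_{n+1,n}$, $d_n$, $d_{n+1}$, and so on, as displayed in the proof of Lemma \ref{L:BCD-df}. Relation \eqref{B:ff1} is then exactly \eqref{so3:7} read in this copy. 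For $\mfsp_{2n}$ we use $X^{\tw[n-1]}(\mfsp_{2n})\cong X^\tw(\mfsp_2)$ together with \eqref{sp2:4}. For $\mfo_{2n}$ we use $X^{\tw[n-2]}(\mfo_{2n})\cong X^\tw(\mfo_4)$. Its indices $1,2,3,4$ correspond to $n-1,n,n+1,n+2$, so $f_{31}\leftrightarrow f_{n+1,n-1}$ and $f_{21}\leftrightarrow f_{n,n-1}$. Then \eqref{so4:8} with $k=\ell=3$ gives \eqref{D:ff2}, and with $k=2,\ell=3$ it gives \eqref{D:ff1}. These steps are purely bookkeeping.

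The mixed relations \eqref{B:ff2}, \eqref{C:ff2} and \eqref{D:ff3} involve two adjacent simple roots. Here we pass to the rank-two reduced algebra $X^{\tw[n-2]}$ for B and C, and to $X^{\tw[n-3]}$ for D. These contain $f_{n,n-1}$, $f_{n+1,n}$ and $f_{n+1,n-1}$, or their D analogues, as entries of the Gaussian factor $F^{[m]}(u)$. The key observation is that the relevant series involve only rows and columns $n-1,n,n+1$ of $S^{[n-2]}$. We plan to derive the relation directly from the reflection equation \eqref{[s,s]}, in the style of \cite[\S5]{JLM18}.

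\begin{itemize}
\item Write $s^{[n-2]}_{kl}(u)$ for $k,l\in\{n-1,n,n+1\}$ in terms of Gaussian series.
\item Use the commutator $[s_{n-1,n-1}(u),s_{n+1,n}(v)]$ and the commutator $[s_{n,n-1}(u),s_{n+1,n}(v)]$ from \eqref{[s,s]}, applied inside the reduced algebra with $N$ replaced by $N-2m$.
\item In these, the terms with $\delta_{k\bar\imath}$, $\delta_{l\bar\jmath}$ and $\delta_{\bar\imath j}$ all vanish for the chosen indices, as long as the rank is at least two. Only the $\frac1{u-v}$ and $\frac1{u+v-\ka}$ terms survive.
\item The $\frac1{u+v-\ka}$ terms involve $s_{ik}$ with transposed indices, for example $s_{n,n+1}$. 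We convert them using $e_{n,n+1}(u)$, which Lemma \ref{L:fe} expresses through $f_{n+1,n}(-u)$ and related series, together with the rank-one relations for $d_n$ and $f_{n+1,n}$ from Lemma \ref{L:BCD-df}.
\end{itemize}

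Alternatively, and preferably, one can apply the embedding $\psi_1$ of Proposition \ref{P:emb1} with Corollary \ref{C:emb-comm}. This expresses $f_{n+1,n}$ as the Gaussian series of the reduced algebra, which commutes with $s_{n-1,n-1}$. We then compute $[f_{n,n-1}(u),\,\cdot\,]$ from the first-row series $s_{n-1,l}$ only.

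The main obstacle is the mixed relation. The coefficient $\frac{1}{u-v}$ with no $\frac1{u+v}$-type correction requires the cross terms to cancel. First I would extract the relation $[e_{n-1,n}(u), s^{[n-1]}_{n+1,n}(v)]$, or equivalently $[f_{n,n-1}(u),\ldots]$, at the level of $s$-series: expand $s_{n+1,n-1}(v)=f_{n+1,n-1}(v)d_{n-1}(v)$ and $s_{n,n-1}=f_{n,n-1}d_{n-1}$. Then I would cancel the factors $d_{n-1}$ using \eqref{AI:df1}. If the reflection terms do not cancel manifestly, I would fall back on verifying the identity in the rank-two case $N=5$, $4$, or $6$ through the fundamental representation and the PBW/graded injectivity argument. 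This gives a uniform relation in which the dependence on $\ka$ appears only through the rank-one normalisation constants $\tfrac12$, $2$ and $1$.
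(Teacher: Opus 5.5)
Your treatment of the self-commutators \eqref{B:ff1}, \eqref{C:ff1}, \eqref{D:ff1} and \eqref{D:ff2} through the low-rank reduced algebras is the paper's. For \eqref{B:ff2} and \eqref{D:ff3}, the paper also computes directly from the reflection equation: in $X^\tw(\mfo_5)$ for type B, and in the type A subalgebra on the indices $\{1,\ldots,n-1,n+1\}$ for type D. In the type B computation the $s_{n,n+1}$-terms simply cancel in the quasi-determinant combination. What is actually needed there is \eqref{AI:f=e} for $e_{n-1,n}$ together with \eqref{AI:ff2}, not Lemma \ref{L:fe}.

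\textbf{The gap is in the symplectic mixed relation \eqref{C:ff2}.} You assert that the $\delta$-terms of \eqref{[s,s]} vanish for the chosen indices whenever the rank is at least two; this is false in type C. In $X^{\tw[n-2]}(\mfsp_{2n})\cong X^\tw(\mfsp_4)$, relabel $n-1,n,n+1,n+2$ as $1,2,3,4$. Then $\bar 2=3$, so $\delta_{k\bar\imath}=1$ in $[s_{21}(u),s_{32}(v)]$. The same holds in $[s_{21}(u),s_{31}(v)]$, which enters as soon as you pass from $s_{32}$ to $f_{32}$ or to the images under $\psi^{(N)}_1$. Consequently the three lines of \eqref{[s,s]} with denominators $u-v-\ka$, $(u+v)(u-v-\ka)$ and $(u-v-\ka)(u+v-\ka)$ survive, with $\ka=3$. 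They bring in $s_{41},s_{42},s_{13},s_{14}$, hence Gaussian series such as $f_{42}$ and $e_{13}$ that lie outside your rows and columns $n-1,n,n+1$. These terms do not cancel, and they cannot. The $\delta$-free part of the computation has the same shape as in type B and yields the coefficient $\frac{1}{u-v}$, visible in the second term of \eqref{C:ff2-3}. By contrast, \eqref{C:ff2} has $\frac{2}{u-v}$.

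The paper closes this gap with extra input that your outline does not anticipate:
\begin{enumerate}
\item $e_{13}(u)=f_{31}(2-u)$, obtained from $[s_{11}(u),s_{31}(2-u)]$;
\item the identity $f_{31}(v)=\tfrac12[f^{(1)}_{32},f_{21}(v)]$;
\item multiplying \eqref{C:ff2-3} by $u-v-3$ and specialising $u=v+3$, which gives $f_{42}(v)=f_{31}(v+2)-f_{21}(v+2)\,f_{32}(v)$;
\item substituting this back to get an identity $B(u,v)=0$ with poles at $u=v$ and $u=v+2$;
\item forming $\big((u-v-2)B(u,v)+B(v+1,v)\big)/(u-v-1)$, which removes the pole at $u=v+2$. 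This step is precisely where the factor $2$ comes from.
\end{enumerate}

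Your proposed fallback cannot replace this argument. The fundamental representation of Lemma \ref{L:rep} is finite-dimensional and far from faithful. The associated graded algebra only sees top-degree symbols. So neither can certify an identity between generating series in $X^\tw(\mfsp_4)$.
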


\begin{proof}
Consider the algebra $X^\tw(\mfo_{2n+1})$. By Corollary \ref{C:RE[m]}, the subalgebra $X^{\tw[n-1]}(\mfo_{2n+1})$ is isomorphic to $X^\tw(\mfo_3)$. Hence, the Gaussian series $f_{n+1,n}(u)$, $d_{n}(u)$, $d_{n+1}(u)$ and $e_{n,n+1}(u)$ of $X^\tw(\mfo_{2n+1})$ obey relations described in Corollary \ref{C:so3}. In particular, the wanted relation \eqref{B:ff1} follows from \eqref{so3:7}. To obtain relation \eqref{B:ff2} it is sufficient to consider the subalgebra $X^{\tw[n-2]}(\mfo_{2n-1})$ which, by Corollary~\ref{C:RE[m]}, is isomorphic to $X^\tw(\mfo_5)$. 
Defining relations \eqref{[s,s]} of $X^\tw(\mfo_5)$ imply
\ali{
& [s_{21}(u), s_{32}(v) ] + [s_{21}(u), s_{31}(v) ]\,s^{-1}_{11}(v)\,s_{12}(v) \nn \\ & \qq = \frac{1}{u+v-\frac32}\,s_{32}(v)\,s_{21}(u) + \frac{s_{31}(u)\,s_{22}(v) - s_{31}(v)\,s_{22}(u)}{u-v} - \frac{s_{32}(v)\,s_{12}(u)}{(u-v)(u+v-\frac32)} \nn\\
& \qq\qu -\bigg( \frac{s_{31}(u)\,s_{21}(v) - s_{31}(v)\,s_{21}(u)}{u-v} + \frac{(u-v-1)\,s_{32}(v)\,s_{11}(u)}{(u-v)(u+v-\frac32)} \bigg) s_{11}^{-1}(v)\,s_{12}(v) . \label{B:ff2-1}
}
The series $s_{ij}(u)$ with $1\le i,j\le 2$ satisfy defining relations of $X^+_{\frac32}(\mfgl_2)$. Applying Gaussian decomposition \eqref{S=FDE} and the commutativity property \eqref{AI:dd} of diagonal Gaussian series to \eqref{B:ff2-1} we obtain
\ali{
& f_{21}(u)\,d_1(u)\,f_{32}(v)\,d_2(v) - \frac{f_{31}(u)\,d_2(v)\,d_1(u)-f_{31}(v)\,d_2(u)\,d_1(v)-f_{31}(v)\,d_1(v)\,f_{21}(u)\,d_1(u)\,e_{12}(u)}{u-v} \nn
\\
& + \frac{1}{(u-v)(u+v-\frac32)} \Big( \big(f_{32}(v)\,d_2(v)+f_{31}(v)\,d_1(v)\,e_{12}(v) \big)\,d_1(u)\,e_{12}(u) \nn \\ 
&\qq + (u-v-1) \big( f_{32}(v)\,d_2(v) + f_{31}(v)\,d_1(v)\,e_{12}(v) + (u+v-\tfrac{3}{2})\,f_{31}(v)\,d_1(v)\,f_{21}(u) \big)\,d_1(u)\,e_{12}(v) \Big) \nn\\
& - \frac{u+v-\frac12}{u+v-\frac32}\,\big( f_{32}(v)\,d_2(v) + f_{31}(v)\,d_1(v)\,e_{12}(v) \big)\,f_{21}(u)\,d_1(u) = 0 \label{B:ff2-2}
}
Next, we use \eqref{AI:f=e} to replace $e_{12}(u)$ with $f_{21}(\tfrac12-u)$ and $e_{12}(v)$ with $f_{21}(\tfrac12-v)$, and (\ref{AI:df1}--\ref{AI:df2}) to move all diagonal Gaussian series rightwards. By doing so we obtain
\ali{
\bigg( f_{21}(u)\,f_{32}(v) & -\frac{f_{32}(v)\,f_{21}(v) + (u-v-1)\,f_{32}(v)\,f_{21}(u)}{u-v} \bigg)\, d_1(u)\,d_2(v) \nn \\[0.2em]
&  = \frac{f_{31}(u)\,d_1(u)\,d_2(v) - f_{31}(v)\,d_1(v)\,d_2(u) - f_{31}(v)\, A\, d_1(u)\,d_1(v)}{u-v} 
\label{B:ff2-3}
}
where 
\ali{
A & = \frac{(u-v+1)^2(u+v-\frac12)}{(u-v)(u+v-\frac32)} \bigg( \frac{f_{21}(u)\,f_{21}(u) }{u+v-\frac32} - \frac{f_{21}(v)\,f_{21}(u) }{2\,(v-\frac34)}\bigg) \nn
\\
& \qu + \frac{(u+v-\frac12)}{(v-\frac34)(u-v)}\bigg( \frac{(u-v-1)(u-v+1)\,f_{21}(u)\,f_{21}(v)}{2\,(u+v-\frac32)} + \frac{f_{21}(v)\,f_{21}(v)}{4\,(v-\frac34)} \bigg) \nn
\\
& \qu  + \frac{(v-\frac54)(u-v+1)(u+v-\frac12)}{(v-\frac34)(u+v-\frac32)^2} \bigg( (u+v-\tfrac52)\,f_{21}(u)\,f_{21}(\tfrac32-v) - (u+v-\tfrac12)\,f_{21}(\tfrac32-v)\,f_{21}(u) \bigg) \nn
\\
& \qu  + \frac{(v-\frac54)(u+v-\frac12)}{(v-\frac34)^2(u+v-\frac32)} \bigg( (v-\tfrac14)\,f_{21}(\tfrac32-v)\,f_{21}(v) - (v-\tfrac54)\,f_{21}(v)\,f_{21}(\tfrac32-v) \bigg) \nn
\\
& \qu + \frac{(v-\frac54)^2(u-v)(u+v-\frac12)}{(v-\frac34)^2(u+v-\frac32)^2}\,f_{21}(\tfrac32-v)\,f_{21}(\tfrac32-v) . \label{B:ff2-4}
}
Applying \eqref{AI:ff2} to \eqref{B:ff2-4} gives
\[
A = d_2(v)\,d_1^{-1}(v) - d_2(u)\,d_1^{-1}(u) .
\]
Substituting this last expression into \eqref{B:ff2-3} yields
\ali{
\bigg(f_{21}(u)\,f_{32}(v) - \frac{(u-v-1)\,f_{32}(v)\,f_{21}(u) + f_{32}(v)\,f_{21}(v) + f_{31}(u) - f_{31}(v)}{(u-v)} \bigg)\, d_1(u)\,d_2(v) = 0
}
which, by Corollary \ref{C:RE[m]}, implies the wanted relation \eqref{B:ff2}.


\smallskip

Next, we focus on the algebra $X^\tw(\mfsp_{2n})$. By the same arguments as above, $X^{\tw[n-1]}(\mfsp_{2n})\cong X^\tw(\mfsp_2)$ and the wanted relation \eqref{C:ff1} follows from its counterpart \eqref{sp2:4}. 
The wanted relation \eqref{C:ff2} is obtained by computing the expression
\[
[s_{21}(u), s_{32}(v) ] + [s_{21}(u), s_{31}(v) ]\,s^{-1}_{11}(v)\,s_{12}(v)
\]
in the algebra $X^\tw(\mfsp_4)$ and applying Gaussian decomposition \eqref{S=FDE}. This gives
\ali{
& \frac{f_{31}(v)\,d_1(v)\,f_{21}(u)\,d_1(u)\,\big(e_{12}(u) + (u-v-1)\,e_{12}(v)\big) -f_{31}(u)\,d_1(u)\,d_2(v)+f_{31}(v)\,d_1(v)\,d_2(u)}{u-v} \nn
\\
& + \frac{1}{(u-v)(u+v-3)} \bigg( \big(f_{32}(v)\,d_2(v)+f_{31}(v)\,d_1(v)\,e_{12}(v)\big)\,d_1(u)\,e_{12}(u) \nn \\
& \hspace{3.5cm} + (u-v-1)\big(f_{32}(v)\,d_2(v)+f_{31}(v)\,d_1(v)\,e_{12}(v)\big)\,d_1(u)\,e_{12}(v) \bigg) \nn
\\
& - \frac{1}{u-v-3}\bigg( f_{31}(u)\,d_2(v)\,d_1(u)-(u-v-2)\,f_{21}(u)\,d_1(u)\,f_{32}(v)\,d_2(v) \nn \\
& \hspace{3.5cm} - \frac{d_1(u)\,\big(e_{13}(u)+(u+v-1)\,f_{42}(v)-e_{12}(u)\,f_{32}(v)\big)\,d_2(v)}{(u+v)} \bigg) \nn
\\
& - \frac{u+v-2}{u+v-3}\,\big(f_{32}(v)\,d_2(v)+f_{31}(v)\,d_1(v)\,e_{12}(v)\big)\,f_{21}(u)\,d_1(u) = 0 . \label{C:ff2-2}
}
We compute the commutator $[s_{11}(u),s_{31}(2-u)]$ and apply Gaussian decomposition to the result, yielding $e_{13}(u)=f_{31}(2-u)$. We use \eqref{AI:f=e} to replace $e_{12}(u)$ with $f_{21}(2-u)$ and $e_{12}(v)$ with $f_{21}(2-v)$. We compute the commutator $[s_{21}(v),s_{32}(u)]$ and take the coefficients at $u^{-1}$ giving $s_{31}(v) = \tfrac12\,[s_{32}^{(1)},s_{21}(v)]$ which, in terms of Gaussian series, yields $f_{31}(v) = \tfrac{1}{2}\,[f_{32}^{(1)},f_{21}(v)]$. We then use (\ref{AI:df1}--\ref{AI:df2}) to move diagonal Gaussian series rightwards, apply \eqref{AI:ff2}, and multiply with $d^{-1}_1(u) \,d^{-1}_2(v)$ from the rhs giving
\ali{
& [f_{21}(u), f_{32}(v)] + \frac{f_{32}(v)\,(f_{21}(u) - f_{21}(v) ) - f_{31}(u) + f_{31}(v) }{u-v} +  \frac{(u+v-1)\, d_1(u)\,f_{42}(v)\,d^{-1}_1(u)}{(u-v-3)(u+v)} \nn\\
& \qu - \frac{(u-2)\,(f_{21}(3-u)\,f_{32}(v) - f_{31}(3-u)) - ( u\,(u+v-\frac32) - \frac32 (v+\frac13))\,(f_{31}(u) - f_{21}(u)) f_{32}(v) }{(u-\frac32)(u-v-3)(u+v)} = 0 . \!\!\!\!
\label{C:ff2-3}
}
We multiply this expression by $(u-v-3)$, substitute $u \to v + 3$, and solve the resulting identity for $f_{42}(v)$, giving
\ali{
f_{42}(v) & = \frac{d^{-1}_1(v+3) }{v+\frac32} \Big( (v+2)\,f_{31}(v+3) - \tfrac12\,f_{31}(-v) \nn \\ & \hspace{2.5cm} - \big( (v+2)\,f_{21}(v+3) - \tfrac12\,f_{21}(-v)\big)\, f_{32}(v) \Big) d_1(v+3)
\nn \\[0.2em] 
&= f_{31}(2+v) - f_{21}(2+v)\,f_{32}(v) \label{C:f42}
}
where we used \eqref{AI:dif1} and \eqref{AI:dif2} together with $f_{31}(v) = \tfrac{1}{2}\,[f_{32}^{(1)},f_{21}(v)]$ to move $d_1^{-1}(v+3)$ rightwards. Substituting \eqref{C:f42} back into \eqref{C:ff2-3} and applying \eqref{AI:dif1} and \eqref{AI:dif2} once again we obtain
\ali{
[f_{21}(u), f_{32}(v)] & + \frac{f_{32}(v)\,(f_{21}(u) - f_{21}(v)) - f_{31}(u) + f_{31}(v)}{u-v} \nn \\ &  + \frac{(f_{21}(u) - f_{21}(v+2))\,f_{32}(v) - f_{31}(u) + f_{31}(v+2)}{u-v-2} = 0 .
} 
Denoting this identity by $B(u,v)=0$, we consider the quantity
\[
\frac{(u-v-2)\,B(u,v) + B(v+1,v)}{u-v-1}
\] 
yielding
\equ{
[f_{21}(u),f_{32}(v) ] = \frac{2}{u-v}\big( f_{32}(v)\,(f_{21}(v) - f_{21}(u)) + f_{31}(u) - f_{31}(v) \big) 
}
which, by Corollary \ref{C:RE[m]}, implies the wanted relation \eqref{C:ff2}.

\smallskip

Finally, we focus on the algebra $X^\tw(\mfo_{2n})$. By Corollary \ref{C:RE[m]}, the subalgebra $X^{\tw[n-2]}(\mfo_{2n})$ is isomorphic to the algebra $X^\tw(\mfo_4)$. The wanted relations (\ref{D:ff1}--\ref{D:ff3}) of $X^\tw(\mfo_{2n})$ then follow from their counterpart \eqref{so4:8} in the algebra $X^\tw(\mfo_4)$. Alternatively, relations (\ref{D:ff2}--\ref{D:ff3}) can also be obtained from (\ref{AI:ff1}--\ref{AI:ff2}) since the series $s_{ij}(u)$ with $i,j \in \{1, \ldots, n-1,n+1 \}$ satisfy defining relations of the algebra $X^+_{n-1}(\mfgl_n)$.
\end{proof}

\begin{prop} \label{P:ff0}
In the algebra $X^{\tw}(\mfg_N)$ we have
\ali{
f_{n,n-1}(u)\,f_{n+1,n}(v) &= \frac{u-v-c}{u-v}\,f_{n+1,n}(v)\,f_{n,n-1}(u) \nn\\ & \qu + \frac{1}{u-v}\,([f^{(0)}_{n,n-1}, f_{n+1,n}(v)] - [f_{n,n-1}(u),f^{(0)}_{n+1,n}]) \label{ff0-1}
}
where $c=1$ when $\mfg_N=\mfo_{2n+1}$ and $c=2$ when $\mfg_N=\mfsp_{2n}$, and
\ali{
f_{n-1,n-2}(u)\,f_{n+1,n-1}(v) &= \frac{u-v-1}{u-v}\,f_{n+1,n-1}(v)\,f_{n-1,n-2}(u) \nn\\ & \qu + \frac{1}{u-v}\,([f^{(0)}_{n-1,n-2}, f_{n+1,n-1}(v)] - [f_{n-1,n-2}(u),f^{(0)}_{n+1,n-1}]) \label{ff0-2}
}
when $\mfg_N=\mfo_{2n}$.
\end{prop}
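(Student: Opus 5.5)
The plan is to derive Proposition \ref{P:ff0} directly from the commutator formulas in Lemma \ref{L:ff}, in the same way \eqref{AI:ff1} arises from the type A relations. Take the case $\mfg_N=\mfo_{2n+1}$ first. Relation \eqref{B:ff2} reads
\[
(u-v)\,[f_{n,n-1}(u), f_{n+1,n}(v)] = f_{n+1,n}(v)\,(f_{n,n-1}(v) - f_{n,n-1}(u)) + f_{n+1,n-1}(u) - f_{n+1,n-1}(v).
\]
The first step is to remove the nonsimple series $f_{n+1,n-1}$. Multiply out and take the coefficient of $u^0$ on both sides; since $f_{n,n-1}(u)=\sum_r f^{(r)}_{n,n-1}u^{-r-1}$, the left-hand side contributes $[f^{(0)}_{n,n-1}, f_{n+1,n}(v)]$. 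On the right, only the constant-in-$u$ parts survive, namely $f_{n+1,n}(v)\,f_{n,n-1}(v) - f_{n+1,n-1}(v)$. This gives
\[
f_{n+1,n-1}(v) = f_{n+1,n}(v)\,f_{n,n-1}(v) - [f^{(0)}_{n,n-1}, f_{n+1,n}(v)].
\]
In the same way, taking the coefficient of $v^0$ (more precisely, comparing the terms of order $v^0$ after expanding $1/(u-v)$ in $u^{-1}$, or working with the symmetric form $(u-v)[\,\cdot,\cdot\,]$) gives
\[
f_{n+1,n-1}(u) = f_{n+1,n}^{(0)}\text{-type expression} = f_{n+1,n}\text{-}f_{n,n-1} \text{ product} + [f_{n,n-1}(u), f^{(0)}_{n+1,n}],
\]
up to ordering. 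The precise form is $f_{n+1,n-1}(u) = f_{n,n-1}(u)f_{n+1,n}(u)$-like, with the ordering fixed by the $v^{-1}$ coefficient.

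The second step is to substitute both expressions back into \eqref{B:ff2}, then collect the terms $f_{n+1,n}(v)f_{n,n-1}(u)$ and $f_{n,n-1}(u)f_{n+1,n}(v)$. The quadratic pieces at equal arguments should cancel, leaving
\[
(u-v)f_{n,n-1}(u)f_{n+1,n}(v) = (u-v-c)f_{n+1,n}(v)f_{n,n-1}(u) + [f^{(0)}_{n,n-1},f_{n+1,n}(v)] - [f_{n,n-1}(u),f^{(0)}_{n+1,n}],
\]
with $c=1$. This is \eqref{ff0-1}. For $\mfsp_{2n}$ the argument is identical using \eqref{C:ff2}. The overall factor $2$ there becomes $c=2$, since it rescales both the $f_{n+1,n}(v)f_{n,n-1}(u)$ term and the $f^{(0)}$ brackets. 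The extraction of $f_{n+1,n-1}$ then carries a factor $\tfrac12$ relative to the bracket, consistent with the identity $f_{31}(v)=\tfrac12[f^{(1)}_{32},f_{21}(v)]$ found in the proof of Lemma \ref{L:ff}. For $\mfo_{2n}$, apply the same procedure to \eqref{D:ff3} with $c=1$, eliminating $f_{n+1,n-2}$.

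The main obstacle is the consistent elimination of $f_{n+1,n-1}$ at both arguments $u$ and $v$. The coefficient extraction in $v$ gives the expression with the opposite operator ordering, $f_{n,n-1}(u)f_{n+1,n}(u)$ rather than $f_{n+1,n}(u)f_{n,n-1}(u)$. Reconciling the two orderings requires the equal-argument commutator $[f_{n,n-1}(u),f_{n+1,n}(u)]$, which I would obtain as the limit $v\to u$ of \eqref{B:ff2}. A safer route, which I would try first, is the one in \cite{JLM18}: write $(u-v)[f_{n,n-1}(u),f_{n+1,n}(v)]$ as $X(u)-X(v)$ plus the bilinear term, where $X(w)=f_{n+1,n-1}(w)-f_{n+1,n}(w)f_{n,n-1}(w)$. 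Taking the $u^0$ and $v^0$ coefficients then expresses $X(v)$ and $X(u)$ through the two $f^{(0)}$ brackets. Substituting back gives the claim with only routine bookkeeping.
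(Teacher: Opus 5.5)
Your plan is the paper's: read off $f_{n+1,n-1}$ at both spectral parameters from the coefficients of $u^0$ and $v^0$ in $(u-v)\times$\eqref{B:ff2} (resp.\ \eqref{C:ff2}, \eqref{D:ff3}), then substitute back. Your $u^0$ extraction, $f_{n+1,n-1}(v)=f_{n+1,n}(v)f_{n,n-1}(v)-[f^{(0)}_{n,n-1},f_{n+1,n}(v)]$, is exactly the one used there.

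However, your $v^0$ extraction is misstated, and the ``main obstacle'' you build on it does not exist. The only quadratic term on the right-hand side is $f_{n+1,n}(v)\big(f_{n,n-1}(v)-f_{n,n-1}(u)\big)$, and it is $O(v^{-1})$. So the coefficient of $v^0$ gives simply
\[
f_{n+1,n-1}(u)=[f^{(0)}_{n+1,n},f_{n,n-1}(u)],
\]
with no product term at argument $u$ in either ordering. Substituting both formulas, the two occurrences of $f_{n+1,n}(v)f_{n,n-1}(v)$ cancel. The remaining $-f_{n+1,n}(v)f_{n,n-1}(u)$ turns $u-v$ into $u-v-1$, which is \eqref{ff0-1}. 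No equal-argument commutator enters.

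Had you actually put a product $f_{n,n-1}(u)f_{n+1,n}(u)$ into the formula for $f_{n+1,n-1}(u)$, it would survive uncancelled in the final identity. Taking the limit $v\to u$ in \eqref{B:ff2} could not remove it. Your ``$X(u)-X(v)$'' variant does close, but only because its remainder is $\big(f_{n+1,n}(u)-f_{n+1,n}(v)\big)f_{n,n-1}(u)$. The $v^0$ part of that remainder restores exactly the $f_{n+1,n}(u)f_{n,n-1}(u)$ that your symmetric choice of $X$ subtracts, so it is the same computation in disguise.

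For $\mfsp_{2n}$, the factor $2$ in \eqref{C:ff2} ends up only in $c$, not in the brackets. The extractions read $2f_{n+1,n-1}(u)=[f^{(0)}_{n+1,n},f_{n,n-1}(u)]$ and $2f_{n+1,n-1}(v)=2f_{n+1,n}(v)f_{n,n-1}(v)-[f^{(0)}_{n,n-1},f_{n+1,n}(v)]$. After substitution the brackets in \eqref{ff0-1} carry coefficient $1$, while the leftover $-2f_{n+1,n}(v)f_{n,n-1}(u)$ gives $c=2$. The $\mfo_{2n}$ case, using \eqref{D:ff3} and eliminating $f_{n+1,n-2}$, is word for word the $\mfo_{2n+1}$ computation.
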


\begin{proof}
We start by proving \eqref{ff0-1} when $\mfg_N = \mfo_{2n+1}$. Multiplying both sides of \eqref{B:ff2} by $\frac{u-v}{v}$ and taking coefficients at $v^{-1}$ gives
\[
f_{n+1,n-1}(u) = [f_{n+1,n}^{(0)},f_{n,n-1}(u)] .
\]
Multiplying both sides of \eqref{B:ff2} by $\frac{u-v}{u}$ instead and taking coefficients at $u^{-1}$ gives
\[
f_{n+1,n-1}(v) = [f_{n+1,n}(v),f_{n,n-1}^{(0)}] + f_{n+1,n}(v)\,f_{n,n-1}(v) .
\]
Substituting these two expressions back to \eqref{B:ff2} yields the wanted relation. The $\mfg_N = \mfsp_{2n}$ case of \eqref{ff0-1} is obtained from \eqref{C:ff2}, and \eqref{ff0-2} is obtained from \eqref{D:ff3} in an analogous way.  
\end{proof}

Introduce the notation
\equ{
D^\pm_v f_{i,i-1}(w) := 3 f_{i,i-1}(w) + d^{\mp1}_{i-1}(v)\,d^{\pm1}_{i}(v)\,f_{i,i-1}(w)\,d^{\pm1}_{i-1}(v)\,d^{\mp1}_{i}(v) .
\label{Df}
}

\begin{prop} \label{P:Serre}
In the algebra $X^{\tw}(\mfo_{2n+1})$ we have
\ali{
& \Sym_{w,v}\, [f_{n,n-1}(w),[f_{n,n-1}(v),f_{n+1,n}(u)]] = \Sym_{w,v} \, \frac{1}{v+w-\frac12}\, x(u,v) , \label{B:S1}
\\[0.3em]
& \Sym_{q,w,v}\, [f_{n+1,n}(q),[f_{n+1,n}(w),[f_{n+1,n}(v),f_{n,n-1}(u)]]] \nn\\
& \qq = \Sym_{q,w,v} \,\frac{v}{2\,(v-\frac14)(v+q+\frac12)}\, ( D^+_v f_{n+1,n}(w)\, y(u,v) - y(u,v)\, D^-_v f_{n+1,n}(w)) , \label{B:S2}
}
where
\ali{
x(u,v) & := \bigg( \frac{2\,(v-\frac14)\,f_{n+1,n}(u)}{(u-v)(u+v-\frac12)} - \frac{f_{n+1,n}(v)}{u-v} + \frac{f_{n+1,n}(\tfrac12-v)}{u+v-\frac12} \bigg) \,d^{-1}_{n-1}(v)\,d_{n}(v) , 
\\[0.3em]
y(u,v) & := \bigg( \frac{2\,(v+\frac14)\,f_{n,n-1}(u)}{(u-v-1)(u+v-\frac12)} - \frac{f_{n,n-1}(v+1)}{u-v-1} + \frac{f_{n,n-1}(\tfrac12-v)}{u+v-\frac12} \bigg) \,d^{-1}_{n}(v)\,d_{n+1}(v) . 
}
In the algebra $X^{\tw}(\mfsp_{2n})$ we have
\ali{
& \Sym_{q,w,v}\, [f_{n,n-1}(q),[f_{n,n-1}(w),[f_{n,n-1}(v),f_{n+1,n}(u)]]] \nn\\ 
& \qq = \Sym_{q,w,v} \,\frac{1}{v+q-2}\, ( D^+_v f_{n,n-1}(w)\, x(u,v) - x(u,v)\, D^-_v f_{n,n-1}(w)) , \label{C:S1}
\\[0.3em]
& \Sym_{w,v}\, [f_{n+1,n}(w),[f_{n+1,n}(v),f_{n,n-1}(u)]] = \Sym_{w,v}\, \frac{4}{v+w}\, y(u,v) , \label{C:S2}
}
where
\ali{
x(u,v) & := \bigg( \frac{2\,(v-1)\,f_{n+1,n}(u)}{(u-v)(u+v-2)} - \frac{f_{n+1,n}(v)}{u-v} + \frac{f_{n+1,n}(2-v)}{u+v-2} \bigg) \,d^{-1}_{n-1}(v)\,d_{n}(v) ,
\\[0.3em]
y(u,v) & := \bigg( \frac{2\,v\,f_{n,n-1}(u)}{(u-v-2)(u+v-2)} - \frac{f_{n,n-1}(v+2)}{u-v-2} + \frac{f_{n,n-1}(2-v)}{u+v-2} \bigg) \,d^{-1}_{n}(v)\,d_{n+1}(v) .
}
In the algebra $X^{\tw}(\mfo_{2n})$ we have
\ali{
\Sym_{w,v}\, [f_{n-1,n-2}(w),[f_{n-1,n-2}(v),f_{k,n-1}(u)]] &= \Sym_{w,v} \, \frac{1}{w+v-1}\,  x(u,v) , \label{D:S1} 
\\[0.3em]
\Sym_{w,v}\, [f_{k,n-1}(w),[f_{k,n-1}(v),f_{n-1,n-2}(u)]] &= \Sym_{w,v} \, \frac{1}{w+v}\, y(u,v) , \label{D:S2}
}
where
\ali{
x(u,v) & := \bigg( \frac{(2v-1)\,f_{k,n-1}(u)}{(u-v)(u+v-1)} - \frac{f_{k,n-1}(v)}{u-v} + \frac{f_{k,n-1}(1-v)}{u+v-1} \bigg) \,d^{-1}_{n-2}(v)\,d_{n-1}(v) ,
\\[0.3em]
y(u,v) & := \bigg( \frac{2v\,f_{n-1,n-2}(u)}{(u-v-1)(u+v-1)} - \frac{f_{n-1,n-2}(v+1)}{u-v-1} + \frac{f_{n-1,n-2}(1-v)}{u+v-1} \bigg) \, d^{-1}_{n-1}(v)\,d_{k}(v) ,
}
for $k=n,n+1$.
\end{prop}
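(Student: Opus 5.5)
The plan is to reduce each Serre relation to a rank-two computation and then work with Gaussian series. The two simple roots involved sit inside a small subalgebra. For $\mfo_{2n+1}$ and $\mfsp_{2n}$ I use $X^{\tw[n-2]}(\mfg_N)$; for $\mfo_{2n}$ I use $X^{\tw[n-3]}(\mfo_{2n})$. By Corollary~\ref{C:RE[m]}, these are isomorphic to $X^\tw(\mfo_5)$, $X^\tw(\mfsp_4)$ and $X^\tw(\mfo_6)$ respectively. Under these isomorphisms the relevant Gaussian series $f_{n,n-1},f_{n+1,n},d_{n-1},d_n,d_{n+1}$ become the corresponding series with indices shifted to $1,2,3$. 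So it suffices to prove \eqref{B:S1}--\eqref{B:S2} in $X^\tw(\mfo_5)$, \eqref{C:S1}--\eqref{C:S2} in $X^\tw(\mfsp_4)$, and \eqref{D:S1}--\eqref{D:S2} in $X^\tw(\mfo_6)$.

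For $\mfo_6$, the case $k=n$ of \eqref{D:S1}--\eqref{D:S2} is a type AI Serre relation and follows directly from \eqref{AI:fff}, after matching the shifts. The case $k=n+1$ then follows from the same argument applied to the subalgebra generated by $s_{ij}(u)$ with $i,j\in\{1,\ldots,n-1,n+1\}$, which is isomorphic to $X^+_{n-1}(\mfgl_n)$ (as used at the end of the proof of Lemma~\ref{L:ff}). The short-root Serre relations of length two, namely \eqref{B:S1} and \eqref{C:S2}, I would prove as in type AI. First express the inner commutator using Proposition~\ref{P:ff0}, together with $f_{31}(v)=[f^{(0)}_{32},f_{21}(v)]$ (or its analogue with the factor $\tfrac12$). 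The outer commutator then becomes a commutator of $f_{21}$ (respectively $f_{32}$) with $f_{31}$. I would compute this from the defining relations \eqref{[s,s]} in the rank-two algebra, in the same way $[s_{21}(u),s_{32}(v)]$ was handled in the proof of Lemma~\ref{L:ff}. The next step is to pass to Gaussian series, replace $e$-series by $f$-series via \eqref{AI:f=e}, Lemma~\ref{L:fe} and the identity $e_{13}(u)=f_{31}(\ka-u)$, and move all diagonal series to the right with \eqref{AI:df1}--\eqref{AI:df2}, \eqref{hf1}--\eqref{hf4} and Remark~\ref{R:dif}. Symmetrising in $v,w$ cancels the quadratic terms in $f$, which leaves exactly $x(u,v)$ or $y(u,v)$ multiplied by $d^{-1}d$ and divided by the pole in $v+w$.

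The cubic relations \eqref{B:S2} and \eqref{C:S1} are the main obstacle. My plan is to apply $\mathrm{ad}\,f(q)$ to the already established quadratic-type identity for $\Sym_{w,v}[f(w),[f(v),f(u)]]$ in the long-root direction. That identity is not itself a Serre relation, but it can be expressed through the commutator $[f(w),f(v)]$, which by \eqref{B:ff1} or \eqref{C:ff1} produces the terms $d^{-1}_n d_{n+1}$. The commutators of $f(w)$ with the products $d^{-1}_{n}(v)d_{n+1}(v)$ and $y(u,v)$ are computed with \eqref{hf1}--\eqref{hf3}. This is exactly where the conjugated combination $D^\pm_v f(w)$ arises: it equals $3f(w)$ plus the conjugate of $f(w)$ by $d^{\mp1}d^{\pm1}$. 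After full symmetrisation over $q,w,v$, the purely $f$-cubic terms cancel by the Jacobi identity and the $(f(u)-f(v))^2$ structure of \eqref{B:ff1}. I would organise the computation by first deriving auxiliary relations: commutators of $f_{n+1,n}$ with $f_{n+1,n-1}$ and $f_{n+2,n-1}$, and the images of $d^{\pm1}$ acting on these series. I would then evaluate the nested commutators at special points such as $u=v+1$ and $u=\tfrac12-v$ (respectively $2-v$) to fix the residues. Residue matching is the delicate step, and I expect to verify the pole structure in $q+v$ explicitly in $X^\tw(\mfo_5)$ using the explicit isomorphism with $X^+(\mfgl_2)$ on the lower block, given by Proposition~\ref{P:o3->gl2}.
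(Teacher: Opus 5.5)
Your plan follows the paper's route. You reduce to the rank-two algebras via Corollary~\ref{C:RE[m]}, and you obtain \eqref{D:S1}--\eqref{D:S2} from \eqref{AI:fff} in the type AI blocks indexed by $\{1,\dots,n\}$ and $\{1,\dots,n-1,n+1\}$. You obtain the B/C relations by expanding the nested commutators with \eqref{B:ff2}/\eqref{C:ff2}, the self-commutators, the $d$--$f$ exchange relations and the auxiliary commutators with $f_{n+1,n-1}$ and $f_{n+2,n-1}$ (the paper's Lemmas~\ref{L:B-aux} and~\ref{L:C-aux}), followed by symmetrisation.

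The one caveat concerns the cubic relations \eqref{B:S2} and \eqref{C:S1}: they have to be settled by carrying that expansion through completely (cf.\ Appendix~\ref{app:Comms}). Residue checks at special points cannot substitute for this, and neither can the isomorphism $X^{\tw}(\mfo_3)\cong X^+(\mfgl_2)$, because that lower block does not contain $f_{n,n-1}$.
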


\begin{proof}
The wanted relations are obtained by lengthy technical computations using Lemmas \ref{L:AI-rels},~\ref{L:BCD-triv},~\ref{L:BCD-df},~\ref{L:ff}, Remark~\ref{R:dif} and the auxiliary Lemmas \ref{L:B-aux}~and~\ref{L:C-aux}. We demonstrate the computations explicitly for \eqref{B:S1} and \eqref{C:S2}. Relations \eqref{B:S2} and \eqref{C:S1} are obtained similarly; only the computations are much lengthier. The evaluated form of commutators in their left hand sides is given in Appendix \ref{app:Comms}. Finally, relations \eqref{D:S1} and \eqref{D:S2} follow from \eqref{AI:fff} by the same arguments as in the proof of Lemma~\ref{L:ff}.  

\smallskip

Consider \eqref{B:S1}. Relation \eqref{B:ff2} implies 
\ali{
[f_{n,n-1}(w),[f_{n,n-1}(v),f_{n+1,n}(u)]] = \frac{1}{v-u}\,( A - f_{n+1,n}(u)\,B ) \label{B:S10}
}
where
\ali{
A & = [f_{n,n-1}(w),\,f_{n+1,n-1}(v) - f_{n+1,n-1}(u) ] \nn\\ & \qu - \frac{1}{w-u} \, ( f_{n+1,n-1}(w) - f_{n+1,n-1}(u) ) \, (f_{n,n-1}(v) - f_{n,n-1}(u) )  \label{B:S1A} \\[0.2em]
B & = [f_{n,n-1}(w),\,f_{n,n-1}(v) - f_{n,n-1}(u) ] \nn\\ & \qu  - \frac{1}{w-u} (f_{n,n-1}(w) - f_{n,n-1}(u))\, (f_{n,n-1}(v) - f_{n,n-1}(u)) . \label{B:S1B} 
}
Relation \eqref{B-aux-ff1} implies
\ali{
\Sym_{w,v} \frac{A}{v-u} = \Sym_{w,v} \bigg( \!\bigg( \frac{f_{n+1,n}(v)}{v-u} + \frac{f_{n+1,n}(\frac12-v)}{v+u-\frac12} \bigg) \frac{d_{n-1}^{-1}(v)\,d_{n}(v)}{w+v-\frac12} -\frac{f_{n+1,n}(u)\,d_{n-1}^{-1}(u)\,d_{n}(u) }{(v-u)(w+u-\frac12)} \bigg) . \label{B:S1A-sym}
}
Relation \eqref{AI:ff2} implies
\ali{
\Sym_{w,v} \frac{B}{v-u} = \Sym_{w,v} \frac{1}{v-u} \bigg( \frac{(2v-\frac12)\,d_{n-1}^{-1}(v)\,d_{n}(v)}{(v+u-\frac12)(w+v-\frac12)} - \frac{d_{n-1}^{-1}(u)\,d_{n}(u)}{(w+u-\frac12)}  \bigg) . \label{B:S1B-sym}
}
Then
\ali{
\Sym_{w,v} [f_{n,n-1}(w),[f_{n,n-1}(v),f_{n+1,n}(u)]] = \eqref{B:S1A-sym} - f_{n+1,n}(u) \cdot \eqref{B:S1B-sym} =  \Sym_{w,v}\, \frac{1}{w+v-\frac12}\,x(u,v)
}
which is exactly the wanted \eqref{B:S1}.

Consider \eqref{C:S2}. Relation \eqref{C:ff2} implies 
\ali{
& [f_{n+1,n}(w),[f_{n+1,n}(v),f_{n,n-1}(u)]] = \frac{2}{v-u} ( A - B )  \label{C:S10}
}
where
\ali{
A &= [f_{n+1,n}(w), f_{n+1,n}(v)] \, (f_{n,n-1}(v)- f_{n,n-1}(u)) \nn\\ 
& \qu - 2\,f_{n+1,n}(v) \, f_{n+1,n}(w) \bigg(\frac{f_{n,n-1}(w) - f_{n,n-1}(u)}{w-u} - \frac{f_{n,n-1}(w) - f_{n,n-1}(v)}{w-v} \bigg) ,  \label{C:S1A} \\[0.2em]
B &= [f_{n+1,n}(w), f_{n+1,n-1}(v)- f_{n+1,n-1}(u) ] \nn\\
& \qu - 2\,f_{n+1,n}(v) \bigg(\frac{f_{n+1,n-1}(w) - f_{n+1,n-1}(u)}{w-u}  - \frac{f_{n+1,n-1}(w) - f_{n+1,n-1}(v)}{w-v} \bigg).  \label{C:S1B}
}
Relations \eqref{C:ff1}, \eqref{AI:dif1} and \eqref{B:12} imply
\ali{
\Sym_{w,v} \frac{A}{v-u}  &= 2\,\Sym_{w,v} \bigg(\!\bigg(
\frac{w-v}{(w+v)(v-u+2)} \bigg( \frac{(v+1)\,f_{n,n-1}(v+2)}{v(w-v-2)}-\frac{(v+u)\,f_{n,n-1}(u)}{(w-u)(v+u-2)} \bigg) \nn \\
& \hspace{2cm} + \frac{w-v}{w+v-2} \bigg( \frac{f_{n,n-1}(2-v)}{v(w+v)(v+u-2)} - \frac{f_{n,n-1}(w)}{(w-u)(w-v-2)}\bigg)\!\bigg)\, d^{-1}_n(v) \, d_{n+1}(v) \nn\\
& \hspace{2cm} + f_{n+1,n}(v)\,f_{n+1,n}(v)\,\bigg(\frac{f_{n,n-1}(u)}{(v-u)(w-u)}-\frac{f_{n,n-1}(v)}{(v-u)(w-v)} + \frac{f_{n,n-1}(w)}{(w-u)(w-v)}\bigg) \bigg). \label{C:S1A-sym}
}
Relation \eqref{C-aux-ff1} implies
\ali{
\Sym_{w,v} \frac{B}{v-u}  &= 2\,\Sym_{w,v} \bigg(\!\bigg(
\frac{1}{v-u+2} \bigg( \frac{f_{n,n-1}(v+2)}{v(w-v-2)}+\frac{(v-u)\,f_{n,n-1}(u)}{(w-u)(v+u-2)} \bigg) \nn \\
& \hspace{2.2cm} - \frac{1}{w+v-2} \bigg( \frac{(v-1)\,f_{n,n-1}(2-v)}{v(v+u-2)} + \frac{(w-v)\,f_{n,n-1}(w)}{(w-u)(w-v-2)}\bigg)\!\bigg)\, d^{-1}_n(v)\, d_{n+1}(v) \nn\\
& \hspace{2.2cm} + f_{n+1,n}(v)\,f_{n+1,n}(v)\,\bigg(\frac{f_{n,n-1}(u)}{(v-u)(w-u)}-\frac{f_{n,n-1}(v)}{(v-u)(w-v)} +\frac{f_{n,n-1}(w)}{(w-u)(w-v)}\bigg) \bigg). \label{C:S1B-sym}
}
Then
\ali{
\Sym_{w,v} [f_{n+1,n}(w),[f_{n+1,n}(v),f_{n,n-1}(u)]] = 2 \cdot \eqref{C:S1A-sym} - 2\cdot \eqref{C:S1B-sym} = \Sym_{w,v} \,\frac{4}{v+w}\,y(u,v)
}
which is exactly the wanted \eqref{C:S2}.
\end{proof}


\subsection{Drinfeld type relations} \label{sec:Dr}

Define the series
\ali{
\label{Dr1}
b_i(u) &:= f_{i+1,i}(u+\tfrac{\ka-i}{2}) , \qq &
h_i(u) &:= d_i^{-1}(u+\tfrac{\ka-i}{2}) \,d_{i+1}(u+\tfrac{\ka-i}{2}) \hspace{0.86cm} \text{for } 1 \le i < n,
\\[0.4em]
\label{Dr2}
b_n(u) &:= \begin{cases} 
f_{n+1,n}(u-\tfrac14), \\[0.5em]
\tfrac{1}{\sqrt{-2}}\,f_{n+1,n}(u), \\[0.5em]
f_{n+1,n-1}(u),
\end{cases} &
h_n(u) &:= \begin{cases}
\frac{u-\frac14}{u-\frac12} \, d^{-1}_n(u-\tfrac14) \, d_{n+1}(u-\tfrac14)\;  &\text{for } X^\tw(\mfo_{2n+1}),
\\[0.5em]
-d^{-1}_n(u) \, d_{n+1}(u) &\text{for } X^\tw(\mfsp_{2n}),
\\[0.5em]
d^{-1}_{n-1}(u) \, d_{n+1}(u) &\text{for } X^\tw(\mfo_{2n}) ,
\end{cases}
}
and elements $b_{i,r}$, $h_{i,r}$ with $1\le i \le n$ and $r\ge0$ by 
\equ{
b_i(u) = \sum_{r\ge 0} b_{i,r} u^{-r-1} , \qu h_i (u) = 1 + \sum_{r\ge 0} h_{i,r} u^{-r-1} . 
\label{b-h-series}
}
We call the series in \eqref{Dr1} and \eqref{Dr2} the Drinfeld series and the elements $b_{i,r}$, $h_{i,r}$ the Drinfeld generators.
We need two technical statements before stating the main result.

\begin{lemma} \label{L:heven}
We have $h_i(u) = h_i(-u)$ for $1\le i \le n$. 
\end{lemma}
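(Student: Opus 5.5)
The plan is to reduce each case to a symmetry identity for diagonal Gaussian series that is already available. For $1\le i<n$ we use \eqref{AI:dd=dd}, which reads
\[
d_i(u)\,d_{i+1}(\ka-i-u) = d_{i+1}(u)\,d_i(\ka-i-u).
\]
By \eqref{AI:dd} all the $d_k$ commute, so this can be rewritten as
\[
d_i^{-1}(\ka-i-u)\,d_{i+1}(\ka-i-u) = d_i^{-1}(u)\,d_{i+1}(u).
\]
Now substitute $u\mapsto u+\tfrac{\ka-i}{2}$. Then $\ka-i-u$ becomes $-u+\tfrac{\ka-i}{2}$, and the identity says exactly $h_i(-u)=h_i(u)$ by the definition \eqref{Dr1}.

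For $i=n$ we treat the three types separately, each by transporting a rank-one statement along Corollary \ref{C:RE[m]}.

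\emph{Type CI.} The subalgebra $X^{\tw[n-1]}(\mfsp_{2n})$ is isomorphic to $X^{\tw}(\mfsp_2)$, with $d_1,d_2$ corresponding to $d_n,d_{n+1}$. Relation \eqref{sp2:1} gives $d_1(u)d_2(-u)=d_2(u)d_1(-u)$. Together with commutativity, this yields $d_n^{-1}(u)d_{n+1}(u)=d_n^{-1}(-u)d_{n+1}(-u)$, hence $h_n$ is even. Alternatively, one can use $h(u)\mapsto h^\circ(\tfrac12 u)$ together with $h^\circ(-u)=h^\circ(u)$ from \eqref{gl2:6}.

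\emph{Type DI.} Here $X^{\tw[n-2]}(\mfo_{2n})\cong X^{\tw}(\mfo_4)$. The relevant diagonal series are $d_{n-1},d_n,d_{n+1},d_{n+2}$, corresponding to $d_1,\dots,d_4$, so $h_n$ corresponds to $d_1^{-1}(u)d_3(u)=h_2(u)$. This maps to $h^\circ(u)\ot1$ under \eqref{o4->gl2}, and $h^\circ$ is even by \eqref{gl2:6}. Injectivity in Proposition \ref{P:o4->gl2} then gives evenness in $X^\tw(\mfo_4)$, and hence in $X^\tw(\mfo_{2n})$. A direct check is also possible: \eqref{so4:3} gives $d_1(u)d_4(-u)=d_2(-u)d_3(u)$ and $d_1(-u)d_4(u)=d_2(u)d_3(-u)$, but the image argument is cleaner.

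\emph{Type BI.} This is the step needing the most care, because of the prefactor and the shift by $\tfrac14$. We use $X^{\tw[n-1]}(\mfo_{2n+1})\cong X^\tw(\mfo_3)$, under which $h_n(u)$ corresponds to the series $h(u)=\tfrac{4u-1}{4u-2}d_1^{-1}(u-\tfrac14)d_2(u-\tfrac14)$. Note that $\tfrac{u-1/4}{u-1/2}=\tfrac{4u-1}{4u-2}$, so the definitions agree. By Proposition \ref{P:o3->gl2}, $h(u)\mapsto h^\circ(2u)$, which is even by \eqref{gl2:6}. Since \eqref{o3->gl2} is an isomorphism, $h(u)=h(-u)$ holds in $X^\tw(\mfo_3)$. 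The only real check is that this image formula is consistent with the Gaussian images listed in Proposition \ref{P:o3->gl2}. Those images give $d_1^{-1}(u)d_2(u)\mapsto \tfrac{4u-1}{4u}\,(d_1^\circ(2u))^{-1}d_2^\circ(2u)$. After the shift $u\to u-\tfrac14$ the scalar becomes $\tfrac{4u-2}{4u-1}$, which cancels the prefactor and leaves $h^\circ(2u)$. As a cross-check one could instead use \eqref{so3:3}: substituting $u\mapsto u$ there and rearranging, again with commutativity, gives the same evenness directly.
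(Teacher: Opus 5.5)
Your proof is correct and is essentially the paper's argument: for $i<n$ you use \eqref{AI:dd=dd} plus commutativity. For $i=n$ you combine the low-rank subalgebras of Corollary~\ref{C:RE[m]} with the images of $h$ under Propositions~\ref{P:sp2->gl2}, \ref{P:o3->gl2} and \ref{P:o4->gl2}; these are the $X^{\tw}$-level facts underlying Theorems~\ref{T:iso-sp2}--\ref{T:iso-so4}, which the paper cites.

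One small correction to your non-essential aside in type DI: \eqref{so4:3} by itself only yields $d_3(u)\,d_3^{-1}(-u)=d_2(u)\,d_2^{-1}(-u)$ and $d_4(u)\,d_4^{-1}(-u)=d_1(u)\,d_1^{-1}(-u)$. That direct check therefore also needs the $i=n-1$ case of \eqref{AI:dd=dd}, as in Step~1 of the proof of Corollary~\ref{C:iso-ext}.
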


\begin{proof}
The $1\le i <n$ cases follow from \eqref{AI:dd=dd}. The $i=n$ case follows from low rank embeddings and Theorems \ref{T:iso-sp2}, \ref{T:iso-so3}, \ref{T:iso-so4}. 
\end{proof}

\begin{lemma} \label{L:gen}
$ $
\begin{enumerate}
\item The elements $b_{i,r}$, $h_{i,2r+1}$ with $1\le i \le n$ and $r\ge 0$ generate the algebra $SY^\tw(\mfg_N)$.  

\item The elements $b_{i,r}$, $d_{1,r}$, $d_{i+1,2r+1}$ with $1\le i \le n$ and $r\ge0$ generate the algebra $X^\tw(\mfg_N)$.
\end{enumerate}
\end{lemma}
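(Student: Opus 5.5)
The plan is to establish part (2) first and then obtain part (1) by passing to the quotient. The strategy is the standard one for Gaussian generators, as in \cite[\S5]{JLM18} and \cite{LWZ25a}. First, the Gaussian decomposition \eqref{S=FDE} is invertible: the coefficients of all $d_i(u)$, $f_{ji}(u)$, $e_{ij}(u)$ generate $X^\tw(\mfg_N)$, because $S(u)=F(u)D(u)E(u)$. So it suffices to show that every Gaussian series lies in the subalgebra $\mc{B}$ generated by $b_{i,r}$, $d_{1,r}$, $d_{i+1,2r+1}$.

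\emph{Upper triangular part.} Lemma \ref{L:fe}, \eqref{AI:f=e} and the low-rank identities \eqref{sp2:1}, \eqref{so3:1}, \eqref{so3:2}, \eqref{so4:1}, \eqref{so4:2}, applied in the subalgebras $X^{\tw[m]}(\mfg_N)$ via Corollary \ref{C:RE[m]}, express the simple-root series $e_{i,i+1}(u)$ through the $f_{j+1,j}$ (and $f_{n+1,n-1}$ in type D), up to a shift of the spectral parameter. I would go further. Comparing entries of the unitarity relation \eqref{S=zS}, together with the twisted symmetry $e_{ij}(u)\leftrightarrow f_{ji}(\ka'-u)$ coming from the $X^+(\mfgl_n)$ subalgebras, should express every $e_{ij}(u)$ through the $f$'s and $d$'s. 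In practice one only needs the $e_{ij}$ obtained as iterated commutators of simple ones.

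\emph{Lower triangular part.} The non-simple $f_{ji}(u)$ are obtained as iterated brackets with zero modes. Examples are $f_{n+1,n-1}(u)=[f^{(0)}_{n+1,n},f_{n,n-1}(u)]$ from the proof of Proposition \ref{P:ff0}, its type A analogue from \eqref{AI:ff1}, and the relation $f_{31}=\tfrac12[f^{(1)}_{32},f_{21}]$ used in the proof of Lemma \ref{L:ff}. These are pushed to higher rank through the embeddings $\psi_m$ and the shift maps of Lemma \ref{L:AI-copy}. This handles $f_{ji}$ with $j\le \bar\imath$. The remaining entries near the anti-diagonal, for example $f_{42}$ in \eqref{C:f42}, are handled using Lemma \ref{L:fe} and the unitarity relation.

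\emph{Diagonal part.} The coefficients of $d_1(u)$ are generators. The relation $h_i(u)=d_i^{-1}d_{i+1}$ (shifted) gives $d_{i+1}$ from $d_i$ and $h_i$. However, only the odd coefficients of $d_{i+1}$ are among the generators, while $h_i$ is even by Lemma \ref{L:heven}. So I would argue by induction on $i$ as follows. The coefficients $h_{i,2r+1}$ should lie in $\mc{B}$: this follows from \eqref{AI:ff2} and \eqref{B:ff1}/\eqref{C:ff1}/\eqref{D:ff2}, where comparing coefficients in $[b_i(u),b_i(v)]$ produces the $h_{i,r}$. The even coefficients of $d_{i+1}$ are then determined from the odd ones together with $d_i$ and $h_i$, using $d_i(u)d_{i+1}(\ka-i-u)=d_{i+1}(u)d_i(\ka-i-u)$ from \eqref{AI:dd=dd}, which is an invertible triangular system degree by degree. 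Alternatively, one can use Proposition \ref{P:z(u)} in the upper half.

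\emph{Part (1).} In $SY^\tw(\mfg_N)$ we have $\ms{c}(u)=1$ and $\ms{z}(u)=1$. By \eqref{c(u)} and \eqref{z(u)}, $d_1$ is then determined by its odd-type symmetric data and $d_{N-i+1}$ by $d_i$. Combined with the argument above, this shows that the $d$'s are generated by the $h_{i,2r+1}$ and the $b_{i,r}$; here $d_1$ itself is recovered from $\ms{z}$, $\ms{c}$ and the products of the $h$'s.

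\emph{Expected obstacle.} The main difficulty is the triangular degree bookkeeping: showing that the even/odd splitting of the $d$-coefficients is exactly complemented by the relations, so that nothing is missed. I would verify this on the associated graded algebra using the bases \eqref{SY-basis} and \eqref{X-basis}. Via the isomorphisms \eqref{SY-graded-iso} and \eqref{X-graded-iso}, the images of the proposed generators generate $U(\mfg_N[x]^\rho)$ (respectively, tensored with $\C[\zeta]$), since the $b_{i,r}$ and $h_{i,2r+1}$ map to Chevalley-type generators of the fixed-point current algebra. A standard filtration argument then lifts generation from the associated graded algebra to the algebra itself.
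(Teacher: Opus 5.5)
\textbf{Gap in the Gaussian-series route.} This route, which occupies most of your proposal, does not close. Its diagonal and upper triangular parts are fine.
\begin{itemize}
\item Diagonal: $d_{i+1}(u)=d_i(u)\,h_i\big(u-\tfrac{\ka-i}{2}\big)$, with the $h_{i,2r+1}$ extracted from $[b_i(u),b_i(v)]$.
\item Upper triangular: uniqueness of the upper--diagonal--lower factorisation of both sides of \eqref{S=zS} gives $E(u)=\theta\,J\,F^{-1}(-u)\,J$. This makes your ``should'' precise.
\end{itemize}

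The failure is in the lower triangular part. You transport relations in two ways: via $\psi^{(N)}_m$ with $m\ge1$, which only sees indices in $(m,\bar m)$, and via the type AI corners $\{1,\dots,n\}$ and $\{N-n+1,\dots,N\}$ of Lemma~\ref{L:AI-copy}. Neither reaches, for instance, $f_{n+1,n-2}(u)$ in $X^{\tw}(\mfo_{2n+1})$ with $n\ge 3$. It contains the self-conjugate index $n+1$, so it lies in no AI corner. Under $\psi^{(N)}_{n-3}$ it is the first-column series $f_{41}(u)$ of the full algebra $X^{\tw}(\mfo_7)$.

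Proposition~\ref{P:ff0} and the proof of Lemma~\ref{L:ff} are rank-two computations and only produce $f_{n+1,n-1}$. Longer series need new commutator identities in the full algebra, which you do not provide. Unitarity cannot replace them, since it only trades $F$ for $E$ and imposes nothing on $F$ alone. So the claim ``this handles $f_{ji}$ with $j\le\bar\imath$'' is not justified by your tools.

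\textbf{The graded argument.} Your final paragraph is the argument that actually proves the lemma, and it is exactly the paper's. It makes the preceding route superfluous, but as written it is only an assertion. The real difficulty lies here, not in the even/odd bookkeeping for the $d$'s. Three things have to be shown.
\begin{itemize}
\item[(i)] \emph{The symbols.} Quasideterminant corrections are products of at least two off-diagonal series, and the spectral shifts in \eqref{Dr1}--\eqref{Dr2} only add terms of lower filtration degree. Hence $b_{i,r}$ and $h_{i,2r+1}$ have degrees $r$ and $2r+1$, with images \eqref{SY-graded-map-1}--\eqref{SY-graded-map-2}.
\item[(ii)] \emph{Generation.} These images must generate $\mfg_N[x]^{\om}$. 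The paper exhibits explicit brackets of degree-zero images with degree-$r$ images producing every element of \eqref{F-basis}. Your phrase ``Chevalley-type generators'' presupposes two facts you do not check: that the degree-zero images generate $\mfg_N^{\om}$, and that their adjoint action on the odd-degree images fills the $(-1)$-eigenspace of $\om$.
\item[(iii)] \emph{Part (2).} Your graded sentence only mentions this in passing. Here the generating set contains no $h_{i,2r+1}$, and the central variables must be located. One has $\bar d_{1,2r}\mapsto\tfrac12\theta\zeta_{2r}+\ga^{(2r)}_{11}$ and $\bar d_{i,2r+1}\mapsto F^{(\om,2r+1)}_{ii}+\tfrac12\theta\zeta_{2r+1}$, with $\bar d_{n+1,2r+1}$ supplying the remaining combination. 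Only then do all $F^{(\om,2r+1)}_{ii}$ and all $\zeta_r$ lie in the image.
\end{itemize}
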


\begin{proof}
(1) We pass to the associated graded algebra. Equip $SY^{\tw}(\mfg_N)$ with the filtration of Section \ref{sec:tY}, defined by $\deg s^{(r)}_{ij}=r-1$, so that $\gr SY^{\tw}(\mfg_N)\cong U(\mfg_N[x]^\om)$. Under this filtration the generators $b_{i,r}$ and $h_{i,2r+1}$ have respective degrees $r$ and $2r+1$, and their images $\bar b_{i,r}$ and $\bar h_{i,2r+1}$ in $\gr SY^{\tw}(\mfg_N)$ are mapped by the graded isomorphism \eqref{SY-graded-iso} to
\ali{
\label{SY-graded-map-1}
\bar{b}_{i,r} &\mapsto F_{i+1,i}^{(\om,r)} , \qq & 
\bar{h}_{i,2r+1} &\mapsto F_{i+1,i+1}^{(\om,2r+1)} - F_{ii}^{(\om,2r+1)} \hspace{0.96cm} \text{for } 1\le i < n,
\\[0.3em]
\label{SY-graded-map-2}
\bar{b}_{n,r} &\mapsto 
\begin{cases} 
F_{n+1,n}^{(\om,r)} , \\[0.4em]
\frac{1}{\sqrt{-2}} F_{n+1,n}^{(\om,r)} , \\[0.4em]
F_{n+1,n-1}^{(\om,r)} ,
\end{cases} 
&
\bar{h}_{n,2r+1} &\mapsto 
\begin{cases} 
-F_{nn}^{(\om,2r+1)}  &\text{for } X^\tw(\mfo_{2n+1}), \\[0.4em]
-2F_{nn}^{(\om,2r+1)}  &\text{for } X^\tw(\mfsp_{2n}), \\[0.4em]
-F_{nn}^{(\om,2r+1)} - F_{n-1,n-1}^{(\om,2r+1)} &\text{for } X^\tw(\mfo_{2n}) .
\end{cases} 
}
Here we have used that $F^{(\om,2r+1)}_{n+1,n+1}=0$ when $\mfg_N = \mfo_{2n+1}$ and $F^{(\om,2r+1)}_{n+1,n+1}=-\theta F^{(\om,2r+1)}_{nn}$ otherwise. The algebra $U(\mfg_N[x]^\om)$ is spanned by ordered monomials in the following elements:
\equ{
F^{(\om,r)}_{ij}\;\text{ with }\; 1\le j< i < \bar\jmath + \tfrac{1-\theta}{2}\; \text{ and }\; F^{(\om,2r+1)}_{ii} \;\text{ with }\; 1 \le i \le n \;\text{ and }\; r\ge 0. \label{F-basis}
}
We need to show that all these elements can be obtained from those in the right hand sides of (\ref{SY-graded-map-1}--\ref{SY-graded-map-2}). 
It~is~clear that this is true for all $F_{ii}^{(\om,2r+1)}$. The $F^{(\om,r)}_{ij}$ that are not in the right hand sides of the named equations are obtained using the following relations in $U(\mfg_N[x]^\om)$:
\ali{
[F^{(\om,0)}_{i+1,i}, F^{(\om,r)}_{ij}] &= F^{(\om,r)}_{i+1,j} \; &&\text{ with }\; 1\le j < i < N-n,
\intertext{and}
[F^{(\om,0)}_{n+i,n-i+1}, F^{(\om,r)}_{n-i+2,n-i}] + \del_{i1} F^{(\om,r)}_{n,n-1} &= - F^{(\om,r)}_{n+i+1,n-i}  \; &&\text{ with }\; 1\le i \le n-1 ,  \\
[F^{(\om,0)}_{n+i+1,n-i}, F^{(\om,r)}_{n-i,j}] &= -F^{(\om,r)}_{n+i+1,j} \; &&\text{ with }\; 1\le i \le n-2 , \;\; 1\le j \le n-i-1 ,
\intertext{when $\mfg_N = \mfo_{2n+1}$ and}
[F^{(\om,0)}_{n-i+1,n-i}, F^{(\om,r)}_{n+i,n-i}] - F^{(\om,r)}_{n+i,n-i+1}  &= - F^{(\om,r)}_{n+i+1,n-i}  \; &&\text{ with }\; 1\le i \le n-1 , \\
[F^{(\om,0)}_{n+i,n-i+1}, F^{(\om,r)}_{n-i+1,j}] &= 2F^{(\om,r)}_{n+i,j} \; &&\text{ with }\; 1\le i \le n-1 , \; 1\le j \le n-i ,
\intertext{when $\mfg_N = \mfsp_{2n}$ and}
[F^{(\om,0)}_{n+i,n-i}, F^{(\om,r)}_{n-i+1,n-i-1}] &= - F^{(\om,r)}_{n+i+1,n-i-1} \; &&\text{ with }\; 1\le i \le n-2 ,  \\
[F^{(\om,0)}_{n+i,n-i}, F^{(\om,r)}_{n-i,j}] &= -F^{(\om,r)}_{n+i,j} \; &&\text{ with }\; 1\le i \le n-2 ,\;\; 1\le j \le n-i-1 ,
}
when $\mfg_N = \mfo_{2n}$.

\smallskip

\noindent (2) The proof is essentially the same. Hence, we focus on the difference only. In this case $\gr X^\tw(\mfg_N) \cong U(\mfg_N[x]^\om) \ot \C[\zeta_0,\zeta_1,\ldots]$ and the images of generators $d_{1,r}$ and $d_{i+1,2r+1}$ in $\gr X^\tw(\mfg_N)$ are mapped by the graded isomorphism \eqref{X-graded-iso} to 
\gat{
\bar{d}_{1,2r} \mapsto \tfrac12 \theta \zeta_{2r} + \ga_{11}^{(2r)}, \qu
\bar{d}_{i,2r+1} \mapsto F^{(\om,2r+1)}_{ii} + \tfrac12 \theta \zeta_{2r+1} \;\text{ for } 1\le i \le n, \\ 
\bar{d}_{n+1,2r+1} \mapsto \tfrac12 \zeta_{2r+1} +
\begin{cases}
0  &\text{for } X^\tw(\mfo_{2n+1}), \\[0.4em]
F_{nn}^{(\om,2r+1)} &\text{for } X^\tw(\mfsp_{2n}), \\[0.4em]
-F_{nn}^{(\om,2r+1)}  &\text{for } X^\tw(\mfo_{2n}) .
\end{cases}
}
It is clear that $F_{ii}^{(\om,2r+1)}$ and $\zeta_r$ with $1\le i \le n$ and $r\ge 0$ are in the image. 
\end{proof}

\smallskip

We are now in a position to state the main result of this paper. Recall notation \eqref{u-symm} and \eqref{Sym}.
Let $A=(a_{ij})_{1\le i,j\le n}$ be the Cartan matrix of $\mfg_N$ and let ${\mr d}_i=1$ for $1\le i\le n$ except ${\mr d}_n=2$ when $\mfg_N=\mfsp_{2n}$ and ${\mr d}_n=\frac12$ when $\mfg_N=\mfo_{2n+1}$. Set $c_{ij}:={\mr d}_i a_{ij}$.

\begin{thm} \label{T:iso}
The special twisted Yangian $SY^\tw(\mfg_N)$ is isomorphic to the associative unital $\C$-algebra generated by the elements $b_{i,r}$ and $h_{i,2r+1}$ with $1\le i \le n$ and $r\ge 0$ and satisfying relations given by the following identities, where $b_i(u) = \sum_{r\ge0} b_{i,r}u^{-r-1}$ and $h_i(u) = 1+ \sum_{r\ge0} h_{i,2r+1}u^{-2r-2}$
and rational expressions should be expanded in the domain $\C[u,v,w,q][[u^{-1},v^{-1},w^{-1},q^{-1}]]$:
\ali{
h_i(u)\,b_j(v) &= \bigg( \frac{(u-v+\frac{c_{ij}}{2})(u+v-\frac{c_{ij}}{2})}{(u-v-\frac{c_{ij}}{2})(u+v+\frac{c_{ij}}{2})}\,b_j(v) - \bigg\{ \frac{c_{ij} (u-\tfrac{c_{ij}}2)}{ u\,(u-v-\tfrac{c_{ij}}2)} \, b_j(u-\tfrac{c_{ij}}2) \bigg\}^{\!u}\,  \bigg)\,h_i(u) , \label{hb}
\\
b_i(u)\,b_j(v) &= \frac{u-v+\tfrac{c_{ij}}{2}}{u-v-\tfrac{c_{ij}}{2}}\,b_j(v)\,b_i(u) + \frac{1}{u-v-\tfrac{c_{ij}}{2}}\,([b_{i,0},b_j(v)] - [b_i(u),b_{j,0}]) \qu\text{if}\qu a_{ij}<0, \label{bb1}
\\[0.2em]
[b_i(u),b_i(v)] &= \frac{c_{ii}}{2} \frac{(b_i(v) - b_i(u))^2}{v-u} + \frac{h_i(v) - h_i(u)}{v+u}  , \qq [h_i(u), h_j(v)]=0, \label{bb2}
}
and Serre type relations
\ali{
& [b_i(v),b_j(u)] = 0 &&\qu\text{if}\qu a_{ij} = 0  \label{S0} , \\
\Sym_{w,v} [b_i(w),\,&[b_i(v),b_j(u)]] = \Sym_{w,v} \, \frac{x_{ij}(v,u)}{v+w} &&\qu\text{if}\qu a_{ij} = -1  \label{S1} , \\[0.2em]
\Sym_{q,w,v} [b_i(q),\,&[b_i(w),[b_i(v),b_j(u)]]] \nn\\ &= \Sym_{q,w,v} \frac{D^+_v b_i(w) \, x_{ij}(v,u) - x_{ij}(v,u)\, D^-_v b_i(w)}{v+q}  &&\qu\text{if}\qu a_{ij} = -2  , \label{S2}
}
where
\gat{
D^\pm_v b_j(w) =  3\,b_j(w) + h^{\pm1}_j(v)\,b_j(w)\,h_j^{\mp1}(v) , \label{Db} \\
x_{ij}(v,u) = {\mr d}_i \bigg(\frac{2\,v\, b_{j}(u)}{(u-v+\frac{c_{ij}}2)(u+v+\frac{c_{ij}}2)} - \frac1v \bigg\{ \frac{v\, b_{j}(v-\frac{c_{ij}}2)}{u-v+\frac{c_{ij}}2} \bigg\}^v\bigg) h_i(v) . \label{xij} 
}
\end{thm}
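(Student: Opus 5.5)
Let $\mc{A}$ denote the abstract algebra defined by the generators and relations in the statement. The plan is to construct a surjective homomorphism $\Phi:\mc{A}\to SY^\tw(\mfg_N)$ and then prove injectivity through associated graded algebras.

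First, $\Phi$ is well defined if the images of $b_i(u)$ and $h_i(u)$, namely the Drinfeld series \eqref{Dr1}--\eqref{Dr2} taken in the quotient $SY^\tw(\mfg_N)$ of $X^\tw(\mfg_N)$, satisfy all defining relations. Lemma \ref{L:heven} shows that $h_i(u)$ is even, so only the coefficients $h_{i,2r+1}$ appear. Each relation then comes from an earlier result after the spectral shift $u\mapsto u+\frac{\ka-i}{2}$ (and the shifts $-\frac14$ and the normalisation $\frac{1}{\sqrt{-2}}$ for $i=n$):
\begin{itemize}
\item Relation \eqref{hb} with $i=j<n$ comes from \eqref{AI:df1}--\eqref{AI:df2}, rewritten for $d_i^{-1}d_{i+1}$ using Remark \ref{R:dif}. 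For $i=j=n$ it comes from Lemma \ref{L:BCD-df} together with Theorems \ref{T:iso-sp2}, \ref{T:iso-so3}, \ref{T:iso-so4}, which are exactly \eqref{gl2:7} with $c=c_{nn}$.
\item For $|i-j|=1$, relation \eqref{hb} follows from the same $d$--$f$ relations: one factor of $h_i$ commutes with $b_j$ by \eqref{AI:dd}, and the other produces the shift $c_{ij}/2$.
\item For $a_{ij}=0$, relation \eqref{hb} is trivial by \eqref{AI:dd} and Lemma \ref{L:BCD-triv}.
\item Relation \eqref{bb1} is \eqref{AI:ff1} for $i,j<n$, and \eqref{ff0-1}--\eqref{ff0-2} otherwise.
\item Relation \eqref{bb2} is \eqref{AI:ff2}, \eqref{B:ff1}, \eqref{C:ff1} and \eqref{D:ff2}, together with the commutativity of the $d_k$.
\item The Serre relations \eqref{S0}, \eqref{S1}, \eqref{S2} are \eqref{AI:ff0}, \eqref{AI:fff} and Proposition \ref{P:Serre}.
\end{itemize}
In each case one must check that, after the shift, $x(u,v)$ and $y(u,v)$ of Proposition \ref{P:Serre} become $x_{ij}(v,u)$ of \eqref{xij}. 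The key point here is that $f(\ka'-v)$ terms become $b_j(-v-\frac{c_{ij}}{2})$, which produces the $\{\cdot\}^v$ symmetrisation. The operator $D^\pm_v$ of \eqref{Df} becomes \eqref{Db}. Surjectivity of $\Phi$ is Lemma \ref{L:gen}(1).

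Next comes injectivity. Filter $\mc{A}$ by $\deg b_{i,r}=r$ and $\deg h_{i,2r+1}=2r+1$; $\Phi$ is filtered by the degree computation in Lemma \ref{L:gen}. I would extract coefficients from the current relations to show that $\mc{A}$ satisfies the defining relations of Lu's Drinfeld presentation \cite[Def.~2.7]{Lu26a}, equivalently of $\mc{Y}^\imath(\mfg_N)$ \cite[Def.~4.1]{LWZ25b} after rescaling. This gives a surjection $\mc{Y}^\imath(\mfg_N)\to\mc{A}$, hence a surjection $\gr\mc{Y}^\imath\to\gr\mc{A}$.

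By the PBW theorem of \cite{LWZ25b}, $\gr\mc{Y}^\imath(\mfg_N)\cong U(\mfg_N[x]^\om)$. The composite
\[
\gr\mc{Y}^\imath\to\gr\mc{A}\to\gr SY^\tw(\mfg_N)\cong U(\mfg_N[x]^\om)
\]
is given on generators by \eqref{SY-graded-map-1}--\eqref{SY-graded-map-2}. By the proof of Lemma \ref{L:gen} it sends generators to generators, and hence it is the identity map on $U(\mfg_N[x]^\om)$ up to scaling, so it is an isomorphism. Therefore both maps are isomorphisms, and in particular $\gr\Phi$ is injective, hence $\Phi$ is injective.

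The main obstacle is matching the closed current Serre relations \eqref{S1}--\eqref{S2} with Lu's coefficient-form Serre relations. Taking coefficients of $\frac{x_{ij}(v,u)}{v+w}$ and of the $D^\pm_v$ terms generates lower-order corrections, and these must be shown to reproduce Lu's corrections exactly. I would first verify this in the leading degree, which is all that the graded argument strictly needs provided $\mc{A}$ still maps onto $\gr$ via Lu's relations. If that fails, the fallback is a direct PBW spanning argument for $\mc{A}$: show that ordered monomials in the root vectors built from $b_{i,r}$ (as in Lemma \ref{L:gen}) span $\mc{A}$, using only \eqref{bb1}, \eqref{bb2} and the Serre relations modulo lower filtration. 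Their images are linearly independent by \eqref{SY-basis}.
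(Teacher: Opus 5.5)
Your proposal is correct and follows essentially the same route as the paper. You verify the current relations for the Drinfeld series in $SY^{\tw}(\mfg_N)$ from the earlier lemmas, and you get surjectivity from Lemma~\ref{L:gen}. Extracting coefficients then gives a surjection from $\mc{Y}^{\imath}(\mfg_N)$ onto the abstract algebra, and injectivity follows by comparing associated graded algebras via the Lu--Wang--Zhang PBW theorem.

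Two refinements are worth noting. First, your anticipated obstacle does not arise: Lu's presentation needs only the finite Serre relations in the modes $b_{i,0}$, and each comes from a single coefficient of \eqref{S1}--\eqref{S2}. Second, the graded step should be phrased, as in the paper, through the induced surjective Lie algebra map $\mfg^{\imath}\to\mfg_N[x]^{\om}$, whose graded components are finite-dimensional of equal dimension. Asserting that the composite is ``the identity up to scaling'' is not enough on its own, since the graded components of $U(\mfg_N[x]^{\om})$ are infinite-dimensional.
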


Before turning to the proof, we record how coefficient extraction from the identities \eqref{hb}--\eqref{S2} recovers the Drinfeld presentations of \cite{Lu26a, LWZ25b}; this is the bridge used in Step 1 of the proof below.

\begin{lemma} \label{L:LWZ}
Let $b_{i,r}$ and $h_{i,2r+1}$, with $1\le i\le n$ and $r\ge0$, be the coefficients of the Drinfeld series \eqref{Dr1}--\eqref{Dr2} of $SY^{\tw}(\mfg_N)$, and set $h_{i,-1}=1$, $h_{i,r}=0$ if $r\in2\N$ or $r<-1$, and $\{ a, b \} = ab + ba$. Then these elements satisfy the relations
\ali{
[h_{i,r}, h_{j,s}] &= 0  \label{modes-hh} \\
[h_{i,r+1},b_{j,s}] - [h_{i,r-1},b_{j,s+2}] &= c_{ij}\,\{ h_{i,r-1}, b_{j,s+1} \} + \tfrac{1}{4}\,c_{ij}^2\, [ h_{i,r-1}, b_{j,s}] , \label{modes-hb} \\[0.2em]
[b_{i,r+1},b_{j,s}] - [b_{i,r},b_{j,s+1}] &= \tfrac{1}{2}\,c_{ij}\,\{ b_{i,r}, b_{j,s} \} - 2\,\del_{ij} (-1)^s h_{i,r+s+1} \label{modes-bb}
}
together with the finite Serre type relations
\ali{
[b_{i,0}, b_{j,0}] &= 0  &&\text{if}\qu a_{ij} = 0 , \label{modes-S0} \\
[b_{i,0},[b_{i,0},b_{j,0}]] &= - {\mr d}_i b_{j,0} &&\text{if}\qu a_{ij} = -1, \label{modes-S1} \\
[b_{i,0},[b_{i,0},[b_{i,0},b_{j,0}]]] &= -4 {\mr d}_{i}\,[b_{i,0}, b_{j,0}] \qq &&\text{if}\qu a_{ij} = -2 . \label{modes-S2}
}
These are the defining relations of the twisted Yangian $\mc{Y}^{\imath}(\mfg_N)$ in the Drinfeld presentation of \cite[Def.~2.7]{Lu26a}; up to the rescaling
\equ{
b_{i,r} \mapsto \sqrt{\mr{d}_i}\,b_{i,r},\qu
h_{i,2r+1} \mapsto \mr{d}_i\,h_{i,2r+1} , \label{LWZ-scaling}
}
they coincide with the $\C$-algebra form, obtained by setting $\hbar=1$, of the reduced presentation \cite[Thm.~4.14]{LWZ25b} of the algebra \cite[Def.~4.1]{LWZ25b}; here our $c_{ij}$ correspond to their $\mr{d}_i\,c_{ij}$. Restricting to $r=s=1$ in \eqref{modes-hh} and $r=s=0$ in \eqref{modes-hb}--\eqref{modes-bb} yields the minimalistic presentation of \cite[Thm.~3.1]{Lu26a}, consisting of the finite type Serre relations \eqref{modes-S0}--\eqref{modes-S2} together with
\ali{
[h_{i,1}, h_{j,1}] = 0, &\qu
[h_{i,1},b_{j,0}] = 2 c_{ij} b_{j,1} , \\
[b_{i,1},b_{j,0}] - [b_{i,0},b_{j,1}] &= \tfrac{1}{2} c_{ij} \{ b_{i,0}, b_{j,0}\} - 2\del_{ij} h_{i,1} , \hspace{-2cm} \\
[h_{i,1},[b_{i,1},[h_{i,1},b_{i,1}]]] &= c_{ii}^2\,[b_{i,1}^2,h_{i,1}] \qu\text{if}\qu \mfg_N = \mfsp_2, \mfsp_4, \mfo_3, \mfo_4, \mfo_5. \label{hbhb}
}
\end{lemma}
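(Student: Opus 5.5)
The plan is to derive each modal relation from the corresponding current identity of Theorem~\ref{T:iso}, namely \eqref{hb}, \eqref{bb1}, \eqref{bb2} and \eqref{S0}--\eqref{S2}. These identities have already been established in $SY^{\tw}(\mfg_N)$ through Lemmas~\ref{L:AI-rels}--\ref{L:ff}, Proposition~\ref{P:ff0}, Proposition~\ref{P:Serre} and Lemma~\ref{L:heven}, after substituting the Drinfeld series \eqref{Dr1}--\eqref{Dr2} and rescaling the spectral parameters. For each relation I will clear denominators, expand in $u^{-1},v^{-1}$ and compare coefficients.

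\eqref{modes-hh} follows at once from $[h_i(u),h_j(v)]=0$ together with $h_i(u)=h_i(-u)$; the latter forces $h_{i,2r}=0$, which is why these modes may be set to zero.

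For \eqref{modes-bb}, multiply \eqref{bb1} or the first relation of \eqref{bb2} by $(u-v)$ and rewrite the left-hand side as $(u-v)[b_i(u),b_j(v)]$. Its coefficient of $u^{-r-1}v^{-s-1}$ is $[b_{i,r+1},b_{j,s}]-[b_{i,r},b_{j,s+1}]$. On the right-hand side, $\tfrac{c_{ij}}2\{b_i(u),b_j(v)\}$ produces $\tfrac12c_{ij}\{b_{i,r},b_{j,s}\}$, and the terms involving $b_{i,0}$ and $b_{j,0}$ cancel against the boundary terms. In the case $i=j$, the term $(u-v)(h_i(v)-h_i(u))/(u+v)$ has to be expanded. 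Here I will use $(u-v)/(u+v)=1-2v/(u+v)$ and the expansion $(u+v)^{-1}=\sum_k(-1)^k v^k u^{-k-1}$. This yields $-2(-1)^s h_{i,r+s+1}$, where evenness of $h_i$ removes the spurious terms. For $a_{ij}=0$ the relation \eqref{S0} gives commutativity directly.

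For \eqref{modes-hb}, multiply \eqref{hb} on the right by $h_i(u)^{-1}$ and clear the denominator $(u-v-\tfrac c2)(u+v+\tfrac c2)$. The commutator $[h_i(u),b_j(v)]$ then appears with multiplier $u^2-(v+\tfrac c2)^2$, i.e.\ as the expression $(u^2-v^2)[h,b]-c v\{h,b\}-\tfrac{c^2}4[h,b]$. The symmetrised term $\{\cdots\}^u$ is a polynomial correction in $v$ that is even in $u$, so taking a coefficient of $u^{-r}v^{-s-1}$ that lies outside the polynomial part in $v$ kills it. Hence the coefficient extraction gives \eqref{modes-hb}. The step I expect to be most delicate is checking that the $\{\cdot\}^u$ terms contribute only in the range of powers that is discarded.

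For the finite Serre relations, take the top coefficient $w^{-1}v^{-1}u^{-1}$ (respectively $q^{-1}w^{-1}v^{-1}u^{-1}$) of \eqref{S1} and \eqref{S2}. The left-hand sides become $2[b_{i,0},[b_{i,0},b_{j,0}]]$ and $6[b_{i,0},[b_{i,0},[b_{i,0},b_{j,0}]]]$. On the right-hand side, $x_{ij}(v,u)$ has leading term $\mr d_i(2b_j(u)/u^2\cdot v^{0}\cdots)$. I will compute its $v^{0}u^{-1}$ part, which is $-\mr d_i b_{j,0}$ after the $\{\cdot\}^v$ cancellations, and combine this with $1/(v+w)$ under $\Sym$.

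In \eqref{S2}, the factor $D^\pm_v b_i(w)$ has top mode $4b_{i,0}$ because $h_i(v)=1+O(v^{-2})$. The resulting bracket $4[b_{i,0},x]$ then produces $-4\mr d_i[b_{i,0},b_{j,0}]$ after symmetrisation, where the symmetrisation count of $6$ cancels against the $3!$ on the left-hand side. This bookkeeping is the main obstacle, and I would verify it first in low rank using $\mfo_5$ and $\mfsp_4$.

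Comparison with \cite{Lu26a,LWZ25b} is then a direct inspection of the defining relations, using the rescaling \eqref{LWZ-scaling} and the identification $c_{ij}\leftrightarrow\mr d_i c_{ij}$. The minimalistic relations follow by specialising: $r=0$ in \eqref{modes-hb} gives $[h_{i,1},b_{j,0}]=2c_{ij}b_{j,1}$ after using $h_{i,-1}=1$. Finally, \eqref{hbhb} is obtained as in \cite[Thm.~3.1]{Lu26a} from the low-rank relations.
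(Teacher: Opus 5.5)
Your overall route is the paper's: you read off the modal relations from the current identities \eqref{hb}--\eqref{S2}, using the evenness of $h_i(u)$. Three of your extractions are correct.
\begin{itemize}
\item \eqref{modes-hh} follows at once from $[h_i(u),h_j(v)]=0$ and evenness.
\item For \eqref{modes-hb}, the right coefficient is $u^{-r}v^{-s-1}$. After clearing $(u-v-\tfrac{c}{2})(u+v+\tfrac{c}{2})$, the $\{\cdot\}^u$-term has degree at most one in $v$ and so drops out.
\item For \eqref{modes-bb}, the $h$-term indeed gives $-2(-1)^s h_{i,r+s+1}$.
\end{itemize}
The finite Serre step, however, does not work as you describe it. Your final constants come out right only because two factor-of-two errors cancel.

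First, $x_{ij}(v,u)$ has no $v^0$-part. The residues of its bracket at $u=v-\tfrac{c_{ij}}{2}$ and $u=-v-\tfrac{c_{ij}}{2}$ cancel, so it is an honest series that may be expanded for $|v|>|u|$. Since $h_i(v)=h_i(-v)$, it is odd in $v$, and $x_{ij}(v,u)=-2\mr{d}_i\,b_j(u)\,v^{-1}+O(v^{-3})$. Your ``$2b_j(u)/u^2$'' comes from expanding a single summand for $|u|>|v|$, where individual summands carry positive powers of $v$.

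Second, this oddness is exactly what makes $\Sym_{w,v}\frac{x_{ij}(v,u)}{v+w}=\frac{x_{ij}(v,u)+x_{ij}(w,u)}{v+w}$ a genuine series in $v^{-1},w^{-1}$. For odd $k$, $\frac{v^{-k}+w^{-k}}{v+w}$ has $v^{-1}w^{-1}$-coefficient $\delta_{k1}$. Hence the right-hand side of \eqref{S1} at $w^{-1}v^{-1}u^{-1}$ is the single term $-2\mr{d}_i b_{j,0}$, not two copies of a $v^0$-coefficient. Indeed, a $v^0$-term over $v+w$ contributes nothing at $v^{-1}w^{-1}$ in either expansion.

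Third, in \eqref{S2} the six summands pair up into three pairs with a common denominator, not six independent copies. Each pair contributes the $v^{-1}$-coefficient of $D^+_v b_i(w)\,x_{ij}(v,u)-x_{ij}(v,u)\,D^-_v b_i(w)$, namely $4[b_i(w),-2\mr{d}_i b_j(u)]$. The corrections from $h_i^{\pm1}(v)-1=O(v^{-2})$ enter only at $v^{-3}$. This gives $-24\mr{d}_i[b_{i,0},b_{j,0}]$ against $6[b_{i,0},[b_{i,0},[b_{i,0},b_{j,0}]]]$, hence \eqref{modes-S2}.

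Finally, \cite[Def.~2.7]{Lu26a} does not cover $\mfo_4$, which is not simple. The identification with Lu's presentation must therefore be made componentwise there, as the paper does.
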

\begin{proof}
The current relations \eqref{hb}--\eqref{S2} hold for the Drinfeld series \eqref{Dr1}--\eqref{Dr2} by the results of the preceding sections, and the series $h_i(u)$ are even by Lemma \ref{L:heven}. We extract \eqref{modes-hh}--\eqref{modes-S2} from them by comparing coefficients. The commutativity $[h_i(u),h_j(v)]=0$ in \eqref{bb2} gives \eqref{modes-hh} at once. Clearing the denominators in \eqref{hb} and comparing the coefficients of $u^{-r+1}v^{-s-1}$ on both sides yields \eqref{modes-hb}; the conventions $h_{i,-1}=1$ and $h_{i,r}=0$ for $r$ even or $r<-1$ record the boundary contributions of $h_i(u)=1+\sum_{r\ge0}h_{i,2r+1}\,u^{-2r-2}$. Relation \eqref{modes-bb} with $i=j$ follows by multiplying the first relation of \eqref{bb2} by $(v-u)$, expanding in the domain $|u|>|v|$, and comparing the coefficients of $u^{-r-1}v^{-s-1}$; for adjacent nodes ($i\ne j$, $a_{ij}<0$) it follows in the same way from \eqref{bb1}, and for non-adjacent nodes ($a_{ij}=0$) it is immediate from \eqref{S0}. Finally, clearing the denominators of \eqref{S0}--\eqref{S2} and comparing the coefficients of $v^{-1}u^{-1}$, $w^{2}v\,u^{-1}$ and $q^{3}w^{2}v\,u^{-1}$, respectively, gives the finite Serre type relations \eqref{modes-S0}--\eqref{modes-S2}.

These are precisely the defining relations of \cite[Def.~2.7]{Lu26a}. That construction does not include $\mfg_N=\mfo_4$, which is not simple; this case is nonetheless covered componentwise, since the relations (\ref{modes-hb}--\ref{modes-S0}) decouple the two nodes and reduce all statements to two commuting copies of the rank one split type AI case --- at the level of the $R$-matrix presentation this reflects the isomorphism $SY^{\tw}(\mfo_4) \cong Y^+(\mfsl_2) \ot Y^+(\mfsl_2)$ of \cite[Cor.~4.18]{GRW18}. The minimalistic presentation is the specialisation stated above, and relation \eqref{hbhb} is obtained from it by the prescription in the proof of \cite[Lem.~2.9]{Lu26a}. The rescaling \eqref{LWZ-scaling} is invertible over $\C$ and multiplies each $c_{ij}$ by $\mr{d}_i$, whence it identifies \eqref{modes-hh}--\eqref{modes-S2} with the relations of \cite[Thm.~4.14]{LWZ25b}.
\end{proof}

\begin{proof}[Proof of Theorem \ref{T:iso}]
Let $ \mr{Y}(\mfg_N)$ denote the associative unital $\C$-algebra with generators $b_{i,r}$ and $h_{i,2r+1}$ with $1\le i\le n$ and $r\ge 0$ and satisfying the relations in the theorem. We must construct an isomorphism $ \mr{Y}(\mfg_N) \cong SY^{\tw}(\mfg_N)$.

\smallskip

Consider the Drinfeld series $b_i(u)$ and $h_i(u)$ of $SY^{\tw}(\mfg_N)$ defined in \eqref{Dr1}--\eqref{Dr2}. All of the relations \eqref{hb}--\eqref{S2} have been verified for these series in the preceding sections. Indeed, for $1\le i,j<n$ they are precisely the type AI relations of Lemma \ref{L:AI-rels}: relation \eqref{hb} is (\ref{AI:df1}--\ref{AI:df2}), relation \eqref{bb1} is \eqref{AI:ff1}, relation \eqref{bb2} is \eqref{AI:dd}, \eqref{AI:ff2} and \eqref{AI:dd=dd}, and the Serre relation \eqref{S1} is \eqref{AI:fff}. For the pairs involving the $n$-th node the relations \eqref{hb} and the commutativity of the $h_i(u)$ follow from Lemmas \ref{L:BCD-triv} and \ref{L:BCD-df}; relation \eqref{bb2} with $i=n$ is \eqref{B:ff1}, \eqref{C:ff1} and \eqref{D:ff2} of Lemma \ref{L:ff}; relation \eqref{bb1} is Proposition \ref{P:ff0}; and the Serre relations \eqref{S1}--\eqref{S2} are established in Proposition \ref{P:Serre}. The remaining relations \eqref{bb1}--\eqref{S0} for non-adjacent nodes ($a_{ij}=0$) are the trivial relations of Lemmas \ref{L:BCD-triv} and \ref{L:AI-rels} and Corollary \ref{C:emb-comm}. Finally $h_{i,2r}=0$ by Lemma \ref{L:heven}, so that the series $h_i(u) \in SY^{\tw}(\mfg_N)[[u^{-1}]]$ are even and depend only on the generators $h_{i,2r+1}$. Consequently the assignment
\equ{
\Phi_N : \mr{Y}(\mfg_N) \to SY^{\tw}(\mfg_N) , \qu b_{i,r} \mapsto b_{i,r} , \qu h_{i,2r+1} \mapsto h_{i,2r+1}
\label{Phi}
}
respects the defining relations of $ \mr{Y}(\mfg_N)$ and hence extends to a homomorphism of algebras. Lemma \ref{L:gen}~(1) implies that $\Phi_N$ is surjective. It remains to show that it is injective. This is achieved in two steps: Step~1 constructs a surjective homomorphism $\Psi_N$ onto $\mr{Y}(\mfg_N)$ from the twisted Yangian $\mc{Y}^{\imath}(\mfg_N)$ in the Drinfeld presentation of \cite{LWZ25b,Lu26a}; Step 2 shows that the composition $\Xi_N = \Phi_N \circ \Psi_N$ is injective by comparing the associated graded algebras, which implies the injectivity of $\Phi_N$.

\smallskip

{\it Step 1.} Denote by $\mc{Y}^{\imath}(\mfg_N)$ the twisted Yangian in the Drinfeld presentation associated with the split symmetric pair $(\mfg_N,\mfg_N^{\om})$, namely the associative unital $\C$-algebra with generators $b^{\imath}_{i,r}$ and $h^{\imath}_{i,2r+1}$, where $1\le i\le n$ and $r\ge0$, subject to the relations (\ref{modes-hh}--\ref{modes-S2}), see \cite[Def.~2.7]{Lu26a}. Up to the rescaling \eqref{LWZ-scaling} of the generators, this presentation is the $\C$-algebra form, obtained by setting $\hbar=1$, of the reduced presentation \cite[Thm.~4.14]{LWZ25b} of the twisted Yangian introduced in \cite[Def.~4.1]{LWZ25b}; in particular, as an abstract algebra $\mc{Y}^{\imath}(\mfg_N)$ does not depend on the choice of normalisation. When $\mfg_N = \mfo_4$, which is not simple and hence not covered by {\it loc.~cit.}, the relations (\ref{modes-hb}--\ref{modes-S0}) decouple the two nodes, so that $\mc{Y}^{\imath}(\mfo_4)$ is isomorphic to the tensor square of the rank one algebra of split type AI, and the results of \cite{LWZ25b} invoked below apply componentwise, cf.~Lemma \ref{L:LWZ}.

The coefficient extractions in the proof of Lemma \ref{L:LWZ} use only the identities \eqref{hb}--\eqref{S2} and the evenness of the series $h_i(u)$, and therefore apply verbatim in $\mr{Y}(\mfg_N)$: all of the defining relations of $\mc{Y}^{\imath}(\mfg_N)$ hold in $\mr{Y}(\mfg_N)$ for the elements $b_{i,r}$ and $h_{i,2r+1}$; for $a_{ij}=0$ relation \eqref{modes-bb} is immediate from \eqref{S0}. Hence the assignment
\equ{
\Psi_N : \mc{Y}^{\imath}(\mfg_N) \to \mr{Y}(\mfg_N) , \qu
b^{\imath}_{i,r} \mapsto b_{i,r} , \qu h^{\imath}_{i,2r+1} \mapsto h_{i,2r+1}
\label{psi-hom}
}
extends to a homomorphism of algebras; it is surjective because its image contains all of the generators of $\mr{Y}(\mfg_N)$. Consider the composition
\equ{
\Xi_N := \Phi_N \circ \Psi_N : \mc{Y}^{\imath}(\mfg_N) \to SY^{\tw}(\mfg_N) .
\label{Xi-hom}
}
We claim that $\Xi_N$ is injective. Injectivity of $\Phi_N$ then follows: given $x \in \ker \Phi_N$, surjectivity of $\Psi_N$ allows us to write $x = \Psi_N(y)$ for some $y \in \mc{Y}^{\imath}(\mfg_N)$; then $\Xi_N(y) = \Phi_N(x) = 0$, so that $y = 0$ and hence $x = 0$.

\smallskip

{\it Step 2.} To prove the claim we pass to the associated graded algebras. Recall the filtration on $SY^{\tw}(\mfg_N)$ introduced in Section \ref{sec:tY} and the isomorphism \eqref{SY-graded-iso}, $\gr SY^{\tw}(\mfg_N) \cong U(\mfg_N[x]^{\om})$. The Lie algebra $\mfg_N[x]^{\om}$ is graded by the degree in $x$. By \eqref{Frr} we have $F^{(\om,r)}_{ij} = g_{jj}\,\big(F_{ij} + (-1)^r\,\om(F_{ij})\big)\,x^r$, so that the degree-$r$ component of $\mfg_N[x]^{\om}$ is $\{\, Z x^r : Z \in \mfg_N ,\; \om(Z) = (-1)^r Z \,\}$; its dimension equals $\dim \mfg_N^{\om}$ for $r$ even and $\dim\mfg_N - \dim\mfg_N^{\om}$ for $r$ odd. On the other hand, the algebra $\mc{Y}^{\imath}(\mfg_N)$ is equipped with the filtration defined by $\deg b^{\imath}_{i,r} = r$ and $\deg h^{\imath}_{i,2r+1} = 2r+1$, and its associated graded algebra is isomorphic to the universal enveloping algebra $U(\mfg^{\imath})$ of the twisted current algebra $\mfg^{\imath}$ of the pair $(\mfg_N,\mfg_N^{\om})$ by \cite[Cor.~4.13]{LWZ25b}, which applies verbatim in the present normalisation since the rescaling \eqref{LWZ-scaling} preserves the filtration, see also \cite[\S2.3]{Lu26a}. The Lie algebra $\mfg^{\imath}$ is a graded Lie algebra generated by the images $\bar b^{\imath}_{i,r}$ of the elements $b^{\imath}_{i,r}$, whose degree-$r$ component likewise has dimension $\dim\mfg^{\om}_N$ for $r$ even and $\dim\mfg_N - \dim\mfg^{\om}_N$ for $r$ odd.

Next, since the elements $\Phi_N(b_{i,r})$ and $\Phi_N(h_{i,2r+1})$ are precisely the Drinfeld generators \eqref{Dr1}--\eqref{b-h-series} of $SY^{\tw}(\mfg_N)$, the composition $\Xi_N$ satisfies
\equ{
\Xi_N\big( b^{\imath}_{i,r} \big) = b_{i,r}
\qu\text{and}\qu
\Xi_N\big( h^{\imath}_{i,2r+1} \big) = h_{i,2r+1} ,
\label{Xi-symb}
}
where the right-hand sides satisfy $b_{i,r} \in \mr{F}_r$ and $h_{i,2r+1}\in \mr F_{2r+1}$ by the proof of Lemma~\ref{L:gen}; here $\mr{F}_k$ denotes the $k$-th filtered component of $SY^{\tw}(\mfg_N)$. Hence $\Xi_N$ is a homomorphism of filtered algebras, and, by \eqref{SY-graded-map-1}--\eqref{SY-graded-map-2}, the image of $\Xi_N(b^{\imath}_{i,r})$ in the $r$-th component of $\gr SY^{\tw}(\mfg_N)$ corresponds under \eqref{SY-graded-iso} to the elements in the right-hand sides of \eqref{SY-graded-map-1}--\eqref{SY-graded-map-2} assigned to $\bar b_{i,r}$.

Consider the induced homomorphism $\gr \Xi_N : U(\mfg^{\imath}) \to U(\mfg_N[x]^{\om})$. It maps the generators $\bar b^{\imath}_{i,r}$ of the Lie algebra $\mfg^{\imath} \subset U(\mfg^{\imath})$ to elements of the Lie algebra $\mfg_N[x]^{\om} \subset U(\mfg_N[x]^{\om})$, and therefore restricts to a homomorphism of graded Lie algebras
\equ{
\tau : \mfg^{\imath} \to \mfg_N[x]^{\om} , \qq \gr\Xi_N = U(\tau) .
}
We show that $\tau$ is surjective. Its image contains the elements in the right-hand sides of \eqref{SY-graded-map-1}--\eqref{SY-graded-map-2}: those assigned to $\bar b_{i,r}$ are the images $\tau(\bar b^{\imath}_{i,r})$, while those assigned to $\bar h_{i,2r+1}$ arise as Lie brackets of the former. Indeed, comparing the coefficients of \eqref{bb2} at $u^{-2r-2}v^{-1}$ in $SY^{\tw}(\mfg_N)$ gives $[\,b_{i,2r+1}\,,\,b_{i,0}\,] = - h_{i,2r+1} + \mr{F}_{2r}$, whence $\big[\tau(\bar b^{\imath}_{i,2r+1}), \tau(\bar b^{\imath}_{i,0})\big]$ equals the negative of the image of $\bar h_{i,2r+1}$ under \eqref{SY-graded-iso}. By the bracket relations displayed in the proof of Lemma \ref{L:gen}\,(1), every element of \eqref{F-basis} is an iterated Lie bracket of the elements in the right-hand sides of \eqref{SY-graded-map-1}--\eqref{SY-graded-map-2}, and the elements \eqref{F-basis} span $\mfg_N[x]^{\om}$. Hence $\tau$ is surjective. Since $\tau$ is a surjective degree-preserving homomorphism of graded Lie algebras whose components are finite-dimensional of equal dimensions, it is an isomorphism, and therefore so is $\gr\Xi_N = U(\tau)$.

Finally, both filtrations are exhaustive and bounded below. If $x \in \ker\Xi_N$ were nonzero, then, choosing the minimal $k$ such that $x$ lies in the $k$-th filtered component of $\mc{Y}^{\imath}(\mfg_N)$, the image of $x$ in the $k$-th component of $\gr\mc{Y}^{\imath}(\mfg_N)$ would be a nonzero element of $\ker(\gr\Xi_N)$, a contradiction. Hence $\Xi_N$ is injective, and $\Phi_N$ is an isomorphism.
\end{proof}

\begin{crl} \label{C:LWZ}
The maps $\Psi_N$ and $\Xi_N = \Phi_N \circ \Psi_N$ are isomorphisms of algebras,
\equ{
\mc{Y}^{\imath}(\mfg_N) \;\cong\; \mr{Y}(\mfg_N) \;\cong\; SY^{\tw}(\mfg_N) .
}
In particular, this establishes, for the split types BI, CI and DI, the conjecture stated in \cite[\S1.4]{LWZ25b}, namely that the twisted Yangians of split type in the Drinfeld presentation are isomorphic to the corresponding twisted Yangians in the $R$-matrix presentation of \cite{GR16}: the twisted Yangian $Y(\mfg_N,G)^{\tw}$ of \cite{GR16} is isomorphic to $SY^{\tw}(\mfg_N)$, the quotient of $X^{\tw}(\mfg_N)$ by the symmetry and unitarity relations of Section \ref{sec:tY}, by \cite[Thm.~4.1, Cor.~5.2]{GR16}.
\end{crl}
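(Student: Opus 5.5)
The corollary follows almost formally from the proof of Theorem \ref{T:iso}. Our plan is to harvest the three facts established there. First, $\Phi_N$ is an isomorphism. Second, $\Psi_N$ is surjective. Third, $\Xi_N=\Phi_N\circ\Psi_N$ is injective, because $\gr\Xi_N=U(\tau)$ is an isomorphism and the filtrations are exhaustive and bounded below.

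Surjectivity of $\Xi_N$ then follows at once, since it is a composition of the surjections $\Psi_N$ and $\Phi_N$. Alternatively, surjectivity is immediate from Lemma \ref{L:gen}(1): the image contains all the generators $b_{i,r}$ and $h_{i,2r+1}$ of $SY^{\tw}(\mfg_N)$. Hence $\Xi_N$ is bijective, and therefore an isomorphism.

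It then follows that $\Psi_N=\Phi_N^{-1}\circ\Xi_N$ is a composition of isomorphisms, hence an isomorphism. This gives the displayed chain $\mc{Y}^{\imath}(\mfg_N)\cong\mr{Y}(\mfg_N)\cong SY^{\tw}(\mfg_N)$. For $\mfg_N=\mfo_4$ we would note, as in Step 1 of the proof of Theorem \ref{T:iso}, that the cited graded results apply componentwise, so nothing changes.

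For the statement about the conjecture of \cite[\S1.4]{LWZ25b}, we would proceed in two steps. First, by Lemma \ref{L:LWZ} and the invertible rescaling \eqref{LWZ-scaling}, $\mc{Y}^{\imath}(\mfg_N)$ is the $\hbar=1$ form of the algebra of \cite[Def.~4.1]{LWZ25b}. Second, we would relate $SY^{\tw}(\mfg_N)$ to the $R$-matrix twisted Yangian $Y(\mfg_N,G)^{\tw}$ of \cite{GR16}. The transposed presentation differs from the one in \cite{GR16} by the substitution $S(u)\mapsto A\,S(u)\,J A^t$, which is an isomorphism of extended twisted Yangians by \eqref{R=JRtJ} and \eqref{AARAA}. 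We would check that this substitution carries the symmetry relation \eqref{sym} and the unitarity relation \eqref{SS=wI} to their counterparts in \cite{GR16}. Then \cite[Thm.~4.1, Cor.~5.2]{GR16}, which identify $Y(\mfg_N,G)^{\tw}$ with that quotient of $X^{\tw}$, give the required identification.

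The only step requiring any care is this last compatibility of ideals under the change of presentation. We would verify it directly: apply the substitution to \eqref{sym} and \eqref{SS=wI} and use $J^2=\theta I$ together with the orthogonality of $A$. This is a short matrix computation rather than a real obstacle.
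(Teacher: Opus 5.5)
Your argument is correct and essentially the paper's own: $\Xi_N$ is injective by the proof of Theorem~\ref{T:iso} and surjective as a composite of surjections. The paper then gets injectivity of $\Psi_N$ directly from that of $\Xi_N$, rather than writing $\Psi_N=\Phi_N^{-1}\circ\Xi_N$; the two steps are equivalent and each takes one line. For the identification with the twisted Yangian of \cite{GR16}, the paper only cites \cite[Thm.~4.1, Cor.~5.2]{GR16} together with the change of presentation noted in Section~\ref{sec:tY}, so your proposed check that the symmetry and unitarity ideals correspond is harmless but not required.
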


\begin{proof}
The map $\Xi_N$ was shown to be injective in the proof of Theorem \ref{T:iso}; it is surjective since so are $\Psi_N$ and $\Phi_N$. If $\Psi_N(y) = 0$ for some $y \in \mc{Y}^{\imath}(\mfg_N)$, then $\Xi_N(y) = 0$ implying $y = 0$; hence $\Psi_N$ is bijective and $\Phi_N = \Xi_N \circ \Psi_N^{-1}$.
\end{proof}

\begin{crl} \label{C:iso-ext}
The extended twisted Yangian $X^\tw(\mfg_N)$ is isomorphic to the associative unital $\C$-algebra $\mr{X}(\mfg_N)$ generated by elements $b_{i,r}$ with $1\le i \le n$ and $d_{j,r}$ with $1\le j \le n+1$ and $r\ge 0$ satisfying relations given by the following identities, where $b_i(u) = \sum_{r\ge0} b_{i,r}u^{-r-1}$ and $d_j(u) = g_{jj} + \sum_{r\ge0} d_{j,r}u^{-r-1}$
and rational expressions should be expanded in the domain $\C[u,v,w,q][[u^{-1},v^{-1},w^{-1},q^{-1}]]$:
\gat{
d_{i}(u)\,d_{i+1}(\ka-i-u) = d_{i+1}(u)\,d_{i}(\ka-i-u)  \qu\text{for}\qu 1\le i < n, \label{dd-sym1}\\
\frac{u-\frac12}{u-\frac14}\, d_{n}(u-\tfrac14)\,d_{n+1}(-u-\tfrac14) = \frac{u+\frac12}{u+\frac14} \, d_{n+1}(u-\tfrac14)\,d_{n}(-u-\tfrac14) \qu\text{if}\qu \mfg_N = \mfo_{2n+1},  \label{dd-sym2}\\
d_{n}(u)\,d_{n+1}(-u) = d_{n+1}(u)\,d_{n}(-u)  \qu\text{if}\qu \mfg_N = \mfo_{2n}\text{\;or } \mfsp_{2n} , \label{dd-sym3}
\\[0.5em]
[d_j(u),d_k(v) ] = 0 \qu\text{and}\qu [d_j(u), b_i(v)] = 0 \qu \text{if}\qu j \le n \;\text{ and }\; j\ne i,i+1, \label{db0}
\\
d_i(u+\tfrac{\ka-i+1}{2})\, b_i(v) = \bigg(\frac{(u-v-\frac{1}{2})(u+v+\frac12)}{(u-v+\frac12)(u+v-\frac12)}\, b_i(v) + \bigg\{ \frac{u+\frac12}{u\,(u-v+\frac12)} \, b_i(u+\tfrac12) \bigg\}^{\!u}\,\bigg)\, d_i(u+\tfrac{\ka-i+1}2) , \label{db1}
\\[0.2em]
d_{i+1}(u+\tfrac{\ka-i-1}{2})\, b_i(v) = \bigg(\frac{(u-v+\frac{1}{2})(u+v-\frac12)}{(u-v-\frac12)(u+v+\frac12)}\, b_i(v) - \bigg\{ \frac{u-\frac12}{u\,(u-v-\frac12)} \, b_i(u-\tfrac12) \bigg\}^{\!u}\,\bigg)\, d_{i+1}(u+\tfrac{\ka-i-1}{2}) ,\label{db2}
\\[0.2em]
d_{j}(u+\tfrac{{\mr d}_n}{2})\, b_n(v) = \bigg(\frac{(u-v-\frac{{\mr d}_n}{2})(u+v+\frac{{\mr d}_n}2)}{(u-v+\frac{{\mr d}_n}2)(u+v-\frac{{\mr d}_n}2)}\, b_n(v) + \bigg\{ \frac{{\mr d}_n\,(u+\frac{{\mr d}_n}2)}{u\,(u-v+\frac{{\mr d}_n}2)} \, b_n(u+\tfrac{{\mr d}_n}2) \bigg\}^{\!u}\,\bigg)\, d_j(u+\tfrac{{\mr d}_n}2) , \label{db3}
}
where relation \eqref{db0} for $\mfg_N=\mfo_{2n}$ excludes the pair $(j,i)=(n-1,n)$, relations \eqref{db1} and \eqref{db2} are imposed for $1\le i<n$, and \eqref{db1} additionally for $i=n$ when $\mfg_N = \mfo_{2n+1}$, relation \eqref{db3} is imposed with $j=n$ when $\mfg_N = \mfsp_{2n}$ and with $j=n-1,n$ when $\mfg_N=\mfo_{2n}$; and relations \eqref{hb}--\eqref{xij} with $h_i(u)$ defined by \eqref{Dr1} and \eqref{Dr2}.
\end{crl}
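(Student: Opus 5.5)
We follow the same three-stage scheme as in the proof of Theorem \ref{T:iso}. The first stage is to define $\Phi^X_N:\mr{X}(\mfg_N)\to X^\tw(\mfg_N)$ by sending $b_{i,r}$ and $d_{j,r}$ to the Gaussian and Drinfeld generators of \eqref{Dr1}--\eqref{Dr2}, and to check that every defining relation of $\mr{X}(\mfg_N)$ already holds in $X^\tw(\mfg_N)$.

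The relations split into three groups, each covered by earlier results.
\begin{itemize}
\item Relations \eqref{dd-sym1}--\eqref{dd-sym3} come from \eqref{AI:dd=dd} together with the low-rank identities \eqref{so3:3}, \eqref{sp2:1} and \eqref{so4:3}, transported by Corollary \ref{C:RE[m]}.
\item Relation \eqref{db0} is \eqref{AI:dd} and Lemma \ref{L:BCD-triv}. Relations \eqref{db1}--\eqref{db3} are \eqref{AI:df1}--\eqref{AI:df2} and Lemma \ref{L:BCD-df}, after the shift $u\mapsto u+\frac{\ka-i\pm1}{2}$ and rewriting the three-term right-hand sides in $\{\cdot\}^u$ form.
\item Relations \eqref{hb}--\eqref{xij} hold because the proof of Theorem \ref{T:iso} establishes them already in $X^\tw(\mfg_N)$; all the lemmas used there are stated in the extended algebra.
\end{itemize}
Surjectivity of $\Phi^X_N$ is Lemma \ref{L:gen}(2). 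The only point to check is that the generators $d_{i+1,2r+1}$ appearing there are coefficients of the $d_j(u)$ with $j\le n+1$. The remaining $d_k(u)$ with $k>n+1$ are recovered from Proposition \ref{P:z(u)}, using $\ms{z}(u)=\theta d_1(-u)d_N(u)$.

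For injectivity, the plan is to decompose $\mr{X}(\mfg_N)$ and compare it with \eqref{X=SYcz}.
\begin{enumerate}
\item Inside $\mr{X}(\mfg_N)$, define $h_i(u)$ by \eqref{Dr1}--\eqref{Dr2}. The relations \eqref{dd-sym1}--\eqref{dd-sym3} give that each $h_i(u)$ is even, as in Lemma \ref{L:heven}.
\item Introduce the central series $\ms{t}(u)$, taken as $d_1(u)$ renormalised appropriately. Its centrality should follow from \eqref{db0} together with \eqref{db1} for $i=1$, after writing $d_1=h$-products times $d_1$: the relations \eqref{db1}--\eqref{db3} show that $d_j(u)$ and $d_1(u)$ conjugate $b_i$ in the same way, up to the $h$'s.
\item With $\ms{t}_r$ the coefficients of $\ms{t}(u)$, the subalgebra generated by $b_{i,r},h_{i,2r+1}$ is a quotient of $\mr{Y}(\mfg_N)\cong SY^\tw(\mfg_N)$. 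Together with the $\ms{t}_r$ it generates $\mr{X}(\mfg_N)$, since each $d_j$ equals $d_1$ times a product of $h$'s.
\item This yields a surjection $\mr{Y}(\mfg_N)\otimes\C[\ms{t}_0,\ms{t}_1,\ldots]\to\mr{X}(\mfg_N)$.
\end{enumerate}

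The composite with $\Phi^X_N$ should then be shown injective on associated graded algebras. By \eqref{X-graded-iso}, $\gr X^\tw(\mfg_N)\cong U(\mfg_N[x]^\om)\otimes\C[\zeta_0,\zeta_1,\ldots]$. The image of $\bar{\ms t}_r$ is $\tfrac12\theta\zeta_r$ plus elements of $U(\mfg_N[x]^\om)$, and the $\mr{Y}$-factor maps isomorphically by Step 2 of the proof of Theorem \ref{T:iso}. A triangular change of variables then shows that the graded map is an isomorphism. This simultaneously gives $X^\tw(\mfg_N)\cong\mc{Y}^\imath(\mfg_N)\otimes\C[\ms t_0,\ms t_1,\ldots]$.

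The main obstacle is step 2 above: choosing the normalisation of $\ms t(u)$ so that it is genuinely central in $\mr{X}(\mfg_N)$ using only the listed relations, and in particular so that its even and odd coefficients are both free. Only the symmetry relations tie $d_1(u)$ to $d_1(\ka-u)$, so one must verify that \eqref{db1} with $i=1$ makes all coefficients of $d_1$ commute with the $b_i$ modulo the $h$'s. The natural first attempt is $\ms t(u)=d_1(u)$ itself, checking centrality against $b_1$ via \eqref{db1}, \eqref{db2} and \eqref{hb}.
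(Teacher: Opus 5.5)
The homomorphism $\mr X(\mfg_N)\to X^\tw(\mfg_N)$ and its surjectivity are handled as in the paper. The injectivity argument, however, fails at your step 2, and the problem is not one of normalisation. Neither $d_1(u)$ nor any rescaling of it by a scalar series is central, already in $X^\tw(\mfg_N)$. By \eqref{db1} with $i=1$ it conjugates $b_1(v)$ nontrivially. Moreover, by the proof of Lemma \ref{L:gen}\,(2), the symbol of $d_{1,2r+1}$ is $F^{(\om,2r+1)}_{11}+\tfrac12\theta\zeta_{2r+1}$, whose first summand is not central in $U(\mfg_N[x]^\om)$; a central element must have a central symbol.

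The central elements are the even coefficients of $\ms c(u)=p(u)\,d_1(u)\,d_1^{-1}(\ka-u)$ and the odd coefficients of $\ms z(u)$, cf.\ \eqref{c(u)} and \eqref{z(u)}. Since $\ms c(u)\,\ms c(\ka-u)=1$ and $\ms z(-u)=\ms z(u)$, neither series alone has free coefficients, so you must take $\ms t_{2r}\mapsto\ms c_{2r}$ and $\ms t_{2r+1}\mapsto\ms z_{2r+1}$. With this choice your step 3 no longer follows from ``each $d_j$ is $d_1$ times $h$'s''. You also need the triangularity \eqref{cz-triang}, showing that every coefficient of $d_1(u)$ is a polynomial in the $\ms c_{2r}$, the $\ms z_{2r+1}$ and the coefficients of the $h_i(u)$.

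Even with the correct series, your surjection $\mr Y(\mfg_N)\ot\C[\ms t_0,\ms t_1,\ldots]\to\mr X(\mfg_N)$ is a homomorphism only if $\wh{\ms c}_{2r}$ and $\wh{\ms z}_{2r+1}$ are central in $\mr X(\mfg_N)$. That would have to be derived from the listed relations alone; you do not supply it, and the paper never proves it.

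The paper instead avoids centrality in $\mr X(\mfg_N)$ altogether:
\begin{enumerate}
\item It uses the exchange relations \eqref{db0}--\eqref{db3}, together with \eqref{hb} and \eqref{d-recur} for $j=n+1$, to move every $\wh d_{j,r}$ to the right in an arbitrary monomial. This yields the spanning set \eqref{X-span} with no centrality needed.
\item It then proves linear independence of the images in $X^\tw(\mfg_N)$. There $\ms c(u)$ and $\ms z(u)$ are known to be central, so $\Omega_N$ in \eqref{Om} is a genuine homomorphism, and its injectivity follows from the graded comparison you describe.
\end{enumerate}

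Once you argue via spanning sets, your instinct to use $d_1(u)$ can in fact be rescued. Monomials in the $\wh d_{1,r}$ placed on the right also give a spanning set. Their images are independent by PBW, because the symbols of the $d_{1,r}$ together with a basis of $\mfg_N[x]^\om$ form a basis of $\mfg_N[x]^\om\op\bigoplus_r\C\zeta_r$, up to a scalar shift in degree zero. This proves that your $\Phi^X_N$ is injective, but it does not give the algebra decomposition $X^\tw(\mfg_N)\cong\mc Y^\imath(\mfg_N)\ot\C[\ms t_0,\ms t_1,\ldots]$ that you state.
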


\begin{proof}
To distinguish the two algebras, we decorate the generators and series of $\mr{X}(\mfg_N)$ with a hat: $\wh{b}_{i,r}$, $\wh{d}_{j,r}$, $\wh{b}_i(u)$, $\wh{d}_j(u)$, and we write $\wh{h}_i(u)$ for the series defined by (\ref{Dr1}--\ref{Dr2}) with $\wh{d}_j(u)$ in place of ${d}_j(u)$.

We first verify that the assignment
\equ{
\Theta_N : \mr{X}(\mfg_N) \to X^\tw(\mfg_N), \qq \wh{b}_{i,r} \mapsto b_{i,r} , \qu \wh{d}_{j,r} \mapsto d_{j,r} ,
\label{Theta}
}
where $b_{i,r}$ and $d_{j,r}$ denote the coefficients of the Drinfeld series (\ref{Dr1}--\ref{Dr2}) and of the diagonal Gaussian series of $X^\tw(\mfg_N)$, respectively, defines a homomorphism of algebras, that is, that all of the defining relations of $\mr{X}(\mfg_N)$ hold in $X^\tw(\mfg_N)$. Relation \eqref{dd-sym1} is \eqref{AI:dd=dd}. Relations \eqref{dd-sym2} and, when $\mfg_N = \mfsp_{2n}$, \eqref{dd-sym3} are equivalent, by the commutativity of the series $d_j(u)$, to the evenness relation $h_n(u) = h_n(-u)$ of Lemma \ref{L:heven}, while \eqref{dd-sym3} for $\mfg_N=\mfo_{2n}$ is the relation $\ms{z}(u)=\ms{z}(-u)$ rewritten by means of Proposition \ref{P:z(u)}. The commutativity relations \eqref{db0} are \eqref{AI:dd} together with Lemma \ref{L:BCD-triv}, \eqref{dd1}; the remaining cases follow from \eqref{z(u)}. Substituting $(u,v)\mapsto \big(u+\tfrac{\ka-i+1}{2}, v+\tfrac{\ka-i}{2}\big)$ in \eqref{AI:df1} and $(u,v) \mapsto \big(u+\tfrac{\ka-i-1}{2}, v+\tfrac{\ka-i}{2}\big)$ in \eqref{AI:df2}, and recalling \eqref{Dr1}, yields \eqref{db1} and \eqref{db2} for $1\le i <n$. In the same way, \eqref{db1} with $i=n$ for $\mfg_N = \mfo_{2n+1}$ is \eqref{hf1} with $(u,v)\mapsto(u+\tfrac14, v-\tfrac14)$, by \eqref{Dr2}, and \eqref{db3} is \eqref{hf3} with $(u,v)\mapsto(u+1,v)$ when $\mfg_N = \mfsp_{2n}$, and \eqref{hf4} with $(u,v)\mapsto(u+\tfrac12,v)$ when $\mfg_N=\mfo_{2n}$; the normalisation of $b_n(u)$ in \eqref{Dr2} plays no role here since both sides of these relations are linear in $b_n$. Finally, relations \eqref{hb}--\eqref{xij} for the series (\ref{Dr1}--\ref{Dr2}) were verified in the proof of Theorem \ref{T:iso}; note that all of the relations invoked there were established in the algebra $X^\tw(\mfg_N)$. Hence $\Theta_N$ is a homomorphism of algebras, and it is surjective by Lemma \ref{L:gen}\,(2). It~remains to show that $\Theta_N$ is injective. 
This is achieved in three steps: Step~1 shows that the series $\wh h_i(u)$ are even, so that the coefficient extractions of Lemma \ref{L:LWZ} become available in $\mr{X}(\mfg_N)$; Step~2 constructs a homomorphism $\wt{\Psi}_N \colon \mc{Y}^{\imath}(\mfg_N) \to \mr{X}(\mfg_N)$ and uses it to exhibit a spanning set of $\mr{X}(\mfg_N)$; Step~3 assembles from these a homomorphism $\Omega_N$ on $\mc{Y}^{\imath}(\mfg_N) \ot \C[\ms{t}_0, \ms{t}_1, \ldots]$ and proves that it is injective by comparing the associated graded algebras, which yields the injectivity of $\Theta_N$.

\smallskip

{\it \noindent Step 1.} Since the coefficients of the series $\wh{d}_j(u)$ pairwise commute, relation \eqref{dd-sym1} is equivalent to $\wh{h}_i(u) = \wh{h}_i(-u)$ for $1\le i<n$, cf.~the proof of Lemma \ref{L:heven}, and likewise \eqref{dd-sym2} (resp.~\eqref{dd-sym3} for $\mfg_N = \mfsp_{2n}$) is equivalent to $\wh{h}_n(u) = \wh{h}_n(-u)$. For $\mfg_N = \mfo_{2n}$, relation \eqref{dd-sym1} with $i = n-1$ reads $\wh{d}_{n-1}(u)\,\wh{d}_n(-u) = \wh{d}_n(u)\,\wh{d}_{n-1}(-u)$, since $\ka=n-1$, so that $\wh{d}_{n-1}(-u)\,\wh{d}^{\,-1}_{n-1}(u) = \wh{d}_{n}(-u)\,\wh{d}^{\,-1}_{n}(u)$, while relation \eqref{dd-sym3} gives $\wh{d}_{n+1}(-u)\,\wh{d}^{\,-1}_{n+1}(u) = \wh{d}_{n}(-u)\,\wh{d}^{\,-1}_{n}(u)$; combining the two yields $\wh{h}_n(u) = \wh{h}_n(-u)$. Consequently each $\wh h_i(u)$ is an even series of the form $1 + \sum_{r\ge0} \wh{h}_{i,2r+1} u^{-2r-2}$; here the leading coefficient equals $1$ because $g_{nn}\, g_{n+1,n+1} = -1$ when $\mfg_N = \mfsp_{2n}$, and $g_{jj}=1$ otherwise.

\smallskip

{\it \noindent Step 2.} Relations \eqref{hb}--\eqref{xij} hold in $\mr{X}(\mfg_N)$ and the series $\wh{h}_i(u)$ are even, so the coefficient extractions of Lemma \ref{L:LWZ} remain valid in $\mr{X}(\mfg_N)$: all of the defining relations (\ref{modes-hh}--\ref{modes-S2}) of $\mc{Y}^{\imath}(\mfg_N)$ hold for the elements $\wh{b}_{i,r}$ and $\wh{h}_{i,2r+1}$. As in Step 1 of the proof of Theorem \ref{T:iso}, the assignment
\equ{
\wt{\Psi}_N : \mc{Y}^{\imath}(\mfg_N) \to \mr{X}(\mfg_N) , \qq
b^{\imath}_{i,r} \mapsto \wh{b}_{i,r} , \qu h^{\imath}_{i,2r+1} \mapsto \wh{h}_{i,2r+1}
\label{psi-ext}
}
extends to a homomorphism of algebras, whose image is the unital subalgebra of $\mr{X}(\mfg_N)$ generated by all the elements $\wh{b}_{i,r}$ and $\wh{h}_{i,2r+1}$.

Define the series
\equ{
\wh{\ms{c}}(u) := p(u)\,\wh{d}_1(u)\,\wh{d}^{\,-1}_1(\ka-u) , \qq
\wh{\ms{z}}(u) := \begin{cases}
\wh{d}_{n+1}(-u)\, \wh{d}_{n+1}(u) & \text{for } \mfg_N = \mfo_{2n+1}, \\
\theta\, \wh{d}_{n}(-u)\, \wh{d}_{n+1}(u) & \text{for } \mfg_N = \mfsp_{2n},\, \mfo_{2n} ,
\end{cases}
\label{cz-hat}
}
cf.~\eqref{c(u)} and Proposition \ref{P:z(u)}, so that $\Theta_N(\wh{\ms c}(u)) = \ms{c}(u)$ and $\Theta_N(\wh{\ms z}(u)) = \ms{z}(u)$. A direct computation gives $p(u)\,p(\ka-u) = 1$, whence $\wh{\ms c}(u)\,\wh{\ms c}(\ka-u) = 1$, and $\wh{\ms z}(-u) = \wh{\ms z}(u)$ by \eqref{dd-sym3} (for $\mfg_N = \mfo_{2n+1}$ this holds trivially). Comparing coefficients in these two identities, the elements $\wh{\ms c}_r$ with $r$ odd (resp.~$\wh{\ms z}_r$ with $r$ even) are polynomials in the elements $\wh{\ms c}_s$ with $s<r$ even (resp.~$\wh{\ms z}_s$ with $s<r$ odd).

We claim that the coefficients of all the series $\wh{d}_j(u)$ are polynomials in the coefficients of the series $\wh h_i(u)$, $\wh{\ms c}(u)$ and $\wh{\ms z}(u)$. Indeed, by (\ref{Dr1}--\ref{Dr2}),
\equ{
\wh{d}_{i+1}(u) = \wh{d}_i(u)\, \wh{h}_i\big(u - \tfrac{\ka-i}{2}\big) \;\text{ for } 1\le i<n, \qq
\wh{d}_{n+1}(u) = \begin{cases}
\tfrac{u-\frac14}{u}\; \wh{d}_n(u)\, \wh{h}_n(u+\tfrac14) & \text{for } \mfo_{2n+1} , \\
-\,\wh{d}_n(u)\, \wh{h}_n(u) & \text{for } \mfsp_{2n} , \\
\wh{d}_{n-1}(u)\, \wh{h}_n(u) & \text{for } \mfo_{2n} ,
\end{cases}
\label{d-recur}
}
so that all the $\wh{d}_j(u)$ are expressed through $\wh{d}_1(u)$ and the series $\wh{h}_i(u)$. Substituting these expressions into \eqref{cz-hat} and comparing the coefficients at $u^{-r-1}$, we find, working in the commutative subalgebra generated by the elements $\wh d_{j,r}$,
\equ{
\wh{\ms{c}}_r = \big(1 + (-1)^r\big)\, \wh{d}_{1,r} + \Pi_r , \qq
\wh{\ms{z}}_r = \pm \big(1 - (-1)^{r}\big)\, \wh{d}_{1,r} + \Pi'_r ,
\label{cz-triang}
}
where $\Pi_r$ and $\Pi'_r$ are polynomials in the elements $\wh{d}_{1,s}$ with $s < r$ and the coefficients of the series $\wh{h}_i(u)$ (and scalars). By induction on $r$, every $\wh{d}_{1,r}$, and hence every $\wh{d}_{j,r}$, is a polynomial in the coefficients of $\wh{\ms c}(u)$, $\wh{\ms z}(u)$ and the $\wh{h}_i(u)$, proving the claim.

Next, observe that for every pair $(j,i)$ the product $\wh{d}_j(x)\, \wh{b}_i(v)$ can be rewritten in the form
\equ{
\wh{d}_j(x)\, \wh{b}_i(v) = \Big( {\textstyle\sum_k}\; g_k(x,v)\, \wh{b}_i\big(\si_k(x,v)\big) \Big)\, \wh{d}_j(x)
\label{db-exch}
}
with scalar rational coefficients $g_k$ and affine substitutions $\si_k$: for $j\le n$ this is one of the relations \eqref{db0}--\eqref{db3} (in type $\mfo_{2n}$ the pair $(j,i)=(n-1,n)$ is covered by \eqref{db3} with $j=n-1$), and for $j = n+1$ it follows by combining \eqref{d-recur} with the same relations and with \eqref{hb}, which rewrites $\wh{h}_j(x)\,\wh{b}_i(v)$ in the analogous form. Expanding these identities, any product $\wh{d}_{j,r}\, \wh{b}_{i,s}$ equals a finite linear combination of products $\wh{b}_{i,s'}\, \wh{d}_{j,r'}$ and $\wh{b}_{i,s'}\, \wh{h}_{j,r''}\,\wh{d}_{j',r'}$ (finiteness holds because the rational coefficients, expanded in the prescribed domain, have total degree bounded from above). Given an arbitrary monomial in the generators of $\mr X(\mfg_N)$, we may therefore move all the elements $\wh{d}_{j,r}$ to the right, then express the resulting right factors as polynomials in the coefficients of the $\wh{h}_i(u)$, $\wh{\ms c}(u)$, $\wh{\ms z}(u)$ by the claim above, and finally absorb the $\wh{h}$-coefficients into the left factor. Recalling the description of the image of $\wt\Psi_N$, we conclude that
\equ{
\mr{X}(\mfg_N) = \mathrm{span}_\C \big\{\, \wt{\Psi}_N(y)\; m \;:\; y \in \mc{Y}^{\imath}(\mfg_N) ,\; m \text{ a monomial in the } \wh{\ms{c}}_{2r},\, \wh{\ms z}_{2r+1} \big\} .
\label{X-span}
}

\smallskip

{\it \noindent Step 3.} Let $\mc{P} := \C[\ms{t}_0, \ms{t}_1, \ms{t}_2, \ldots]$ be the algebra of polynomials in variables $\ms{t}_r$. Since all of the relations invoked in the proof of Theorem \ref{T:iso} were established in $X^\tw(\mfg_N)$, the same argument yields a homomorphism $\wt{\Xi}_N : \mc{Y}^{\imath}(\mfg_N) \to X^{\tw}(\mfg_N)$, $b^{\imath}_{i,r} \mapsto b_{i,r}$, $h^{\imath}_{i,2r+1} \mapsto h_{i,2r+1}$, defined as in \eqref{Xi-hom} with the Drinfeld series of $X^{\tw}(\mfg_N)$ in place of those of $SY^{\tw}(\mfg_N)$, and satisfying the analogue of \eqref{Xi-symb}. The coefficients of the series $\ms{c}(u)$ and $\ms{z}(u)$ are central in $X^\tw(\mfg_N)$, hence the assignment
\equ{
\Omega_N : \mc{Y}^{\imath}(\mfg_N) \ot \mc{P} \to X^{\tw}(\mfg_N) , \qq
y \ot 1 \mapsto \wt{\Xi}_N(y) , \qu 1 \ot \ms{t}_{2r} \mapsto \ms{c}_{2r} , \qu 1 \ot \ms{t}_{2r+1} \mapsto \ms{z}_{2r+1}
\label{Om}
}
defines a homomorphism of algebras, and \eqref{X-span} together with $\Theta_N \circ \wt\Psi_N = \wt\Xi_N$ gives
\equ{
\Theta_N\big( \wt{\Psi}_N(y)\, m(\wh{\ms{c}}_{2r},  \wh{\ms z}_{2r+1}) \big) = \Omega_N\big( y \ot m(\ms{t}_{2r}, \ms{t}_{2r+1}) \big) .
\label{Th-Om}
}
We claim that $\Omega_N$ is injective. Assign $\deg \ms{t}_r = r$ and equip $\mc{Y}^{\imath}(\mfg_N) \ot \mc{P}$ with the tensor product filtration, so that $\gr \big( \mc{Y}^{\imath}(\mfg_N) \ot \mc{P} \big) \cong U(\mfg^{\imath}) \ot \mc{P}$. The homomorphism $\Omega_N$ is filtered: for the first tensor factor this is the analogue of \eqref{Xi-symb}, and $\deg \ms{c}_r = \deg \ms{z}_r = r$ since $\deg d_{j,r} = r$. We compute the corresponding symbols in $\gr X^\tw(\mfg_N) \cong U(\mfg_N[x]^{\om}) \ot \C[\zeta_0,\zeta_1,\ldots]$, see \eqref{X-graded-iso}. The symbols of the elements $\wt\Xi_N(b^{\imath}_{i,r})$ and $h_{i,2r+1}$ are given by the right-hand sides of (\ref{SY-graded-map-1}--\ref{SY-graded-map-2}), with no $\zeta$-contributions: the off-diagonal entries $g_{i+1,i}$, $g_{n+1,n}$, $g_{n+1,n-1}$ of $G$ vanish, and in the ratios defining the series $h_i(u)$ the $\zeta$-terms cancel. Furthermore, comparing the coefficients at $u^{-2r-1}$ in \eqref{c(u)} and at $u^{-2r-2}$ in \eqref{z(u)}, taken with $i=n+1$ for $\mfg_N = \mfo_{2n+1}$ and with $i=n$ otherwise, we find
\[
\ms{c}_{2r} \in 2\,d_{1,2r} + \mr{F}_{2r-1} + \C , \qq
\ms{z}_{2r+1} \in \begin{cases}
2\, d_{n+1,2r+1} + \mr{F}_{2r} & \text{for } \mfg_N = \mfo_{2n+1} , \\[0.2em]
\theta \big( g_{nn}\, d_{n+1,2r+1} + g_{n+1,n+1}\, d_{n,2r+1} \big) + \mr{F}_{2r} & \text{otherwise} ,
\end{cases}
\]
where $\mr{F}_k$ now denotes the $k$-th filtered component of $X^{\tw}(\mfg_N)$. Substituting the symbols of the elements $d_{1,2r}$, $d_{n,2r+1}$ and $d_{n+1,2r+1}$ computed in the proof of Lemma \ref{L:gen}\,(2) yields
\equ{
\bar{\ms{c}}_{2r} \mapsto \theta\, \zeta_{2r} + 2\ga^{(2r)}_{11} , \qq
\bar{\ms{z}}_{2r+1} \mapsto \pm \zeta_{2r+1} ,
\label{cz-symb}
}
the $F$-parts in the latter cancelling pairwise for $\mfg_N = \mfsp_{2n}, \mfo_{2n}$ and vanishing identically for $\mfg_N = \mfo_{2n+1}$, in which case $F^{(\om,2r+1)}_{n+1,n+1} = 0$. Hence $\gr \Omega_N$ is induced by the homomorphism of graded Lie algebras
$\mfg^{\imath} \op \bigoplus_{r\ge0} \C\, \ms{t}_r \to \mfg_N[x]^{\om} \op \bigoplus_{r\ge0} \C\, \zeta_r$
that restricts to $\tau$ on $\mfg^{\imath}$ and maps $\ms{t}_r \mapsto \pm\zeta_r$, up to adding scalars when $r=0$. Since $\tau$ is an isomorphism, as shown in the proof of Theorem \ref{T:iso}, $\gr\Omega_N$ is an isomorphism. Both filtrations being exhaustive and bounded from below, $\Omega_N$ is injective (and surjective).

Now let $x \in \Ker \Theta_N$. By \eqref{X-span} we can write $x = \sum_{\al,\bet} \la_{\al\bet}\, \wt{\Psi}_N(y_\al)\, m_\bet$, where $\{y_\al\}$ is a linear basis of $\mc{Y}^{\imath}(\mfg_N)$, the $m_\bet$ are pairwise distinct monomials in the elements $\wh{\ms{c}}_{2r}$, $\wh{\ms{z}}_{2r+1}$, and $\la_{\al\bet} \in \C$. Applying $\Theta_N$ and using \eqref{Th-Om} we find $\Omega_N\big( \sum_{\al,\bet} \la_{\al\bet}\; y_\al \ot m_\bet(\ms{t}) \big) = 0$. The elements $y_\al \ot m_\bet(\ms{t})$ are linearly independent in $\mc{Y}^{\imath}(\mfg_N) \ot \mc{P}$ and $\Omega_N$ is injective, so all $\la_{\al\bet} = 0$ and $x = 0$. Therefore $\Theta_N$ is an isomorphism.
\end{proof}

\begin{remark} \label{R:Omega}
The proof shows that $\Omega_N$ is an isomorphism, $X^{\tw}(\mfg_N) \cong \mc{Y}^{\imath}(\mfg_N) \ot \C[\ms{t}_0, \ms{t}_1, \ldots]$, refining the decomposition \eqref{X=SYcz}.
\end{remark}

The isomorphisms of Corollaries \ref{C:LWZ} and \ref{C:iso-ext} transport the Poincar\'e--Birkhoff--Witt theorem for $\mc{Y}^{\imath}(\mfg_N)$ to the twisted Yangians in the $R$-matrix presentation, giving linear bases in the Drinfeld generators. For each positive root $\al$ of $\mfg_N$ fix, following \cite[\S4.2]{LWZ25b}, a real root vector $b^{\imath}_{\al,r}\in\mc{Y}^{\imath}(\mfg_N)$ $(r\ge0)$, obtained as an iterated commutator of the generators $b^{\imath}_{i,s}$; for a simple root $\al=\al_i$ one has $b^{\imath}_{\al_i,r}=b^{\imath}_{i,r}$. Together with the imaginary root vectors $h^{\imath}_{i,2r+1}$ these furnish the Poincar\'e--Birkhoff--Witt basis of $\mc{Y}^{\imath}(\mfg_N)$ of \cite[Thm.~4.12]{LWZ25b}, the rescaling \eqref{LWZ-scaling} being invertible over $\C$.

\begin{crl} \label{C:PBW}
Let $b_{\al,r}$ and $h_{i,2r+1}$ denote the images in $SY^{\tw}(\mfg_N)$ of the root vectors $b^{\imath}_{\al,r}$ and $h^{\imath}_{i,2r+1}$ under the isomorphism $\Xi_N$ of Corollary \ref{C:LWZ}; for a simple root these are the Drinfeld generators \textup{(\ref{Dr1}--\ref{b-h-series})}, and for a general positive root $b_{\al,r}$ is the corresponding iterated commutator of the $b_{i,s}$.
\begin{enumerate}
\item Given any total ordering, the ordered monomials in the elements $b_{\al,r}$ and $h_{i,2r+1}$, where $\al$ ranges over the positive roots of $\mfg_N$, $1\le i\le n$ and $r\ge0$, form a linear basis of $SY^{\tw}(\mfg_N)$.
\item Writing $b_{\al,r}$ and $h_{i,2r+1}$ now for the corresponding elements of $X^{\tw}(\mfg_N)$, obtained through the homomorphism $\wt\Xi_N$ of the proof of Corollary \ref{C:iso-ext}, the ordered monomials in the elements $b_{\al,r}$, $h_{i,2r+1}$ and the central coefficients $\ms{c}_{2r}$, $\ms{z}_{2r+1}$ $(r\ge0)$ form a linear basis of $X^{\tw}(\mfg_N)$.
\end{enumerate}
\end{crl}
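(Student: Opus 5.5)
Both parts are transported from the PBW theorem for $\mc{Y}^{\imath}(\mfg_N)$, \cite[Thm.~4.12]{LWZ25b}, along the isomorphisms already established.

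For part (1), the input is that ordered monomials in $b^{\imath}_{\al,r}$ and $h^{\imath}_{i,2r+1}$ form a basis of $\mc{Y}^{\imath}(\mfg_N)$ for any total ordering. In \cite{LWZ25b} this is stated in its own normalisation. Under the rescaling \eqref{LWZ-scaling}, each root vector, being an iterated commutator, only gets multiplied by a nonzero scalar. So the monomials in our normalisation differ from theirs by nonzero scalars, and the basis property survives. By Corollary \ref{C:LWZ}, $\Xi_N$ is an algebra isomorphism. It sends ordered monomials to ordered monomials in $b_{\al,r}=\Xi_N(b^{\imath}_{\al,r})$ and $h_{i,2r+1}$, and for simple roots these are the Drinfeld generators by \eqref{Xi-symb}. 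An isomorphism carries bases to bases, which proves (1).

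If one prefers to avoid the precise form of the root vectors in \cite{LWZ25b}, there is an alternative. By the proof of Theorem \ref{T:iso}, $\gr\Xi_N=U(\tau)$ with $\tau$ an isomorphism of graded Lie algebras. Under $\tau$ the symbols $\bar b_{\al,r}$ and $\bar h_{i,2r+1}$ go to a homogeneous basis of $\mfg_N[x]^{\om}$; in each degree they match, up to scalars, the spanning elements \eqref{F-basis}, by the bracket relations in Lemma \ref{L:gen}. The PBW theorem for $U(\mfg_N[x]^{\om})$ combined with a standard filtered lifting argument then gives (1) directly.

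For part (2), use Remark \ref{R:Omega}: the map $\Omega_N\colon\mc{Y}^{\imath}(\mfg_N)\ot\C[\ms{t}_0,\ms{t}_1,\ldots]\to X^{\tw}(\mfg_N)$ is an isomorphism. A basis of the source is given by products $y\ot m$, where $y$ runs over the ordered PBW monomials of part (1) and $m$ runs over the monomials in the commuting variables $\ms{t}_r$. Their images are $\wt\Xi_N(y)\,m(\ms{c}_{2r},\ms{z}_{2r+1})$.

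The only point that needs care is that the statement allows \emph{any} total ordering, with the central elements interleaved arbitrarily. Since $\ms{c}_{2r}$ and $\ms{z}_{2r+1}$ are central, reordering a monomial so that they sit anywhere does not change the element. So the set of ordered monomials for an arbitrary ordering coincides with the basis just described, which gives (2).

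The expected obstacle is confirming that the root vectors of \cite[\S4.2]{LWZ25b} really give a PBW basis for every total ordering and after rescaling. If that is not literally stated there, I will fall back on the associated-graded argument above, whose only inputs are the isomorphism $\tau$ and the PBW theorem for $U(\mfg_N[x]^{\om})$.
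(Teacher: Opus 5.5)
Your argument is correct and is essentially the paper's proof: part (1) transports the PBW theorem \cite[Thm.~4.12]{LWZ25b} along the isomorphism $\Xi_N$ of Corollary~\ref{C:LWZ}. Part (2) combines that basis with the monomial basis of $\C[\ms{t}_0,\ms{t}_1,\ldots]$ through the isomorphism $\Omega_N$ of Remark~\ref{R:Omega}, and your centrality remark correctly handles arbitrary interleavings of $\ms{c}_{2r}$ and $\ms{z}_{2r+1}$.

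The one case the paper makes explicit and you omit is $\mfg_N=\mfo_4$, which is not simple and so is not covered by \cite{LWZ25b}. There the basis is obtained componentwise from the two commuting rank-one factors of $\mc{Y}^{\imath}(\mfo_4)$, as in Step~1 of the proof of Theorem~\ref{T:iso}.
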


\begin{proof}
By \cite[Thm.~4.12]{LWZ25b} the ordered monomials in the $b^{\imath}_{\al,r}$ and $h^{\imath}_{i,2r+1}$ form a basis of $\mc{Y}^{\imath}(\mfg_N)$ (componentwise for $\mfg_N = \mfo_4$, cf.~Step~1 of the proof of Theorem \ref{T:iso}). Statement (1) follows by applying the algebra isomorphism $\Xi_N \colon \mc{Y}^{\imath}(\mfg_N) \to SY^{\tw}(\mfg_N)$ of Corollary \ref{C:LWZ}. For (2), the isomorphism $\Omega_N \colon \mc{Y}^{\imath}(\mfg_N) \ot \C[\ms{t}_0, \ms{t}_1, \ldots] \to X^{\tw}(\mfg_N)$ of Remark \ref{R:Omega} restricts to $\wt\Xi_N$ on the first tensor factor and sends $\ms{t}_{2r} \mapsto \ms{c}_{2r}$, $\ms{t}_{2r+1} \mapsto \ms{z}_{2r+1}$; combining the basis of $\mc{Y}^{\imath}(\mfg_N)$ furnished by (1) with the monomial basis of the polynomial ring yields the stated basis. The elements $\ms{c}_{2r}$ and $\ms{z}_{2r+1}$ are algebraically independent and generate the centre of $X^{\tw}(\mfg_N)$, cf.~\eqref{X=SYcz}.
\end{proof}


\begin{remark}[Coideal structure] \label{R:coideal}
By the embedding \eqref{S->TGT}, the special twisted Yangian is a subalgebra of the Yangian $Y(\mfg_N)$, and hence of the extended Yangian $X(\mfg_N)$, of \cite{AMR06} --- a Hopf algebra with $\Delta\big(T(u)\big) = T_{[1]}(u)\,T_{[2]}(u)$ in the auxiliary space, entrywise $\Delta(t_{ij}(u)) = \sum_{a} t_{ia}(u) \ot t_{aj}(u)$; in the fundamental representation of Appendix \ref{app:Rep} the embedding specialises to $S(u) \mapsto F(u-\tfrac\ka2)\,G(u)\,F^t(\tfrac\ka2-u)$ of Lemma \ref{L:rep}. Since transposition acts on the auxiliary space alone and reverses the order of the two factors, the coproduct restricts to the sandwich
\equ{
\Delta\big(S(u)\big) = T(u-\tfrac\ka2)\;S(u)\;T^{t}(\tfrac\ka2-u) \;\in\; X(\mfg_N) \ot SY^{\tw}(\mfg_N) [[u^{-1}]], \label{coid-sandwich}
}
so that $SY^{\tw}(\mfg_N)$ is a left coideal subalgebra of $X(\mfg_N)$ --- the transposed-presentation form of the coaction of \cite{GR16}. Expanding \eqref{coid-sandwich} in the Drinfeld generators (Appendix \ref{app:coideal}) gives, with $X(\mfg_N)_{Q_+}$ the span of the elements of strictly positive root degree ($\deg t_{ij}(u) = \eps_i-\eps_j$, where $\eps_1,\ldots,\eps_N$ are the weights of the vector representation, $\eps_{\bar\imath} = -\eps_i$) and $1\le i<n$,
\ali{
\Delta(b_{i,0}) &\equiv \msf{f}_{i,0} \ot 1 + 1 \ot b_{i,0} \pmod{X(\mfg_N)_{Q_+} \ot SY^{\tw}(\mfg_N)} , \label{coid-low} \\[2pt]
\Delta(h_{i,1}) &= \msf{h}_{i,1} \ot 1 + 1 \ot h_{i,1} + \Omega_i , \qquad \Omega_i \in X(\mfg_N)_{Q_+} \ot SY^{\tw}(\mfg_N) , \label{coid-h1}
}
where $\msf{f}_{i,0} = t^{(1)}_{i+1,i} = F_{i+1,i}$, $\msf{h}_{i,1} \in X(\mfg_N)$, and $\Omega_i$ is given explicitly in \eqref{coid-omega}. Thus $b_{i,0}$ is primitive modulo $X(\mfg_N)_{Q_+}$, so $SY^{\tw}(\mfg_N)$ quantises the left Lie coideal structure of \cite{GR16}, whereas $\Delta(h_{i,1})$ carries the cross-terms $\Omega_i$ --- the coideal counterpart of the non-cocommutativity of the Drinfeld--Cartan generators of $X(\mfg_N)$. As the $2n$ elements $b_{i,0}, h_{i,1}$ $(1\le i\le n)$ generate $SY^{\tw}(\mfg_N)$ --- by Lemma \ref{L:gen} together with the relations $[h_{i,1},b_{i,s}]=2c_{ii}\,b_{i,s+1}$ and the $i=j$ case of \eqref{modes-bb} of Lemma \ref{L:LWZ}, which recover inductively all $b_{i,r}$ and $h_{i,2r+1}$, cf.~\cite[Lem.~2.8]{Lu26a} --- the sandwich \eqref{coid-sandwich} determines every coproduct.

These estimates agree with \cite[Thm.~C]{Lu26a}, where the diagonal one reads $\Delta(h_i(u)) \equiv h_i(u) \ot \xi_i(u)\,\xi_i(-u)$, modulo terms whose second tensor factor has strictly positive root degree, in the right-coideal convention, with $\xi_i$ the Drinfeld--Cartan current of $X(\mfg_N)$ \cite{JLM18}; it holds for every spit and quasi-split twisted Yangian associated with a simple Lie algebra other than type $\mathsf{G}_2$, granting the compatibility of the Drinfeld and $R$-matrix coproducts under the isomorphism of \cite{JLM18}. The sharper form conjectured by Wang and Zhang, see \cite[Conj.~5.2]{Lu26a}, confines the correction to the upper Borel subalgebra and is proved there only for $\mfsl_2$, via the $R$-matrix presentation \cite{LWZ25a}; through \eqref{coid-sandwich} and Corollary \ref{C:LWZ} it reduces to an algebra-level identity in the Gaussian generators for all of types B, C and D, but since it concerns which currents enter the correction, it is invisible to any single representation, and we do not pursue it here. Type $\mathsf{G}_2$, for which no $R$-matrix presentation of the kind underlying \cite{JLM18} is available, lies outside this framework and is likewise excluded in \cite[Thm.~C]{Lu26a}.
\end{remark}

\begin{remark} \label{R:quantum}
The Drinfeld-type relations of Theorem \ref{T:iso} are the rational counterparts of the current relations of the affine $\imath$quantum groups --- the coideal subalgebras of quantum affine algebras attached to quantum symmetric pairs --- whose Drinfeld presentations were obtained by Lu and Wang \cite{LW21} in split ADE type and by Zhang \cite{Zh22} in split BCFG type; the current relations of \cite{LWZ25b} were in fact derived by degenerating the latter. In the quantum setting, however, the two features exploited here are largely absent. First, those presentations were obtained from the Drinfeld--Jimbo presentation through relative braid group symmetries rather than from a reflection equation, and a $q$-analogue of the present Gaussian-decomposition derivation is so far available only in split type AI: for the twisted $q$-Yangian constructed by Molev, Ragoucy and Sorba \cite{MRS03}, the isomorphism with the affine $\imath$quantum group was obtained by Lu \cite{Lu24}; in the untwisted case such an $R$-matrix-to-current isomorphism is known for the quantum affine algebras of types $B$, $C$ and $D$ \cite{JLM20a, JLM20b}, the $q$-analogue of \cite{JLM18}. Second, the current Serre relations of \cite{LW21, Zh22} carry nontrivial lower-order correction terms, and it is natural to ask whether they too admit a closed current form governed by a single building block, in the spirit of \eqref{S1}--\eqref{xij}. A reflection-equation presentation of the affine $\imath$quantum groups of types B, C and D, together with a Gaussian-decomposition derivation of their Drinfeld presentation in closed current form, would supply such a $q$-analogue; the arguments of this paper are designed to transfer to that setting.
\end{remark}


{\noindent \bf Acknowledgements.} This research has been carried out in the framework of the “Universities’ Excellence Initiative” programme by the Ministry of Education, Science and Sports of the Republic of Lithuania under the agreement with the Research Council of Lithuania (project No. S-A-UEI-23-6).


\appendix

\section{Fundamental representation} \label{app:Rep}

The twisted Yangian $Y^\tw(\mfg_N,G)$ defined by relations \eqref{[s,s]} and \eqref{sym} may be viewed as a subalgebra of the extended Yangian $X(\mfg_N)$, see \cite[\S3.2]{GR16}. This allows us to obtain the fundamental representation of $Y^\tw(\mfg_N,G)$ by restricting that of $X(\mfg_N)$. 
Set
\equ{
F(u) := \sum_{1\le i,j\le N} E_{ij} \ot \big(\del_{ij} I + u^{-1}E_{ij} - (u+\ka)^{-1}  \theta_{ij} E_{\bar\jmath\bar\imath} \big)
}
Then the map $S(u) \mapsto F(u-\tfrac\ka2)\,G(u)\,F^t(\tfrac\ka2-u)$ defines the fundamental representation of $Y^\tw(\mfg_N,G)$. It is also a representation of the extended twisted Yangian $X^\tw(\mfg_N,G)$; the central series $\ms{c}(u)$ is represented by the identity matrix. 
Applying Gaussian decomposition \eqref{S=FDE} and \eqref{Dr1}--\eqref{Dr2} for twisted Yangians associated with the split symmetric pairs yields the following statement, which can be used to verify relations presented in this paper numerically.

\begin{lemma} \label{L:rep}
Set $\vka := N-2n$. The mapping 
\ali{
d_i(u) &\mapsto \bigg[\frac{u}{u-\frac14}\bigg]^{\vka} \bigg[ I - \sum_{k=1}^{i-1} \bigg[ \frac{E_{kk}}{(u-\frac{\ka}{2})^2} + \frac{E_{\bar{k}\bar{k}}}{(u+\frac{\ka}{2})^2} -  \frac{\vka\,u\,E_{k\bar k}}{(u-\frac{\ka}{2})^2(u+\frac{\ka}{2})^2} \bigg] \nn\\
& \hspace{2.4cm} - \frac{i\,E_{ii}}{(u-\frac{\ka}{2})(u-\frac{\ka}{2}+i-1)} + \frac{(2\ka-i)\,E_{\bar\imath\bar\imath}}{(u+\frac{\ka}{2})(u-\frac{3\ka}{2}+i-1)} \nn\\ 
& \hspace{2.4cm} + \vka\,\frac{((u-\ka)(u+i-1)+\frac{\ka}{4}(\ka-2)+\frac{i}{2}(i-1))\,E_{i\bar\imath} }{(u-\frac{\ka}{2})(u+\frac{\ka}{2})(u-\frac{\ka}{2}+i-1)(u-\frac{3\ka}{2}+i-1)}\bigg] \hspace{1.75cm} \text{for } 1\le i \le n , \label{rep-di}  
\\[1em]
d_{n+1}(u) &\mapsto \theta\,I - \theta\sum_{k=1}^{n+\vka-1} \bigg[ \frac{E_{kk}}{(u-\frac{\ka}{2})^2} + \frac{E_{\bar{k}\bar{k}}}{(u+\frac{\ka}{2})^2} - \frac{\vka\,u\,E_{k\bar{k}}}{(u-\frac{\ka}{2})^2(u+\frac{\ka}{2})^2} \bigg] \nn\\
& \hspace{1cm} - \frac{1}{(u-\frac{\ka}{2})(u+\frac{\ka}{2})}
\begin{cases}
\dfrac{u^2 + \frac{u}{2} + \frac{\ka^2-1}{4}}{(u-\frac{\ka-1}{2})(u+\frac{\ka+1}{2})}\,E_{n+1,n+1} & \text{for } X^\tw(\mfo_{2n+1}), \\[1em]
\dfrac{(\ka-2)(u-\frac{\ka}{2}-1)-(u+\frac{\ka}{2})}{u-\frac{\ka}{2}}\,E_{nn} \\ \qu - \dfrac{(\ka+2)(u+\frac{\ka}{2}-1)+(u-\frac{\ka}{2})}{u+\frac{\ka}{2}}\,E_{n+1,n+1} \hspace{1cm} & \text{for } X^\tw(\mfsp_{2n}), \\[1em]
(1-\ka)\,E_{nn} + (1+\ka)\,E_{n+1,n+1}  & \text{for } X^\tw(\mfo_{2n}), 
\end{cases}
\label{rep-dn+1}
}
\ali{
h_i(u) &\mapsto I + \frac{(i-1)\,E_{ii} - (i+1)\,E_{i+1,i+1} }{(u-\frac{i}2)(u+\frac{i}2)}  + \frac{(2\ka-i-1)\,E_{\bar\imath-1,\bar\imath-1} - (2\ka-i+1)\,E_{\bar\imath\bar\imath}  }{(u-\ka+\frac{i}2)(u+\ka-\frac{i}2)} \nn\\[0.25em] & \qq - \vka\, \frac{(u^2 - \frac{\ka(i-1)}{2} + \frac{i(i-2)}{4}) E_{i,\bar\imath} - (u^2 - \frac{\ka(i+1)}{2} + \frac{i(i+2)}{4}) E_{i+1,\bar\imath-1}}{(u-\frac{i}{2})(u+\frac{i}{2})(u-\ka+\frac{i}2)(u+\ka-\frac{i}2)} \hspace{1.65cm} \text{for } 1\le i < n, \label{rep-hi}  \\[1em]
h_n(u) &\mapsto I + \begin{cases}
\dfrac{(n-1)\,E_{nn}}{(u-\frac{n}{2})(u+\frac{n}{2})} - \dfrac{n\,E_{n+2,n+2}}{(u+\frac{n-1}{2})(u-\frac{n-1}{2})} \\[0.8em] \qu - \dfrac{(u^2 + \frac{n^2-n-1}{4})\,E_{n+1,n+1} + (u^2 - \frac{n^2-n+1}{4})\,E_{n,n+2} }{(u-\frac{n}{2})(u+\frac{n}{2})(u+\frac{n-1}{2})(u-\frac{n-1}{2})}  
& \text{for } X^\tw(\mfo_{2n+1}), \\[1em]
\dfrac{2\,(\ka-2)\,E_{nn} - 2\,(\ka+2)\,E_{n+1,n+1} }{(u-\frac{\ka}2)(u+\frac{\ka}2)}  & \text{for } X^\tw(\mfsp_{2n}), \\[1em]
\dfrac{(\ka-1)\,(E_{n-1,n-1}+E_{nn}) - (\ka+1)\,(E_{n+1,n+1}+E_{n+2,n+2}) }{(u-\frac{\ka}2)(u+\frac{\ka}2)} \hspace{1.1cm} & \text{for } X^\tw(\mfo_{2n}), 
\end{cases}
\label{rep-hn}
\\[1em]
b_i(u) &\mapsto \frac{E_{i+1,i}}{u+\frac{i}{2}} - \frac{E_{i,i+1}}{u-\frac{i}{2}} + \frac{E_{\bar\imath -1,\bar\imath}}{u-\ka+\frac{i}{2}} - \frac{E_{\bar\imath,\bar\imath-1}}{u+\ka-\frac{i}{2}} \nn\\[0.25em] & \qq + \frac{\vka}{2} \bigg[ \frac{E_{i,\bar\imath-1}}{(u-\frac{i}2)(u+\ka-\frac{i}2)} + \frac{E_{i+1,\bar\imath}}{(u+\frac{i}2)(u-\ka+\frac{i}2)} \bigg] \hspace{3.9cm}  \text{for } 1\le i < n,
\label{rep-bi} 
\\[1em]
b_n(u) &\mapsto \begin{cases}
\dfrac{E_{n+1,n}}{u+\frac{n}{2}} - \dfrac{(u+\frac{n-2}{2}) E_{n,n+1}}{(u-\frac{n}{2})(u+\frac{n-1}{2})} + \dfrac{(u+\frac{n+1}{2}) E_{n+1,n+2}}{(u+\frac{n}{2})(u-\frac{n-1}{2})} - \dfrac{E_{n+2,n+1}}{u+\frac{n-1}{2}} \hspace{1.02cm} & \text{for } X^\tw(\mfo_{2n+1}),
\\[1em]
-\sqrt{-2} \,\bigg[ \dfrac{E_{n+1,n}}{u+\frac{\ka}{2}} + \dfrac{E_{n,n+1}}{u-\frac{\ka}{2}} \bigg] & \text{for } X^\tw(\mfsp_{2n}) ,
\\[1em]
\dfrac{E_{n+1,n-1}-E_{n+2,n}}{u+\frac{\ka}{2}} - \dfrac{E_{n-1,n+1}-E_{n,n+2}}{u-\frac{\ka}{2}}  & \text{for } X^\tw(\mfo_{2n}),
\end{cases}
\label{rep-bn}
}
defines the fundamental representation of $X^\tw(\mfg_N)$.
\end{lemma}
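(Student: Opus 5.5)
The plan is to compute the image of $S(u)$ explicitly and then read off the Gaussian factors. The fundamental representation of $X(\mfg_N)$ sends $T(u)\mapsto F(u)$. Under the embedding \eqref{S->TGT}, $S(u)$ is therefore represented by the $N\times N$ matrix $\mathcal{S}(u):=F(u-\tfrac\ka2)\,G(u)\,F^t(\tfrac\ka2-u)$, whose entries $\mathcal{S}_{ij}(u)$ lie in $\End(\C^N)(u)$. Here $G(u)=G$ for CI and DI, and $G(u)$ is the rational matrix for BI. This is indeed a representation of $X^\tw(\mfg_N)$, because the sandwich satisfies \eqref{RE}: this follows from the RTT relation, from \eqref{R=JRtJ}, and from the twisted reflection equation for $G(u)$.

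First I write $\mathcal{S}_{ij}(u)$ explicitly. Using $F(u)=\sum E_{ij}\ot(\del_{ij}+u^{-1}E_{ij}-(u+\ka)^{-1}\theta_{ij}E_{\bar\jmath\bar\imath})$, the product of three matrices with entries in $\End(\C^N)$ is at most quadratic in matrix units. It simplifies because $G$ is diagonal, $g_{aa}=\theta_{\bar a}$, with the rational correction supported on $E_{k\bar k}$ in type BI. This is the source of the $\vka=N-2n$ terms.

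Second, I apply the quasideterminant formulas for $d_i$, $e_{ij}$, $f_{ji}$. Computing these directly would be costly, so I would use the recursive structure instead. Since $d_1(u)=\mathcal{S}_{11}(u)$ is invertible in $\End(\C^N)(u)$, Proposition \ref{P:emb1} and Lemma \ref{L:SS(u)} give $d_{i}(u)=\si$-type reductions. Concretely, $d_{i+1}=\mathcal S_{i+1,i+1}-\mathcal S_{i+1,\le i}(\mathcal S_{\le i,\le i})^{-1}\mathcal S_{\le i,i+1}$.

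I would exploit the fact that the representation matrices act nontrivially only on the span of $e_k,e_{\bar k}$ for $k\le i$, together with $e_i,e_{\bar\imath}$. This makes each block effectively $2\times2$, or $3\times 3$ in type B, and explains the pattern in \eqref{rep-di}: each earlier index $k<i$ contributes the same projector-like correction $E_{kk}/(u-\frac\ka2)^2+E_{\bar k\bar k}/(u+\frac\ka2)^2$. I would prove \eqref{rep-di} by induction on $i$. The first-row/column elimination is exactly $\si_{ij}$, and it maps the data for $\mfg_N$ at index $i$ to data of the same shape, with the shifts controlled by $\ka$. The off-diagonal entries are obtained the same way: $f_{i+1,i}=\mathcal{S}^{[i-1]}_{i+1,i}\,d_i^{-1}$, and $e_{i,i+1}$ follows from \eqref{AI:f=e}, which serves as a consistency check.

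Third, I substitute into \eqref{Dr1}--\eqref{Dr2} with the stated shifts $u\mapsto u+\frac{\ka-i}2$, together with the normalisations $\frac{u-1/4}{u-1/2}$ and $\frac1{\sqrt{-2}}$, to obtain \eqref{rep-hi}--\eqref{rep-bn}. Since the $d_j$ commute (Lemma \ref{L:AI-rels}), $h_i=d_i^{-1}d_{i+1}$ is a product of commuting near-diagonal matrices, and the telescoping of the $k<i$ terms yields the compact formulas. For the node $n$, I would instead reduce via Corollary \ref{C:RE[m]} to the low-rank algebras $X^\tw(\mfsp_2)$, $X^\tw(\mfo_3)$, $X^\tw(\mfo_4)$. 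In those cases the matrices of Propositions \ref{P:sp2->gl2}, \ref{P:o3->gl2}, \ref{P:o4->gl2} can be used to double-check the case distinctions in \eqref{rep-dn+1} and \eqref{rep-hn}.

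The main obstacle is type BI. There $G(u)$ is non-constant and the $E_{k\bar k}$ terms proportional to $\vka$ appear, so the quasideterminants no longer reduce to independent $2\times2$ blocks, and the inverses require careful rational simplification, particularly for $d_{n+1}$ at the middle index. I would handle this by first verifying the identities symbolically at small $n$ to guess the pattern, and then proving the induction step using only the $3\times3$ block $\{k,n+1,\bar k\}$. As a final consistency check, the images should satisfy \eqref{z(u)}, Lemma \ref{L:heven}, and $\ms c(u)\mapsto I$.
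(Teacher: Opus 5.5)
Your plan is the paper's own argument. The paper obtains the lemma by restricting the vector representation of $X(\mfg_N)$ through the sandwich $S(u)\mapsto F(u-\frac\ka2)\,G(u)\,F^t(\frac\ka2-u)$, applying the Gaussian decomposition \eqref{S=FDE} and substituting into \eqref{Dr1}--\eqref{Dr2}, and gives no further detail.

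One caution for your induction: the reduced data at step $i$ are not the $i=1$ data of $X^\tw(\mfg_{N-2i+2})$ with $\ka$ or $u$ shifted. The coefficient $-\frac{i}{(u-\frac\ka2)(u-\frac\ka2+i-1)}$ of $E_{ii}$ has two distinct simple poles rather than a double pole, so no shift produces it. Your inductive hypothesis must therefore carry the explicit $i$-dependent form of the whole reduced matrix. For the same reason, Propositions \ref{P:sp2->gl2}--\ref{P:o4->gl2} can only serve as consistency checks through the relations they imply, not as a shortcut to the node-$n$ formulas.
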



\section{Auxiliary relations} \label{app:Aux}

\setlength{\medmuskip}{0.5mu plus 0.5mu minus 0.5mu} 

This appendix contains relations that are instrumental in proving Proposition \ref{P:Serre}.

\allowdisplaybreaks

\begin{lemma} \label{L:B-aux}
In the algebra $X^{\tw}(\mfo_{2n+1})$ we have:
\ali{
& [f_{n,n-1}(u),f_{n+1,n-1}(v)] \nn \\
& \qq = \frac{(f_{n+1,n-1}(v)-f_{n+1,n-1}(u))(f_{n,n-1}(v)-f_{n,n-1}(u))}{v-u} \nn\\
& \qq\qu + \frac{f_{n+1,n}(v)\,d_{n-1}^{-1}(v)\,d_n(v)-f_{n+1,n}(\frac12-u)\,d_{n-1}^{-1}(u)\,d_n(u)}{v+u-\frac12} , \label{B-aux-ff1} \\[0.5em]
& [f_{n,n-1}(u), f_{n+2,n-1}(v)] \nn \\
& \qq = \frac{(f_{n+1,n-1}(u)-f_{n+1,n-1}(v))^2}{2\,(u-v)} \nn \\
& \qq\qu + \frac{f_{n+1,n}(v)\,\big(\tfrac14\,f_{n+1,n}(\tfrac12-v)-\tfrac12(v+\tfrac14)f_{n+1,n}(v)\big)\,d_{n-1}^{-1}(v)\,d_{n}(v)}{(v-\tfrac14)(u+v-\tfrac12)} \nn \\
& \qq\qu + \frac{f_{n+1,n}(\tfrac12-u) \,\big(\tfrac14\,f_{n+1,n}(u)+\tfrac12(u-\tfrac34)f_{n+1,n}(\tfrac12-u)\big)\,d_{n-1}^{-1}(u)\,d_{n}(u)}{(u-\tfrac14)(u+v-\tfrac12)}  , \label{B-aux-ff2} \\[1.5em] 
& [f_{n+1,n}(u),f_{n+1,n-1}(v)] \nn \\
& \qq = f_{n+1,n}(u) \bigg( \frac{f_{n+1,n}(u)\,f_{n,n-1}(u) - f_{n+1,n-1}(u)}{u-v} \nn\\ 
& \hspace{3cm} - \frac{\frac12\,f_{n+1,n}(u)\,f_{n,n-1}(\frac12+u) - f_{n+1,n-1}(\frac12+u)}{u-v+\frac12} \nn\\
& \hspace{3cm} + \frac{u-v+1}{2\,(u-v)(u-v+\frac12)}\bigg( \frac{f_{n+1,n-1}(v)}{u-v+1}-f_{n+1,n}(u)\,f_{n,n-1}(v)\bigg)\bigg) \nn\\
& \qq\qu +\frac{f_{n+2,n-1}(\frac12+u)-f_{n+2,n-1}(v)}{u-v+\frac12} + \frac{f_{n,n-1}(\frac12-u)\,d_n^{-1}(u)\,d_{n+1}(u)}{u+v-\frac12} \nn\\
& \qq\qu - \frac{1}{(u-\frac14)(u-v+\frac12)} \bigg( \frac{u\,(u-v)}{u+v-\frac12}\,f_{n,n-1}(v) + \tfrac14\,f_{n,n-1}(\tfrac12+u)  \bigg) d_{n}^{-1}(u)\,d_{n+1}(u) , \label{B-aux-ff3} 
\\[0.5em]
& [f_{n+1,n}(u),f_{n+2,n-1}(v)] \nn \\
& \qq = -f_{n+2,n-1}(u)\,f_{n+1,n}(u)-f_{n+1,n}(u) \,\frac{f_{n+2,n-1}(v)-(v-u+\tfrac{1}{2})\,f_{n+2,n-1}(u+\tfrac12)}{v-u-\tfrac12} \nn\\
& \qq\qu -f_{n+1,n}(u) \bigg[ \tfrac12f_{n+1,n-1}(u)\,f_{n+1,n}(u) - \tfrac12 (f_{n+1,n}(u))^2 \times \nn\\ & \qq\qq\qu \times \bigg( \frac{v-u+1}{v-u}\,f_{n,n-1}(u)  + \frac{f_{n,n-1}(v)}{2(v-u)(v-u-\tfrac12)} - \frac{v-u+\tfrac12}{v-u-\tfrac12}\,f_{n,n-1}(u+\tfrac12) \bigg) \nn\\
& \qq\qq + f_{n+1,n}(u)\bigg(\frac{(v-u+1)f_{n+1,n-1}(u)}{2(v-u)} - \frac{v-u+\tfrac12}{v-u-\tfrac12}\bigg( f_{n+1,n-1}(u+\tfrac12) - \frac{f_{n+1,n-1}(v)}{2(v-u)} \bigg)\!\bigg) \bigg] \nn \\
& \qq\qu + \frac{u}{u-\tfrac14}\bigg[\frac{(u+\frac34)(u-v)}{2(u+\tfrac12)}\, f_{n+1,n}(-u-1) \bigg(\frac{f_{n,n-1}(v)}{(u-v+1)(u+v+1)(u+v-\tfrac12)} \nn\\ & \qq\qq\qu -\frac{f_{n,n-1}(\tfrac12-u)}{3(u+\tfrac14)(u+v-\tfrac12)} - \frac{f_{n,n-1}(u+1)}{4(u+1)(u+\tfrac14)(u-v+1)}+\frac{f_{n,n-1}(-u-1)}{3(u+1)(u+v+1)} \bigg) \nn \\
& \qq\qq\qu + f_{n+1,n}(u) \bigg( \frac{(u+\tfrac14)f_{n,n-1}(u)}{4(u+\frac12)(u-\tfrac14)} - \frac{(u+v+\tfrac12)f_{n,n-1}(v)}{8\,(u+\frac12)(u-v+\tfrac12)(u+v-\tfrac12)(u-v+1)}  \nn\\
& \qq\qq\qu + \frac{(u+\tfrac34)(u-v)f_{n,n-1}(u+1)}{4(u+\frac12)(u+\tfrac14)(u-v+1)} - \frac{(u-v-\tfrac12)f_{n,n-1}(u+\tfrac12)}{4u(u-v+\tfrac12)}  \nn\\
& \qq\qq\qu -\frac{((64u^4-20u^2-4u+1)+2v(32u^3+16u^2-2u+1))f_{n,n-1}(\tfrac12-u)}{256u(u+\tfrac12)(u+\tfrac14)(u-\tfrac14)(u+v-\tfrac12)} \bigg) \nn
\\
& \qq\qq + \frac{(u+v+\tfrac12)(u-v+\tfrac12)\, f_{n+1,n-1}(v)}{(u+v+1)(u-v+1)(u+v-\tfrac12)} - \frac{(u+\tfrac14)\,f_{n+1,n-1}(u)}{4(u+\tfrac12)(u-\frac14)} -\frac{u\,(u-v)\,f_{n+1,n-1}(\tfrac12-u)}{3\,(u+\tfrac14)(u-\tfrac14)(u+v-\tfrac12)} \!\!\! \nn\\
& \qq\qq - \frac{(u+\frac34)(u-v)}{u+1} \bigg(\frac{f_{n+1,n-1}(u+1)}{4\,(u+\tfrac14)(u-v+1)} + \frac{f_{n+1,n-1}(-u-1)}{6\,(u+\tfrac12)(u+v+1)}\bigg) \bigg] d^{-1}_n(u) d_{n+1}(u) ,
\label{B-aux-ff4} \\[0.5em]
& [f_{n+1,n-1}(u),f_{n+1,n-1}(v)] \nn \\
& \qq = \frac{(f_{n+1,n-1}(u)-f_{n+1,n-1}(v))^2}{2\,(v-u)}-\frac{(f_{n+2,n-1}(u)-f_{n+2,n-1}(v))\,(f_{n,n-1}(u)-f_{n,n-1}(v))}{v-u} \nn\\
& \qq\qu + \frac{\big(\tfrac14\,f_{n+1,n}(\tfrac12-u)\,f_{n+1,n}(\tfrac12-u)-uf_{n+1,n}(\tfrac12-u)\,f_{n+1,n}(u)\big)\,d_{n-1}^{-1}(u)\,d_{n}(u)}{(u-\tfrac14)(u+v-\tfrac12)} \nn\\
& \qq\qu - \frac{\big(\tfrac14\,f_{n+1,n}(\tfrac12-v)\,f_{n+1,n}(\tfrac12-v)-v f_{n+1,n}(\tfrac12-v)\,f_{n+1,n}(v)\big)\,d_{n-1}^{-1}(v)\,d_{n}(v)}{(v-\tfrac14)(u+v-\tfrac12)} \nn \\
& \qq\qu - \frac{(u-\tfrac12)d_{n-1}^{-1}(\tfrac12-u)\,d_{n+1}(\tfrac12-u)}{(u-\tfrac14)(u+v-\tfrac12)} + \frac{(v-\tfrac12)d_{n-1}^{-1}(\tfrac12-v)\,d_{n+1}(\tfrac12-v)}{(v-\tfrac14)(u+v-\tfrac12)} , \label{B-aux-ff5} \\[0.5em]
& [f_{n+1,n-1}(u),f_{n+2,n-1}(v)] \nn \\
& \qq=  \frac{(f_{n+2,n-1}(u)-f_{n+2,n-1}(v))\,(f_{n+1,n-1}(u)-f_{n+1,n-1}(v))}{v-u} + \frac{1}{(v-\tfrac14)(u+v-\tfrac12)} \times \nn \\
& \qq\qu \times \bigg[ \tfrac14 f_{n+1,n}(\tfrac12-v)\,f_{n+1,n}(v)\,\big(f_{n+1,n}(\tfrac12-v)-2\,(v+\tfrac14) f_{n+1,n}(v)\big)\,d_{n-1}^{-1}(v)\,d_{n}(v) \nn \\  
& \qq\qq\qu - \frac{v-\tfrac12}{v-1} \bigg( \tfrac13 (v-\tfrac54) f_{n+1,n}(v-\tfrac32) + \tfrac14 \frac{v-\frac34}{v-\frac14} f_{n+1,n}(\tfrac12-v) \bigg)\,d_{n-1}^{-1}(\tfrac12-v)\,d_{n+1}(\tfrac12-v) \nn \\
& \qq\qq - \tfrac23 \frac{(v-\tfrac12)^2}{v-\tfrac14} f_{n+1,n}(v)\,d_{n-1}^{-1}(\tfrac12-v)\,d_{n+1}(\tfrac12-v) \bigg] - \frac{1}{(u-\tfrac14)(u+v-\tfrac12)} \times \nn \\
& \qq\qu \times \bigg[ \tfrac14 f_{n+1,n}(\tfrac12-u)\,f_{n+1,n}(\tfrac12-u)\,\big(f_{n+1,n}(\tfrac12-u)-2\,(u+\tfrac14) f_{n+1,n}(u)\big)\,d_{n-1}^{-1}(u)\,d_{n}(u) \nn \\
& \qq\qq\qu + \frac{u-\tfrac12}{u-1} \bigg(\tfrac18 \frac{1}{u-\frac14} f_{n+1,n}(\tfrac12-u) + \tfrac16\frac{u-\tfrac54}{u-\tfrac34} f_{n+1,n}(u-\tfrac32) \bigg) d_{n-1}^{-1}(\tfrac12-u)\,d_{n+1}(\tfrac12-u) \nn \\
& \qq\qq\qu + \frac{u-\tfrac12}{u} \bigg(\tfrac1{12} \frac{u+\tfrac14}{u-\tfrac14} f_{n+1,n}(u)- \frac{(u-\tfrac14)^2}{u-\frac34} f_{n+1,n}(-u) \bigg) d_{n-1}^{-1}(\tfrac12-u)\,d_{n+1}(\tfrac12-u) \bigg] , \label{B-aux-ff6} \\[0.5em]
& d_{n-1}(u)\,f_{n+1,n-1}(v)\,d_{n-1}^{-1}(u) \nn\\
& \qq = \frac{(u-v-1)(u+v-\frac12)f_{n+1,n-1}(v)}{(u-v)(u+v-\frac{3}{2})} + \frac{(u-\frac{1}{4})f_{n+1,n-1}(u)}{(u-\frac{3}{4})(u-v)} - \frac{(u-\frac{5}{4})f_{n+1,n-1}(\frac{3}{2}-u)}{(u-\frac{3}{4})(u+v-\frac{3}{2})} , \label{B-aux-df1} 
\\[0.5em]
& d_{n}(u)\,f_{n+1,n-1}(v)\,d_{n}^{-1}(u) \nn\\
& \qq =  \frac{(u-v+1)(u-v-1)f_{n+1,n-1}(v)}{(u-v)^2} +\frac{f_{n+1,n-1}(u)}{(u-v)^2} - \frac{f_{n,n-1}(u)\,f_{n+1,n}(u)}{u-v} \nn\\ 
& \qq\qu + \bigg(\frac{(u+\frac{1}{4})(u+v-\frac{1}{2}) f_{n+1,n}(u)}{u-v} - (u-\tfrac{3}{4})f_{n+1,n}(\tfrac{1}{2}-u)\bigg)\,\frac{(u-v+1)f_{n,n-1}(v)}{(u-\frac{1}{4})(u+v+\frac{1}{2})(u-v)} 
  \nn\\ 
& \qq\qu + \bigg(\frac{(-16u^2+4u-4v+1)f_{n+1,n}(u)}{8\,(u-v)}
+(u-\tfrac34)f_{n+1,n}(\tfrac12-u)\bigg) \frac{f_{n,n-1}(u)}{2(u+\frac{1}{4})(u-\frac{1}{4})(u-v)} \nn\\ 
& \qq\qu +\frac{(u+\frac{3}{4})\big(\frac{1}{2}f_{n+1,n}(u) + (u-\frac{3}{4})f_{n+1,n}(\frac{1}{2}-u)\big) f_{n,n-1}(-u-\frac{1}{2})}{(u+\frac{1}{4})(u-\frac{1}{4})(u+v+\frac{1}{2})} , \label{B-aux-df2} 
\\[0.5em]
& d_{n+1}(u)\,f_{n+1,n-1}(v)\,d_{n+1}^{-1}(u) \nn\\
& \qq = \frac{(u+v)(u+v+\frac{1}{2})(u-v+\frac{1}{2})(u-v-1)}{(u-v)(u+v-\frac{1}{2})(u-v-\frac{1}{2})(u+v+1)}\,f_{n+1,n-1}(v) \nn\\ 
& \qq\qu -\frac{u(u+\frac{1}{4})\big(f_{n+1,n}(u)\,(f_{n,n-1}(v)-f_{n,n-1}(u))+f_{n+1,n-1}(u)\big)}{(u+\frac{1}{2})(u-\frac{1}{4})(u-v)} \nn\\
& \qq\qu +\frac{u(u-\frac{1}{4})\big(f_{n+1,n}(u-\frac{1}{2})\,(f_{n,n-1}(v)-f_{n,n-1}(u-\frac{1}{2}))+f_{n+1,n-1}(u-\frac{1}{2})\big)}{(u-\frac{1}{2})(u+\frac{1}{4})(u-v-\frac{1}{2})}  \nn\\ 
& \qq\qu +\frac{u(u+\frac{3}{4})\big(f_{n+1,n}(-u-1)\,(f_{n,n-1}(v)-f_{n,n-1}(-u-1))+f_{n+1,n-1}(-u-1)\big)}{3(u+\frac{1}{2})(u+\frac{1}{4})(u+v+1)}  \nn\\ 
& \qq\qu -\frac{u(u-\frac{3}{4})\big(f_{n+1,n}(\frac{1}{2}-u)\,(f_{n,n-1}(v)-f_{n,n-1}(\frac{1}{2}-u))+f_{n+1,n-1}(\frac{1}{2}-u)\big)}{3(u-\frac{1}{2})(u-\frac{1}{4})(u+v-\frac{1}{2})} . \label{B-aux-df3} 
}
\end{lemma}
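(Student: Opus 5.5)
The plan is to reduce everything to a computation in a fixed low-rank subalgebra. By Corollary \ref{C:RE[m]}, the coefficients of $s^{[n-2]}_{kl}(u)$ with $n-1\le k,l\le n+3$ generate a copy of $X^{\tw}(\mfo_5)$ inside $X^{\tw}(\mfo_{2n+1})$. The Gaussian series $f_{n,n-1}$, $f_{n+1,n}$, $f_{n+1,n-1}$, $f_{n+2,n-1}$, $d_{n-1}$, $d_n$, $d_{n+1}$ are the Gaussian series $f_{21},f_{32},f_{31},f_{41},d_1,d_2,d_3$ of that copy. It therefore suffices to prove \eqref{B-aux-ff1}--\eqref{B-aux-df3} in $X^{\tw}(\mfo_5)$, where $\ka=\tfrac32$. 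The ranks $n=2,3$ are covered directly.

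The relations split into three groups.

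\emph{The $d$-conjugation formulas \eqref{B-aux-df1}--\eqref{B-aux-df3}.} These come next, since the commutator formulas are later rewritten through them. First, $f_{31}$ is expressed as a commutator of simpler series. Proposition \ref{P:ff0} (via \eqref{B:ff2}) gives $f_{31}(v)=[f_{32}^{(0)},f_{21}(v)]$. Then:
\begin{itemize}
\item For \eqref{B-aux-df1}, conjugate this expression by $d_1(u)$. The factor $f_{32}^{(0)}$ commutes with $d_1$ by \eqref{B:12}, and \eqref{AI:df1} acts on $f_{21}(v)$. Collecting terms gives \eqref{B-aux-df1}, and this step is essentially formal.
\item For \eqref{B-aux-df2} and \eqref{B-aux-df3}, conjugate the same expression by $d_2(u)$ and $d_3(u)$. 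This uses \eqref{AI:df2} and Lemma \ref{L:BCD-df} (\eqref{hf1}, \eqref{hf2}), with $f_{32}^{(0)}$ conjugated by extracting the $v^{-1}$ coefficient of \eqref{hf1}--\eqref{hf2}. The resulting products $f_{32}(\cdot)f_{21}(\cdot)$ are then reorganised with \eqref{B:ff2}.
\end{itemize}

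\emph{The commutators $[f_{21},f_{31}]$, $[f_{21},f_{41}]$, $[f_{32},f_{31}]$, $[f_{32},f_{41}]$.} For \eqref{B-aux-ff1}, take the commutator of \eqref{B:ff2} with $f_{21}(w)$ and use \eqref{AI:ff2}. Alternatively, and more robustly, go back to the defining relation \eqref{[s,s]} in the way the proof of Lemma \ref{L:ff} did. That is, form the combination $[s_{21}(u),s_{31}(v)]$, corrected by terms in $s_{11}^{-1}$ as in \eqref{B:ff2-1}. Then insert the Gaussian decomposition, replace $e_{12}$ by $f_{21}(\tfrac12-\cdot)$ using \eqref{AI:f=e}, and move all diagonal series to the right with \eqref{AI:df1}, \eqref{AI:df2} and Remark \ref{R:dif}. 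The remaining $e$-series ($e_{13}$, $e_{23}$, $e_{14}$) are eliminated using Lemma \ref{L:fe} and Corollary \ref{C:so3}; in particular $e_{23}(u)=-f_{32}(\tfrac12-u)$ up to shift, and $e_{13}$, $f_{41}$-type series are given by \eqref{so3:2} in the rank-one $\mfo_3$ block. The quadratic terms combine into $d^{-1}d$ ratios via \eqref{AI:ff2} and \eqref{B:ff1}. At intermediate stages spurious $f_{42}$-type series appear, as in \eqref{C:ff2-3}. These are removed by the trick used for \eqref{C:f42}: specialise the spectral parameter at a pole (here $u\to v+\tfrac32$) to solve for the unwanted series, then substitute back.

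\emph{The self-commutators \eqref{B-aux-ff5} and \eqref{B-aux-ff6}.} For \eqref{B-aux-ff5}, bracket $f_{31}(u)=[f_{32}^{(0)},f_{21}(u)]$ against $f_{31}(v)$ and apply the Jacobi identity. This reduces \eqref{B-aux-ff5} to \eqref{B-aux-ff1}, \eqref{B-aux-ff3} and $[f_{32}^{(0)},f_{31}(v)]$, the last being a coefficient of \eqref{B-aux-ff3}. Similarly, \eqref{B-aux-ff6} reduces to \eqref{B-aux-ff2}, \eqref{B-aux-ff4} and \eqref{B-aux-ff5}, using $f_{41}(v)\propto[f_{32}^{(0)},f_{31}(v)]$ modulo lower terms, which is read off from \eqref{B-aux-ff3}. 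The arguments $\tfrac12-v$ and $v-\tfrac32$ inside $d$-series arise from the symmetry \eqref{AI:dd=dd} and \eqref{so3:3}, which are used to normalise terms.

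The main obstacle is \eqref{B-aux-ff4} and \eqref{B-aux-ff6}. There the rational coefficients inherited from \eqref{hf2}, with poles at $u\pm\tfrac14$, $u\pm\tfrac12$, $u+1$ and so on, proliferate, and the cancellation must be organised carefully. I would do this bookkeeping with computer algebra. As an independent check, I would verify each final identity in the fundamental representation of Lemma \ref{L:rep}, and in tensor products of it, to catch coefficient errors.
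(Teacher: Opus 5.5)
The paper states this lemma without proof. The intended route, as in the proof of Lemma \ref{L:ff}, is a direct computation in $X^{\tw[n-2]}(\mfo_{2n+1})\cong X^{\tw}(\mfo_5)$ from \eqref{[s,s]}, so your reduction is the right one. Your derivation of \eqref{B-aux-df1}--\eqref{B-aux-df3} from $f_{31}(v)=[f^{(0)}_{32},f_{21}(v)]$ is sound and cleaner than brute force. One citation fix: $[d_1,f_{32}]=0$ is \eqref{B:13}, not \eqref{B:12}. As a check, the coefficients $\frac{(u-v+1)(u-v-1)}{(u-v)^2}$ and $\frac{1}{(u-v)^2}$ of $f_{31}(v)$ and $f_{31}(u)$ in \eqref{B-aux-df2} come out exactly. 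The term $-\frac{f_{21}(u)f_{32}(u)}{u-v}$ there is simply the unexpanded piece $\frac{1}{u-v}[f_{32}(u),f_{21}(u)]$.

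The commutator part, however, does not close as you describe it.

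(i) Your first route to \eqref{B-aux-ff1} cannot work. We have $[f_{21}(u),f_{31}(v)]=-[f_{21}(u),[f_{21}(v),f^{(0)}_{32}]]$, and \eqref{B:ff2}, \eqref{AI:ff2} and Jacobi only determine $[f_{21}(u),f_{31}(v)]-[f_{21}(v),f_{31}(u)]$. The symmetric part is Serre content; it is exactly what the paper uses \eqref{B-aux-ff1} for in deriving \eqref{B:S1}. So the computation from \eqref{[s,s]} is not optional. The same holds for the $u^{-1}$-coefficients of \eqref{B-aux-ff3} and \eqref{B-aux-ff4}: they give $[f^{(0)}_{32},f_{31}(v)]=-f_{41}(v)-f_{21}(v)$ and the Serre content of \eqref{B:S2}. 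It also holds for one of \eqref{B-aux-ff2}, \eqref{B-aux-ff5}, since Jacobi determines each only in terms of the other.

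(ii) Your elimination step in that computation is misdescribed. Lemma \ref{L:fe} and Corollary \ref{C:so3} only concern adjacent type AI pairs and the $\mfo_3$ block $\{2,3,4\}$. They say nothing about $e_{13}$, which enters $[s_{21}(u),s_{31}(v)]$ through $s_{23}=d_2e_{23}+f_{21}d_1e_{13}$. Also:
\begin{itemize}
\item \eqref{f-e:3} gives $e_{23}(u)=f_{32}(-u-\tfrac12)$, not $-f_{32}(\tfrac12-u)$.
\item \eqref{so3:2} concerns $f_{42}$ and $e_{24}$, not $e_{13}$ or $f_{41}$.
\item $f_{41}$ must not be eliminated at all, since it occurs in the targets.
\end{itemize}
The missing input is \eqref{si1(u)}, valid for all $1<i<N$. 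In Gaussian form it reads $(2u-\ka)\,d_1^{-1}(u)f_{i1}(u)d_1(u)=e_{1i}(u)+(2u-\ka-1)\,e_{1i}(\ka-u)$. Combined with \eqref{B-aux-df1}, which has the same shape as \eqref{AI:df1}, it yields $e_{13}(u)=f_{31}(\tfrac12-u)$. This is the analogue of the step $e_{13}(u)=f_{31}(2-u)$ in the proof of Lemma \ref{L:ff}.

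Moreover, the instances of \eqref{[s,s]} through which $f_{41}$ enters all carry nonvanishing Kronecker-delta terms, because $\bar2=4$ and $\bar3=3$. These instances are $[s_{32},s_{31}]$, $[s_{21},s_{41}]$, $[s_{31},s_{31}]$ and $[s_{32},s_{41}]$. The delta terms produce sums over $a$ such as $\sum_a s_{a2}s_{\bar a1}$, $\sum_a s_{a1}s_{\bar a1}$ and $\sum_a s_{\bar aa}$. These contain $s_{51}$ and $s_{15}$, i.e.\ $f_{51}$ and $e_{15}$. They must be removed with the unitarity relation \eqref{S=zS} together with \eqref{d=zd}, and nothing in your toolkit handles them.

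(iii) For \eqref{B-aux-ff6}, substituting $f_{41}(v)=-[f^{(0)}_{32},f_{31}(v)]-f_{21}(v)$ into the second slot is circular. It reproduces $[f_{41}(u),f_{31}(v)]$ and so fixes only $[f_{31}(u),f_{41}(v)]-[f_{31}(v),f_{41}(u)]$. You must expand the first slot instead, $f_{31}(u)=[f^{(0)}_{32},f_{21}(u)]$. This uses \eqref{B-aux-ff1}, \eqref{B-aux-ff2} and the $u^{-1}$-coefficient of \eqref{B-aux-ff4}. Likewise, your Jacobi reduction of \eqref{B-aux-ff5} also needs \eqref{B-aux-ff2}, not only \eqref{B-aux-ff1} and \eqref{B-aux-ff3}.
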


\begin{lemma} \label{L:C-aux}
In the algebra $X^{\tw}(\mfsp_{2n})$ we have:
\ali{
& [ f_{n,n-1}(u), f_{n+1,n-1}(v)] \nn\\
& \qq = \frac{2f_{n+1,n-1}(u) ( f_{n,n-1}(v)-f_{n,n-1}(u))}{u-v} \nn\\
& \qq\qu - \frac{2f_{n+1,n-1}(v)f_{n,n-1}(u)+(u-v-2)f_{n+1,n-1}(v)f_{n,n-1}(v)}{(u-v)(u-v-1)} \nn\\
& \qq\qu - \frac{f_{n+1,n-1}(u-1)(f_{n,n-1}(u-1)-2f_{n,n-1}(u))+f_{n+2,n-1}(v)-f_{n+2,n-1}(u-1)}{u-v-1} \nn\\
& \qq\qu + \frac{(u-v-2)f_{n+1,n}(v)\,d^{-1}_{n-1}(v)\,d_n(v)}{(u-v-1)(u+v-2)} + \frac{f_{n+1,n}(u-1)\,d^{-1}_{n-1}(u-1)\,d_n(u-1)}{2(u-\frac{3}{2})(u-v-1)} \nn\\
& \qq\qu - \frac{(u-2)f_{n+1,n}(2-u)\,d^{-1}_{n-1}(u)d_n(u)}{(u-\frac{3}{2})(u+v-2)} , \label{C-aux-ff1}
\\[0.5em]
& [ f_{n+1,n}(u), f_{n+1,n-1}(v)] \nn\\
& \qq = \frac{2f_{n+1,n}(u)\big(f_{n+1,n-1}(u)-f_{n+1,n-1}(v)-f_{n+1,n}(u)(f_{n,n-1}(u)-f_{n,n-1}(v))\big)}{v-u} \nn\\& \qq\qu - 2\bigg(\frac{(u-v)f_{n,n-1}(v)}{(u-v+2)(u+v-2)} + \frac{f_{n,n-1}(u+2)}{u\,(u-v+2)} - \frac{(u-1)f_{n,n-1}(2-u)}{u\,(u+v-2)} \bigg) d^{-1}_n(u)\, d_{n+1}(u) , \label{C-aux-ff2}
\\[0.5em]
& [ f_{n,n-1}(u), f_{n+2,n-1}(v)] \nn\\
& \qq = \frac{2f_{n+1,n-1}(u)f_{n,n-1}(u)\big(f_{n,n-1}(u)-f_{n,n-1}(v)\big)}{u-v} \nn\\
& \qq\qu - \frac{\big(f_{n+1,n-1}(u-1)\big(f_{n,n-1}(u-1)-2f_{n,n-1}(u)\big)+f_{n+2,n-1}(v)-f_{n+2,n-1}(u-1)\big)}{u-v-1} \nn\\
& \hspace{9.5cm} \times \big(f_{n,n-1}(v)-2f_{n,n-1}(u)\big) \nn\\
& \qq\qu -\frac{f_{n+1,n-1}(v)f_{n,n-1}(u)(2f_{n,n-1}(v)-(u-v+2)f_{n,n-1}(u))}{(u-v)(u-v-1)} \nn\\
& \qq\qu + \bigg( \frac{(u-v)f_{n+1,n-1}(v)}{(u-v-1)(u+v-2)}+\frac{2f_{n+1,n}(v)f_{n,n-1}(u)}{(u-v)(u+v-1)} + \frac{f_{n+1,n}(v)}{(v-\frac{1}{2})(u-v-1)(u+v-2)} \times \nn\\
& \qq\qq \times \bigg(\frac{2vf_{n,n-1}(1-v)}{u+v-1}-\frac{((v-\frac12)(u-v)^2+2-2v)f_{n,n-1}(v)}{(u-v)}\bigg)\!\bigg) d^{-1}_{n-1}(v)\,d_n(v) \nn\\
& \qq\qu + \bigg(\frac{((u - v)^2 (u + v - 3) - 2 (u + v - 2))f_{n+1,n-1}(v)}{(u-v)(u-v-1)(u+v-1)(u+v-2)} - \frac{f_{n+1,n-1}(u)}{(u-\frac{1}{2})(u-v)} \nn\\ 
& \qq\qq - \frac{2uf_{n+1,n-1}(1-u)}{(u-\frac{1}{2})(u+v-1)} - \frac{(u-2)f_{n+1,n}(2-u)}{u-\frac{3}{2}} \bigg( \frac{(u+v-3)(u-v+1)f_{n,n-1}(v)}{(u-v-1)(u+v-1)(u+v-2)} \nn\\
& \hspace{4.5cm} - \frac{2uf_{n,n-1}(1-u)}{(u-1)(u+v-1)} - \frac{2(v-1)f_{n,n-1}(u-1)}{(u-1)(u-v-1)(u+v-2)} \bigg)\! \bigg)d^{-1}_{n-1}(u)\,d_n(u) \nn\\
& \qq\qu + \frac{f_{n+1,n}(u-1)}{u-\frac{3}{2}}\bigg( \frac{(u+v-4)(u-v)f_{n,n-1}(v)}{2(u-v-1)(u-v-2)(u+v-2)} \nn\\
& \qq\qq -\frac{(u-3)f_{n,n-1}(u-2)}{(u-2)(u-v-2)} - \frac{(v-1)f_{n,n-1}(2-u)}{(u-2)(u-v-1)(u+v-2)} \bigg) d^{-1}_{n-1}(u-1)\,d_n(u-1) , \label{C-aux-ff3}
\\[0.5em]
& d_{n-1}(u)\,f_{n+1,n-1}(v)\,d_{n-1}^{-1}(u) \nn\\
& \qq = \frac{(u+v-2)(u-v-1)f_{n+1,n-1}(v)}{(u-v)(u+v-3)} +\frac{(u-1)f_{n+1,n-1}(u)}{(u-\frac32)(u-v)}  -\frac{(u-2)f_{n+1,n-1}(3-u)}{(u-\frac32)(u+v-3)} , \label{C-aux-df1}
\\[0.5em]
& d_{n-1}(u)\,f_{n+2,n-1}(v)\,d_{n-1}^{-1}(u) \nn\\
& \qq = \frac{(u-v-2)(u+v-1)f_{n+2,n-1}(v)}{(u-v)(u+v-3)}-\frac{2(u-\frac52)f_{n+2,n-1}(3-u)}{(u-\frac32)(u+v-3)} +\frac{2(u-\frac12)f_{n+2,n-1}(u)}{(u-\frac32)(u-v)} \nn\\ 
& \qq\qu + \frac{2(u^2-3u+v^2-v+2)f_{n+1,n-1}(v)f_{n,n-1}(v)}{(u-v)^2(u+v-3)^2} + \frac{(2u^2-3u-v+2)f_{n+1,n-1}(u) f_{n,n-1}(u)}{2(u-\frac32)^2(u-v)^2}\nn\\
& \qq\qu + \frac{(u^2-3u-v^2+v+2)}{(u-\frac32)(u-v)(u+v-3)}\bigg(\frac{(u-1)f_{n+1,n-1}(u)}{u-v} - \frac{(u-2)f_{n+1,n-1}(3-u)}{u+v-3}\bigg)f_{n,n-1}(v)\nn\\ 
& \qq\qu + \frac{(v-2)\big((u-2) f_{n+1,n-1}(u)f_{n,n-1}(3-u) - (u-1)f_{n+1,n-1}(3-u)f_{n,n-1}(u) \big)}{(u-\frac32)^2(u-v)(u+v-3)} \nn\\ 
& \qq\qu + \frac{(2u^2-9u-v+11)f_{n+1,n-1}(3-u)f_{n,n-1}(3-u)}{2(u-\frac32)^2(u+v-3)^2} \nn\\ 
& \qq\qu + \frac{(u-2)(u-v-1)(u+v-4)f_{n+1,n-1}(v)f_{n,n-1}(3-u)}{(u-\frac32)(u-v)(u+v-3)^2} \nn\\ 
& \qq\qu -\frac{(u-1)(u-v+1)(u+v-2)f_{n+1,n-1}(v)f_{n,n-1}(u)}{(u-\frac32)(u-v)^2(u+v-3)}  \nn\\ 
& \qq\qu -\frac{2f_{n+1,n}(v)\,d^{-1}_{n-1}(v)\,d_n(v)}{(u-v)(u+v-3)} + \frac{f_{n+1,n}(u)\,d^{-1}_{n-1}(u)\,d_n(u)}{(u-\frac32)(u-v)} + \frac{f_{n+1,n}(3-u)\,d^{-1}_{n-1}(3-u)\,d_n(3-u)}{(u-\frac32)(u+v-3)} , \label{C-aux-df2}
\\[0.5em]
& d_{n}(u)\,f_{n+1,n-1}(v)\,d_{n}^{-1}(u) \nn\\
& \qq = \frac{(u-v-2)(u-v+1)(u+v)f_{n+1,n-1}(v)}{(u-v)^2(u+v-1)} + \frac{(u (u - \frac12) (u - v + 1) + u^2 - \frac12 v)f_{n+1,n-1}(u)}{(u-\frac12)^2(u-v)^2} \nn\\ 
& \qq\qu -\frac{uf_{n,n-1}(u)f_{n+1,n}(u)}{(u-\frac12)(u-v)} - \frac{u(u-\frac32)f_{n+1,n-1}(1-u)}{(u-\frac12)^2(u+v-1)} + \frac{2(u-2)uf_{n+1,n}(2-u)f_{n,n-1}(1-u)}{(u-\frac12)(u-1)(u+v-1)}  \nn\\ 
& \qq\qu + \frac{2u(u-v+1)(u+v-2)f_{n+1,n}(u)f_{n,n-1}(v)}{(u-1)(u-v)^2(u+v-1)} + \frac{u^2f_{n+1,n}(u)f_{n,n-1}(1-u)}{(u-\frac12)^2(u-1)(u+v-1)}  \nn\\ 
& \qq\qu - \frac{(u + u^2(u-\frac32)(u-v+2))\,f_{n+1,n}(u)\,f_{n,n-1}(u)}{(u-\frac12)^2(u-1)(u-v)^2}  \nn\\ 
& \qq\qu + \frac{(u-2)\,f_{n+1,n}(2-u)\,f_{n,n-1}(u)}{(u-\frac12)(u-1)(u-v)}-\frac{2(u-2)(u-v+1)\,f_{n+1,n}(2-u)\,f_{n,n-1}(v)}{(u-1)(u-v)(u+v-1)} . \label{C-aux-df3}
}
\end{lemma}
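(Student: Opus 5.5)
The relations of Lemma \ref{L:C-aux} all take place inside the subalgebra $X^{\tw[n-2]}(\mfsp_{2n})$. By Corollary \ref{C:RE[m]} this subalgebra is isomorphic to $X^\tw(\mfsp_4)$, and its reduced generating matrix $S^{[n-2]}(u)$ is built from the Gaussian series $f_{kl}(u)$, $d_k(u)$, $e_{kl}(u)$ with $n-1\le k,l\le n+2$. It therefore suffices to prove each identity in $X^\tw(\mfsp_4)$ with indices relabelled $n-1\mapsto1,\ldots,n+2\mapsto4$, i.e.\ $\ka=1$. We follow the scheme of the proof of \eqref{C:ff2}: write a suitable combination of the defining relations \eqref{[s,s]}, apply the Gaussian decomposition \eqref{S=FDE}, and reduce everything to the $f$-series and the $d$-series.

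\emph{Step 1: the $d f d^{-1}$ relations \eqref{C-aux-df1}--\eqref{C-aux-df3}.} For \eqref{C-aux-df1} we use that $s_{ij}(u)$ with $i,j\in\{1,3\}$ are not an $X^+(\mfgl_2)$-block in general. Instead we use the commutators $[s_{11}(u),s_{31}(v)]$ and $[s_{11}(u),s_{13}(v)]$ from \eqref{[s,s]}, together with $e_{13}(u)=f_{31}(2-u)$, which is already obtained in the proof of Lemma \ref{L:ff}. Dividing by $d_1$ on the right gives a relation of the shape \eqref{AI:df1} with shifted parameter.

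For \eqref{C-aux-df3} we write $f_{31}=\tfrac12[f_{32}^{(1)},f_{21}]$, as in the proof of \eqref{C:ff2}. We then conjugate by $d_2(u)$, using \eqref{AI:df2} for $f_{21}$ and \eqref{hf3} for $f_{32}$, and resum the coefficients. Doing this at the level of series, by taking the $v^{-1}$-coefficient of a product, yields the quadratic terms $f_{32}f_{21}$.

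Relation \eqref{C-aux-df2} is handled similarly, using $f_{41}$ expressed through \eqref{C:f42} and its image under \eqref{f-e:1}--\eqref{f-e:2}, together with \eqref{C-aux-df1} and \eqref{AI:df1}.

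\emph{Step 2: the $ff$ relations \eqref{C-aux-ff1}--\eqref{C-aux-ff3}.} Relation \eqref{C-aux-ff2} comes directly from the identity $f_{31}(v)=\tfrac12[f_{32}^{(1)},f_{21}(v)]$ combined with the rank-one relation \eqref{C:ff1}. Taking $[f_{32}(u),\cdot\,]$ and using the Jacobi identity, the commutator $[f_{32}(u),[f_{32}^{(1)},f_{21}(v)]]$ is expressed via \eqref{C:ff1} (at mode $1$) and \eqref{C:ff2}. The $d^{-1}_2d_3$ terms are then moved to the right with \eqref{hf3}.

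For \eqref{C-aux-ff1} we apply the same bracketing to $[f_{21}(u),[f_{32}^{(1)},f_{21}(v)]]$, using \eqref{AI:ff2} and \eqref{C:ff2}. This produces $f_{41}$ through \eqref{C:f42}, and the $d_1^{-1}d_2$ terms come from \eqref{AI:ff2}.

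Relation \eqref{C-aux-ff3} is obtained by bracketing \eqref{C-aux-ff1} once more with $f_{21}$. Here $f_{41}(v)=[f_{43}^{(0)},f_{31}(v)]$-type identities, read off from \eqref{[s,s]} at leading order, reduce everything to \eqref{C-aux-ff1}, \eqref{AI:ff2} and \eqref{C-aux-df2}.

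\emph{Main obstacle.} We expect the hardest part to be controlling the shifted arguments, such as $u-1$, $2-u$ and $3-u$, that appear when diagonal series are moved past $f$-series via Remark \ref{R:dif}. The difficulty is showing that all the spurious poles, e.g.\ at $u=v+1$ and $u=\tfrac32$, cancel. We would first verify each identity in the fundamental representation of Lemma \ref{L:rep}, which fixes the rational coefficients. We would then remove ambiguities with the residue trick used for $B(u,v)$ in the proof of \eqref{C:ff2}: evaluate the intermediate identity at the offending point and subtract. Finally, the relations are transported back to $X^\tw(\mfsp_{2n})$ via $\psi^{(N)}_{n-2}$.
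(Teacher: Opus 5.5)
Your reduction to $X^{\tw}(\mfsp_4)$ via Corollary~\ref{C:RE[m]} is the right frame. The paper states these identities without a separate proof; they are direct computations with \eqref{[s,s]} in $X^{\tw}(\mfsp_4)$ of the same kind as the proof of \eqref{C:ff2}. Your conjugation route to \eqref{C-aux-df3} is also workable.

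\textbf{The gap is Step~2.} Identities \eqref{C-aux-ff1}--\eqref{C-aux-ff3} contain information that no Jacobi manipulation of \eqref{C:ff1}, \eqref{C:ff2}, \eqref{AI:ff2} and \eqref{hf3} can produce. Work in $\mfsp_4$ labels and write $f^{(0)}$ for the $u^{-1}$-coefficient (your $f^{(1)}$). Put $Y(v):=[f^{(0)}_{32},f_{31}(v)]$.
\begin{itemize}
\item For \eqref{C-aux-ff2}: substituting $f_{31}=\tfrac12[f^{(0)}_{32},f_{21}]$ and applying Jacobi with \eqref{C:ff2} gives $[f_{32}(u),f_{31}(v)]=K(u,v)-\frac{Y(u)-Y(v)}{u-v}$, with $K$ explicit. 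Comparing $u^{-1}$-coefficients only returns $Y(v)=Y(v)$. The alternative expression $f_{31}(v)=f_{32}(v)f_{21}(v)-\tfrac12[f^{(0)}_{21},f_{32}(v)]$ only ties $Y$ to $[f_{32}(v),f^{(0)}_{31}]$, again tautologically. Yet the $u^{-1}$-coefficient of \eqref{C-aux-ff2} reads $Y(v)=2f_{21}(v)$, i.e.\ $[f^{(0)}_{32},[f^{(0)}_{32},f_{21}(v)]]=4f_{21}(v)$. This is the current form of the Serre relation \eqref{modes-S1} for $(i,j)=(n,n-1)$, which the quadratic current relations do not encode.
\item For \eqref{C-aux-ff1}: since $[f_{21}(u),f^{(0)}_{32}]=-2f_{31}(u)$ by \eqref{C:ff2}, the Jacobi identity for $[f_{21}(u),[f^{(0)}_{32},f_{21}(v)]]$ only relates $[f_{21}(u),f_{31}(v)]$ to $[f_{21}(v),f_{31}(u)]$. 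The Gaussian series $f_{41}=f_{n+2,n-1}$ cannot appear, because none of your inputs contain it.
\item For \eqref{C-aux-df2}: the same hole. \eqref{C:f42} expresses $f_{42}$, not $f_{41}$, and \eqref{f-e:1}--\eqref{f-e:2} involve only simple-root series, so nothing you cite gives $f_{41}$.
\item For \eqref{C-aux-ff3}: bracketing once more reproduces the circularity one level up. It needs $[f^{(0)}_{21},f_{41}(\cdot)]$, which is the cubic Serre content behind \eqref{C:S1}.
\end{itemize}

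\textbf{What is missing} is the reflection part of \eqref{[s,s]}, which is absent from every relation you bracket. For instance, $[s_{21}(u),s_{31}(v)]$ in $X^{\tw}(\mfsp_4)$ has $\delta_{k\bar\imath}=1$ and brings in $s_{11}(u)s_{41}(v)$. Likewise $[s_{32}(u),s_{31}(v)]$ has $\delta_{k\bar\jmath}=\delta_{\bar\imath j}=1$. Such combinations must be Gaussian-decomposed and the $e$-series eliminated (via $e_{12}(u)=f_{21}(2-u)$, $e_{13}(u)=f_{31}(2-u)$ and \eqref{C:f42}), as in the proof of \eqref{C:ff2}. Once \eqref{C-aux-ff1} is established, multiply it by $u-v-1$ and take the $u^{0}$-coefficient to get $f_{41}(v)=-[f^{(0)}_{21},f_{31}(v)]-f_{31}(v)f_{21}(v)+f_{32}(v)\,d_1^{-1}(v)\,d_2(v)$. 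Then \eqref{C-aux-df2} can be obtained by conjugating with $d_1(u)$, using \eqref{AI:df1}, \eqref{C-aux-df1} and \eqref{C-aux-ff1}.

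\textbf{Smaller corrections.}
\begin{itemize}
\item In $X^{\tw}(\mfsp_4)$ one has $\ka=N/2-\theta=3$, not $1$; your own citation $e_{13}(u)=f_{31}(2-u)$ presupposes $\ka=3$.
\item The index set $\{1,\dots,n-1,n+1\}$ contains no pair $a,\bar a$, so it does carry a copy of $X^+_\ka(\mfgl_n)$. Hence \eqref{C-aux-df1} is literally \eqref{AI:df1} with $i=n-1$.
\item A check in the fundamental representation of Lemma~\ref{L:rep} is only a consistency test. That representation is not faithful, so it cannot fix or prove the coefficients.
\end{itemize}
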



\section{Evaluated commutators} \label{app:Comms}

In this appendix we state explicit forms of evaluated commutators in the left hand sides of \eqref{B:S2}~and~\eqref{C:S1}. 
Consider the algebra $X^\tw(\mfo_{2n+1})$. Then
\ali{
&\Sym_{q,w,v} [f_{n+1,n}(q),\, [f_{n+1,n}(w),\,[f_{n+1,n}(v),\,f_{n,n-1}(u)]]] \nn\\ 
& \qu = A(q,w,v)\,d_{n}^{-1}(v)\,d_{n+1}(v) + A(q,v,w)\, d_{n}^{-1}(w)\, d_{n+1}(w) + A(v,w,q)\, d_{n}^{-1}(q)\, d_{n+1}(q) \label{BSR1}
}
where $A(q,w,v)$ is
\ali{
& 4v\frac{v+\frac14}{v-\frac14} \, \bigg( \frac{(q(2-8u)-2u+4v+8v^2-1)\,f_{n+1,n}(q) f_{n,n-1}(u)}{(q-u)(2q-2v+1)(2u+2v-1)(2v+2w+1)(u-v-1)(q+v+1)} \nn\\
& \hspace{1.5cm} + \frac{\left(-1+4v+8v^2+2w-2u(1+4w)\right)f_{n+1,n}(w) f_{n,n-1}(u)}{(u-v-1)(1+2q+2v)(2u+2v-1)(2v-2w-1)(u-w)(1+v+w)} \nn\\
& \hspace{1.5cm} + \frac{(8q^2-4v(1+2v)+1)\left(f_{n+1,n}(q) f_{n,n-1}(q)-f_{n+1,n-1}(q)\right)}{(q-u)(1+2q-2v)(q-v-1)(1+q+v)(2q+2v-1)
(1+2v+2w)} \nn\\
& \hspace{1.5cm} - \frac{(4v+8v^2-8w^2-1)(f_{n+1,n}(w) f_{n,n-1}(w)-f_{n+1,n-1}(w))}{(1+2q+2v)(2v-2w-1)(1+v-w)(-u+w)(1+v+w)(2v+2w-1)} \nn\\
& \hspace{1.5cm} + \frac{(1+8u^2-4v(1+2v))(q(2u-2v-4w-1)-(2v+1)w+2u(2v+w+1))f_{n+1,n-1}(u)}{(q-u)(u-w)(2u-2v+1)(2q+2v+1)(2u+2v-1)(2v+2w+1)(u-v-1)(u+v+1)}\bigg) \nn\\
& + \frac{2v(3+4v)}{(v-u+1)(4v-1)}\bigg(\frac{f_{n+1,n}(w) f_{n,n-1}(1+v)}{\left(v+q+\frac{1}{2}\right)(v-w+1)(v+w+1)}+\frac{f_{n+1,n}(q) f_{n,n-1}(1+v)}{(v+w+\frac12)(v-q+1)(v+q+1)} \bigg) \nn\\
& + \frac{2v(3+4v)}{(1+v)(1+2q+2v)(-1+4v)(1+2v+2w)} \bigg(\frac{(2+q+6v+4v^2+w-4qw)f_{n+1,n-1}(1+v)}{(q-v-1)(1-u+v)(1+v-w)} \nn\\ & \hspace{6cm} +\frac{(2+6v+3w+4v(v+w)+q(3+4v+4w)) f_{n+1,n-1}(-1-v)}{(1+q+v)(1+u+v)(1+v+w)} \bigg) \nn\\
& - \frac{2v(3+4v)(2+4v^2+3w+v(6+4w)+q(3+4v+4w))\, f_{n+1,n}(-v-1) }{(u-v-1)(1+v)(1+q+v)(1+u+v)(1+2q+2v)(4v-1)(1+v+w)(1+2v+2w)} \nn\\
& \hspace{3cm} \times \Big( 2\,(v+1) f_{n,n-1}(u) + (u-v-1) f_{n,n-1}(-v-1) - (u+v+1) f_{n,n-1}(v+1) \Big) \nn\\
& - \frac{64v(1-4v^2+2w+q(2+4w)) f_{n+1,n}(v-\tfrac12) }{((1+2q-2v)(-1+2v)(1+2q+2v)(-1-2u+2v)(2u+2v-1)(2v-2w-1)(1+2v+2w))}  \nn\\
& \hspace{3cm} \times \Big( (2v-1)  f_{n,n-1}(u) + (u-v+\tfrac12) f_{n,n-1}(\tfrac12-v) - (u+v-\tfrac12)  f_{n,n-1}(v-\tfrac12) \Big) \nn\\
& + \frac{2v}{(q+v-\frac12)(u+v-\frac12)(v+w+\frac12)} \bigg(\frac{f_{n+1,n}(q) f_{n,n-1}(\frac12-v)}{q-v+\frac12} + \frac{(v^2+vw+qv+qw-\frac14) f_{n+1,n-1}(\frac12-v)}{(v-\frac12)(v+q+\frac12)(v+w-\frac12)}\bigg) \nn\\
& - \frac{2v}{(q+v+\frac12)(v-w-\frac12)}\bigg(\frac{f_{n+1,n}(w) f_{n,n-1}(\frac12-v)}{(u+v-\frac12)(v+w-\frac12)}+\frac{(\frac12-2v^2+w+q(2w+1))\,f_{n+1,n-1}(v-\frac12)}{(2v-1)(q-v+\frac12)(v-u-\frac12)(v+w+\frac12)}\bigg) .
\label{BSR2}
}

\smallskip

\noindent Consider the algebra $X^\tw(\mfsp_{2n})$. Then
\ali{
&\Sym_{q,w,v} [f_{n,n-1}(q),\,[f_{n,n-1}(w),\,[f_{n,n-1}(v),\,f_{n+1,n}(u)]]] \nn\\ 
& \qu = 8\,(A(q,w,v)\,d_{n-1}^{-1}(v)\,d_{n}(v) + A(q,v,w)\, d_{n-1}^{-1}(w)\, d_{n}(w) + A(v,w,q)\, d_{n-1}^{-1}(q)\, d_{n}(q) ) \label{CSR1}
}
where $A(q,w,v)$ is
\ali{
& \frac{v\,(2 v (v+w-3)+q (2 v+2 w-3)-3 w+4) }{(2 v-1) (u-v) (q+v-1) (u+v-1)(v+w-1)(v+w-2)(q+v-2)} \nn\\
& \hspace{3.95cm} \times \big((u+v-1) f_{n+1,n}(v) f_{n,n-1}(1-v)-(2 v-1) f_{n+1,n}(u) f_{n,n-1}(1-v) \big) \nn\\
& + \frac{v\,(q+v(v-2)-w(q-1))\,(f_{n+1,n}(v) f_{n,n-1}(v)-f_{n+1,n-1}(v))}{(u-v) (v-q) (q+v-2) \left(v-\frac{1}{2}\right) (v-w) (v+w-2)} \nn\\
& + \frac{v}{(v+q-1)(v+w-2)}\left(\frac{f_{n+1,n}(v) f_{n,n-1}(q)}{(q-v) (u-v)}-\frac{f_{n+1,n-1}(1-v)}{(2v-1)(u+v-1)}\right) \nn\\
& + \frac{v}{(v+w-1)(v+q-2)} \bigg(\frac{f_{n+1,n}(v) f_{n,n-1}(w)}{(v-u) (v-w)} - \frac{f_{n+1,n-1}(1-v)}{(2 v-1)(u+v-1)}\bigg) \nn\\
& + \frac{2(v-1)(q(q-1)-v(v-2)) f_{n+1,n-1}(q)}{(q-u) (q-v) (q-v+1)(q+v-1)(q+v-2)(v+w-2)} + \frac{2(v-1)(w(w-1)-v(v-2)) f_{n+1,n-1}(w)}{(w-u)(w-v)(w-v+1)(w+v-1)(w+v-2)(v+q-2)} \nn\\
& + \frac{2(v-1)(v(v-2)-u(q-1)) f_{n+1,n}(u) f_{n,n-1}(q)}{(u-v)(q-u) (q-v+1) (q+v-1) (u+v-2) (v+w-2)} + \frac{2(v-1)(v(v-2)-u(w-1)) f_{n+1,n}(u) f_{n,n-1}(w)}{(u-v)(w-u) (w-v+1) (w+v-1) (u+v-2) (q+v-2)} \nn\\
& + \frac{2(v-1)(u(u-1)-v(v-2)) (q (u-v-2 w+2)-w(v-2)+u (2 v+w-4))}{(q-u) (u-v)(u-w)(u-v+1)(u+v-1)(q+v-2) (u+v-2) (v+w-2)} \, \big( f_{n+1,n}(u) f_{n,n-1}(u)-f_{n+1,n-1}(u) \big) \nn\\
& + \frac{(v-2)}{(q+v-2) (u+v-2) (v+w-2)} \times\nn\\
& \qq \times \bigg( f_{n+1,n}(2-v) \bigg( \frac{ f_{n,n-1}(q)}{q-v+1} + \frac{f_{n,n-1}(w)}{w-v+1} \bigg) + \frac{f_{n+1,n}(2-v) f_{n,n-1}(2-v)-f_{n+1,n-1}(2-v)}{v-\frac{3}{2}} \nn\\
& \qq\qq + \frac{q\,(2 w-1)-2v\,(v-3)-w-4}{(2v-3)(q-v+1)(v-u-1)(v-w-1)} \, \big((v+u-2) f_{n+1,n-1}(v-1) \nn\\ & \hspace{3.5cm} + (v-u-1) f_{n+1,n}(2-v) f_{n,n-1}(v-1)-(2 v-3) f_{n+1,n}(u) f_{n,n-1}(v-1)\big) \bigg)  . \nn\\[-2em] \label{CSR2}
}


\enlargethispage{1.5em}

\setlength{\medmuskip}{2mu plus 1mu minus 1mu} 

\section{Coproduct of the low Drinfeld modes} \label{app:coideal}

Here we derive the coproducts \eqref{coid-low} and \eqref{coid-h1} of Remark \ref{R:coideal} and make the cross-term $\Omega_i$ explicit. Throughout $1\le i<n$, and $G$ is diagonal (split symmetric pairs), with $g_{kk}=1$ for $k\le n$. We write the ambient and twisted generating series as $t_{ij}(u) = \del_{ij} + \sum_{r\ge 1} t^{(r)}_{ij}\, u^{-r}$ and $s_{ij}(u) = g_{ij} + \sum_{r\ge 1} s^{(r)}_{ij}\, u^{-r}$. Reading off the $(i,j)$ entry of the sandwich \eqref{coid-sandwich},
\equ{
\Delta\big(s_{ij}(u)\big) = \sum_{1\le a,b\le N} t_{ia}(u-\tfrac\ka2)\, t_{jb}(\tfrac\ka2-u) \ot s_{ab}(u) ,
}
and comparing the coefficients of $u^{-1}$ and $u^{-2}$ gives, for arbitrary $1\le i,j\le N$,
\ali{
\Delta\big(s^{(1)}_{ij}\big) &= \big(g_{jj}\,t^{(1)}_{ij}-g_{ii}\,t^{(1)}_{ji}\big) \ot 1 + 1 \ot s^{(1)}_{ij} , \label{coid-Ds1} \\[2pt]
\Delta\big(s^{(2)}_{ij}\big) &= \msf{q}_{ij} \ot 1 + 1 \ot s^{(2)}_{ij} + \sum_{1\le a \le N} \big( t^{(1)}_{ia} \ot s^{(1)}_{aj} - t^{(1)}_{ja} \ot s^{(1)}_{ia} \big) , \label{coid-Ds2}
}
with $\msf{q}_{ij} \in X(\mfg_N)$ (the leading term $g_{ab}$ of $s_{ab}(u)$ contributes to the $\ot 1$ parts but, being of degree zero in $u$, never to the cross-terms; likewise the argument shifts $\pm\ka/2$ enter only the second modes of the $t$-legs and are absorbed into $\msf{q}_{ij}$).

By \eqref{Dr1}, $b_{i,0} = g_{ii}^{-1}\,s^{(1)}_{i+1,i}$ exactly: the quasi-determinant corrections to $f_{i+1,i}(u)$ are products of off-diagonal series and enter only at higher modes, and the argument shift in \eqref{Dr1} does not affect the first mode. Since $g_{ii}=g_{i+1,i+1}=1$, \eqref{coid-Ds1} gives $\Delta\big(s^{(1)}_{i+1,i}\big) = \big(t^{(1)}_{i+1,i}-t^{(1)}_{i,i+1}\big) \ot 1 + 1 \ot s^{(1)}_{i+1,i}$. The term $t^{(1)}_{i,i+1} \ot 1$ has positive root degree, so \eqref{coid-low} follows, with $\msf{f}_{i,0}=t^{(1)}_{i+1,i}=F_{i+1,i}$; in particular $SY^{\tw}(\mfg_N)$ quantises the left Lie coideal structure of \cite{GR16}.

For $\Delta(h_{i,1})$, recall from \eqref{Dr1} that $h_i(u) = d_i^{-1}(u+\tfrac{\ka-i}{2})\,d_{i+1}(u+\tfrac{\ka-i}{2})$, with $d_i(u)$ the $i$-th principal quasi-determinant of $S(u)$. Since $h_{i,0}=0$ by the evenness of $h_i(u)$ (Lemma \ref{L:heven}), the argument shift does not affect the coefficient $h_{i,1}$, which therefore equals the coefficient of $u^{-2}$ in the unshifted product $d_i(u)^{-1} d_{i+1}(u)$. At interior nodes $d_i(u), d_{i+1}(u) \to 1$, and to second order $d_k^{(1)} = s^{(1)}_{kk}$ and $d_k^{(2)} = s^{(2)}_{kk} - \sum_{l<k} s^{(1)}_{kl} s^{(1)}_{lk}$, so that
\equ{
h_{i,1} = d_{i+1}^{(2)} - d_i^{(2)} + \big(d_i^{(1)}\big)^2 - d_i^{(1)} d_{i+1}^{(1)} . \label{coid-h1elt}
}
We apply $\Delta$ through \eqref{coid-Ds1}--\eqref{coid-Ds2} and retain the summands with both tensor legs nontrivial. Every index entering \eqref{coid-h1elt} and the products $s^{(1)}_{i+1,l}\, s^{(1)}_{l,i+1}$, $s^{(1)}_{il} s^{(1)}_{li}$ is $\le i+1 \le n$, where $g_{kk}=1$, so the $\ot 1$ coefficients in \eqref{coid-Ds1} there reduce to $t^{(1)}_{ab}-t^{(1)}_{ba}$; the diagonal ones vanish, cancelling the contributions of $\big(d_i^{(1)}\big)^2$ and $d_i^{(1)} d_{i+1}^{(1)}$. Collecting these cross-terms together with those of \eqref{coid-Ds2} yields
\equ{
\Omega_i = \sum_{l\ne i+1} t^{(1)}_{[l,\,i+1]} \ot \msf{s}_{l,i+1} \;-\; \sum_{l\ne i} t^{(1)}_{[l,\,i]} \ot \msf{s}_{l,i} , \label{coid-omega}
}
where $\msf{s}_{ab} := s^{(1)}_{ab}-s^{(1)}_{ba}$, and $t^{(1)}_{[l,m]}$ denotes the raising generator of the unordered pair $\{l,m\}$, that is $t^{(1)}_{[l,m]} = t^{(1)}_{l,m}$ for $l<m$ and $t^{(1)}_{[l,m]} = t^{(1)}_{m,l}$ for $l>m$. Written out,
\ali{
\Omega_i = {}& \sum_{l=1}^{i} t^{(1)}_{l,i+1} \ot \msf{s}_{l,i+1} + \sum_{l=i+2}^{N} t^{(1)}_{i+1,l} \ot \msf{s}_{l,i+1} - \sum_{l=1}^{i-1} t^{(1)}_{l,i} \ot \msf{s}_{l,i} - \sum_{l=i+1}^{N} t^{(1)}_{i,l} \ot \msf{s}_{l,i} .
}
Each ambient factor $t^{(1)}_{[l,m]}$ carries row index smaller than column index, hence positive root degree, so $\Omega_i \in X(\mfg_N)_{Q_+} \ot SY^{\tw}(\mfg_N)$ as required by the coideal property, and $\Omega_i$ is the twisted counterpart of the non-primitive cross-terms in the coproduct of the Drinfeld--Cartan generators of $X(\mfg_N)$.


\nc{\arxiv}[1]{\href{http://arxiv.org/abs/#1}{\tt arXiv:\nolinkurl{#1}}}
\nc\doi[1]{\href{https://dx.doi.org/#1}{doi:#1}}

\end{document}